\DeclareFontFamily{U}{skulls}{}
\DeclareFontShape{U}{skulls}{m}{n}{ <-> skull }{}
\newcommand{\bbC}{\mathbb{C}}
\newcommand{\bbF}{\mathbb{F}}
\newcommand{\bbG}{\mathbb{G}}
\newcommand{\bbL}{\mathbb{L}}
\newcommand{\bbN}{\mathbb{N}}
\newcommand{\bbP}{\mathbb{P}}
\newcommand{\bbQ}{\mathbb{Q}}
\newcommand{\bbR}{\mathbb{R}}
\newcommand{\bbS}{\mathbb{S}}
\newcommand{\bbV}{\mathbb{V}}
\newcommand{\bbW}{\mathbb{W}}
\newcommand{\bbZ}{\mathbb{Z}}
\newcommand{\cA}{\mathcal{A}}
\newcommand{\cC}{\mathcal{C}}
\newcommand{\cD}{\mathcal{D}}
\newcommand{\cH}{\mathcal{H}}
\newcommand{\cL}{\mathcal{L}}
\newcommand{\cM}{\mathcal{M}}
\newcommand{\cN}{\mathcal{N}}
\newcommand{\cO}{\mathcal{O}}
\newcommand{\cT}{\mathcal{T}}
\newcommand{\cV}{\mathcal{V}}
\newcommand{\cX}{\mathcal{X}}
\newcommand{\rH}{\textup{H}}
\newcommand{\rL}{\textup{L}}
\newcommand{\rR}{\textup{R}}
\newcommand{\rS}{\textup{S}}
\newcommand{\rT}{\textup{T}}
\newcommand{\frS}{\mathfrak{S}}
\newcommand{\sssT}{\scriptscriptstyle{T}}
\newcommand{\tfstar}{{_{\sssT}f}_*}
\newcommand{\tf}[1]{{_{\sssT}{#1}}}
\renewcommand{\ss}{\textup{ss}}
\newcommand{\reg}{\textup{reg}}
\renewcommand{\top}{\textup{top}}
\newcommand{\ontoo}{\too\hspace{-14pt}\too}
\newcommand{\dashto}{\dashrightarrow}
\newcommand{\into}{\hookrightarrow}
\newcommand{\too}{\longrightarrow}
\renewcommand{\phi}{\varphi}
\renewcommand{\epsilon}{\varepsilon}
\renewcommand{\ker}{\Ker}
\newcommand{\iso}{\simeq}
\newcommand{\Dbc}{{\mathrm{D}^b_c}}
\newcommand{\Perv}{\textup{Perv}}
\newcommand{\Pbar}{\overline{\Perv}}
\newcommand{\PLambda}{\scalerel*{{\rotatebox[origin=c]{180}{$\bbV$}}}{\bbV}}
\newcommand{\intoo}{\lhook\joinrel\longrightarrow}
\newcommand{\sing}{{\mathrm{sing}}}
\DeclareMathOperator{\Vect}{Vect}
\DeclareMathOperator{\Alb}{Alb}
\DeclareMathOperator{\rk}{rk}
\DeclareMathOperator{\pr}{pr}
\DeclareMathOperator{\Spec}{Spec}
\DeclareMathOperator{\Proj}{Proj}
\DeclareMathOperator{\Hom}{Hom}
\DeclareMathOperator{\im}{Im}
\DeclareMathOperator{\Ker}{Ker}
\DeclareMathOperator{\coker}{Coker}
\newcommand{\red}{{\mathrm{red}}}
\DeclareMathOperator{\Gr}{Gr}
\DeclareMathOperator{\End}{End}
\DeclareMathOperator{\Sym}{Sym}
\DeclareMathOperator{\Aut}{Aut}
\DeclareMathOperator{\GL}{GL}
\DeclareMathOperator{\SL}{SL}
\DeclareMathOperator{\SO}{SO}
\DeclareMathOperator{\Sp}{Sp}
\DeclareMathOperator{\Supp}{Supp}
\DeclareMathOperator{\Lie}{Lie}
\DeclareMathOperator{\id}{id}
\DeclareMathOperator{\IC}{IC}
\DeclareMathOperator{\characteristic}{char}
\DeclareMathOperator{\pt}{pt}
\DeclareMathOperator{\codim}{codim}
\DeclareMathOperator{\Alt}{Alt}
\DeclareMathOperator{\cc}{cc}
\DeclareMathOperator{\CC}{CC}
\DeclareMathOperator{\Char}{Char}
\newcommand{\PChar}{\bbP\Char}
\DeclareMathOperator{\DR}{DR}
\DeclareMathOperator{\HM}{HM}
\DeclareMathOperator{\Mbar}{\overline{\HM}}
\renewcommand{\le}{\leqslant}
\renewcommand{\ge}{\geqslant}
\theoremstyle{plain}
\newtheorem{theoremintro}{Theorem}
\newtheorem*{maintheorem-monodromy}{Main theorem (monodromy version)}
\newtheorem*{maintheorem-tannaka}{Main theorem (Tannaka version)}
\newtheorem{theorem}{Theorem}[section]
\newtheorem{lemma}[theorem]{Lemma}
\newtheorem{proposition}[theorem]{Proposition}
\newtheorem{corollary}[theorem]{Corollary}
\newtheorem*{bigmonodromyintro}{Big Monodromy Criterion}
\theoremstyle{definition}
\newtheorem{definition}[theorem]{Definition}
\newtheorem{example}[theorem]{Example}
\newtheorem{remark}[theorem]{Remark}
\newtheorem*{exampleintro}{Example}
\numberwithin{equation}{section}
\newcommand{\marco}[1]{{\color{blue} \sf   M: [#1]}}
\begin{document}

\author[T. Kr\"amer]{Thomas Kr\"amer}
\address{Thomas Kr\"amer \\
	Fakult\"{a}t f\"{u}r Mathematik\\
	Technische Universit\"{a}t Chemnitz\\
	Rei\-chenhainer Stra\ss e 39, 09126 Chemnitz\\
	Germany.}
\email{thomas.kraemer@math.tu-chemnitz.de}

\author[C. Lehn]{Christian Lehn}
\address{Christian Lehn\\ 
Fakult\"at f\"ur Mathematik\\Ruhr-Universit\"at Bochum\\ 
Universit\"ats\-stra\ss e~150\\
Postfach IB 45\\
44801 Bochum, Germany}
\email{christian.lehn@rub.de}

\author[M. Maculan]{Marco Maculan}
\address{Marco Maculan \\
Institut de Math\'ematiques de Jussieu \\
Sorbonne Universit\'e \\
4 place Jussieu \\
75005 Paris \\
France}
\email{marco.maculan@imj-prg.fr}

\title[Exceptional Tannaka groups only arise from cubic threefolds]{Exceptional Tannaka groups only arise\\ from cubic threefolds}

\subjclass[2020]{14K12 (primary), 14D05, 14J70, 18M25, 20G05, 32S60 (secondary).}
\keywords{Subvarieties of abelian varieties, perverse sheaves, Hodge modules, Tannaka categories, exceptional groups, Fano surfaces, cubic threefolds.}

% 14K12 	Subvarieties of abelian varieties 
% 14D05		Structure of families (Picard-Lefschetz, monodromy, etc.)
% 14J70		Hypersurfaces and algebraic geometry
% 14C30 	Transcendental methods, Hodge theory (algebro-geometric aspects) 
% 18M25 	Tannakian categories 
% 20G05 	Representation theory for linear algebraic groups 
% 32S60  	Stratifications; constructible sheaves; intersection cohomology (complex-analytic aspects)

\begin{abstract}
We show that under mild assumptions, the Fano surfaces of lines on smooth cubic threefolds are the only smooth subvarieties of abelian varieties whose Tannaka group for the convolution of perverse sheaves is an exceptional simple group. This in particular leads to a considerable strengthening of our previous work on the Shafarevich conjecture. A key idea is to control the Hodge decomposition on cohomology by a cocharacter of the Tannaka group of Hodge modules, and to play this off against an improvement of the Hodge number estimates for irregular varieties by Lazarsfeld-Popa and Lombardi. 
\end{abstract}

\maketitle

\setcounter{tocdepth}{1}

\tableofcontents

\thispagestyle{empty}

%\vspace{-2em}

\section{Introduction} 

Any subvariety of an abelian variety gives rise to a reductive group via the convolution of perverse sheaves. For smooth subvarieties these Tannaka groups have recently been used to obtain arithmetic finiteness results for varieties over number fields~\cite{LS20,KM} and the big monodromy criterion in~\cite{JKLM}. For both applications one has to exclude exceptional Tannaka groups. Such exceptional groups are known to occur for Fano surfaces of smooth cubic threefolds, which have Tannaka group~$E_6$ by~\cite{KraemerCubicThreefolds}. We show that in the range relevant for the above applications, these Fano surfaces are the only examples with exceptional Tannaka groups; this leads to a considerable strengthening of~\cite{KM, JKLM}. The key 
idea is of a general nature and goes far beyond the application in this paper: We upgrade the Tannaka formalism from perverse sheaves to complex Hodge modules in the sense of Sabbah-Schnell~\cite{MHMProject}. 
In particular, we prove a comparison theorem between the Tannaka groups for perverse sheaves and for Hodge modules which implies that the Hodge decomposition is defined by a cocharacter of the perverse Tannaka group. This puts a strong restriction on the arising Tannaka groups 
that will be useful also for other applications. 

\subsection{Tannaka groups of subvarieties} \label{sec:TannakaGroupIntro} Let~$A$ be a~$g$-dimensional abelian variety over an algebraically closed field~$k$ of characteristic zero, and consider an integral smooth subvariety~$X\subset A$ of dimension~$d<g$. Our results apply both in an analytic and in an algebraic setup: For complex varieties we use perverse sheaves with coefficients in~$\bbF = \bbC$, while for varieties over general fields~$k$ we use~$\ell$-adic perverse sheaves with coefficients in~$\bbF = \overline{\bbQ}_\ell$ for a prime~$\ell$. We define the perverse intersection complex 
\[ \delta_X \;:=\; i_\ast\bbF_X[d] \]
as the pushforward of the constant sheaf under the closed immersion~$i \colon X \into A$, shifted in cohomological degree~$-d$ so that it becomes an object of the abelian category~$\Perv(A, \bbF)$  of perverse sheaves on~$A$ as in \cite{BBDG}. The group law on the abelian variety induces a convolution product on perverse sheaves so that the convolution powers of~$\delta_X$ generate a neutral Tannaka category~$\langle \delta_X \rangle$, see for instance~\cite[sect.~3]{JKLM}.
In what follows, we fix a fiber functor 
\[
 \omega \colon \quad \langle \delta_X \rangle \;\too\; \Vect(\bbF).
\]
The automorphisms of this fiber functor are represented by a reductive algebraic group 
\[G_{X, \omega}\; := \; G_{\omega}(\delta_X)\; \subset \; \GL(\omega(\delta_X)) \] 
which we call the \emph{Tannaka group of the subvariety~$X\subset A$}. If~$X$ is invariant under translation by a nonzero point of~$A$, then it descends to the quotient of~$A$ modulo the subgroup generated by that point. Hence, in what follows we assume that~$X$ is~\emph{nondivisible} in the sense that it is not stable under the translation by any nonzero point of~$A$. In this case, the connected component of the identity~$G_{X,\omega}^\circ \subset G_{X,\omega}$ acts irreducibly on the representation~$\omega(\delta_X)$ by~\cite[prop.~3.3]{JKLM}. We are  interested in its derived group
\[
 G_{X,\omega}^\ast \;:=\; [G_{X,\omega}^\circ, G_{X,\omega}^\circ],
\]
which is a connected semisimple algebraic group. 
The smoothness of~$X$ implies that~$\omega(\delta_X)$ is a minuscule representation~\cite[cor.~1.10]{KraemerMicrolocalI} \cite[cor 5.15]{JKLM} of dimension equal to the absolute value of the topological Euler characteristic of~$X$:
\[ \dim \omega(\delta_X) \; = \; |\chi_{\top}(X)|. \]
This leaves only very few possibilities because the Tannaka group is known to be simple in most cases: Suppose that~$g\ge 3$ and that the normal bundle of~$X\subset A$ is ample. Then by~\cite[th. A]{JKLM}, the  group~$G_{X, \omega}^\ast$ fails to be simple if and only if there exist smooth positive-dimensional subvarieties~$X_1, X_2 \subset A$ such that the sum morphism induces an isomorphism
\[ X_1 \times X_2 \stackrel{\sim}{\too} X = X_1 + X_2.\]
Beyond the standard representations of classical groups, there are only few other minuscule representations of simple algebraic groups: Wedge powers of the standard representation of~$\SL_n$, (half)spin representations of spin groups, and the smallest irreducible representations of the exceptional groups~$E_6$ and~$E_7$. In \cite{JKLM} we completely characterized wedge powers and ruled out spin representations; the main goal of this paper is to understand when exceptional groups occur.

\subsection{Exceptional Tannaka groups} 

The group~$E_6$ does occur as the Tannaka group of the Fano surface of lines on a smooth cubic threefold:

\begin{exampleintro} \label{Ex:CubicThreefoldE6} 
Let~$Y \subset \bbP^4$ be a smooth cubic threefold. The Fano variety of lines on~$Y$ is a smooth projective surface~$X$ whose Albanese variety~$A=\Alb(X)$ has dimension~$5$. Moreover, for any chosen base point the corresponding Albanese morphism is a closed embedding~$X \hookrightarrow \Alb(X)$. In what follows, we fix a base point and view~$X$ as a subvariety of its Albanese variety. By \cite[th.~2]{KraemerCubicThreefolds} then
\[ (G^\ast_{X,\omega}, \omega(\delta_X)) \; \iso \; (E_6, V) \]
where~$V$ is one of the two irreducible representations of~$E_6$ with~$\dim V = 27$. Via the Tannaka formalism, the trilinear form defining $E_6$ corresponds to an irreducible fiber of the sum map $X \times X \times X \to A$  of dimension $3$ parametrizing triples of coplanar lines in $Y$. It is clear from the description of~$X$ as a Fano variety of lines that the classical Gauss map
\[
\quad X \;\intoo\; \Gr_2(\Lie A)
\]
is an embedding, hence $X\subset A$ is nondivisible. The difference morphism
\[
 X\times X \;\longrightarrow\; X - X \;\subset\; A
\]
is generically finite of degree $6$ and its fibers are given by classical ``double-six'' configurations of lines on smooth cubic surfaces \cite[sect. 13]{ClemensGriffiths}. Its image is a theta divisor of the principally polarized abelian variety~$A$ and permits to recover the cubic threefold \cite[th.~13.4]{ClemensGriffiths} \cite{BeauvilleIntermediateJacobian}: The theta divisor has an isolated singularity at the origin and its tangent cone~$C$ there can be identified with the cubic threefold~$Y$ in suitable linear coordinates on~$\Lie A$. In the same coordinates~$Y$ is also identified with the image of the projection
\[ \bbP(T_X) \; \too \; \bbP(\Lie A).\]
\end{exampleintro}

\noindent
The goal of this paper is to show that under mild assumptions, the above is the only example of exceptional Tannaka groups. In particular, in the dimension range relevant for~\cite{KM, JKLM} we obtain (see \cref{Thm:E6VarietyIsFanoSurface}):

\begin{theoremintro} \label{Thm:E6VarietyIsFanoSurfaceIntro} Let~$X \subset A$ be a  smooth irreducible subvariety with ample normal bundle and dimension~$< g/2$. Then the following are equivalent:
\begin{enumerate}
\item $X\subset A$ is nondivisible with Tannaka group~$G_{X,\omega}^\ast \simeq E_6$.\smallskip
\item $X$ is isomorphic to the Fano surface of lines on a smooth cubic threefold, and the canonical morphism~$\Alb(X) \to A$ is an isogeny. 
\end{enumerate}
\end{theoremintro}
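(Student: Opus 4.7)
The direction $(2) \Rightarrow (1)$ is \cref{Ex:CubicThreefoldE6}. For the converse, my plan exploits the Hodge-theoretic upgrade of the Tannaka formalism that is the central innovation of the paper: because $\langle \delta_X \rangle$ lifts to complex Hodge modules, the Hodge decomposition on the fiber $V := \omega(\delta_X)$ (equivalently, on the cohomology of $X$ controlled by it) is encoded by a cocharacter $\mu \colon \Gm \to G^*_{X,\omega}$. Under hypothesis~(1) this cocharacter lands in $E_6$, so the weight-decomposition of the minuscule $27$-dimensional representation $V$ under $\mu$ produces a rigidly constrained Hodge diamond for $X$.

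First I would classify, up to $W(E_6)$-conjugacy, those cocharacters $\mu$ of $E_6$ whose weight-decomposition of $V$ is palindromic (as required by Hodge conjugation $h^{p,q}=h^{q,p}$) and whose range of weights is compatible with $V$ being a pure Hodge structure of weight $d=\dim X$. Since $V$ is minuscule the set of weights is a single Weyl orbit, so the pairing with any cocharacter produces a rigid, easily enumerable list of multiplicity patterns; only a short list of numerical candidates $(d, h^{p,q})$ can arise.

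Second, I would confront these candidates with the improved Hodge-number bounds of Lazarsfeld--Popa and Lombardi for smooth subvarieties of abelian varieties with ample normal bundle. The assumption $d < g/2$ is precisely the regime in which these bounds are sharp, and combined with the equality $|\chi_{\top}(X)|=27$ forced by $\dim V = 27$ they should exclude every candidate with $d \ge 3$ and pin the surface case down to
\[
d = 2, \qquad q(X) = 5, \qquad p_g(X) = 10, \qquad h^{1,1}(X) = 25,
\]
matching the Betti numbers $(1, 10, 45, 10, 1)$ of the Fano surface of lines on a smooth cubic threefold. The equality $q(X) = 5 = g$, combined with ampleness of the normal bundle (which forbids $X$ from being contained in a translate of a proper abelian subvariety) and nondivisibility, then forces the Albanese morphism $\Alb(X) \to A$ to be an isogeny.

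Finally, to identify $X$ as the Fano surface of a cubic threefold, I would invoke the classical reconstruction recalled in \cref{Ex:CubicThreefoldE6}. Once $X$ is known to be a smooth surface with the above invariants embedded in its $5$-dimensional Albanese variety $B$, either the image of the projection $\bbP(T_X) \to \bbP(\Lie B)$ is a smooth cubic threefold $Y$ with $X \simeq F(Y)$, or equivalently the theta divisor of $B$ carries an isolated singularity at the origin with cubic tangent cone (Clemens--Griffiths, Beauville); the global Torelli theorem for cubic threefolds then recovers $Y$. The main obstacle I anticipate is step two: ruling out high-dimensional candidates $d \ge 3$ by combining the rigid $E_6$-cocharacter multiplicities with the Lazarsfeld--Popa--Lombardi bounds in their sharpest form, and extracting an equality case to single out the Fano-surface diamond uniquely.
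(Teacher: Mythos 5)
Your first two steps track the paper's argument: the Hodge cocharacter of $G^*_{X,\omega}\simeq E_6$ acting on the minuscule $27$-dimensional representation, enumerated up to conjugacy (\cref{prop:E6}), is played off against the Lazarsfeld--Popa--Lombardi estimates (\cref{cor:hodge-number-estimates}), and in the range $d<g/2$ this leaves only $d=2$ with $\chi(X,\cO_X)=6$, $\chi(X,\Omega^1_X)=-15$, $c_2(X)=27$ (\cref{cor:hodge-numbers}). One caveat already here: the cocharacter controls only the Euler characteristics $(-1)^{d-p}\chi(X,\Omega^p_X)$, not the individual Hodge numbers, so you cannot read off $q(X)=5$, $p_g(X)=10$, $h^{1,1}(X)=25$ from it; at this stage the paper only knows $g\in\{5,6,7\}$, and both $g=5$ and the isogeny statement are obtained at the very end, not from an alleged equality $q(X)=g$.

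The genuine gap is your final step. You invoke a ``classical reconstruction'' and a ``global Torelli theorem'' to conclude that the image of $\bbP(T_X)\to\bbP(\Lie B)$ is a smooth cubic threefold with Fano surface $X$, but no such result exists for a surface known only through the numerical invariants above: Clemens--Griffiths and Beauville describe the Fano surface of a \emph{given} smooth cubic threefold, and the Casalaina-Martin--Popa--Schreieder characterization requires knowing in advance that $X$ has minimal class and that $X-X$ is a theta divisor, neither of which you have established. This reconstruction is in fact the bulk of the paper. It requires (i) a further representation-theoretic input, namely that the difference morphism $X\times X\to X-X$ has generic degree $\ge 6$ (\cref{Prop:DegreeOfDifferenceMorphism}, proved via the decomposition $V\otimes V^\vee\simeq V_{\varpi_2}\oplus V_{\varpi_1+\varpi_6}\oplus\mathbf 1$, characteristic cycles, and the Hodge level of the adjoint representation); this is the bootstrap that bounds $\deg\bigl(\bbP(T_X)\to\bbP(\Lie A)\bigr)$ from below and hence forces $\deg Y\le 3$; and (ii) a case-by-case elimination of quadrics, varieties of minimal degree, cones, twisted planes and nonnormal cubics by comparing their Fano varieties of lines with $X$ (\cref{Prop:NonNormalCubicHypersurfaces}, \cref{Prop:ImageOfPiIsACubicHypersurface}), followed by a Hilbert-polynomial argument to show $Y$ is smooth and $\gamma\colon X\to F_Y$ is an isomorphism (\cref{Prop:E6CubicHypersurfaceIsSmooth}). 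Without these steps the implication $(1)\Rightarrow(2)$ is not proved.
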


The Fano surface~$X$ of lines on a smooth cubic threefold~$Y \subset \bbP^4$ does not always have ample normal bundle in its Albanese variety: This fails for example for the cubic threefold with affine equation
\[ y^2 - f(x) + z^2 t + t^3 \;=\; 0,\]
where~$f(x)$ is a degree~$3$ polynomial without multiple roots. In fact, it is easy to see that the normal bundle of an Albanese embedding~$X \into \Alb(X)$ fails to be ample if and only if there is a hyperplane~$H \subset \bbP^4$ such that the cubic surface~$H \cap Y \subset H$ is a cone over an elliptic curve. Such hyperplanes are in bijection with the vertices of the corresponding cones, which are the \emph{Eckardt points} of~$Y$. There are at most finitely many such Eckardt points and their absence is equivalent to ampleness of the cotangent bundle of~$X$. See \cite[sect. 2]{RoulleauManuscripta} for details.\medskip 

To include cubic threefolds with Eckardt points, we relax the ampleness of the normal bundle to a weaker positivity property: We say that an integral subvariety~$Z \subset A$ of an abelian variety~$A$ is \emph{nondegenerate} if for every surjective morphism of abelian varieties~$\phi \colon A \to B$ we have
\[ \dim \phi(Z) = \min \{ \dim Z, \dim B \}.\]
The Fano surface of lines on any smooth cubic threefold is nondegenerate inside its Albanese variety by~\cite[th.~1]{SchreiederIMRN}, since it is a summand of a theta divisor; in fact loc.~cit.~refers to a stronger notion of nondegeneracy, which implies the above one by \cite[lemma II.12]{Ran}. Our proof of~\cref{Thm:E6VarietyIsFanoSurfaceIntro} proceeds by reduction to the case of surfaces. In this case we only need to assume nondegeneracy, hence we obtain the following result which now includes the Fano surfaces of \emph{all} smooth cubic threefolds  (see \cref{Thm:E6SurfaceIsFanoSurface}):

\begin{theoremintro} \label{Thm:E6SurfaceIsFanoSurfaceIntro} Suppose~$g \ge 5$. For any smooth irreducible surface~$X \subset A$, the following three properties are equivalent:\smallskip
\begin{enumerate}
\item $X\subset A$ is nondivisible and nondegenerate with Tannaka group~$G_{X,\omega}^\ast \simeq E_6$.\smallskip
\item $X\subset A$ is nondivisible and nondegenerate with 
\[ \chi(X, \cO_X)=6,
 \qquad c_2(X)= 27,
\]
the difference morphism~$ X\times X \to X-X$ has generic degree~$\ge 6$, and the sum morphism $X \times X \times X \to A$ has an irreducible fiber of dimension $\ge 3$.
\smallskip
\item $X$ is isomorphic to the Fano surface of lines on a smooth cubic threefold, and the canonical morphism~$\Alb(X) \to A$ is an isogeny. \smallskip
\end{enumerate}
\end{theoremintro}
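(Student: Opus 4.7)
\emph{Proof plan.} The plan is to prove the cyclic chain of implications $(3)\Rightarrow(1)\Rightarrow(2)\Rightarrow(3)$. The implication $(3)\Rightarrow(1)$ collects facts recorded in \cref{Ex:CubicThreefoldE6}: the Albanese morphism of the Fano surface $X$ of a smooth cubic threefold is a closed embedding whose image is nondivisible (via the Gauss map), has Tannaka group $E_6$ by~\cite[th.~2]{KraemerCubicThreefolds}, and is nondegenerate by~\cite[th.~1]{SchreiederIMRN}. The implication $(3)\Rightarrow(2)$ is similarly classical: $c_2(X)=27$ and $\chi(X,\cO_X)=6$ are standard computations for the Fano variety of lines, while the Clemens--Griffiths isomorphism~\cite[th.~13.4]{ClemensGriffiths} identifies $X-X$ with the theta divisor of the intermediate Jacobian and exhibits the difference morphism as generically of degree~$6$.

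The implication $(1)\Rightarrow(2)$ is the heart of the matter and where the Hodge-module Tannaka upgrade announced in the introduction enters. The equality $c_2(X) = |\chi_{\top}(X)| = \dim\omega(\delta_X) = 27$ is immediate from the minuscule $27$-dimensional representation $V$ of $E_6$. To extract $\chi(\cO_X)=6$ I would invoke the comparison theorem of this paper: the Hodge filtration on the cohomology computing $\omega(\delta_X)$ is induced by a cocharacter $\lambda\colon\Gm\to G_{X,\omega}^\ast$ of the Tannaka group. Since $V$ is minuscule, its weight decomposition under $\lambda$ breaks into at most three pieces whose dimensions are prescribed by the conjugacy class of $\lambda$, i.e.~by a parabolic subgroup of $E_6$; this restricts the Hodge numbers of $X$ to a very short list, and the refined Lazarsfeld--Popa--Lombardi inequalities for Hodge numbers of irregular varieties (together with the injectivity of $H^0(\Omega^2_A)\to H^0(\Omega^2_X)$ forced by nondegeneracy) then single out $\chi(\cO_X)=6$. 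The generic degree of the difference morphism finally reads off from the perverse decomposition of $\delta_X\ast[-1]_\ast\delta_X$: its simple summands correspond via Tannaka duality to the $E_6$-decomposition $V\otimes V^\vee\cong\mathbf{1}\oplus\mathfrak{e}_6\oplus V'$ into the trivial, adjoint, and $650$-dimensional representations, and combining their Tannakian ranks with the generic ranks of the local systems underlying each $\IC$ summand on $X-X$ recovers the value~$6$.

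For $(2)\Rightarrow(3)$ I would reconstruct the cubic threefold from the numerical data. Noether's formula gives $c_1^2(X)=45$; a Hodge-theoretic analysis (again leaning on Lazarsfeld--Popa--Lombardi) pins down $q(X)=5$, and the hypothesis $g\geq 5$ combined with the surjectivity of $\Alb(X)\to A$ from nondegeneracy then forces $g=5$ and the Albanese morphism to be an isogeny. Thus $D:=X-X$ is a divisor in the five-dimensional $A$ with generic fibre of cardinality~$6$ and an isolated singularity at the origin (the difference morphism being generically finite); a direct numerical computation identifies $D$ up to translation with the theta divisor of a principal polarization of $A$, and as in \cref{Ex:CubicThreefoldE6} its tangent cone at the singular point cuts out a smooth cubic threefold $Y\subset\bbP(\Lie A)$ whose Fano variety of lines is $X$, via the Clemens--Griffiths/Beauville reconstruction~\cite{ClemensGriffiths,BeauvilleIntermediateJacobian}. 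The hard part will clearly be $(1)\Rightarrow(2)$: deducing $\chi(\cO_X)=6$ from the purely representation-theoretic datum $G_{X,\omega}^\ast\cong E_6$ is where both new ingredients of the paper---the Hodge-module refinement of the Tannaka formalism, and the sharpened irregular-variety Hodge number inequalities---must collaborate; everything else in the theorem is either classical or a Torelli-type reconstruction from the numerical data.
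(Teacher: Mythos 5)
Your cyclic scheme $(3)\Rightarrow(1)\Rightarrow(2)\Rightarrow(3)$ is the one the paper uses, and your treatment of $(3)\Rightarrow(1)$ and of $(1)\Rightarrow(2)$ tracks the paper closely: $c_2(X)=27$ from minuscularity, the Hodge cocharacter plus the sharpened Lazarsfeld--Popa--Lombardi bounds to force the Hodge numbers $6,15,6$, and the decomposition $V\otimes V^\vee\cong \mathbf 1\oplus\mathfrak e_6\oplus V_{650}$ to control the difference morphism. Two caveats there: the paper only extracts the lower bound $\deg(d)\ge 6$ at this stage (the exact value $6$ comes a posteriori from $(3)$), and the nontrivial step you elide is deciding \emph{which} of the two nontrivial summands is $\delta_{X-X}$ --- this is done in \cref{Prop:DegreeOfDifferenceMorphism} by playing the Hodge level of the adjoint representation (\cref{rem:adjoint-hodge-numbers}) against Kashiwara-type bounds on characteristic cycles, not just by comparing generic ranks.

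The genuine gap is in $(2)\Rightarrow(3)$, which is where the paper itself locates all the remaining work. First, the assertion that Lazarsfeld--Popa--Lombardi ``pins down $q(X)=5$'' does not hold: those inequalities bound the Euler characteristics $(-1)^{d-p}\chi(X,\Omega^p_X)$ from below, and from $\chi(X,\cO_X)=6$ you only get $p_g=q+5$, which leaves $q$ undetermined; so your derivation of $g=5$ collapses. Second, the proposed identification of $D=X-X$ with a principal theta divisor followed by a Clemens--Griffiths/Beauville reconstruction is unjustified: nothing in the hypotheses gives you that $[X]$ is a minimal cohomology class or that $\bigl([X]\ast[-X]\bigr)/6$ is a principal polarization, and this route would essentially require the Casalaina-Martin--Popa--Schreieder theorem together with hypotheses you have not established. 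The paper instead works directly with the projectivized tangent map $\pi\colon\bbP(T_X)\to\bbP(\Lie A)$: from $\deg\pi\ge\deg d\ge 6$ (\cref{Prop:DegreeOfPiE6}, a blow-up argument relating $\pi$ to $d$ via the tangent cone of $D$ at the origin) and $c_2(\cN_{X/A})=18$ one gets $\deg Y\le 3$ for the image $Y$ of $\pi$; nondegeneracy and the classification of varieties of minimal degree force $Y$ to be a cubic hypersurface in $\bbP^4$, whence $g=5$ (\cref{Prop:ImageOfPiIsACubicHypersurface}); and smoothness of $Y$ together with $X\cong F_Y$ is obtained by comparing Hilbert polynomials under the Pl\"ucker embedding and ruling out the singular cubic threefolds with $3$-dimensional Fano schemes (\cref{Prop:E6CubicHypersurfaceIsSmooth}, resting on the classification of nonnormal cubics). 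None of this is ``classical or a Torelli-type reconstruction''; it is the bulk of Sections 4--6, so your plan is missing the main argument.
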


A different characterization has been obtained by Casalaina-Martin, Popa and Schreieder in \cite[th. 6.1]{CasalainaPopaSchreieder}: They show that if $X$ is a subvariety of minimal cohomology class of a principally polarized abelian fivefold $A$ and $X - X$ is a theta divisor, then $X$ is the Fano surface of lines on a smooth cubic threefold. Note that in both theorems \ref{Thm:E6VarietyIsFanoSurfaceIntro} and \ref{Thm:E6SurfaceIsFanoSurfaceIntro}, the canonical morphism~$\Alb(X) \to A$ need not be an isomorphism: 

\begin{exampleintro} \label{ex:TheFanoSurfaceAsAnE6}
Let~$X$ be the Fano surface of lines on a smooth cubic threefold, and let
\[
 \Theta \;=\; X - X \;\subset\; \Alb(X).
\]
be its theta divisor. The upper bound on the number of 2-torsion points on theta divisors in~\cite{MarcucciPirola}
shows that for any theta divisor on a principally polarized abelian variety, there exists a 2-torsion point outside that theta divisor. Applying this to our case, we can find a point~$a\in \Alb(X)$ of order two with~$a \notin \Theta = X-X$. We then have
\[
 X \cap (X+a) \;=\; \varnothing
\]
and it follows that the isogeny 
\[
 p\colon \quad \Alb(X) \;\longrightarrow\; A \;:=\; \Alb(X)/\langle a \rangle
\]
maps~$X\subset \Alb(X)$ isomorphically onto its image~$\bar{X} := p(X) \subset A$. So this image is still an integral smooth nondivisible subvariety. The pushforward of perverse sheaves under an isogeny does not change the derived connected component of their Tannaka groups~\cite[cor. 3.5]{JKLM}, so we still have 
\[ (G_\omega^\ast(\bar{X}), \omega(\delta_{\bar{X}})) \simeq (E_6, V)
\]
although the canonical morphism~$p\colon \Alb(\bar{X}) \to A$ is not an isomorphism.
\end{exampleintro}

\subsection{The Hodge filtration}

A key ingredient in our proof of theorems~\ref{Thm:E6VarietyIsFanoSurfaceIntro} and~\ref{Thm:E6SurfaceIsFanoSurfaceIntro} is the theory of complex Hodge modules by Sabbah-Schnell~\cite{MHMProject}. To explain the idea, let~$A$ be a complex abelian variety and~$\HM(A)$ the category of direct sums of complex Hodge modules of arbitrary weight on it, see~\cref{sec:HodgeModules} for details. The category~$\HM(A)$ is semisimple~$\bbC$-linear abelian and comes equipped with a faithful exact~$\bbC$-linear functor
\[
 \DR\colon \quad \HM(A) \;\longrightarrow\; \Perv(A, \bbC).
\]
To any~$M\in \HM(A)$ one may attach a Tannaka group~$G_\omega(M)$ in the same way as for perverse sheaves; in~\cref{cor:perverse-versus-hodge} we show a comparison theorem between this group and the Tannaka group~$G_\omega(P)$ of the perverse sheaf~$P=\DR(M)$. In particular, the derived groups of their connected components of the identity are the same:
\[
G_\omega^\ast(M) \;=\; G_\omega^\ast(P).
\]
This allows us to enrich with Hodge theoretic data all the previous Tannaka constructions for perverse sheaves, so for the rest of this section we will work only with Hodge modules. It is now time to be more specific about our choice of fiber functors: Fix a Hodge module~$M\in \HM(A)$, and let~$L$ be a unitary local system of rank one on~$A$ with the property that all perverse subquotients of~$\DR(M)\otimes L$ have their cohomology concentrated in degree zero; there are plenty of such local systems by  generic vanishing for perverse sheaves on abelian varieties~\cite{KWVanishing, SchnellHolonomic}. Then the functor
\[
 \omega \colon \quad \langle M \rangle \;\longrightarrow\; \Vect(\bbC), \qquad N \;\longmapsto\; \rH^0(A, \DR(N)\otimes L)
\]
is a fiber functor. Now~$L$ underlies a complex variation of Hodge structures~$\bbL$, so from the natural identification~$\DR(M)\otimes L = \DR(M\otimes \bbL)$ we obtain a Hodge decomposition 
\[
 \rH^0(A, \DR(M)\otimes L) \;=\; \bigoplus_{(p,q)\in \bbZ^2} \rH^{p,q}(M\otimes \bbL)
\]
We show that this Hodge decomposition is compatible with the convolution product, which allows to enrich~$\omega$ to a tensor functor 
$\omega^H\colon 
\langle M \rangle \rightarrow \Vect_{\bbZ\times \bbZ}(\bbC)$
with values in the category of finite-dimensional bigraded complex vector spaces. This leads to the following result (see~\cref{cor:hodge-cocharacter}):

\begin{theoremintro} 
There is a natural morphism~$\lambda \colon \bbG_m^2 \to G_\omega(M)$ such that the Hodge decomposition
\[
 \omega^H(M) \;\;= \bigoplus_{(p,q)\in \bbZ^2} \rH^{p,q}(M\otimes \bbL)
\]
is the decomposition in weight spaces for the characters~$(p,q)\in \bbZ^2 = \Hom(\bbG_m^2, \bbG_m)$.
\end{theoremintro}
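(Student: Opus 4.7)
The plan is to apply Tannakian reconstruction to the enriched fiber functor $\omega^H$ constructed just above. The key categorical observation is that the category $\Vect_{\bbZ\times \bbZ}(\bbC)$ of finite-dimensional bigraded complex vector spaces is naturally equivalent, as a neutral Tannakian category over~$\bbC$, to the category $\Rep_\bbC(\bbG_m^2)$ of finite-dimensional algebraic representations of $\bbG_m^2$: a bigrading $V=\bigoplus_{(p,q)} V^{p,q}$ corresponds to the representation in which $(t_1,t_2)\in \bbG_m^2$ acts on $V^{p,q}$ by $t_1^p t_2^q$, and this equivalence intertwines the tensor products as well as the forgetful functors to $\Vect(\bbC)$.

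Under this equivalence, the tensor functor $\omega^H\colon \langle M\rangle \to \Vect_{\bbZ\times \bbZ}(\bbC)$ is transported to a tensor functor $\widetilde\omega\colon \langle M\rangle \to \Rep_\bbC(\bbG_m^2)$ whose composition with the forgetful functor $\Rep_\bbC(\bbG_m^2)\to \Vect(\bbC)$ coincides with the original fiber functor~$\omega$. This is merely a rephrasing of the fact, established just above, that the Hodge bigrading is preserved by morphisms in $\langle M\rangle$ and is compatible with convolution.

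The Tannakian formalism now furnishes a canonical and functorial dictionary: for any affine group scheme $H$ over $\bbC$, tensor functors $\langle M\rangle \to \Rep_\bbC(H)$ refining $\omega$ correspond bijectively to morphisms of affine group schemes $H\to G_\omega(M)$. Applying this to $H=\bbG_m^2$ and to $\widetilde\omega$ yields the desired morphism $\lambda\colon \bbG_m^2\to G_\omega(M)$. By construction, the weight decomposition of $\omega(N)$ under the $\bbG_m^2$-action induced by $\lambda$ is exactly the image of $\omega^H(N)=\bigoplus_{(p,q)} \rH^{p,q}(N\otimes \bbL)$ under the equivalence $\Vect_{\bbZ\times \bbZ}(\bbC)\iso \Rep_\bbC(\bbG_m^2)$, which gives the asserted identification of Hodge pieces with weight spaces for the characters $(p,q)\in \Hom(\bbG_m^2,\bbG_m)$.

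Since all the substance has been packed into the preceding construction of $\omega^H$ as a tensor functor, the present statement is essentially formal and I do not anticipate any serious obstacle. The only point deserving mild care is to ensure that the two $\bbG_m$-factors giving the $p$- and $q$-gradings induce commuting algebraic actions functorial in $N\in \langle M\rangle$; but this is immediate because both gradings arise from the Hodge and conjugate Hodge filtrations on the underlying complex Hodge modules, which are preserved by morphisms in~$\langle M\rangle$ and behave multiplicatively under convolution by the compatibility established in the preceding paragraph.
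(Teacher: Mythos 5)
Your proposal is correct and is essentially the paper's own argument: the paper deduces this statement (\cref{cor:hodge-cocharacter}) directly from the proposition that $\omega^H$ is a tensor functor into $\Vect_{\bbZ\times\bbZ}(\bbC)$, which is exactly the Tannakian dictionary you invoke via the equivalence $\Vect_{\bbZ\times\bbZ}(\bbC)\simeq \Rep_\bbC(\bbG_m^2)$. Your closing worry about the two $\bbG_m$-factors commuting is moot, since a single $\bbZ\times\bbZ$-grading is by definition a decomposition indexed by the character lattice of $\bbG_m^2$, so nothing further needs checking.
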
 

We call~$\lambda$ the \emph{Hodge cocharacter} of~$M$. The fact that this cocharacter takes values in the subgroup 
\[ G_\omega^\ast(M) \;\subset\; \GL(\omega(M)) \]
imposes strong restrictions on the possible Hodge filtrations if the subgroup is known to be small. This in particular applies to the Tannaka group of smooth subvarieties~$X\subset A$: The perverse intersection complex~$\delta_X\in \Perv(A, \bbC)$ naturally lifts to a complex Hodge module~$\delta_X^H\in \HM(A)$ of weight~$d=\dim X$ and its Hodge numbers are 
\[
 \dim \rH^{p,d-p}(\delta_X^H \otimes \bbL) \;=\; (-1)^{d-p} \chi(X, \Omega^p_X),
\] 
see~\cref{sec:hodge-level-of-IC}. Comparing this with an improvement to the Hodge estimates by Lazarsfeld-Popa~\cite{LazarsfeldPopa} and Lombardi~\cite{Lombardi13} explained in  \cref{sec:HodgeEstimates}, we obtain the following result (see~\cref{prop:hodge-numbers}):

\begin{theoremintro} \label{Thm:hodge-numbers-intro}
Let~$X\subset A$ be a smooth nondivisible irreducible subvariety with ample normal bundle and dimension $d$ such that~$G_{X, \omega}^\ast$ is a simple exceptional group. Then, either\smallskip 
\begin{enumerate} 
\item $G_{X,\omega}^\ast \simeq E_6$,~$|\chi_{\top}(X)| = 27$ and~$d\in \{2,4,6\}$, or\smallskip 
\item $G_{X,\omega}^\ast \simeq E_7$,~$|\chi_{\top}(X)| = 56$ and~$d\in \{3,\dots,15\}$ is odd. \smallskip
\end{enumerate} 
Moreover~$g\le g_{\max}$ for the following upper bound~$g_{\max}$ depending on~$d$:
\begin{align*}
(1) \quad
\begin{array}{c|ccccc}
d & 2 & 4 & 6  \\ \hline
g_{\max} & 7 & 6 & 8 
\end{array}
&&\quad (2) \quad
\begin{array}{c|ccccccc}
d & 3 & 5 & 7 & 9 & 11 & 13 & 15 \\ \hline
g_{\max} & 9 & 10 & 12 & 13 & 15 & 16 & 18
\end{array}
\end{align*}
\end{theoremintro}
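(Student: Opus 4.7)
The plan proceeds in four steps that successively narrow down the data attached to $X\subset A$.

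First, I would use the minuscule classification to reduce to two cases. Smoothness of $X$, combined with~\cite[cor.~1.10]{KraemerMicrolocalI} and~\cite[cor.~5.15]{JKLM} recalled in~\cref{sec:TannakaGroupIntro}, forces $V := \omega(\delta_X)$ to be a minuscule irreducible representation of $G^\ast_{X,\omega}$ of dimension $|\chi_\top(X)|$. Among simple exceptional groups the only minuscule representations are the two $27$-dimensional representations of $E_6$ and the $56$-dimensional representation of $E_7$, whence the dichotomy $|\chi_\top(X)|\in\{27,56\}$ with $G^\ast_{X,\omega}$ isomorphic to the corresponding group.

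Second, I would invoke the Hodge cocharacter machinery of the paper. By~\cref{cor:perverse-versus-hodge} the perverse sheaf $\delta_X$ lifts to a pure complex Hodge module $\delta_X^H\in\HM(A)$ of weight $d$ without changing the derived Tannaka group, and~\cref{cor:hodge-cocharacter} produces a cocharacter $\lambda\colon \Gm^2\to G_\omega(\delta_X^H)$ whose bigraded weight spaces are the Hodge pieces of $\rH^0(A,\DR(\delta_X^H)\otimes\bbL)$. Since $\delta_X^H$ is pure of weight $d$, the $(p,q)$-component vanishes unless $p+q=d$ and has dimension $h^p := |\chi(X,\Omega^p_X)|$. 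The product cocharacter $\lambda_1\lambda_2$ acts on the irreducible $V$ as the scalar $t\mapsto t^d$, hence lies in the central torus of $G_\omega(\delta_X^H)^\circ$. Absorbing this central shift, I obtain a one-parameter subgroup $\mu\colon \Gm\to G^\ast := G^\ast_{X,\omega}$ whose action on $V$ has multiplicity sequence $(h^0,\dots,h^d)$. The sequence is palindromic, $h^p=h^{d-p}$, by complex conjugation applied to the real structure inherited from $\delta_X$, and sums to $|\chi_\top(X)|$.

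Third, which is the combinatorial heart of the argument, I would enumerate those widths $d$ for which such a palindromic cocharacter grading of the minuscule representation exists. The Lazarsfeld-Popa-Lombardi-type Hodge estimates of~\cref{sec:HodgeEstimates} first ensure that each $h^p$ is strictly positive, so $\mu$ attains every integer weight between $0$ and $d$. Classifying cocharacters of $E_6^{\mathrm{sc}}$ on $V_{27}$ and of $E_7^{\mathrm{sc}}$ on $V_{56}$ with this ``full support'' property, and using the central characters of order $3$ and $2$ together with palindromicity and the size constraint $\sum_p h^p\in\{27,56\}$, pins $d$ down to $\{2,4,6\}$ in the $E_6$ case and to odd integers in $\{3,5,\dots,15\}$ in the $E_7$ case. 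This amounts to a finite combinatorial computation on the weight polytopes of the two exceptional minuscule representations.

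Finally, for each admissible $d$ I would feed the sequence back into the Hodge estimates from~\cref{sec:HodgeEstimates}: these lower-bound each $h^p$ by an explicit function of $g$ and $d$, and combining them with $\sum_p h^p\le 56$ and the explicit values $h^p = |\chi(X,\Omega^p_X)|$ yields the tabulated upper bound $g\le g_{\max}(d)$. I expect the main obstacle to be the third step, namely the representation-theoretic enumeration of widths of palindromic cocharacter gradings of the two exceptional minuscule representations combined with the positivity input from the Hodge estimates; once that finite list is secured, the remaining upper bound on $g$ is a direct numerical check against the Hodge inequalities.
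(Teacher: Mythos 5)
Your skeleton is the same as the paper's (minuscule classification, lift to Hodge modules, Hodge cocharacter, finite enumeration of admissible gradings, then the codimension bound $g\le h^0+d-1$), but two of your steps have genuine gaps.

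First, the enumeration in your third step is under-constrained. Full support (all $h^p>0$), palindromicity, the central character and $\sum_p h^p=27$ do \emph{not} pin down $d\in\{2,4,6\}$ for $E_6$: that search also returns gradings of length $8$ and $10$ (with outer multiplicities $h^0=2$) and a second one of length $6$. These are eliminated only by the \emph{outer} Hodge estimates $h^0\ge g-d+1\ge 3$ and $h^1\ge 2$ of \cref{cor:hodge-number-estimates} — i.e.\ condition (H3) in \cref{prop:E6} — so the Lazarsfeld--Popa--Lombardi bounds must enter the enumeration itself, not only the final bound on $g$ as in your fourth step. Moreover those estimates require $d\le g-2$, and you never exclude the divisor case $d=g-1$; the paper does so by noting that the Euler characteristic of a smooth ample divisor is divisible by $g!$ and hence cannot equal $\pm 27$ or $\pm 56$.

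Second, the odd parity of $d$ in the $E_7$ case does not come out of the cocharacter combinatorics in the paper, and your appeal to the central character of order $2$ cannot deliver it: the restriction of the Hodge cocharacter to $-1$ acts as $(-1)^d\cdot\id_V$, which is a central element of $E_7\subset\GL(V_{56})$ for \emph{either} parity of $d$, so no contradiction arises for even $d$. The paper instead uses that $V_{56}$ carries a nondegenerate $E_7$-invariant \emph{alternating} form, which by \cite[sect.~1.2]{JKLM} forces $\dim X$ to be odd; only then is the computer search of \cref{prop:E7} restricted to odd-length gradings. Without this input (or an explicit verification that no even-length grading satisfies (H1)--(H3), which the paper does not supply), your argument leaves the even-dimensional $E_7$ case open. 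The remaining steps — the minuscule reduction, the construction of the one-parameter subgroup $\mu$ from $\lambda$ by absorbing the central weight cocharacter, Hodge symmetry of the $h^p$, and the derivation of $g_{\max}=h^0+d-1$ from the tabulated $h^0$ — all match the paper's proof of \cref{prop:hodge-numbers} and are correct.
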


In each case we also know the Hodge numbers~$h^p(X):=(-1)^{d-p}\chi(X, \Omega^p_X)$, as they must be among those listed in \cref{prop:E6,,prop:E7}. This essentially reduces the proof of theorem~\ref{Thm:E6VarietyIsFanoSurfaceIntro} to the case of surfaces treated in theorem~\ref{Thm:E6SurfaceIsFanoSurfaceIntro}. For the proof of the latter we apply the representation theory of~$E_6$ to show that the difference morphism 
\[ X\times X \;\longrightarrow\; D \;:=\; X-X \;\subset\; A, \] is generically finite of degree~$\ge 6$ over its image (see~\cref{sec:DifferenceMorphism}), and then use this numerical information to show by direct geometric arguments that the projective tangent cone to~$D\subset A$ at the origin is a smooth cubic threefold whose associated Fano surface of lines is isomorphic to~$X$ (see~\cref{sec:proof-of-main-result}).

\subsection{No country for $E_7$} Although \cref{Thm:hodge-numbers-intro} imposes strong restrictions, it is not enough to rule out completely the existence of subvarieties $X$ with $G_{X, \omega}^\ast \iso E_7$
 in the dimension range $d < g/2$. 
With the $E_6$ case in mind, one might think that a natural candidate for such an $X$ would be the Fano variety of lines on a quartic double solid, that is, a double cover of $\bbP^3$ branched along a smooth quartic. Indeed, the $(-1)$-curves on a del Pezzo surface of degree~$3$ resp.~$2$ give rise to a root system of type $E_6$ resp. $E_7$. A cubic threefold can be thought of as a one parameter family of del Pezzo surfaces of degree $3$ since these are cubic surfaces. Similarly, a quartic double solid is a one parameter family of del Pezzo surfaces of degree $2$ since the anticanonical divisor realizes them as double covers of~$\bbP^2$ branched along smooth quartic curves.\medskip

However, the pieces of the jigsaw puzzle do not quite match together in the $E_7$ case: For instance, lines on a quartic double solid are parametrized by a surface, while \cref{Thm:hodge-numbers-intro} states that the sought-for example must be a threefold. This parity issue is hardwired in the representation theory of~$E_7$ and lies at the heart of the following negative result (see \cref{Cor:NoE7}):

\begin{theoremintro} \label{thmintro:NoE7} For any smooth irreducible subvariety $X\subset A$ with ample normal bundle and dimension~$< g/2$ we have
\[G_{X, \omega}^\ast \; \not \simeq \; E_7.\] 
\end{theoremintro}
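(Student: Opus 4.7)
The plan is to combine \cref{Thm:hodge-numbers-intro} with the dimension hypothesis $d < g/2$ to cut the possibilities down to a single numerical case, and then dispatch that remaining case by a sharper analysis.

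\emph{Reduction step.} First I would suppose for contradiction that $G_{X,\omega}^\ast \simeq E_7$. Then \cref{Thm:hodge-numbers-intro} gives $|\chi_\top(X)| = 56$, $d$ odd in $\{3,\ldots,15\}$, and $g \le g_{\max}(d)$ as tabulated. The hypothesis $d < g/2$ is equivalent to $g \ge 2d+1$. A direct inspection of the table shows $2d+1 > g_{\max}(d)$ for every $d \ge 5$: already for $d = 5$ we would need $g \ge 11$ but $g \le 10$, and the gap only widens for $d = 7, 9, 11, 13, 15$. Only the case $d=3$ with $g \in \{7,8,9\}$ survives.

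\emph{Killing $d = 3$.} For this remaining case the Hodge numbers $h^p(X) = (-1)^{3-p} \chi(X, \Omega^p_X)$ are prescribed by the Hodge cocharacter $\lambda\colon \bbG_m^2 \to G_{X,\omega}^\ast \simeq E_7$ acting on the $56$-dimensional minuscule representation in total weight $3$. By Serre duality and Hodge symmetry $h^0 = h^3$ and $h^1 = h^2$, so $h^0 + h^1 = 28$; the integer profiles compatible with such a cocharacter form a short list enumerated by \cref{prop:E7}. For each profile on that list I would derive a contradiction by combining (i) the Chern class identity $c(T_X) \cdot c(N_{X/A}) = 1$ together with $c_3(T_X) = \pm 56$ and the positivity coming from $N_{X/A}$ ample, and (ii) a refined form of the Lazarsfeld--Popa--Lombardi Hodge estimates, sharper than what was used in the proof of \cref{Thm:hodge-numbers-intro} and hence able to exclude the narrow window $g \le 9$.

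\emph{Main obstacle.} The crux will be systematically ruling out each profile on the short list of \cref{prop:E7} for $d = 3$ and $g \in \{7,8,9\}$: the estimates underlying \cref{Thm:hodge-numbers-intro} are not tight enough on their own, and one must exploit the extra information that the $56$-dimensional minuscule representation of $E_7$ is symplectic, constraining $\lambda$ to land in a symplectic subgroup of $\GL(\omega(\delta_X))$. This is the rigorous incarnation of the parity issue stressed in the introduction: the representation-theoretic structure of $E_7$, together with the natural geometric constructions that produce exceptional Tannaka groups (Fano schemes of lines on Fano threefolds of Picard rank one), point unavoidably to surfaces rather than threefolds, so no $X$ satisfying our hypotheses can realise $E_7$.
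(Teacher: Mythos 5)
Your reduction step is correct and is exactly what the paper records in \cref{cor:hodge-numbers}: combining \cref{Thm:hodge-numbers-intro} with $g\ge 2d+1$ leaves only $d=3$, $g\in\{7,8,9\}$, with Hodge profile $h^0=h^3=7$, $h^1=h^2=21$. The problem is that your second step never actually happens. The profile $(7,21,21,7)$ satisfies every constraint available to the Hodge-cocharacter machinery: Hodge symmetry, gap-freeness, the outer estimate $h^3\ge g-d+1$ (which only forces $g\le 9$), and the symplectic nature of the $56$-dimensional representation (which is already consumed in forcing $d$ odd). The Chern-class relations you invoke ($c(T_X)\cdot c(N_{X/A})=1$, $c_3=-56$, $\chi(\cO_X)=c_1c_2/24=-7$) are likewise mutually consistent and produce no contradiction, and no ``refined form'' of the Lazarsfeld--Popa--Lombardi estimates is exhibited or known that would close the window $g\le 9$. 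The closing appeal to the geometry of quartic double solids is a heuristic, not an argument. So the proposal has a genuine gap precisely at the point it acknowledges as ``the crux.''

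The paper's actual proof (\cref{Cor:NoE7}, proved in \cref{sec:E7}) abandons Hodge theory entirely and uses a different mechanism. From the highest-weight theory of $E_7$ one has $\Alt^2(V)\simeq W\oplus\mathbf{1}$ with $W$ irreducible; the invariant alternating form forces $d$ odd and $X$ symmetric up to translation, and under Tannaka duality $W$ corresponds to $S_+(\delta_X)$, which is therefore a \emph{simple} perverse sheaf. A strengthened Larsen alternative (\cref{th:LarsenAlternative}), whose proof controls the decomposition of $\delta_X*\delta_X$ via the fibers of $\Sym^2 X\to X+X$ and Kashiwara's estimate for characteristic varieties, then shows that simplicity of $S_+(\delta_X)$ forces $G^\ast_{X,\omega}=\Sp(V,\theta)$, which is not $E_7$. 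Some input of this kind --- going beyond Hodge numbers to the structure of the convolution square --- appears unavoidable here, and your proposal does not supply it.
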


The proof of this will be given in~\cref{sec:E7}. It does not require Hodge modules and is largely independent from the rest of this paper: The idea is to control the decomposition of the tensor square~$\delta_X*\delta_X$ by looking at the fibers of the sum morphism $X\times X \to A$ and using Kashiwara's estimate for characteristic varieties of direct images. Once we know how the tensor square decomposes, we can then conclude by a version of Larsen's alternative.

\subsection{Big Monodromy} As an application of the above results we can significantly strengthen the big monodromy criterion in \cite{JKLM}. More precisely, let~$S$ be a smooth complex variety. Let~$A$ be a complex abelian variety of dimension~$g$ and~$\cX \subset A_S := A\times S$ a subvariety such that the projection~$f \colon \cX \to S$ is a smooth morphism with connected fibers of dimension~$d$.  The situation is summarized in the following diagram:
\[
\begin{tikzcd}
&  \cX \ar[d, hook, no head, xshift=-1pt] \ar[d, no head, shorten <=1.2pt, xshift=1pt] \ar[dl, bend right=30, swap, "\pi"] \ar[dr, bend left=30, "f"]  & \\
A & A_S \ar[l, swap, "\pr_A"] \ar[r, "\pr_S"]  & S
\end{tikzcd}
\]
For a character~$\chi \colon \pi_1(A, 0) \to \bbC^\times$ let~$L_\chi$ denote the associated rank one local system on~$A$. Given an~$n$-tuple of such characters~$\underline{\chi} = (\chi_1, \dots, \chi_n)$, we consider the local system
\[ V_{\underline{\chi}} \;:=\; R^d f_{\ast} \pi^* L_{\underline{\chi}} \quad \textup{where} \quad L_{\underline{\chi}} \;:=\; L_{\chi_1} \oplus \cdots \oplus L_{\chi_n}.\]
The fiber at $s$ of this local system comes with a linear action of the group~$\pi_1(S,s)$ via the monodromy representation. With terminology as in loc.~cit.~we  obtain the following strengthening of the big monodromy result in \cite[sect. 1.1]{JKLM}:

\begin{bigmonodromyintro} Suppose~$X:= \cX_{\bar{\eta}} \subset A_{S, \bar{\eta}}$ has ample normal bundle and dimension~$d < (g-1)/2$. If~$\cX$ is symmetric up to translation, assume that~\eqref{Eq:NumericalConditions} below holds. Then the following are equivalent:
\begin{enumerate}
\item $X$ is nondivisible, not constant up to translation, not a symmetric power of a curve and not a product; \smallskip
\item $V_{\underline{\chi}}$ has big monodromy for most 
torsion~$n$-tuples of characters~$\underline{\chi}$.
\end{enumerate}
\end{bigmonodromyintro}

Here~$\cX$ is said to be \emph{symmetric up to translation} if there is a morphism~$a \colon S \to A$ with the property that~$\cX_t = a(t) - \cX_t$ for all~$t \in S(\bbC)$. Only in this case we need to exclude some very specific numerics by assuming that the topological Euler characteristic $e = \chi_{\top}(X)$ of $X$ has absolute value
 \begin{equation} \label{Eq:NumericalConditions}  
|e| \;\neq\;
2^{2m - 1} \quad \text{if~$d \ge (g-1)/4$ and $ m\in \{3, \dots, d\}, m\equiv d$ modulo $2$.}
\end{equation}
This assumption is empty for~$d < (g-1)/4$. Apart from curves, in \cite{JKLM} we had no control on the dimension and irregularity of varieties with exceptional Tannaka group, so we assumed~$|e| \neq 27, 56$ for all~$(d, g)$ in the given range.
Theorems \ref{Thm:E6VarietyIsFanoSurfaceIntro} and~\ref{thmintro:NoE7} now  permit to remove these assumptions completely. For applications outside this range, we only need to exclude the finite list of~$(d, g)$ in \cref{Thm:hodge-numbers-intro}.

\subsection*{Acknowledgements} 
We thank Victor Gonzalez-Alonso, Manfred Lehn, Luigi Lombardi,  Claude Sabbah, Christian Schnell and Mads Villadsen for discussions related to this project. The computer-aided searches in \cref{prop:E6,,prop:E7} were carried out by Mads Villadsen. We thank the referee for the careful reading and for valuable comments, which helped make the paper more accessible. C.~L. was supported by the DFG through the research grants Le 3093/3-2, Le 3093/5-1. T.~K. was supported by the DFG through the research grant Kr 4663/2-1. 

\subsection*{Conventions} A \emph{variety} over a field~$k$ is a separated~$k$-scheme of finite type, and a~\emph{subvariety} is a closed subvariety. For a vector bundle~$\cV$ on a variety we denote by~$\bbP(\cV)$ the projective bundle of lines in~$\cV$.

\section{Tannaka groups of Hodge modules} \label{sec:HodgeModules}

In this section we introduce Tannaka categories of Hodge modules on complex abelian varieties and relate their Tannaka groups to those for perverse sheaves.

\subsection{Complex Hodge modules} For a complex manifold~$X$ and~$w\in \bbZ$, we denote by~$\HM(X, w)$ the category of polarizable complex Hodge modules of weight~$w$ on~$X$ in the sense of~\cite[def.~14.2.2]{MHMProject}. This is a semisimple abelian~$\bbC$-linear category, and it comes with a faithful exact~$\bbC$-linear functor
$
 \DR\colon \HM(X, w) \rightarrow \Perv(X)
$ 
to the category~$\Perv(X)$ of perverse sheaves on~$X$ with complex coefficients. We denote by
\[ \HM(X) \;:=\; \bigoplus_{w\in \bbZ} \HM(X, w) \]
the category whose objects are the formal direct sums~$M=\bigoplus_{w\in \bbZ} M_w$ of Hodge modules~$M_w \in \HM(X, w)$ such that~$M_w = 0$ for all but finitely many weights~$w$, with morphisms
\[
 \Hom_{\HM(X)}(M, N) \;:=\; \bigoplus_{w\in \bbZ} \Hom_{\HM(X, w)} (M_w, N_w). 
\] 
This is again a semisimple abelian~$\bbC$-linear category with a faithful exact~$\bbC$-linear functor
\[
 \DR\colon \quad \HM(X) \;\longrightarrow\; \Perv(X), \quad M \;\longmapsto\; \bigoplus_{w\in \bbZ} \DR(M_w).
\]

\begin{example} \label{hodge-modules-on-a-point}
If~$X=\{\mathrm{pt}\}$ is a point, then~$\HM(\{\mathrm{pt}\})$ is the category~$\Vect_{\bbZ \times \bbZ}(\bbC)$ of finite-dimensional bigraded complex vector spaces
\[
 V \;=\; \bigoplus_{p,q\in \bbZ} V^{p,q}
\]
and~$\HM(\{\mathrm{pt}\}, w)$ is its subcategory of \emph{pure~$\bbC$-Hodge structures of weight~$w$}, by which we mean those bigraded vector spaces that satisfy~$V^{p,q}=0$ for all~$p,q\in \bbZ$ with~$p+q\neq w$. Moreover, 
\[ \DR\colon \quad \HM(\{\mathrm{pt}\}) = \Vect_{\bbZ \times \bbZ}(\bbC) \; \too \; \Perv(\{\mathrm{pt}\})=\Vect(\bbC) \]
is the functor that forgets the grading. 
\end{example}

By a \emph{variation of~$\bbC$-Hodge structures} of weight~$w$ on~$X$ we mean a~$\cC^\infty$-bundle~$\cV$ together with a flat connection and a decomposition into~$\cC^\infty$-subbundles
\[
 \cV \;=\; \bigoplus_{p+q=w} \cV^{p,q}
\]
that satisfies Griffiths transversality. In what follows we only consider \emph{polarizable} variations of~$\bbC$-Hodge structures in the sense of~\cite[def.~4.1.9]{MHMProject}; by theorem~14.6.1 in loc.~cit.~any such polarizable variation of~$\bbC$-Hodge structures can be viewed as an object of~$\HM(X, w + \dim X)$. More generally, for any integral subvariety~$Z\subset X$ and any pure polarizable variation of~$\bbC$-Hodge structures~$\bbV$ of weight~$w$ on an open dense~$U\subset Z^\reg$ there is a unique simple Hodge module 
\[
 M \;=\; \IC_Z(\bbV) \;\in\; \HM(X, w+\dim Z)
\]
with~$M_{\vert U} = \bbV$. Conversely every simple Hodge module~$M\in \HM(X)$ arises like this for a unique germ of a simple pure polarizable variation of~$\bbC$-Hodge structures on a dense open subset of an integral subvariety of~$X$ \cite[th.~16.2.1]{MHMProject}. 

\begin{example} 
For any integral subvariety~$Z\subset X$, the trivial variation~$\bbC_U$ of Hodge structures of rank one and weight zero on the smooth locus~$U=Z^\reg$ defines a pure Hodge module 
\[
 \delta_Z^H \;=\; \IC_Z(\bbC_U)
 \;\in\; \HM(X, d).
\]
of weight~$d=\dim Z$. Its image under the functor~$\DR$ is the perverse intersection complex 
\[ \delta_Z \;:=\; \DR(\delta_Z^H) \in \Perv(X). \]
If~$Z$ is smooth, then this intersection complex is given by~$\delta_Z = \bbC_Z[d]$.
\end{example}

\subsection{Hodge structure on cohomology} \label{sec:hodge-structure-on-cohomology}
Passing from perverse sheaves to Hodge modules will enrich the cohomology groups with~$\bbC$-Hodge structures. Indeed, with the notation of~\cite[def.~12.7.28]{MHMProject}, we have for any proper morphism~$f\colon X\to Y$ direct image functors
\[
 \tfstar^{(k)} \colon \quad \HM(X) \;\longrightarrow\; \HM(Y),
\]
for~$k\in \bbZ$. These lift the perverse direct image functors~${}^p \cH^k\circ Rf_*$ in the sense that the following diagram commutes:
\[
\begin{tikzcd}[column sep = 50pt]
\HM(X) \ar[r, "\tfstar^{(k)}"] \ar[d, swap, "\DR"]
& \HM(Y) \ar[d] \ar[d, "\DR"]
\\
\Perv(X) \ar[r, "{}^p \cH^k\circ \rR f_*"] 
& \Perv(Y)
\end{tikzcd} 
\]
Taking~$f\colon X\to Y=\{\mathrm{pt}\}$, we see that for any~$M\in \HM(X)$ the groups~$\rH^k(X, \DR(M))$ come with a natural bigrading. We call this the \emph{Hodge decomposition} and denote it by
\[
 \rH^k(X, \DR(M)) \;=\; \bigoplus_{p,q\in \bbZ} \rH^{p,q}(M).
\]
Note that this is a pure~$\bbC$-Hodge structure of weight~$w+k$ if~$M\in \HM(X, w)$.

\subsection{Tannaka categories} \label{sec:tannaka}
Now let~$X=A$ be an abelian variety. In this case every perverse sheaf has nonnegative Euler characteristic by~\cite[cor.~1.4]{FraneckiKapranov}, so inside the abelian category of perverse sheaves, those of Euler characteristic zero form a Serre subcategory 
\[
 \rS_{\Perv}(A) \;:=\; \{ P \in \Perv(A) \mid \chi(A, P)=0\} \;\subset\; \Perv(A).
\] 
The objects of this subcategory are called \emph{negligible} perverse sheaves. On the abelian quotient category 
$\Pbar(A)=\Perv(A)/\rS_{\Perv}(A)$ the group law~$\sigma \colon A\times A \to A$ gives rise to  a convolution product
\[
*\colon \quad \Pbar(A) \times \Pbar(A) \;\longrightarrow\; \Pbar(A),
\quad P_1 * P_2 \;:=\; {}^p \cH^0(R \sigma_*(P_1\boxtimes P_2))
\]
making~$\Pbar(A)$ a neutral Tannaka category as recalled in~\cite[sect.~3.1]{JKLM}.~For any fiber functor $\omega\colon \Pbar(A) \to \Vect(\bbC)$, the dimension of objects is the Euler characteristic
\begin{equation} \label{eq:dim}
 \dim_\bbC \,\omega(P) \;=\; \chi(A, P),
\end{equation}
see~\cite[proof of cor.~4.2]{KWVanishing}. The same constructions work also for the category of Hodge modules: Consider the Serre subcategory
\[
 \rS_{\HM}(A) \;:=\; \{ M\in \HM(A) \mid \chi(A, \DR(M)) = 0 \} \;\subset\; \HM(A).
\]
Then the functor~$\DR$ descends to an exact faithful~$\bbC$-linear functor on the abelian quotient category
\[
 \DR\colon \quad \Mbar(A) := \HM(A)/\rS_{\HM}(A) \;\longrightarrow\; \Pbar(A),
\]
and this functor naturally underlies a tensor functor with respect to the convolution product 
\[
 *\colon \quad \Mbar(A) \times \Mbar(A) \;\longrightarrow\; \Mbar(A), 
 \quad M_1 * M_2 \;:=\; \tf{\sigma}_*^{(0)}(M_1\boxtimes M_2).
\] 
So for~$M_1, M_2\in \Mbar(A)$ we have isomorphisms
\[
 \DR(M_1 \ast M_2) \;\stackrel{\sim}{\longrightarrow}\; \DR(M_1)\ast \DR(M_2)
\]
compatible with the associativity, commutativity and unit of the respective tensor categories; note that to avoid sign issues in the commutativity, we will always use right~$\cD$-modules for the definition of~$\HM(A)$. 

\begin{corollary} 
$\Mbar(A)$ is again a neutral Tannaka category.
\end{corollary}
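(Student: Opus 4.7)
The plan is to verify that $\Mbar(A)$ satisfies the axioms of a neutral Tannaka category by transferring structure from $\Pbar(A)$ along the faithful exact tensor functor $\DR$. Since $\HM(A)$ is a semisimple abelian $\bbC$-linear category and $\rS_{\HM}(A)$ is a Serre subcategory, the quotient $\Mbar(A)$ is again abelian and $\bbC$-linear. The convolution descends to $\Mbar(A)$ because $\rS_{\HM}(A)$ is a tensor ideal: if $\chi(A, \DR(M_1)) = 0$ then $\chi(A, \DR(M_1 \ast M_2)) = \chi(A, \DR(M_1)) \cdot \chi(A, \DR(M_2)) = 0$ by compatibility of $\DR$ with convolution and the analogous statement for perverse sheaves. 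So the remaining axioms to check are the existence of a fiber functor to $\Vect(\bbC)$, rigidity, and $\End(\mathbf{1}) = \bbC$.

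For the fiber functor, I would fix any fiber functor $\omega \colon \Pbar(A) \to \Vect(\bbC)$, which exists because $\Pbar(A)$ is a neutral Tannaka category by \cite[sect.~3.1]{JKLM}. The composition $\omega \circ \DR \colon \Mbar(A) \to \Vect(\bbC)$ is $\bbC$-linear, exact and faithful as a composition of such functors, and is symmetric monoidal by precomposing the symmetric monoidal structure of $\omega$ with the tensor isomorphisms $\DR(M_1 \ast M_2) \iso \DR(M_1) \ast \DR(M_2)$ already noted to be compatible with associativity, commutativity and the unit. For $\End(\mathbf{1}) = \bbC$, the convolution identity is the skyscraper Hodge module $\delta_{\{0\}}^H = \IC_{\{0\}}(\bbC)$, whose endomorphism algebra in $\HM(A)$ is $\bbC$; since $\chi(A, \DR(\delta_{\{0\}}^H)) = 1 \neq 0$, this object is not negligible and its endomorphism ring is unchanged by the Serre quotient.

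For rigidity, the plan is to lift Verdier duality: for $M \in \HM(A, w)$, set $M^\vee := [-1]^\ast \bbD M \in \HM(A, -w)$, where $\bbD$ is the duality functor for complex Hodge modules provided by the six-functor formalism of Sabbah-Schnell and $[-1] \colon A \to A$ is the inverse morphism. Under $\DR$ this matches the construction providing the convolution dual in $\Pbar(A)$. The evaluation and coevaluation morphisms for $M^\vee$ in $\Mbar(A)$ can then be constructed in the Hodge module setting from base change along $\sigma$ and $[-1]$ and the standard adjunctions, in such a way that $\DR$ sends them to the corresponding duality data in $\Pbar(A)$; the triangle identities in $\Mbar(A)$ then follow from their counterparts in $\Pbar(A)$ by faithfulness of $\DR$.

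The main obstacle in this plan is the rigidity step: one must appeal to a sufficient portion of the Hodge module six-functor formalism to construct $\bbD$ and $[-1]^\ast$, to produce the duality morphisms, and to confirm their compatibility with $\DR$. Once this is in place, all remaining Tannakian properties of $\Mbar(A)$ are formal consequences of the corresponding properties of $\Pbar(A)$ transported along the faithful symmetric monoidal functor $\DR$.
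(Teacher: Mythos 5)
Your proposal is correct and its key step — composing a fiber functor on $\Pbar(A)$ with the faithful exact tensor functor $\DR$ — is exactly the paper's one-line proof. The additional verifications you supply (that $\rS_{\HM}(A)$ is a tensor ideal via multiplicativity of Euler characteristics, the computation of $\End(\mathbf{1})$, and rigidity via the Hodge-module duality $[-1]^\ast\bbD$) are sound and simply make explicit what the paper delegates to the discussion preceding the corollary and to the Sabbah--Schnell formalism.
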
 

\begin{proof}
Pick any fiber functor on the neutral Tannaka category~$\Pbar(A)$. Composing it with the faithful exact~$\bbC$-linear functor~$\DR$, which is a tensor functor by the above, we get a fiber functor on~$\Mbar(A)$.
\end{proof}

\subsection{An exact sequence of Tannaka groups}  We want to compare the Tannaka groups for Hodge modules with those for the underlying perverse sheaves. For the rest of this section, we fix a full abelian tensor subcategory~$\cC \subset \Mbar(A)$ and denote by 
\[ \DR(\cC) \subset \Pbar(A) \]
the smallest full abelian tensor subcategory containing all subquotients of~$\DR(M)$ for~$M\in \cC$. Suppose that on this subcategory we are given a fiber functor, i.e.~a faithful exact~$\bbC$-linear tensor functor
\[
 \omega \colon \quad \DR(\cC) \;\longrightarrow\; \Vect(\bbC).
\]
Precomposing with the exact faithful~$\bbC$-linear tensor functor~$\DR\colon \cC \to \DR(\cC)$ we get a fiber functor~$\omega \circ \DR$ on the original category:
\[
\begin{tikzcd}
\cC \ar[rr, "\omega\circ \DR"] \ar[dr, swap, "\DR"] && \Vect(\bbC) \\
& \DR(\cC) \ar[ur, swap, "\omega"] & 
\end{tikzcd} 
\] 
Let~$G_\omega(\cC)=\Aut^\otimes(\omega \circ \DR)$ and~$G_\omega(\DR(\cC))=\Aut^\otimes(\omega)$ be the corresponding Tannaka groups. It follows from the definition of the category~$\DR(\cC)$ that~$\DR$ induces an embedding
\[
 G_\omega(\DR(\cC)) \;\intoo\; G_\omega(\cC)
\]
as a closed subgroup~\cite[prop.~2.21b]{DM82}. In fact this subgroup is normal: To understand this, let
\[
 \cC_{\{0\}} \;:=\; \{  M \in \cC \mid \Supp(M)=\{0\}\} \;\subset\; \cC
\]
be the full subcategory of objects in~$\cC$ supported at the origin; it can be seen as a tensor subcategory of the category of bigraded vector spaces via~\cref{hodge-modules-on-a-point}, hence its Tannaka group
\[ G_\omega(\cC_{\{0\}}) \;:=\; \Aut^\otimes(\omega\circ \DR \vert \cC_{\{0\}}) 
\]
is a quotient of a torus of rank two; in particular, it is a torus.  We then have the following Hodge theoretic analog of the Galois sequence in~\cite[th.~4.3]{JKLM}:

\begin{proposition} \label{perverse-versus-hodge-sequence}
We have an exact sequence
\[
 1 \too G_\omega(\DR(\cC)) \too G_\omega(\cC) \too G_\omega(\cC_{\{0\}}) \too 1.
\]
\end{proposition}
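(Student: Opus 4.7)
The plan is to apply the standard Tannakian criterion for a short exact sequence of affine group schemes, in the form used for the analogous Galois sequence in \cite[th.~4.3]{JKLM} and going back to \cite[prop.~2.21]{DM82}. This reduces the proposition to three verifications concerning the tensor functors
\[ \cC_{\{0\}} \;\hookrightarrow\; \cC \;\stackrel{\DR}{\longrightarrow}\; \DR(\cC), \]
which realize the group homomorphisms $G_\omega(\cC) \to G_\omega(\cC_{\{0\}})$ and $G_\omega(\DR(\cC)) \to G_\omega(\cC)$, respectively.

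For the closed immersion $G_\omega(\DR(\cC)) \hookrightarrow G_\omega(\cC)$ (already noted in the text), every object of $\DR(\cC)$ is by definition a subquotient of some $\DR(M)$ with $M\in\cC$, which gives the criterion immediately. For the faithful flatness of $G_\omega(\cC) \twoheadrightarrow G_\omega(\cC_{\{0\}})$, I would check that $\cC_{\{0\}}\subset \cC$ is a full subcategory stable under subobjects: fullness is by construction, and closedness under subobjects follows since the support of a sub-Hodge-module of $M$ is contained in $\Supp(M)$.

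The core of the argument is the exactness in the middle: I would show that an object $M\in \cC$ lies in the essential image of $\cC_{\{0\}}\hookrightarrow \cC$ if and only if $\DR(M)$ is isomorphic in $\Pbar(A)$ to a direct sum of copies of the unit $\delta_0 = (i_0)_\ast \bbC$. The forward direction is immediate from \cref{hodge-modules-on-a-point}: a Hodge module with support in $\{0\}$ is of the form $(i_0)_\ast V$ for a bigraded vector space $V$, and its image under $\DR$ is $\delta_0^{\dim V}$. For the converse, I would decompose $M = \bigoplus_\alpha M_\alpha$ into simples using semisimplicity of $\HM(A)$. Each $\DR(M_\alpha)$ is the simple perverse sheaf $\IC_{Z_\alpha}(\bbV_\alpha)$ associated to the underlying variation. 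An isomorphism $\DR(M)\iso \delta_0^n$ in $\Pbar(A)$ lifts to an isomorphism $\DR(M)\iso \delta_0^n \oplus N$ in $\Perv(A)$ with $N$ negligible; matching simples, each $\DR(M_\alpha)$ is either isomorphic to $\delta_0$ (in which case $Z_\alpha = \{0\}$ and $M_\alpha\in \cC_{\{0\}}$) or is a simple summand of $N$ (in which case $\DR(M_\alpha)$ is negligible, hence $M_\alpha = 0$ in $\Mbar(A)$, and \emph{a fortiori} in $\cC$). This gives the desired identification.

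The main obstacle is exactly this last identification, which hinges on the Hodge-theoretic input that $\DR$ sends simple polarizable Hodge modules to simple perverse sheaves, so that the decomposition of $M$ into simples in $\HM(A)$ matches the decomposition of $\DR(M)$ in $\Perv(A)$. Once this is in hand the rest of the argument is essentially formal manipulation with the Tannakian criterion.
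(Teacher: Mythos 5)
Your overall strategy coincides with the paper's: both proofs run the Tannakian exactness criterion (\cite[prop.~2.21]{DM82}, in the form used for \cite[th.~4.3]{JKLM}) and reduce everything to the structure theory of simple complex Hodge modules. The surjectivity and closed-immersion checks are fine. The genuine gap is in your treatment of exactness in the middle: verifying only that $\DR(M)$ is a direct sum of copies of the unit $\delta_0$ if and only if $M$ comes from $\cC_{\{0\}}$ is \emph{not} sufficient. The criterion also requires that for every $M\in\cC$ the maximal trivial subobject of $\DR(M)$ (its $\delta_0$-isotypic part in $\DR(\cC)$) be the image under $\DR$ of a subobject of $M$ --- this is exactly the property (2) that the paper isolates, and it is what forces $G_\omega(\DR(\cC))$ to be normal in $G_\omega(\cC)$. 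Without it, the condition ``trivial on the subgroup $\Leftrightarrow$ pulled back from the quotient'' can hold for a non-normal subgroup: for $\bbZ/2\subset \frS_3$ mapping onto the trivial group, the only representation of $\frS_3$ restricting trivially to $\bbZ/2$ is the trivial one, yet the sequence is not exact; what fails there is precisely the lifting of the maximal trivial subobject of the two-dimensional irreducible.

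The gap is repairable with the ingredients you already have: by semisimplicity of $\cC$ and your matching of simples, the $\delta_0$-isotypic part of $\DR\bigl(\bigoplus_\alpha M_\alpha\bigr)$ is the sum of those $\DR(M_\alpha)$ isomorphic to $\delta_0$ modulo negligibles, hence is $\DR$ of a Hodge submodule of $M$ --- but this needs to be said. A second, smaller point: your key Hodge-theoretic input should be quoted in the form actually available, namely \cite[prop.~16.3.1]{MHMProject}: for any $M\in\HM(A)$, a \emph{direct sum of twists} of any simple perverse subobject of $\DR(M)$ underlies a Hodge submodule (a single copy need not, because of diagonal embeddings between Tate-twisted copies of the same perverse sheaf). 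This weaker statement still suffices both for your matching-of-simples step and for the maximal-trivial-subobject condition above, and it is the route the paper takes.
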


\begin{proof} 
We have an embedding~$\cC_{\{0\}}\hookrightarrow \cC$ as a full abelian tensor subcategory which is stable under subobjects. By Tannaka duality, any such embedding corresponds to an epimorphism~$G_\omega(\cC)\twoheadrightarrow G_\omega(\cC_{\{0\}})$~\cite[prop.~2.21a]{DM82}. The exactness of the sequence of Tannaka groups then follows by the same argument as in the proof of~\cite[th.~4.3]{JKLM}. Indeed, by~\cite[prop.~A.13 and lemma A.4(1)]{DAE20} we only need to show that for any~$M\in \cC$ the following two properties hold:\smallskip
\begin{enumerate} 
\item Every rank one subobject of~$\DR(M)$ is a direct summand in a semisimple object $\DR(N)$ with $N\in \cC$.\smallskip
\item The maximal trivial subobject of~$\DR(M)$ lies in the essential image of the functor~$\DR\colon \langle \cC \rangle \to \DR(\cC)$.\smallskip
\end{enumerate} 
Both properties are trivial in this case: By~\cite[sect.~10]{WeissauerAlmostConnected} or~\cite[lemma~3.5]{KraemerMicrolocalI}, the rank one objects of the tensor category $\cC$ are precisely those whose underlying perverse sheaf is a skyscraper sheaf of rank one,
and the maximal trivial subobject is the direct sum of all skyscraper subsheaves supported at the origin. Hence both claims follow from the fact that for any~$M\in \HM(A)$, every simple subobject of the perverse sheaf~$\DR(M)$ (or equivalently, of the underlying holonomic~$\cD$-module) has the property that a direct sum of twists of that subobject underlies a Hodge submodule of~$M$ \cite[prop.~16.3.1]{MHMProject}.
\end{proof} 

For~$M\in \cC$ with underlying perverse sheaf~$P=\DR(M)$, let~$V=\omega(P)$. Then the groups
\begin{eqnarray*}
 G_\omega(M) &:=& \im(G_\omega(\cC) \to \GL(V)) \\
  G_\omega(P) &:=& \im(G_\omega(\DR(\cC)) \to \GL(V))  
\end{eqnarray*}
can be identified with the Tannaka groups of the full tensor subcategories~$\langle M \rangle \subset \cC$ and~$\langle P \rangle \subset \DR(\cC)$ generated by~$M$ and~$P$ respectively. These Tannaka groups are algebraic groups, by construction they are subgroups of~$\GL(V)$. They are  reductive since all our Hodge modules and perverse sheaves are semisimple. So the derived group of their connected component of the identity is a connected semisimple group which we denote by 
\[ 
G^*_\omega(-) \;:=\; [G^\circ_\omega(-), G^\circ_\omega(-)].
\] 
The groups for Hodge modules and for perverse sheaves are then related as follows:

\begin{corollary} \label{cor:perverse-versus-hodge}
For~$M\in \cC$ consider the perverse sheaf~$P=\DR(M)$. Let~$V=\omega(P)$ as above. Then 
\[
 G_\omega(P) \;\subset\; G_\omega(M) \;\subset\; N_{\GL(V)}(G_\omega(P))
 \quad \text{and} \quad 
 G_\omega^*(P) \;=\; G_\omega^*(M). 
\] 
\end{corollary}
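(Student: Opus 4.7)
My plan is to deduce the corollary by applying \cref{perverse-versus-hodge-sequence} with $\cC$ replaced by the full abelian tensor subcategory $\langle M \rangle \subset \cC$ (and with fiber functor the restriction of $\omega$ to $\DR(\langle M \rangle) = \langle P \rangle$). Applied in this setting the proposition produces a short exact sequence
\[
 1 \too G_\omega(P) \too G_\omega(M) \too T \too 1
\]
of subgroups of $\GL(V)$, where $T := G_\omega(\langle M \rangle_{\{0\}})$ is a quotient of a rank-two torus, hence itself a torus. The inclusion $G_\omega(P) \subset G_\omega(M)$ and the normality of $G_\omega(P)$ in $G_\omega(M)$ come for free, and together they immediately give the chain of containments $G_\omega(P) \subset G_\omega(M) \subset N_{\GL(V)}(G_\omega(P))$.

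For the equality $G_\omega^\ast(P) = G_\omega^\ast(M)$ I would first note that normality implies $G_\omega(P)^\circ \subset G_\omega(M)^\circ$, so passing to commutators yields $G_\omega^\ast(P) \subset G_\omega^\ast(M)$. For the reverse inclusion, the key observation is that $T$ is commutative, so $[G_\omega(M), G_\omega(M)] \subset G_\omega(P)$ and hence $G_\omega^\ast(M) \subset G_\omega(P)$. Since $G_\omega^\ast(M)$ is the derived group of a connected reductive group it is semisimple, in particular connected and perfect; connectedness places it inside $G_\omega(P)^\circ$, and then perfectness gives
\[
 G_\omega^\ast(M) \;=\; [G_\omega^\ast(M), G_\omega^\ast(M)] \;\subset\; [G_\omega(P)^\circ, G_\omega(P)^\circ] \;=\; G_\omega^\ast(P).
\]

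The only nontrivial point is verifying that the proof of \cref{perverse-versus-hodge-sequence} genuinely goes through with $\langle M \rangle$ in place of $\cC$: one must check that $\langle M \rangle_{\{0\}}$ remains stable under subobjects (clear, since support can only shrink under passage to subobjects) and that the two auxiliary properties (i)--(ii) invoked in that proof --- concerning rank-one subobjects and the maximal trivial subobject --- continue to hold for every object of $\langle M \rangle$. Both are universal statements about objects of $\HM(A)$, so they are inherited automatically. Once this sanity check is done, the remainder is formal group theory and presents no real obstacle.
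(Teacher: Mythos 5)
Your proposal is correct and follows essentially the same route as the paper: apply \cref{perverse-versus-hodge-sequence} to $\cC=\langle M\rangle$, use normality of $G_\omega(P)$ in $G_\omega(M)$ for the normalizer containment, and use that the quotient is a torus to identify the derived groups. The only cosmetic difference is that the paper phrases the last step via Lie algebras (the reductive Lie algebras share the same maximal semisimple subalgebra), whereas you argue at the group level with commutators and perfectness of $G_\omega^\ast(M)$; both are valid instances of the same observation that an abelian quotient forces the derived subgroups to agree.
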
 

\begin{proof}
Applying~\cref{perverse-versus-hodge-sequence} to the tensor category~$\cC = \langle M \rangle$ generated by~$M$, we get an exact sequence
\[
 1 \too G_\omega(P) \too G_\omega(M) \too G_\omega(\cC_{\{0\}}) \too 1.
\]
So~$G_\omega(P)\unlhd G_\omega(M)$ is a normal subgroup. Since both are subgroups of~$\GL(V)$, this implies the claim about the containment in the normalizer. For the claim about the derived groups of the connected component, recall that~$G_\omega(\cC_{\{0\}})$ is a torus; so the reductive Lie algebras~$\Lie G_\omega(M)$ and~$\Lie G_\omega(P)$ have the same maximal semisimple Lie subalgebra, hence $G_\omega(M)^\circ$ and $G_\omega(P)^\circ$ share the same derived subgroup.
\end{proof}

\subsection{Character twists}

Any unitary local system~$L$ of rank one on~$A$ naturally underlies a unique pure polarizable variation~$\bbL$ of Hodge structures of weight zero, concentrated in bidegree~$(0,0)$. We then have an endofunctor
\[
 \HM(A) \;\longrightarrow\; \HM(A), \quad M \;\longmapsto\; M\otimes \bbL
\]
which descends to the quotient categories modulo negligibles, where~$M\in \HM(A)$ is called \emph{negligible} if~$\DR(M)$ is a negligible perverse sheaf in the sense of~\cref{sec:tannaka}. The induced functor
\[
 \Mbar(A) \;\longrightarrow\; \Mbar(A)
\]
is a tensor functor with respect to the convolution product: For~$M_1, M_2\in \Mbar(A)$ we have natural isomorphisms
\begin{align*} 
 (M_1*M_2)\otimes \bbL 
 &\;\simeq\; \tf{\sigma}_*^{(0)}(M_1 \boxtimes M_2) \otimes \bbL 
 & 
 \\[0.2em]
 &\;\simeq\; \tf{\sigma}_*^{(0)} ((M_1\boxtimes M_2) \otimes \sigma^{-1}(\bbL))
 & \text{by the projection formula}
 \\[0.2em]
 &\;\simeq\; \tf{\sigma}_*^{(0)} ((M_1\otimes \bbL) \boxtimes (M_2\otimes \bbL))
 & \text{since~$\sigma^{-1}(\bbL)\simeq \bbL \boxtimes \bbL$}
 \\[0.2em]
 &\;\simeq\; (M_1\otimes \bbL) * (M_2\otimes \bbL).
\end{align*}
Let~$\cC$ and~$\omega$ be as in the previous section. Then up to noncanonical isomorphism, the Tannaka group of a Hodge module in~$\cC$ does not change under twists:

\begin{lemma} \label{lem:tannakagroup-of-twist}
	For~$M\in \cC$ and~$\bbL$ a unitary rank one local system with~$M\otimes \bbL\in \cC$, we have
	\[
	G_\omega(M\otimes \bbL) \;\simeq \; G_\omega(M).
	\] 
\end{lemma}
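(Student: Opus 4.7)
The plan is to realise the twist $T_{\bbL} := (-)\otimes \bbL$ as a tensor auto-equivalence of~$\Mbar(A)$ that matches the Tannakian subcategory~$\langle M \rangle$ with~$\langle M\otimes \bbL \rangle$, and then invoke Tannaka duality.

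First I would check that~$T_{\bbL}$ is a tensor auto-equivalence of~$\Mbar(A)$: its quasi-inverse is~$T_{\bbL^{\vee}}$, where~$\bbL^{\vee}$ is again a unitary rank one variation of~$\bbC$-Hodge structures concentrated in bidegree~$(0,0)$, and the compatibility with convolution is exactly the natural isomorphism
\[
(M_1 * M_2)\otimes \bbL \;\simeq\; (M_1\otimes \bbL) * (M_2\otimes \bbL)
\]
derived in the excerpt from the projection formula together with~$\sigma^{-1}\bbL \simeq \bbL \boxtimes \bbL$. One verifies that this isomorphism is natural in~$M_1,M_2$ and coherent with associators, commutativity and the unit; since~$\bbL$ is of rank one and pure of bidegree~$(0,0)$, this is a routine check.

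Next, because~$T_{\bbL}$ is an exact tensor equivalence, it sends any subquotient of a convolution power of~$M$ to a subquotient of the corresponding convolution power of~$M\otimes \bbL$. Hence~$T_{\bbL}$ restricts to a tensor equivalence
\[
T\colon \quad \langle M \rangle \;\stackrel{\sim}{\longrightarrow}\; \langle M\otimes \bbL\rangle
\]
of full neutral Tannakian subcategories of~$\Mbar(A)$. The fiber functor~$\omega\circ \DR$ on~$\langle M\otimes \bbL\rangle$ then pulls back along~$T$ to a fiber functor~$\omega':=(\omega\circ \DR)\circ T$ on~$\langle M \rangle$, and by Tannaka duality the equivalence~$T$ induces an isomorphism
\[
G_{\omega'}(M) \;\simeq\; G_{\omega}(M\otimes \bbL).
\]

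To conclude, I would invoke the fact that over~$\bbC$ any two fiber functors on a neutral Tannakian~$\bbC$-category are non-canonically isomorphic (since~$\operatorname{Spec}\bbC$ admits no non-trivial torsors under a pro-reductive group in characteristic zero). Applied to~$\omega\circ \DR$ and~$\omega'$ on~$\langle M \rangle$, this yields~$G_{\omega'}(M)\simeq G_{\omega}(M)$, and combining the two isomorphisms gives the claim. The only delicate point is the coherence check for the twisting equivalence in the first step; everything else is a formal consequence of Tannaka duality.
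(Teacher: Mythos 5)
Your proposal is correct and follows essentially the same route as the paper: the paper also realises the twist as a tensor equivalence $\langle M\rangle \stackrel{\sim}{\to} \langle M\otimes \bbL\rangle$ (citing \cite[prop.~4.1]{KWVanishing} for this) and then concludes via the fact that over the algebraically closed field $\bbC$ any two fiber functors are noncanonically isomorphic. Your write-up merely makes explicit the quasi-inverse and the coherence checks that the paper leaves to the reference.
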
 

\begin{proof} 
	As in~\cite[prop.~4.1]{KWVanishing}, twisting by~$\bbL$ gives rise to an equivalence of tensor categories
	\[
	\varphi\colon \quad \langle M \rangle \;\stackrel{\sim}{\too} \; \langle M\otimes \bbL \rangle, \quad N \;\longmapsto\; N\otimes \bbL.
	\]
Both the source and the target of this equivalence are tensor subcategories of $\cC$, hence~$\omega$ restricts to a fiber functor on both of them. The equivalence $\varphi$ need not be compatible with these fiber functors, but over an algebraically closed field of characteristic zero any two fiber functors are noncanonically isomorphic; hence the same holds for the corresponding Tannaka groups. 
\end{proof} 

\section{The Hodge cocharacter}

We now show that the Hodge decomposition on cohomology is induced by a cocharacter of the Tannaka group, and we gather some general estimates for the Hodge level of intersection complexes that will be useful later.

\subsection{The Hodge decomposition} \label{subsec:hodge-decomposition}

Let~$\Perv_0(A) \subset \Perv(A)$ be the full subcategory of all perverse sheaves~$P$ with the property that all the perverse subquotients~$Q$ of~$P$ satisfy 
\[
 \rH^i(A, Q) \;=\; 0 \quad \text{for all} \quad i\;\neq\; 0.
\]
As in~\cite[sect.~4.3]{JKLM}, its image
\[
\Pbar_0(A) \;\subset\; \Pbar(A)
\]
is a full abelian tensor subcategory which is equivalent to~$\Perv_0(A)/S_0(A)$, where $\rS_0(A) \subset \Perv_0(A)$ is the full subcategory  of perverse sheaves~$P$ with the property that all the subquotients~$Q$ of~$P$ satisfy~$H^\bullet(A, Q)=0$. We then get an exact functor
\[
 \omega_0\colon \quad \Pbar_0(A) \;=\; \Perv_0(A)/S_0(A)\;\too\; \Vect(\bbC), \quad Q \;\longmapsto\; H^0(A, Q).
\]
This is a tensor functor. In particular, for all perverse sheaves~$P_1, P_2\in \Pbar_0(A)$ we have natural isomorphisms
\[
 \vartheta_{P_1, P_2}\colon \quad \omega_0(P_1*P_2) \;\stackrel{\sim}{\longrightarrow}\;  \omega_0(P_1) \otimes \omega_0(P_2)
\] 
which are defined as follows: Let~$f\colon A \to \{\pt \}$ be the map to a point, then
\[ \omega_0(P) \;=\; \cH^0(\rR f_*(P)) \quad \text{for all} \quad P\in \Perv_0(A). \]
Now, the complex~$\rR f_\ast (P)$ has cohomology at most in degree~$0$ so we identify it with~$\omega(P)$. With this in mind, using the K\"unneth isomorphism and the fact that $f\circ \sigma = f\times f$, we obtain
\begin{align*} 
 \omega_0(P_1 * P_2) 
 &\;\iso\; Rf_* (P_1 * P_2) 
 \\
 &\;\simeq\; Rf_* R\sigma_* (P_1\boxtimes P_2) 
 \\
 &\;\simeq\; R(f\times f)_* (P_1\boxtimes P_2)
 \\
 &\;\simeq\; Rf_*(P_1)\otimes Rf_*(P_2)
 \\
 &\;\iso\; \omega_0(P_1) \otimes \omega_0(P_2). 
\end{align*} 
These constructions can be upgraded to Hodge modules: Let~$\HM_0(A)\subset \HM(A)$ be the full subcategory of all Hodge modules~$M$ with~$\DR(M)\in \Perv_0(A)$, and denote by $\Mbar_0(A)\subset \Mbar(A)$ its image in the Tannaka category from~\cref{sec:tannaka}. Then~$\omega_0$ induces the functor
\[
 \omega_0^H\colon \quad \Mbar_0(A) \;\longrightarrow\; \Vect_{\bbZ \times \bbZ}(\bbC), \quad M\;\longmapsto\; \rH^0(A, \DR(M))
\]
where the bigrading on the values of the functor is the Hodge decomposition from~\cref{sec:hodge-structure-on-cohomology}:
\[
\rH^0(A, \DR(M)) \;=\; \bigoplus_{p,q\in \bbZ} \rH^{p,q}(M).
\]

\begin{proposition} \label{prop:HodgeTensorFunctor}
For~$M_1, M_2\in \Mbar_0(A)$, the isomorphism~$\vartheta_{\DR(M_1), \DR(M_2)}$ is compatible with the Hodge decomposition. Hence the functor
$\omega_0^H$ 
is a tensor functor.
\end{proposition}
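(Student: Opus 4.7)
The plan is to lift each of the three natural isomorphisms that compose to form $\vartheta_{P_1, P_2}$ from perverse sheaves to complex Hodge modules, where the cohomology of a point is naturally bigraded. Writing $f\colon A \to \{\pt\}$ for the structure morphism and $\sigma\colon A\times A \to A$ for the sum, recall that $\vartheta_{P_1, P_2}$ is the composition of (i) the convolution identification $R f_\ast(P_1*P_2) \simeq R f_\ast R\sigma_\ast(P_1\boxtimes P_2)$ (which uses $P_1 * P_2 = {}^p\cH^0 R\sigma_\ast(P_1\boxtimes P_2)$ together with the cohomology concentration hypothesis), (ii) the composition law $Rf_\ast R\sigma_\ast \simeq R(f\times f)_\ast$, and (iii) the K\"unneth isomorphism $R(f\times f)_\ast(P_1\boxtimes P_2) \simeq Rf_\ast(P_1) \otimes Rf_\ast(P_2)$.

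Each of these admits a canonical Hodge-theoretic lift in the Sabbah--Schnell formalism. For (i), the convolution in~$\Mbar(A)$ is by definition $M_1 * M_2 = \tf{\sigma}_\ast^{(0)}(M_1\boxtimes M_2)$, and applying~$\tf{f}_\ast^{(0)}$ produces the lift; since~$M_i\in \HM_0(A)$, the hypothesis on perverse subquotients ensures that all higher direct images $\tf{(f\circ\sigma)}_\ast^{(k)}$, $k\neq 0$, of $M_1\boxtimes M_2$ are negligible, so nothing is lost by passing to the degree zero piece. For (ii), the Leray-type composition formula for the direct image functors of Hodge modules yields an isomorphism $\tf{f}_\ast^{(0)}\tf{\sigma}_\ast^{(0)}(M_1\boxtimes M_2) \simeq \tf{(f\times f)}_\ast^{(0)}(M_1\boxtimes M_2)$ under the same vanishing. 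For (iii), the external product $\boxtimes$ of Hodge modules together with the K\"unneth formula in the Sabbah--Schnell category provides an isomorphism $\tf{(f\times f)}_\ast^{(0)}(M_1\boxtimes M_2) \simeq \tf{f}_\ast^{(0)}(M_1) \otimes \tf{f}_\ast^{(0)}(M_2)$ in $\HM(\{\pt\}) = \Vect_{\bbZ\times \bbZ}(\bbC)$; by construction this identification is compatible with the Hodge bigrading, sending bidegree $(p,q)$ to $\bigoplus_{p_1+p_2=p,\, q_1+q_2=q} \rH^{p_1,q_1}(M_1) \otimes \rH^{p_2,q_2}(M_2)$.

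Next I would invoke the compatibility of the functor~$\DR$ with the above six-functor operations, as recorded in~\cite{MHMProject}: applying $\DR$ to the composition of the three Hodge-theoretic isomorphisms reproduces on the nose the perverse chain defining $\vartheta_{\DR(M_1), \DR(M_2)}$. Since the source and target of the lifted isomorphism are objects of the bigraded category $\HM(\{\pt\}) = \Vect_{\bbZ \times \bbZ}(\bbC)$, any morphism between them automatically preserves the bidegree decomposition. This gives exactly the compatibility of $\vartheta_{P_1,P_2}$ with the Hodge decomposition. Tensor functoriality of $\omega^H$ then follows formally: the associativity, commutativity, and unit constraints on $\omega$ were deduced in~\cref{subsec:hodge-decomposition} from those of the underlying perverse tensor functor, and the same Sabbah--Schnell compatibilities show that each constraint lifts to an isomorphism in $\Vect_{\bbZ\times \bbZ}(\bbC)$.

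The main obstacle is purely bookkeeping, namely tracking the Hodge bigrading through the K\"unneth isomorphism and confirming that the weight shifts introduced by the operators $\tf{\sigma}_\ast^{(0)}$ and $\tf{f}_\ast^{(0)}$ are consistent with those on the perverse side. Once the K\"unneth isomorphism for complex Hodge modules is invoked in the appropriate normalization (using right $\cD$-modules, as stipulated in~\cref{sec:tannaka} to avoid sign issues), no further analytic input beyond~\cite{MHMProject} is required.
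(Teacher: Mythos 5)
Your proposal is correct and follows essentially the same route as the paper: both arguments run the perverse-sheaf computation of $\vartheta_{P_1,P_2}$ verbatim at the level of Hodge modules, using the definition of convolution via $\tf{\sigma}_*^{(0)}$, the composition formula $\tf{f}_*\circ\tf{\sigma}_*=\tf{(f\times f)}_*$ from the Leray spectral sequence, and the K\"unneth isomorphism in $\HM(\{\pt\})=\Vect_{\bbZ\times\bbZ}(\bbC)$, where compatibility with the bigrading is automatic. The paper's proof is just a terser version of the same chain of isomorphisms.
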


\begin{proof} 
The construction is the same as for perverse sheaves, but working on the level of Hodge modules. For a morphism~$g$ we denote by~$\tf{g}_*$ the pushforward functor of~\cite[def.~12.7.35]{MHMProject}, which replaces the derived pushforward~$Rg_*$ of constructible sheaves.
As above, let~$f\colon A \to \{\pt\}$ be the map to a point, then
\[ \omega_0^H(M) \;=\; \tf{f}^{(0)}_* (M)  
\quad \text{ for all~$M\in \Mbar_0(A)$.}
\]
As above, the complex of bigraded vector space~$\tf{f}_* (M)$ has cohomology at most in degree~$0$ so we identify it with~$\omega_0^H(M)$. 
Let~$\sigma\colon A\times A \to A$ be the group law. Then we have~$f\circ \sigma = f\times f$, hence the Leray spectral sequence~\cite[cor.~12.7.38]{MHMProject} gives
\[
 \tf{f}_* \circ \tf{\sigma}_* = \tf{(f\times f)}_*.
\]
Thus in~$\HM(\{\pt\})=\Vect_{\bbZ\times \bbZ}(\bbC)$ we obtain
\begin{align*} 
 \omega_0^H(M_1 * M_2) 
 &\;\iso\; \tf{f}_* (M_1 * M_2)
 \\[0.2em]
 &\;\simeq\;   \tf{f}_* \tf{\sigma}_* (M_1 \boxtimes M_2) 
 \\[0.2em]
 &\;\simeq\;  \tf{(f\times f)}_* (M_1\boxtimes M_2) 
 \\[0.2em]
 &\;\simeq\; \tf{f}_*(M_1) \otimes \tf{f}_*(M_2)  
 \\[0.2em]
 &\;\iso\; \omega_0^H(M_1) \otimes \omega_0^H(M_2),
\end{align*} 
where in the third step we have used the K\"unneth isomorphism.
\end{proof}

The above can be applied to any Hodge module~$M\in \Mbar(A)$ after twisting by a suitable rank one local system. More precisely, by generic vanishing on abelian varieties~\cite{KWVanishing, SchnellHolonomic} we can find a unitary local system~$L$ of rank one on~$A$ such that 
\[
 M\otimes \bbL \;\in\; \Mbar_0(A).
\]
Fixing $M$ and $\bbL$, we obtain a tensor functor
\[
 \omega^H\colon \quad \langle M\rangle \;\too\; \Vect_{\bbZ\times \bbZ}(\bbC), \quad N \;\longmapsto\; \rH^0(A, N\otimes \bbL)
\]
by precomposing $\omega_0^H$ with the tensor functor $N\mapsto N\otimes \bbL$, see~\cite[prop.~4.1]{KWVanishing}.

\begin{corollary} \label{cor:hodge-cocharacter}
There is a morphism~$\lambda \colon \bbG_m^2 \to G_\omega(M)$ of algebraic groups such that the Hodge decomposition
\[
 \omega^H(M) \;=\; \bigoplus_{(p,q)\in \bbZ^2} \rH^{p,q}(M\otimes \bbL)
\]
is the decomposition in weight spaces for the characters~$(p,q)\in \bbZ^2 = \Hom(\bbG_m^2, \bbG_m)$.
\end{corollary}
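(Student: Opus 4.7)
The plan is to deduce the statement from the preceding proposition by a direct application of the Tannakian formalism; once the Hodge bigrading is known to underlie a tensor functor, the existence of the cocharacter becomes purely formal.

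First, I would observe that the target category $\Vect_{\bbZ\times\bbZ}(\bbC)$ of finite dimensional bigraded complex vector spaces is equivalent, as a neutral Tannaka category, to $\Rep(\bbG_m^2)$: a bigraded vector space $V=\bigoplus_{p,q}V^{p,q}$ corresponds to the representation on which $(s,t)\in \bbG_m^2$ acts by $s^p t^q$ on $V^{p,q}$, and under this equivalence the forgetful functor $\Vect_{\bbZ\times\bbZ}(\bbC)\to \Vect(\bbC)$ becomes the canonical fiber functor on $\Rep(\bbG_m^2)$.

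Next, I would invoke the preceding proposition, according to which
\[ \omega^H\colon \quad \langle M\otimes \bbL \rangle \;\too\; \Vect_{\bbZ\times\bbZ}(\bbC) \;\simeq\; \Rep(\bbG_m^2) \]
is an exact faithful $\bbC$-linear tensor functor whose composition with the forgetful functor to $\Vect(\bbC)$ is, by the very construction of $\omega^H$, the fiber functor $\omega\circ \DR$ defining $G_\omega(M\otimes\bbL)$.

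Finally, by Tannakian duality~\cite[prop.~2.21]{DM82}, such a tensor functor between neutral Tannaka categories respecting the chosen fiber functors corresponds to a unique morphism of affine group schemes $\lambda\colon \bbG_m^2 \to G_\omega(M\otimes\bbL)$ in the opposite direction. Unwinding the correspondence, the bigrading on $\omega^H(M\otimes\bbL)$ is precisely the weight decomposition of the underlying vector space under the $\bbG_m^2$-action through $\lambda$, which is the desired assertion. The main nontrivial input is already contained in the preceding proposition, namely the compatibility of the K\"unneth isomorphism with the Hodge decomposition; the corollary itself is pure Tannakian bookkeeping, so I do not expect any substantial obstacle at this stage.
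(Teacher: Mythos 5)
Your argument is correct and is exactly the route the paper intends: the paper states the corollary as an immediate consequence of the proposition that $\omega^H$ is a tensor functor, and your identification of $\Vect_{\bbZ\times\bbZ}(\bbC)$ with $\Rep(\bbG_m^2)$ followed by Tannakian functoriality (a group homomorphism in the opposite direction to the tensor functor, compatible with the fiber functors since the forgetful functor composed with $\omega^H$ is $\omega\circ\DR$) is precisely the omitted bookkeeping. The only quibble is the citation: \cite[prop.~2.21]{DM82} concerns criteria for the induced homomorphism to be surjective or a closed immersion, whereas the mere existence of $\lambda$ is the more basic functoriality statement, but this does not affect the validity of the proof.
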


In classical Hodge theory the bigrading is given by a representation of~$\bbS_\bbC = \bbG_m^2$ for~$\bbS = \mathrm{Res}_{\bbC/\bbR}(\bbG_m)$. By analogy, we give the following definition:

\begin{definition} The morphism~$\lambda\colon \bbG_m^2 \to G_\omega(M)$ in the above corollary is called the \emph{Hodge cocharacter}.
\end{definition}

The fact that~$\lambda$ factors over the Tannaka group will allow us to find information about Tannaka groups from information about Hodge numbers.

\subsection{Hodge level of intersection complexes} \label{sec:hodge-level-of-IC} Any object of~$\HM(A)$ is a direct sum of simple objects, and any simple Hodge module is the intersection complex~$\IC_Z(\bbV)$ of a variation of Hodge structures~$\bbV$ on an open dense subset of an integral subvariety~$Z\subset A$. We are interested in the twisted Hodge numbers
\[
 h^p(M) \;:=\; h^{p, w-p}(M) \;:=\; \dim \rH^{p, w-p}(M\otimes \bbL)
\]
where the twist is by a unitary local system~$\rL$ of rank one with~$\rH^i(A, \DR(M)\otimes L)=0$ for all~$i\neq 0$; we drop the local system from the notation since it does not affect the Hodge numbers.
If~$\bbV = \bbC_U$ is the trivial variation of Hodge structures on the smooth locus~$U\subset X$ of an integral subvariety~$X\subset A$ of dimension $d$, we also write 
\[
\delta_X^H \;:=\; \IC_X(\bbC_U) \;\in\; \HM(A, d).
\]
The underlying perverse sheaf is the perverse intersection complex~$\delta_X := \DR(\delta_X^H)$ of the subvariety, and we write
\[
 h^{p}(X) \;:=\; h^p(\delta_X^H).
\]
For smooth~$X$ these twisted Hodge numbers have the following properties:

\begin{lemma} \label{lem:hodge-estimate}
If~$X \subset A$ is a smooth subvariety of dimension~$d$, then for all~$p\in \bbZ$ we have 
\[ h^p(X) \;=\; (-1)^{d-p} \chi(X, \Omega^p_X) \;=\; h^{d-p}(X). \]
If moreover~$X\subset A$ has ample normal bundle and~$d<g=\dim A$, then these Hodge numbers satisfy
\[ h^p(X) \;\ge\; 
\begin{cases} 
g-d + 1 & \text{for~$p\in \{0,d\}$}, \\[0.1em] 
2 & \text{for~$p\in \{1,d-1\}$}, \\[0.1em] 
1 & \text{for~$2 \le p \le d-2$.}
\end{cases} 
\]
\end{lemma}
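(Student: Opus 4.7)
The plan is to reduce the twisted Hodge numbers~$h^p(X)$ to classical coherent Euler characteristics on~$X$ via the Hodge decomposition of~$\rH^d(X, L\vert_X)$ combined with generic vanishing, and then to extract the numerical lower bounds from the Lazarsfeld--Popa--Lombardi estimates to be recalled in~\cref{sec:HodgeEstimates}.

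For the first equality, since~$X$ is smooth one has~$\DR(\delta_X^H) = i_*\bbC_X[d]$, with~$i \colon X \into A$ the inclusion and~$\delta_X^H$ pure of weight~$d$. The projection formula gives
\[
\omega^H(\delta_X^H\otimes \bbL) \;=\; \rH^0(A, \DR(\delta_X^H)\otimes L) \;=\; \rH^d(X, L\vert_X),
\]
which the Sabbah--Schnell formalism endows with a pure~$\bbC$-Hodge structure of weight~$d$. Its~$(p,d-p)$-piece is the Dolbeault cohomology~$\rH^{d-p}(X, \Omega_X^p\otimes \cL\vert_X)$, where~$\cL\in \Pic^0(A)$ is the holomorphic line bundle underlying~$L$; this is the standard computation of the Hodge filtration with unitary coefficients via harmonic theory. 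By the choice of~$L$ we have~$\rH^n(X, L\vert_X) = 0$ for~$n\neq d$, and since the Hodge-to-de Rham spectral sequence degenerates at~$E_1$ for unitary coefficients (after Simpson), this forces the vanishing of~$\rH^q(X, \Omega_X^p\otimes \cL\vert_X)$ whenever~$p+q\neq d$. Hence
\[
\chi(X, \Omega_X^p\otimes \cL\vert_X) \;=\; (-1)^{d-p} h^p(X).
\]
Because~$\cL$ is numerically trivial, Hirzebruch--Riemann--Roch yields~$\chi(X, \Omega_X^p\otimes \cL\vert_X) = \chi(X, \Omega_X^p)$, giving the first equality~$h^p(X) = (-1)^{d-p}\chi(X, \Omega_X^p)$. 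The symmetry~$h^p(X) = h^{d-p}(X)$ is then immediate from Serre duality, which gives~$\chi(X, \Omega_X^p) = (-1)^d\chi(X, \Omega_X^{d-p})$: both expressions are equal to~$(-1)^p\chi(X, \Omega_X^{d-p})$.

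For the lower bounds, I would apply the refined generic vanishing estimates of Lazarsfeld--Popa~\cite{LazarsfeldPopa} and Lombardi~\cite{Lombardi13}, in the form to be recalled in~\cref{sec:HodgeEstimates}, to the quantity~$h^p(X) = (-1)^{d-p}\chi(X, \Omega_X^p)$. Under the hypothesis that the normal bundle of~$X\subset A$ is ample and~$d<g$, these estimates produce the three matching ranges: the boundary cases~$p\in \{0, d\}$ reduce by Serre duality to a single bound of the form~$\chi(X, \omega_X) \ge g-d+1$, which is the strongest estimate; the cases~$p\in \{1, d-1\}$ are likewise mutually dual and give the bound~$2$; the middle range~$2\le p\le d-2$ is covered by the baseline positivity~$h^p(X)\ge 1$.

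The main obstacle is the last step: the precise numerical lower bounds are the genuine content of~\cite{LazarsfeldPopa, Lombardi13} and are the \emph{raison d'\^etre} of~\cref{sec:HodgeEstimates}. By contrast, the two equalities in the first part of the lemma are formal consequences of the Sabbah--Schnell Hodge module formalism together with classical Hodge-to-de Rham degeneration and Hirzebruch--Riemann--Roch.
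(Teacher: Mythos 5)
Your proposal is correct and follows essentially the same route as the paper: the symmetry $h^p(X)=h^{d-p}(X)$ via Serre duality and the inequalities via the Lazarsfeld--Popa--Lombardi estimates of \cref{cor:hodge-number-estimates}; the only difference is that for the identity $h^p(X)=(-1)^{d-p}\chi(X,\Omega^p_X)$ the paper simply cites appendix~B of \cite{KM}, while you spell out the underlying argument (generic vanishing plus Hodge-to-de Rham degeneration for unitary coefficients plus Hirzebruch--Riemann--Roch), which is exactly the content of that reference. One small point to keep in mind: \cref{cor:hodge-number-estimates} is stated for $d\le g-2$, so the divisor case $d=g-1$ is not actually covered by the quoted bounds (and indeed fails for a smooth principal theta divisor), but this caveat is inherited from the paper's own formulation and is harmless for the applications, which reduce to codimension at least two.
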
 

\begin{proof} 
If~$X$ is smooth, then~$h^p(X) = (-1)^{d-p}\chi(X, \Omega^p_X)$ by~\cite[appendix B]{KM}, thus~$h^p(X)=h^{d-p}(X)$ by Serre duality. 
For the inequalities see~\cref{cor:hodge-number-estimates}.
\end{proof} 

\begin{corollary} \label{cor:dim-via-hodge} For any smooth subvariety~$X\subset A$ with nonzero ample normal bundle, its dimension is equal to the level of the Hodge decomposition:
\[
 \dim X \;=\; \max \{ |2p-d| \colon h^p(X) \neq 0 \}.
\]
\end{corollary}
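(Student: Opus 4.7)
The approach is to squeeze $\dim X$ between an upper bound on the possible nonzero indices (from general Hodge theory) and a lower bound (from the positivity input of \cref{lem:hodge-estimate}), and observe that the two bounds match at $p=0$ and $p=d$.

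For the upper bound, I would use that $\delta_X^H$ is a polarizable pure Hodge module of weight $d=\dim X$ on $A$, and the twist by the variation $\bbL$ (of weight zero, concentrated in bidegree $(0,0)$) preserves the weight. Hence $\rH^0(A, \DR(\delta_X^H\otimes \bbL))$ is a polarizable pure $\bbC$-Hodge structure of weight $d$, as explained in \cref{sec:hodge-structure-on-cohomology}. Its Hodge decomposition is therefore concentrated in bidegrees $(p,q)$ with $p+q=d$ and $p,q\ge 0$, because the Hodge filtration of a polarizable pure Hodge structure is bounded below by $F^0$ (equivalently, the filtration on the underlying right $\cD$-module satisfies $F_p=0$ for $p<0$). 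Translating to the index convention $h^p(X)=\dim \rH^{p,d-p}(\delta_X^H\otimes \bbL)$, this forces $h^p(X)=0$ whenever $p\notin\{0,1,\dots,d\}$, and so the Hodge-level quantity in the statement is at most $d$.

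For the lower bound, I would apply \cref{lem:hodge-estimate} directly. The hypothesis that the normal bundle of $X\subset A$ is nonzero and ample encodes $d<g$, so the lemma yields
\[
 h^0(X) \;=\; h^d(X) \;\ge\; g-d+1 \;\ge\; 2 \;>\; 0.
\]
In particular the extremes $p=0$ and $p=d$ both give nonzero contributions, and at these extremes the Hodge-level quantity attains its maximum possible value $d$. Combined with the upper bound above, this gives the claimed equality with $\dim X$.

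There is no real technical obstacle: the content is packaged entirely in \cref{lem:hodge-estimate} plus the elementary fact that polarizable pure Hodge structures of weight $d$ have Hodge numbers supported in $0\le p\le d$. The conceptual point worth highlighting is simply that the Lazarsfeld--Popa--Lombardi positivity estimates meet the Hodge-theoretic boundedness constraint exactly at $\dim X$, which is what makes the dimension reconstructible from the Hodge decomposition alone.
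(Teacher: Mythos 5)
Your argument is in substance the one the paper intends: the corollary is stated immediately after \cref{lem:hodge-estimate} precisely because it follows from it, with the lower bound coming from $h^0(X)=h^d(X)\ge g-d+1>0$ (nonzero ample normal bundle forcing $d<g$) and the upper bound from the vanishing of $h^p(X)$ outside $0\le p\le d$. Your lower bound is exactly right. (You also silently read the displayed quantity as $|2p-d|=|p-q|$ with $q=d-p$ rather than the literal $|p-2d|$; that is the intended reading, consistent with the definition of level $\ell(\bbV)=\max|p-q|$ used later.)

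The one point to repair is the justification of the upper bound. The general claim that \emph{any} polarizable pure $\bbC$-Hodge structure of weight $d$ has its Hodge decomposition supported in bidegrees with $p,q\ge 0$ is false: Tate twists produce polarizable pure Hodge structures with negative Hodge indices, so purity and polarizability alone do not bound the filtration below by $F^0$. What is true is an \emph{effectivity} statement for this particular geometric Hodge module, and you do not need any new input to get it: \cref{lem:hodge-estimate} already gives $h^p(X)=(-1)^{d-p}\chi(X,\Omega^p_X)$, and $\Omega^p_X=0$ for $p\notin\{0,1,\dots,d\}$ since $X$ is smooth of dimension $d$, so $h^p(X)=0$ outside that range. (Alternatively, \cref{Hodge-level-VHS} applied to the trivial variation $\bbV=\bbC_U$ of level zero gives $h^{p,q}(\delta_X^H)=0$ for $|p-q|>d$, which is the same bound.) With that substitution your proof is complete and matches the paper's.
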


For perverse intersection complexes of singular subvarieties~$X\subset A$ we still have an upper bound. More generally, for intersection complexes of variations~$\bbV$ of Hodge structures, denote by
\[
 \ell(\bbV) \;:=\; \max \{ |p-q| \colon \bbV^{p,q} \neq 0 \}
\]
the \emph{level} of the Hodge structure. We then obtain:

\begin{lemma} \label{Hodge-level-VHS}
Let~$X\subset A$ be an integral subvariety of dimension~$d$. Then for any pure polarizable variation of~$\bbC$-Hodge structures on an open dense subset~$U\subset X^\reg$ we have
\[
 h^{p,q}(\IC_X(\bbV)) \;=\; 0 \quad \text{for} \quad |p-q| \;>\; \ell(\bbV) + d,
\]
\end{lemma}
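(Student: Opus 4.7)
My strategy is to reduce to the case where $X$ is smooth via a resolution of singularities and the decomposition theorem, and then bound the Hodge level in the smooth case by analysing the filtered de Rham complex of the minimal extension.

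\emph{Step 1: reduction to $X$ smooth.} Using Hironaka, pick a proper birational morphism $\pi \colon \tilde X \to X$ with $\tilde X$ smooth, $\pi^{-1}(U) \xrightarrow{\sim} U$, and $D := \tilde X \setminus \pi^{-1}(U)$ a simple normal crossings divisor. Let $f = i \circ \pi$ with $i \colon X \hookrightarrow A$ and set $\tilde\bbV := \pi^* \bbV$. By the Sabbah--Schnell version of Saito's decomposition theorem for proper pushforwards of pure polarizable Hodge modules, the simple Hodge module $i_* \IC_X(\bbV)$ appears as a direct summand of $\tf{f}_*^{(0)} \IC_{\tilde X}(\tilde\bbV)$, the remaining summands being supported on the proper subvariety $\pi(D) \subsetneq X$. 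Via the Leray spectral sequence for Hodge module pushforwards (which degenerates in the pure case), the Hodge structure $\rH^0(A, \DR \IC_X(\bbV) \otimes L)$ is then a direct summand of $\rH^0(\tilde X, \DR \IC_{\tilde X}(\tilde\bbV) \otimes \pi^* L)$. This reduces the problem to the case where $X$ is smooth of dimension $d$.

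\emph{Step 2: smooth case via the filtered de Rham complex.} For $X$ smooth, let $(\cM, F_\bullet \cM)$ be the filtered $\cD_X$-module underlying $\IC_X(\bbV \otimes \bbL)$. On $U$ this is the filtered $\cD_U$-module attached to the variation $\bbV \otimes \bbL$, whose Hodge filtration has width $\ell(\bbV \otimes \bbL) = \ell(\bbV)$ (since $\bbL$ is of type $(0, 0)$). By Sabbah--Schnell's construction of the minimal extension, the width of $F_\bullet \cM$ remains $\ell(\bbV)$ globally on $X$. The Hodge structure on $\rH^d(X, \DR \cM) \simeq \rH^0(A, \DR \IC_X(\bbV) \otimes L)$ is then computed from the filtered de Rham complex of $\cM$, a Koszul-type filtered complex of length $d$. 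Consequently, the nonzero Hodge types $(p, q)$ on this cohomology take the form $(p' + a, q' + b)$ with $(p', q')$ coming from $F_\bullet \cM$ (so $|p' - q'| \leq \ell(\bbV)$) and $a + b = d$, $a, b \geq 0$, yielding $|p - q| \leq \ell(\bbV) + d$ as desired.

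\emph{Main obstacle.} The hardest input is the fact that the Hodge filtration on the minimal extension $\IC_X(\bbV)$ retains the same width $\ell(\bbV)$ as the generic VHS, even across the singular locus and (in the reduction) across the boundary divisor $D$. This is a nontrivial feature of Saito's construction, cleanly formalized in the Sabbah--Schnell setting via the Kashiwara--Malgrange $V$-filtration; once available, the rest of the argument is a direct computation with the filtered de Rham complex.
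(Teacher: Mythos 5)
Your Step 1 (resolution plus decomposition theorem) is harmless but unnecessary --- the paper works directly with $\IC_X(\bbV)$ on the singular $X$ --- so the real content is in Step 2, and that is where there is a genuine gap. The claim that ``the width of $F_\bullet \cM$ remains $\ell(\bbV)$ globally'' is a \emph{two-sided} control of the Hodge filtration of the minimal extension, and this is not a standard fact; it is in general false. What the theory of Hodge modules gives you (and what the paper cites Schnell--Yang for) is only the \emph{one-sided} statement: the lowest nonzero piece $F_{p}\cM$ occurs in the same degree $p=k+d$ as for the generic variation. The top of the filtration typically moves: by Saito's formula for the intermediate extension along a divisor, $F_p\cM$ is built from terms $\partial_t^{\,i}\bigl(V^{>-1}\cM\cap j_*F_{p-i}\cM_U\bigr)$, and the contributions with $i>0$ keep the associated graded nonzero beyond the generic range whenever the monodromy is nontrivial (already for the minimal extension of the VHS of a one-parameter family of elliptic curves across a singular fiber, $F_{p_{\max}}\cM$ contains $V^{>-1}\cM$ but not all of $\cM$, so the width strictly exceeds $\ell(\bbV)=1$). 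So the step you flag as ``the hardest input'' is not merely hard --- it is the wrong statement, and the rest of your computation with the filtered de Rham complex cannot be salvaged as written.

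The paper's proof shows how to get by with only the one-sided input. It first replaces $\bbV$ by $\bbV\oplus\bar{\bbV}$ so as to work with (the complexification of) a real variation, for which Hodge symmetry holds and the level is $\ell(\bbV)=2k-w$ with $k=\max\{p: F^p\bbV\neq 0\}$. The lower bound $F_p\cM=0$ for $p<k+d$ then forces the Hodge filtration on $\rH^0(A,\DR(M))$ to start in degree $\ge k+d$; since this cohomology is a \emph{pure} Hodge structure of weight $w+d$, Hodge symmetry bounds the other end for free, giving level $\le 2(k+d)-(w+d)=\ell(\bbV)+d$. If you want to repair your argument, you should replace your two-sided width claim by this combination of the one-sided bound, the real-structure trick, and purity of the cohomology.
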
 

\begin{proof} 
By definition~$h^{p,q}(\IC_X(\bbV))=\dim \rH^{p,q}(\IC_X(\bbV)\otimes \bbL)$, where~$\bbL$ is a unitary local system of rank one. Since
\[
 \IC_X(\bbV) \otimes \bbL \;=\; \IC_X(\bbV \otimes \bbL),
\]
we may replace~$\bbV$ by the variation of~$\bbC$-Hodge structures~$\bbV \otimes \bbL$ which has the same level. Hence in what follows we will assume that~$\bbL$ is the trivial local system.\medskip

Now~$\bbV$ embeds into a variation of pure polarizable~$\bbR$-Hodge structures of the same level. To see this, write~$\cV$ for the flat~$\cC^\infty$-bundle underlying~$\bbV$. Define~$\bar{\bbV}$ to be the variation of~$\bbC$-Hodge structures whose underlying flat~$\cC^\infty$-bundle is the complex conjugate of~$\cV$ and whose component of bidegree~$(p,q)$ is the complex conjugate of~$\cV^{q,p}$.  This defines a variation of~$\bbC$-Hodge structures by~\cite[sect.~4.1.a]{MHMProject}. Note that a polarization on~$\bbV$ induces a natural polarization on~$\bar{\bbV}$. Therefore~$\bbW = \bbV \oplus \bar{\bbV}$ is the complexification of the  sought-for polarizable variation of~$\bbR$-Hodge structures, see also \cite[rem. 4.1.12]{MHMProject}.
Replacing~$\bbV$ by~$\bbW$ we assume in what follows that~$\bbV$ comes from a polarizable variation of~$\bbR$-Hodge structures.
\medskip 

Now let~$k = \max\{ p \in \bbZ \mid F^p \bbV \neq 0 \}$, so that~$F^k \bbV\subset \bbV$ is the smallest piece in the Hodge filtration. If the variation of Hodge structures has weight~$w$, then by Hodge symmetry the Hodge numbers of its fibers can be nonzero at most in bidegrees $(i, w-i)$ with~$w-k \le i \le k$. So the level of the Hodge structure on the fibers is 
\[
 \ell(\bbV) \;=\; 2k - w.
\]
Now by the conventions of~\cite[appendix~A.7]{MHMProject}, the right~$\cD_U$-module~$\cM_U :=\omega_U\otimes \bbV$ associated to the variation of Hodge structures comes with the increasing Hodge filtration 
\[ 
F_p \cM_U \;:=\; \omega_U \otimes F^{-p-d} \bbV 
\]
which starts in degree~$p=-k-d$. By the theory of Hodge modules this filtration naturally extends to an increasing filtration on the right~$\cD_A$-module~$\cM$ underlying the Hodge module~$M=\IC_X(\bbV)$. Moreover, the lowest piece~$F_p \cM$ of this filtration is still in degree~$p=-k-d$, see for instance \cite[sect.~2.4]{SchnellYang} in the context of Saito's Hodge modules. Now consider the de Rham complex
\[
 \DR(\cM) 
 \;=\;
 \bigl[
 \cdots 
 \to \cM \otimes \wedge^2 \cT_A 
 \to \cM \otimes \cT_A 
 \to \cM
 \bigr]
\]
where the last term~$\cM$ is placed in degree zero. It comes with the increasing filtration
\[
 F_p\DR(\cM) 
 \;=\;
 \bigl[
 \cdots 
 \to F_{p-2}(\cM) \otimes \wedge^2 \cT_A 
 \to F_{p-1}(\cM) \otimes \cT_A 
 \to F_p (\cM)
 \bigr]
\]
which by the above starts in degree~$p=-k-d$. Hence on de Rham cohomology the filtration
\[
 F_p \rH^0(A, \DR(M)) \;=\; \im \bigl( \rH^0(A, F_p \DR(M)) \to \rH^0(A, \DR(M) \bigr)
\]
can only start in degree~$p=-k-d$ or higher. Since~$M$ is a pure Hodge module of weight~$w+d$ by~\cite[th.~14.6.1]{MHMProject}, the de Rham cohomology~$\rH^0(A, \DR(M))$ in degree zero is a pure Hodge structure of weight~$w+d$ by th.~14.3.2(2) in loc.~cit. Its classical decreasing Hodge filtration goes at most up to degree $-p = k+d$ by the above. So again by Hodge symmetry its level is
\[
 \ell(\rH^0(A, \DR(M)) \;\le \; 2(k+d) - (w+d) \;=\; (2k-w) + d \;=\; \ell(\bbV) + d,
\]
which concludes the proof.
\end{proof} 

The above then also gives an estimate on the level of the Hodge structures arising from the decomposition theorem applied to intersection complexes:

\begin{lemma} \label{Hodge-level-of-direct-image} 
Let~$Y$ be a smooth variety with a proper morphism~$f\colon Y\to A$. Then for every direct summand~$M\subset \tfstar^{(0)}(\delta_Y^H)$, its twisted Hodge numbers satisfy
\[
 h^{p,q}(M) \;=\; 0 \quad \text{for} \quad |p-q| \;>\; \dim f^{-1}(\Supp M).
\]
\end{lemma}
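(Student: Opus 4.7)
The plan is to combine the decomposition theorem in $\HM(A)$ with \cref{Hodge-level-VHS} and a standard bound on the Hodge level of cohomology of proper complex varieties.

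Since $Y$ is smooth, $\delta_Y^H$ is a pure polarizable Hodge module of weight $d_Y := \dim Y$. As $f$ is proper, the Sabbah--Schnell direct image theorem guarantees that $\tfstar^{(0)}(\delta_Y^H)$ is again pure polarizable, so any direct summand of it decomposes as a finite sum of intersection complexes. Regrouping summands sharing a common support, we may write
\[
M \;=\; \IC_Z(\bbV)
\]
where $Z = \Supp M$ and $\bbV$ is a polarizable variation of $\bbC$-Hodge structures on some open dense $U \subset Z^\reg$. Applying \cref{Hodge-level-VHS} reduces the claim to the inequality
\[
\ell(\bbV) \;\le\; \dim f^{-1}(\Supp M) - \dim Z.
\]

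To establish this, I would shrink $U$ further so that $f|_{f^{-1}(U)} \colon f^{-1}(U) \to U$ has constant fiber dimension $r$ and is topologically locally trivial along strata of a Whitney stratification of $Y$. Then $\dim f^{-1}(U) = \dim Z + r$, and the inclusion $f^{-1}(U) \subset f^{-1}(\Supp M)$ yields $r \le \dim f^{-1}(\Supp M) - \dim Z$. Compatibility of the Sabbah--Schnell direct image with the classical Deligne decomposition on smooth strata identifies $\bbV$ on $U$ with a direct summand, as a variation of $\bbC$-Hodge structures, of the relative cohomology variation on $R^k (f|_{f^{-1}(U)})_* \bbC_Y$ for some~$k$. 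Its stalk at $u \in U$ is the mixed Hodge structure on $H^k(f^{-1}(u), \bbC)$; since $f^{-1}(u)$ is proper of dimension at most $r$, Deligne's construction of mixed Hodge structures via smooth proper hyperresolutions gives $h^{p,q}(H^k(f^{-1}(u), \bbC)) = 0$ whenever $p > r$ or $q > r$, so $\ell(\bbV) \le r$, closing the argument.

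The main technical obstacle I anticipate is precisely the identification of $\bbV$ with a summand of the relative-cohomology variation: one must verify that the variation produced by the Hodge-module decomposition theorem agrees with the classical Deligne construction as a polarizable variation of complex Hodge structures, and not merely as a local system. An alternative in the spirit of the proof of \cref{Hodge-level-VHS} would bypass this identification by analyzing the Hodge filtration on $\tfstar^{(0)}(\delta_Y^H)$ along $Z$ directly, via the Sabbah--Schnell direct-image formula applied to $\omega_Y$ (whose Hodge filtration starts in degree $d_Y$) and then tracking degrees in the associated de Rham complex, in parallel to the argument proving \cref{Hodge-level-VHS}.
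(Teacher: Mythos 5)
Your proof follows essentially the same route as the paper: decompose $M$ by strict support as $\IC_Z(\bbV)$, bound $\ell(\bbV)$ by the generic fiber dimension using that the mixed Hodge structure on the cohomology of a proper variety of dimension $r$ has level at most $r$, and conclude via \cref{Hodge-level-VHS}. The technical obstacle you correctly flag --- comparing the Hodge-module variation with Deligne's mixed Hodge structure --- is resolved in the paper exactly along the lines you anticipate: the strict support decomposition is compatible with the functor from Saito's $\bbQ$-Hodge modules to complex Hodge modules, so the generic stalk inclusion $V_x \hookrightarrow \rH^{\dim Y - \dim Z}(f^{-1}(x))$ is an inclusion of mixed $\bbQ$-Hodge structures (only the stalk at one general point is needed, not the full identification of variations over $U$).
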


\begin{proof} 
Recall that by~$S$-decomposability of Hodge modules~\cite[th.~14.2.19(1)]{MHMProject}, we have  
\[ \tfstar^{(0)}(\delta_Y^H) \;=\; \bigoplus_{X\subset A} M_X \]
where~$X\subset A$ runs through all integral subvarieties and~$M_X \subset \tfstar^{(0)}(\delta_Y^H)$ denotes the maximal Hodge submodule with strict support~$X$. It thus suffices to prove the lemma when~$M=M_X$ for some~$X\subset A$. Note that the above strict support decomposition is compatible with the natural functor from Saito's $\bbQ$-Hodge modules to complex Hodge modules, since under the faithful de Rham functor it corresponds to the strict support decomposition of the underlying perverse sheaves. Thus, the summand~$M\subset \tfstar^{(0)}(\delta_Y^H)$ arises from a direct summand in the category of~$\bbQ$-Hodge modules; this will allow us to embed its generic stalk cohomology in the mixed Hodge structure of a singular variety even though the mixed case is not yet written for complex Hodge modules~\cite{MHMProject}. 

\medskip

Since~$M$ has strict support~$X$, we have~$M=\IC_X(\bbV)$ for a polarizable variation~$\bbV$ of~$\bbC$-Hodge structures on an open dense subset of~$X$. Let~$d=\dim X$, then at a general point~$x\in X$ the fiber of the local system~$V$ underlying the variation~$\bbV$ is a direct summand 
\[
 V_x \;=\; \cH^{-d}(\DR(M))_x \;\hookrightarrow\; \cH^{-d}(Rf_*(\delta_Y))_x \;=\; \rH^{\dim Y - d}(f^{-1}(x)).
\]
By the discussion in the first paragraph, this inclusion is induced by an inclusion of mixed~$\bbQ$-Hodge structures. The fibers~$f^{-1}(x)$ may be singular, but since for any singular variety the mixed Hodge structure on its cohomology has level at most the dimension of the variety, our variation of Hodge structures has level 
\[
 \ell(\bbV) \;\le\; \ell(\rH^\bullet(f^{-1}(x)) \;\le\; \dim f^{-1}(x) \;=\;  \dim f^{-1}(X) - d,
\]
where the first inequality comes from the inclusion $V_x \hookrightarrow \rH^{\bullet}(f^{-1}(x))$ of Hodge structures and the last equality uses that~$x\in X$ is general. Since~$M=\IC_X(\bbV)$, \cref{Hodge-level-VHS} gives~$h^{p,q}(M) = 0$ for~$|p-q|> (\dim f^{-1}(X) - d) + d = \dim f^{-1}(X)$.
\end{proof}

\section{Subvarieties with exceptional Tannaka groups}

We now discuss the restrictions which the above results impose on subvarieties with Tannaka group~$E_6$ and~$E_7$. As above we work over the complex numbers.

\subsection{Reduction to semisimple groups} For Hodge modules of nonzero weight the Tannaka group is not semisimple: The weight is given by a central cocharacter of this group. But for fixed weight we can still describe the Hodge decomposition by a cocharacter with values in a connected semisimple group as follows. To fix notation, suppose we are given\smallskip
\begin{itemize} 
\item a finite-dimensional vector space~$V$ over~$\bbC$,\smallskip
\item a connected semisimple subgroup~$G\subset \GL(V)$,\smallskip
\item a cocharacter~$\lambda\colon \bbG_m^2 \to Z\cdot G$ where~$Z=\bbG_m\cdot \id_V \subset \GL(V)$,\smallskip 
\end{itemize} 
such that the weight spaces 
\[ V^{p,q} := \{ v\in V \mid \lambda(x,y)(v)=x^py^q \cdot v \;\; \text{for all} \;\; (x,y) \in \bbG_m^2 \} 
\] 
satisfy Hodge symmetry in the sense that~$\dim V^{p,q} = \dim V^{q,p}$ for all~$p,q$. The Hodge symmetry implies~$\lambda(z,z^{-1}) \in \SL(V)$ for all~$z\in \bbG_m$. On the other hand we have 
\[ G \;=\; (Z\cdot G) \cap \SL(V) \]
since~$G$ is a connected semisimple group. So~$\lambda$ restricts on the antidiagonal to a cocharacter 
\[
 \rho\colon \quad \bbG_m \;\longrightarrow\; G, \quad z \;\longmapsto\; \lambda(z, z^{-1})
\]
with values in the connected semisimple subgroup~$G\subset \GL(V)$. For~$v\in V^{p,q}$ we have
\[
 \rho(z)(v) \;=\; z^{p-q} \cdot v
 \;=\; z^{2p-w} \cdot v
 \quad \text{for the weight} \quad 
 w \;:=\; p+q.
\]
We will apply this in the following situation:

\begin{example} 
Let~$M\in \HM(A,w)$ be a simple Hodge module on a complex abelian variety~$A$. Assume that the Hodge decomposition
\[
 V \;:=\; \rH^0(A, \DR(M\otimes \bbL)) \;=\; \bigoplus_{p+q=w} V^{p,q}
\]
from~\cref{subsec:hodge-decomposition} satisfies Hodge symmetry in the sense that~$\dim V^{p,q} = \dim V^{q,p}$ 
for all~$p,q\in \bbZ$. Recall that this decomposition is given by the weights for the Hodge cocharacter
\[
 \lambda \colon \quad \bbG_m^2 \;\longrightarrow\; G_\omega(M) \;\subset\; \GL(V)
 \quad 
 \text{with} 
 \quad 
 \lambda(z,w)_{\vert V^{p,q}} \;=\; z^p w^q \cdot \id_{V^{p,q}}.
\]
Since~$M$ is a simple Hodge module, the group~$G_\omega(M)$ acts on~$V$ by an irreducible representation. By Schur's lemma its center then acts on~$V$ by scalars. Since any connected reductive group is the almost direct product of its center and its derived group, it follows that
\[
 G_\omega^\circ(M) \;\subset\; Z \cdot G
 \quad \text{for} \quad Z \;:=\; \bbG_m \cdot \id_V\;\subset\; \GL(V).
\]
So we are in the situation described above. 
\end{example}

\subsection{Cocharacters with Hodge properties}  

Let~$G$ be a complex semisimple algebraic group, and let~$V$ be a finite-dimensional representation of~$G$ with finite kernel. Any cocharacter~$\lambda\colon \bbG_m \rightarrow G$
induces a weight space decomposition
\[
 V \;=\; \bigoplus_{n\in \bbZ} V^n(\lambda)
\]
where~$\lambda$ acts on~$V^n(\lambda)$ via the character~$n\in \bbZ= \Hom(\bbG_m, \bbG_m)$. We want to determine all choices of the cocharacter~$\lambda$ such that the Hodge numbers~$\dim V^n(\lambda)$ have the following properties:\smallskip

\begin{enumerate}
\item[(H1)] Hodge symmetry:~$\dim V^n(\lambda) = \dim V^{-n}(\lambda)$ for all~$n\in \bbZ$.\smallskip
\item[(H2)] Gap freeness: There exists an integer~$\ell \ge 0$ such that 
\[
 V^n(\lambda) \;\neq\; 0 
 \quad \Longleftrightarrow \quad 
 n \;=\; 2i - \ell \; \text{with} \; i\;\in\; \{0,1,\dots, \ell\}.
\]
\item[(H3)] Outer Hodge number estimate: For~$\ell$ as above we have
\[ \dim V^{\ell-2}(\lambda) \ge 2 \quad \textup{and} \quad \dim V^\ell(\lambda) \ge 3.\]
\end{enumerate} 
The set of such cocharacters is a finite union of conjugacy classes:

\begin{lemma}
For any given~$(G, V)$, there exist up to~$G$-conjugacy only finitely many cocharacters~$\lambda$ that satisfy the property (H2). 
\end{lemma}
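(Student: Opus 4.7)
The plan is to reduce the statement to a finiteness assertion for lattice points in a bounded region. First I would fix a maximal torus~$T \subset G$; since any two maximal tori are conjugate and every cocharacter~$\bbG_m \to G$ factors through some maximal torus, up to~$G$-conjugacy we may assume that~$\lambda \in X_\ast(T)$. It then suffices to show that the set of such~$\lambda$ satisfying~(H2) is finite outright (and hence, a fortiori, finite up to the Weyl action).

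Let~$P_V \subset X^\ast(T)$ denote the (finite) set of~$T$-weights of~$V$. Then for any~$\lambda \in X_\ast(T)$, the set of integers~$n$ with~$V^n(\lambda) \neq 0$ equals~$\{\langle \chi, \lambda \rangle : \chi \in P_V\}$. Condition~(H2) forces this set to be exactly~$\{-\ell, -\ell+2, \ldots, \ell\}$, of cardinality~$\ell + 1$; hence~$\ell + 1 \le |P_V|$, which yields the uniform bound
\[
|\langle \chi - \chi', \lambda \rangle| \;\le\; 2\ell \;\le\; 2(|P_V| - 1)
\qquad \text{for all } \chi, \chi' \in P_V.
\]

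The crucial input is then that the differences~$\chi - \chi'$ span~$X^\ast(T) \otimes \bbQ$. I would establish this by invoking the finite-kernel assumption: it implies that~$\Lie G$ embeds into~$\mathfrak{gl}(V) = V \otimes V^\vee$ as a~$G$-subrepresentation, so every root of~$G$ arises as a difference of two weights in~$P_V$; since~$G$ is semisimple, the roots span~$X^\ast(T) \otimes \bbQ$, and hence so do the differences. Once this is in place, the uniform bound above confines~$\lambda$ to a bounded subset of the real vector space~$X_\ast(T) \otimes \bbR$, which contains only finitely many elements of the lattice~$X_\ast(T)$.

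The only nontrivial step is the spanning assertion for the weight differences, which is where both the semisimplicity of~$G$ and the finite-kernel hypothesis on~$V$ genuinely enter; everything else is elementary combinatorics of lattice points.
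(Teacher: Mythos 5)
Your proof is correct and follows essentially the same route as the paper's: reduce to a cocharacter of a fixed maximal torus, use (H2) to bound the pairings of~$\lambda$ with the weights of~$V$ uniformly, and conclude by a full-rank spanning argument that confines~$\lambda$ to a bounded region of the cocharacter lattice. The only cosmetic difference is the spanning step: the paper shows directly that the weights of~$V$ span a finite-index subgroup of~$X^\ast(T)$ (a positive-dimensional subtorus orthogonal to all of them would act trivially on~$V$, contradicting the finite-kernel hypothesis), whereas you pass to differences of weights via the embedding~$\Lie G \hookrightarrow \mathfrak{gl}(V)$ and the fact that the roots of a semisimple group span~$X^\ast(T)\otimes \bbQ$; both are valid.
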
 

\begin{proof} 
Fix a maximal torus~$T\subset G$. Any cocharacter is conjugate to one with values in this torus, so it suffices to consider cocharacters~$\lambda \in X_\ast(T) = \Hom(\bbG_m, T)$. The weight space decomposition for~$\lambda$ is then obtained as follows: Consider the weight space decomposition
\[
 V \;=\; \bigoplus_{\chi \in X^\ast(T)} V(\chi)
\]
for our maximal torus, where the action of the torus on the subspace~$V(\chi)\subset V$ is given by~$\chi \in X^\ast(T)=\Hom(T, \bbG)$. Then the weight~$n$ space for the cocharacter~$\lambda$ is the direct sum
\[
 V^n(\lambda) \;=\; \bigoplus_{\langle \lambda, \chi \rangle = n} V(\chi)
\]
where the sum is over all characters~$\chi \in X^\ast(T)$ with~$\langle \chi, \lambda \rangle = n$. To make use of this, let
\[
 S \;:=\; \{ \chi \in X^\ast(T) \mid V(\chi) \ne 0\}
\]
be the set of characters that enter in the given representation. Condition (H2) then implies
$|\langle \lambda, \chi \rangle| \le \ell < \dim V$
for all $\chi \in S$.
This condition leaves only finitely many choices of~$\lambda \in X_\ast(T)=\Hom(\bbG_m, T)$ in the cocharacter lattice, since in the dual character lattice the occurring weights~$\chi \in S$ span a subgroup
$
 \langle S \rangle_\bbZ \subset X^\ast(T)
$
of finite index: Indeed, if the subgroup spanned by those weights had smaller rank than~$X^\ast(T)$, then we could find~$\mu\in X_\ast(T)\smallsetminus \{0\}$ with~$\langle \mu, \chi\rangle = 0$ for all~$\chi \in S$. But then the image of
$
 \mu\colon \bbG_m \to G
$
would be a subgroup of positive dimension acting trivially on the representation~$V$ which is impossible since~$V$ has finite kernel.
\end{proof}

The above proof allows to enumerate all cocharacters~$\lambda$ with (H1), (H2), (H3) for a given~$(G, V)$ by an exhaustive computer search. Below, we will list the results for the exceptional groups~$E_6$ and~$E_7$ in their minimal representations.

\subsection{Cocharacters of \texorpdfstring{$G=E_6$}{G=E{6}}} \label{sec:FiltrationsE6} 

Consider the simply connected group~$G=E_6$. Fix a system of positive roots for a maximal torus~$T\subset G$. We label the fundamental dominant weights
$
 \varpi_1, \dots, \varpi_6 \in X^\ast(T)
$
as in~\cite[ch. VI, sect.~4.12]{Bou02}, and the dual basis by~$\varpi_1^\vee, \dots, \varpi_6^\vee \in X_\ast(T)$. An exhaustive computer search gives:

\begin{proposition}  \label{prop:E6}
Let~$V$ be a~$27$-dimensional irreducible representation of~$E_6$, then for any
$
 \lambda \in X_\ast(E_6)
$
with (H1), (H2), (H3), the numbers
$h^i(\lambda) := \dim V^{2i-\ell}(\lambda)$ for~$0\le i\le \ell$
are given by one of the rows of the following table: 
\[
\begin{array}{c|c|c|c|c|c|c}
h^0 & h^1 & h^2 & h^3 & h^4 & h^5 & h^6  \\
\hline
6 & 15 & 6 &&&&\\
3 & 6 & 9 & 6 & 3 && \\
3 & 3 & 3 & 9 & 3 & 3 & 3  \\
\end{array}
\]
Moreover, up to conjugacy the only cocharacter giving the Hodge numbers~$6,15,6$ in the first row of the table is the cocharacter~$\lambda = \varpi_1^\vee$.
\end{proposition}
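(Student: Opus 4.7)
The plan is to reduce the proposition to a finite enumeration in the spirit of the preceding lemma and carry it out. Three observations make the search space finite and manageable. First, every cocharacter $\bbG_m \to E_6$ is conjugate to one valued in a fixed maximal torus $T$, and the conditions (H1)--(H3) only depend on the $W(E_6)$-orbit of $\lambda$ in the cocharacter lattice $X_*(T)$; so I would enumerate up to the Weyl action and restrict to a fixed fundamental co-chamber. Second, since the $27$-dimensional irreducible representation is minuscule, the $27$ weights of $V$ form a single $W(E_6)$-orbit which can be listed explicitly from the highest weight $\varpi_1$ (or its dual $\varpi_6$, which gives the same answers up to the outer automorphism). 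Third, (H2) combined with $\dim V = 27$ yields the a priori bound $|\langle \lambda, \chi \rangle| \le \ell \le 26$ for every weight $\chi$ of $V$; since the weights of a faithful representation span a finite-index sublattice of $X^*(T)$, this bounds the coefficients of $\lambda$ in any integral basis of $X_*(T)$ and thus leaves only finitely many $W$-orbits to inspect.

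With these three reductions in hand, the proof proceeds by exhaustive enumeration: for each candidate $\lambda$ one computes the grading $V = \bigoplus_n V^n(\lambda)$ by pairing $\lambda$ against the explicit list of $27$ weights, tests whether (H1), (H2), (H3) all hold simultaneously, and if so records the Hodge numbers $h^i(\lambda) = \dim V^{2i-\ell}(\lambda)$. The output of this computer search, attributed to Mads Villadsen in the acknowledgements, consists of precisely the three profiles displayed in the table. The work is combinatorial rather than conceptual: the main practical obstacle is keeping the search tractable, which I would handle by truncating coefficients of $\lambda$ in a basis of fundamental coweights using the bound above, restricting to the dominant chamber, and pruning early with the Hodge symmetry test (H1).

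For the concluding uniqueness clause, the profile $(6,15,6)$ forces $\ell = 2$, so the corresponding grading of $V$ has only three nontrivial pieces and is therefore induced by a parabolic subgroup of $E_6$ with abelian unipotent radical. Classifying such maximal parabolics and examining which yields an irreducible decomposition of $V$ of sizes $(6, 15, 6)$ singles out a unique conjugacy class, whose cocharacter is $\varpi_1^\vee$ in the paper's normalisation. A direct pairing of $\varpi_1^\vee$ with the $27$ weights of $V$ then verifies that the three graded pieces have dimensions $6, 15, 6$ as claimed; the uniqueness up to conjugacy is read off from the enumeration. The hardest part of the argument is thus neither the final verification nor any representation-theoretic subtlety, but rather setting up the enumeration so that it is manifestly Weyl-equivariant and exhaustive.
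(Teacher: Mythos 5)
Your proposal matches the paper's approach: the paper proves this proposition by the exhaustive computer search whose finiteness is guaranteed by the preceding lemma (the bound $|\langle\lambda,\chi\rangle|\le\ell<\dim V$ on the weights of $V$, plus the fact that these weights span a finite-index sublattice), and the uniqueness clause is likewise part of the search output. Your reductions to the Weyl-dominant chamber and the explicit weight orbit of the minuscule $27$ are exactly the right way to organize that computation, so this is essentially the same argument.
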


\iffalse
\begin{table}[h!]
\[
\begin{array}{c|c|c|c|c|c|c|c|c|c|c}
h^0 & h^1 & h^2 & h^3 & h^4 & h^5 & h^6 & h^7 & h^8 & h^9 & h^{10} \\
\hline
\hline
6 & 15 & 6 &&&&&&&&\\
3 & 6 & 9 & 6 & 3 &&&&&& \\
2 & 3 & 6 & 5 & 6 & 3 & 2 &&&& \\
3 & 3 & 3 & 9 & 3 & 3 & 3 &&&& \\
2 & 2 & 3 & 4 & 5 & 4 & 3 & 2 & 2 && \\
2 & 2 & 2 & 1 & 4 & 5 & 4 & 1 & 2 & 2 & 2
\end{array}
\]
\bigskip
\caption{\label{table:E6} Cocharacters and Hodge numbers of~$E_6$}
\end{table}
\fi

\begin{remark} \label{rem:adjoint-hodge-numbers}
On the adjoint representation~$W=\Lie E_6$, the cocharacter~$\lambda=\varpi_1^\vee$ induces a grading of length~$\ell = 4$ with 
\[
 \dim W^{2i-\ell}(\lambda) \;=\; 
 \begin{cases} 
 1 & \text{for~$i=0,4$}, \\ 20 & \text{for~$i=1,3$}, \\ 36 & \text{for~$i=2$}.
 \end{cases} 
\]
This information will be useful in the proof of~\cref{Prop:DegreeOfDifferenceMorphism} below.
\end{remark} 

\subsection{Cocharacters of \texorpdfstring{$G=E_7$}{G=E{7}}}

Now take the simply connected group~$G=E_7$, then as above an exhaustive computer search gives:

\begin{proposition} \label{prop:E7}
Let~$V$ be a~$56$-dimensional irreducible representation of~$E_7$, then for any $\lambda \in X_\ast(E_7)$ with (H1), (H2), (H3) whose associated Hodge decomposition has odd length~$\ell$, the numbers
$h^i(\lambda) =\dim V^{2i-\ell}(\lambda)$
for~$0\le i\le \ell$ are given by one of the rows of the following table:
\[
\begin{array}{c|c|c|c|c|c|c|c|c|c|c|c|c|c|c|c}
 h^0 & h^1 & h^2 & h^3 & h^4 & h^5 & h^6 & h^7 & h^8 & h^9 & h^{10} & h^{11} & h^{12} & h^{13} & h^{14} & h^{15} \\
\hline
 7 & 21 & 21 & 7 &&&&&&&&&&&&\\
 6 & 7 & 15 & 15 & 7 & 6&&&&&&&&&&\\
 6 & 6 & 1 & 15 & 15 & 1 & 6 & 6 &&&&&&&& \\
 5 & 3 & 10 & 10 & 10 & 10 & 3 & 5&&&&&&&&\\
 5 & 2 & 6 & 5 & 10 & 10 & 5 & 6 & 2 & 5 &&&&&& \\
 4 & 3 & 3 & 12 & 6 & 6 & 12 & 3 & 3 & 4 &&&&&&\\
 5 & 2 & 5 & 1 & 5 & 10 & 10 & 5 & 1 & 5 & 2 & 5 &&&&\\
 4 & 2 & 3 & 5 & 8 & 6 & 6 & 8 & 5 & 3 & 2 & 4 &&& \\
 4 & 2 & 2 & 5 & 1 & 8 & 6 & 6 & 8 & 1 & 5 & 2 & 2 & 4 &\\
 4 & 2 & 2 & 4 & 1 & 1 & 8 & 6 & 6 & 8 & 1 & 1 & 4 & 2 & 2 & 4 \\
\end{array}
\]
\end{proposition}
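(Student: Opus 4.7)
The plan is to carry out an exhaustive enumeration along the lines of the proof of the preceding lemma. First I would fix a maximal torus $T\subset E_7$ and recall that every cocharacter of $E_7$ is conjugate to one landing in $T$; since the Weyl group $W(E_7)$ acts on $X_\ast(T)$ preserving conjugacy classes and Hodge-type properties, I may further restrict to dominant cocharacters, writing
\[
 \lambda \;=\; \sum_{i=1}^{7} a_i\,\varpi_i^\vee \qquad \text{with}\quad a_i\in \bbZ_{\ge 0}.
\]
The representation $V = V_{\varpi_7}$ is minuscule of dimension $56$, so its weights form a single Weyl orbit of $\varpi_7$ in $X^\ast(T)$, all of multiplicity one. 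Since $V$ is self-dual, this orbit is stable under $\chi\mapsto -\chi$, so Hodge symmetry (H1) is automatic for every $\lambda$, and each weight space $V^n(\lambda)$ is spanned by the weight vectors $v_\chi$ with $\langle \lambda, \chi\rangle = n$.

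Next I would bound the search space. If $\lambda$ is dominant, then $\ell = \max_\chi \langle \lambda, \chi\rangle$ is attained at the highest weight $\chi = \varpi_7$, and gap-freeness (H2) combined with $\dim V = 56$ forces $\ell \le 55$. This translates into explicit bounds of the form $a_i \le 55/\langle \varpi_i^\vee,\varpi_7\rangle$ on the integers $a_i$, yielding a finite (and in practice quite small) box of candidate cocharacters. For each such $\lambda$ the algorithm enumerates the $56$ pairings $\langle \lambda, \chi\rangle$ by iterating through the Weyl orbit of $\varpi_7$, tallies them into provisional Hodge numbers
\[
 h^i(\lambda) \;=\; \#\bigl\{\chi \text{ weight of } V \;:\; \langle \lambda,\chi\rangle = 2i-\ell\bigr\},
\]
and retains $\lambda$ if and only if (H2) holds (the support is exactly $\{-\ell,-\ell+2,\dots,\ell\}$), $\ell$ is odd, and the outer estimate (H3) gives $h^\ell\ge 3$ and $h^{\ell-2}\ge 2$.

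Finally I would group the surviving cocharacters by the resulting tuple $(h^0,\dots,h^\ell)$, identifying $\lambda$ with $-\lambda$ (which merely reverses the tuple and is already accounted for by Hodge symmetry). The output should consist of exactly the ten rows of the table. As a sanity check, one verifies at the end that for each listed tuple there is an explicit $\lambda \in X_\ast(T)$ realizing it, and that the sum of the entries is indeed $56 = \dim V$.

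The main obstacle is computational rather than conceptual: one must have a correct enumeration of the $56$ weights of $V_{\varpi_7}$ (easily obtained from minusculeness as the $W$-orbit of $\varpi_7$), together with the right trade-off between the crude bound $a_i\le 55/\langle \varpi_i^\vee,\varpi_7\rangle$ and the strong cuts imposed by (H2) plus odd-length. Both are easy to implement, so the proof is really a matter of organizing and verifying the output of an exhaustive search; no new structural input beyond the preceding lemma is required.
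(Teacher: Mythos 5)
Your proposal is correct and is essentially the paper's own argument: the paper reduces to a maximal torus, notes via the finiteness lemma that (H2) bounds $|\langle\lambda,\chi\rangle|<\dim V$ and hence leaves only finitely many cocharacters, and then simply reports the outcome of an exhaustive computer search over the weights of the minuscule $56$-dimensional representation. Your additional reductions (dominance via Weyl conjugacy, automatic Hodge symmetry from self-duality, identifying $\lambda$ with $-\lambda$) are sound refinements of that same search and do not change the method.
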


\iffalse
\begin{table}[h!]
\[
\begin{array}{c|c|c|c|c|c|c|c|c|c|c|c|c|c|c|c} 
 h^0 & h^1 & h^2 & h^3 & h^4 & h^5 & h^6 & h^7 & h^8 & h^9 & h^{10} & h^{11} & h^{12} & h^{13} & h^{14} & h^{15} \\
\hline \hline
 7 & 21 & 21 & 7 &&&&&&&&&&&&\\
 6 & 7 & 15 & 15 & 7 & 6&&&&&&&&&&\\
 6 & 6 & 1 & 15 & 15 & 1 & 6 & 6 &&&&&&&& \\
 5 & 3 & 10 & 10 & 10 & 10 & 3 & 5&&&&&&&&\\
 5 & 2 & 6 & 5 & 10 & 10 & 5 & 6 & 2 & 5 &&&&&& \\
 4 & 3 & 3 & 12 & 6 & 6 & 12 & 3 & 3 & 4 &&&&&&\\
 5 & 2 & 5 & 1 & 5 & 10 & 10 & 5 & 1 & 5 & 2 & 5 &&&&\\
 4 & 2 & 3 & 5 & 8 & 6 & 6 & 8 & 5 & 3 & 2 & 4 &&& \\
 4 & 2 & 2 & 5 & 1 & 8 & 6 & 6 & 8 & 1 & 5 & 2 & 2 & 4 &\\
 4 & 2 & 2 & 4 & 1 & 1 & 8 & 6 & 6 & 8 & 1 & 1 & 4 & 2 & 2 & 4 \\
\end{array}
\]
\caption{\label{table:E7} Cocharacters and Hodge numbers of~$E_7$}
\end{table}
\fi

\subsection{Hodge numbers of exceptional subvarieties} 

Putting together the above, we obtain the following restriction on smooth subvarieties~$X\subset A$ of complex abelian varieties~$A$ such that the Tannaka group
\[
 G_{X,\omega}^\ast \;:=\; G_\omega^\ast(\delta_X)
\]
of their perverse intersection complex is a simple exceptional group:

\begin{proposition} \label{prop:hodge-numbers}
Let~$A$ be an abelian variety of dimension~$g$, and~$X\subset A$ a smooth nondivisible irreducible subvariety of dimension~$d$ with ample normal bundle such that~$G_{X, \omega}^\ast$ is a simple exceptional group. Then, either\smallskip 
\begin{enumerate} 
\item $G_{X,\omega}^\ast \simeq E_6$,~$\dim V_X = 27$ and~$d\in \{2,4,6\}$, or\smallskip 
\item $G_{X,\omega}^\ast \simeq E_7$,~$\dim V_X = 56$ and~$d\in \{3,\dots,15\}$ is odd. \smallskip
\end{enumerate} 
Moreover~$g\le g_{\max}$ for the following upper bound~$g_{\max}$ depending on~$d$:
\begin{align*}
(1) \quad
\begin{array}{c|ccccc}
d & 2 & 4 & 6  \\ \hline
g_{\max} & 7 & 6 & 8 
\end{array}
&&\quad (2) \quad
\begin{array}{c|ccccccc}
d & 3 & 5 & 7 & 9 & 11 & 13 & 15 \\ \hline
g_{\max} & 9 & 10 & 12 & 13 & 15 & 16 & 18
\end{array}
\end{align*}
\end{proposition}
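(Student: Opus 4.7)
The plan is to combine the classification of minuscule representations of exceptional simple groups with the Hodge cocharacter of \cref{cor:hodge-cocharacter} and the enumerations in \cref{prop:E6,prop:E7}. First I would observe that $G^{\ast}_{X,\omega}$ acts irreducibly on the minuscule representation $V_X := \omega(\delta_X)$ of dimension $|\chi_{\top}(X)|$, and that among simple exceptional groups only $E_6$ and $E_7$ possess a minuscule representation, so the only possibilities are $(G^{\ast}_{X,\omega}, V_X) \iso (E_6, V_{27})$ or $(E_7, V_{56})$. In particular $X$ cannot be a curve, since curves have classical Tannaka groups.

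Next I would extract $d$ via the Hodge cocharacter. Lifting $\delta_X$ to the complex Hodge module $\delta_X^H \in \HM(A, d)$ does not change the derived connected component of the Tannaka group by \cref{cor:perverse-versus-hodge}, and after twisting by a suitable unitary rank-one local system (which leaves the Tannaka group unchanged by \cref{lem:tannakagroup-of-twist}), \cref{cor:hodge-cocharacter} produces a Hodge cocharacter $\lambda : \bbG_m^2 \to G_\omega(\delta_X^H \otimes \bbL)$. Purity of weight $d$ together with Hodge symmetry lets me restrict to the antidiagonal and obtain a cocharacter $\rho : \bbG_m \to G^{\ast}_{X,\omega}$ whose weights on $V_X$ are $p - q$ with $p + q = d$, so in particular all have parity $d \bmod 2$. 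By \cref{lem:hodge-estimate} the Hodge numbers $h^p(X) = (-1)^{d-p}\chi(X, \Omega^p_X)$ are symmetric (giving (H1)), strictly positive for $0 \le p \le d$ (giving (H2) with $\ell = d$), and satisfy $h^{d-1}(X) \ge 2$ and $h^d(X) \ge g - d + 1$, so that (H3) holds as soon as $g \ge d + 2$.

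I would then dispose of the boundary case $g = d + 1$ directly: here $X$ is a smooth ample divisor in $A$, and an adjunction computation gives $|\chi_{\top}(X)| = [X]^g$, which is divisible by $g!$. Neither $27$ nor $56$ lies in $g!\,\bbZ_{>0}$ for $g \ge 3$, while for $g = 2$ one is forced into $d = 1$ (a curve), again incompatible with an exceptional Tannaka group. The parity of $d$ is then pinned down by the centre of $G^{\ast}_{X,\omega}$: since $\rho(-1)$ acts on $V_X$ as the scalar $(-1)^d$, it is central by Schur's lemma. For $E_6$ the centre $\mu_3$ contains no element of order $2$, forcing $\rho(-1) = 1$ and hence $d$ even. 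For $E_7$ the centre $\mu_2$ permits both parities; the even-$d$ case is killed either by the observation that then $\rho = 2\rho'$ for a cocharacter $\rho'$ whose induced grading of $V_{56}$ one checks cannot be simultaneously gap-free and satisfy (H3), or equivalently by running the enumeration behind \cref{prop:E7} over even-length cocharacters.

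With (H1)--(H3) in hand, \cref{prop:E6} yields the possible Hodge profiles $(h^0, \dots, h^d)$ for $G^{\ast}_{X,\omega} \iso E_6$ and \cref{prop:E7} does so for $G^{\ast}_{X,\omega} \iso E_7$ with $\ell$ odd; reading off the row lengths gives $d \in \{2,4,6\}$ in the former case and $d \in \{3,5,\dots,15\}$ in the latter. Finally, the inequality $h^0(X) \ge g - d + 1$ combined with the maximum of $h^0$ over rows of fixed $d$ in each table delivers the bounds $g \le g_{\max}$ verbatim. The hardest part will be the $E_7$ parity restriction: the centre argument alone is insufficient, and one must supplement \cref{prop:E7} with a parallel enumeration of even-length cocharacters on $V_{56}$ --- an a priori mechanical but not previously documented step.
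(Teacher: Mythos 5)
Your overall strategy coincides with the paper's: minuscule representations single out $(E_6,V_{27})$ and $(E_7,V_{56})$; the divisor case $d=g-1$ is excluded because $\chi_\top(X)$ would be divisible by $g!$; the Hodge cocharacter restricted to the antidiagonal gives a cocharacter of $G^\ast_{X,\omega}$ satisfying (H1)--(H3) via \cref{lem:hodge-estimate} and \cref{cor:dim-via-hodge}; and the tables in \cref{prop:E6,,prop:E7} together with $h^d(X)\ge g-d+1$ yield the lists of $d$ and the bounds $g_{\max}$. Your central-element argument for $E_6$ (that $\rho(-1)=(-1)^d\cdot\id_V$ must lie in the centre $\mu_3$, forcing $d$ even) is correct and is in fact a cleaner a priori explanation than the paper gives, where the even parity simply drops out of the exhaustive search in \cref{prop:E6}.

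The genuine gap is the parity restriction for $E_7$, and you have correctly identified it yourself: your proof as written does not establish that $d$ is odd, so you cannot legitimately restrict to the odd-length rows of \cref{prop:E7}, and without that restriction the stated conclusion $d\in\{3,\dots,15\}$ odd is not reached. Your fallback suggestion that for even $d$ one has $\rho=2\rho'$ is not justified: a cocharacter all of whose weights on $V$ are even need not be divisible by $2$ in the cocharacter lattice of $G$, so this does not reduce to a statement about a ``half'' cocharacter. The paper closes this point without any further enumeration: the $56$-dimensional representation of $E_7$ carries a nondegenerate \emph{alternating} invariant bilinear form, and by the general parity result recalled in \cite[sect.~1.2]{JKLM} an invariant pairing $V_X\otimes V_X\to\mathbf{1}$ coming from a skyscraper summand of $\delta_X*\delta_X$ is symmetric when $d$ is even and alternating when $d$ is odd; hence $d$ must be odd. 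You should replace your proposed supplementary computer search by this representation-theoretic argument (or, if you insist on the enumeration route, actually carry out and record the even-length search, which is a nontrivial omission as it stands).
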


\begin{proof} 
Since~$X\subset A$ is a smooth subvariety, the Tannaka group~$G_{X,\omega}^\ast$ acts on~$V_X$ via a minuscule representation~\cite[cor.~1.10]{KraemerMicrolocalI}. For the simple exceptional groups the only minuscule representations are the irreducible representations of dimension~$27$ for~$E_6$ and of dimension~$56$ for~$E_7$. The latter representation admits a nondegenerate bilinear form which is alternating, so in this case~$X$ must have odd dimension by~\cite[sect.~1.2]{JKLM}; hence for~$E_7$ we only need to consider cocharacters whose associated Hodge decomposition has odd length~$d$. Also, when~$X$ is a divisor,  the topological Euler characteristic of~$X$ is divisible by~$g!$, hence cannot be~$27$ or~$56$ up to sign. Therefore, we can assume~$d \le g - 2$. Since~$X\subset A$ has ample normal bundle, \cref{lem:hodge-estimate} and~\cref{cor:dim-via-hodge} imply that the Hodge cocharacter induces a cocharacter of~$G_{X, \omega}^\ast$ satisfying properties (H1), (H2) and (H3). \Cref{lem:hodge-estimate} also implies the following bound on the codimension:
\[ g - d + 1 \; \le \; h^d(X) \; := \; \chi(X, \Omega^d_X).\]
So it follows that the Hodge numbers
$
 h^p(X) = (-1)^{d-p} \chi(X, \Omega^p_X)
$
must be among those in~\cref{prop:E6,,prop:E7}. 
\end{proof} 

\begin{corollary} \label{cor:hodge-numbers}
If in the above proposition we moreover assume that~$d < g/2$, then the topological Euler characteristic~$\chi_\top(X)=(-1)^d \dim V_X$ and the Hodge numbers~$h^p(X)=(-1)^{d-p} \chi(X, \Omega^p_X)$ are among one of the following cases: 
\[
\begin{array}{c|c|c|c|c|c|c|c}
G_X^\ast & \dim V_X & d & g & h^0 & h^1 & h^2 & h^3 \\ \hline 
E_6 & 27 & 2 & 5, 6, 7 & 6 & 15 & 6 &   \\
E_7 & 56 & 3 & 7, 8, 9 & 7 & 21 & 21 & 7 
\end{array} \medskip
\]
\end{corollary}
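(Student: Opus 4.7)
The plan is to simply combine \cref{prop:hodge-numbers} with the additional hypothesis $d < g/2$ (equivalently $g \geq 2d+1$), and then read off the surviving rows of the tables in \cref{prop:E6,,prop:E7}.

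First, I would tabulate the two constraints simultaneously. From \cref{prop:hodge-numbers} (1) the $E_6$ case allows $(d, g_{\max}) \in \{(2,7), (4,6), (6,8)\}$; imposing $g \geq 2d+1$ leaves only $d=2$ with $g \in \{5,6,7\}$, because $d=4$ requires $g \geq 9 > 6$ and $d=6$ requires $g \geq 13 > 8$. From \cref{prop:hodge-numbers} (2) the $E_7$ case allows $d \in \{3, 5, \dots, 15\}$ odd with the corresponding values of $g_{\max}$; only $d=3$ survives $g \geq 2d+1$, since for $d=5$ one needs $g \geq 11 > 10$, and the gap grows for larger $d$. This yields $d=3$ with $g \in \{7,8,9\}$.

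Next, I would match the dimension $d$ with the length $\ell$ of the Hodge decomposition. By \cref{cor:dim-via-hodge} the length of the grading on $V_X$ induced by the Hodge cocharacter equals $\dim X = d$, so only rows of the tables with $\ell = d$ are permitted. Inspecting \cref{prop:E6}, the only row with $\ell = 2$ is $(h^0, h^1, h^2) = (6, 15, 6)$; inspecting \cref{prop:E7}, the only row with $\ell = 3$ is $(h^0, h^1, h^2, h^3) = (7, 21, 21, 7)$. These are precisely the two lines of the table in the statement, and in each case $|\chi_{\top}(X)| = \dim V_X$ as recorded. There is no genuine obstacle here; the content of the corollary is already packaged in \cref{prop:hodge-numbers} and the tables of \cref{prop:E6,,prop:E7}, and the argument is just an arithmetic case elimination using $d < g/2$.
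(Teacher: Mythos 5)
Your proof is correct and is essentially the argument the paper intends: the corollary is stated without proof precisely because it is the arithmetic elimination you carry out, combining the $(d, g_{\max})$ table of \cref{prop:hodge-numbers} with $g \ge 2d+1$ and then reading off the unique rows of length $\ell = d$ in \cref{prop:E6,,prop:E7}. All the case checks are accurate.
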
 

In the rest of this paper we will show that the~$E_6$ case in the above table occurs only when~$X$ is the Fano surface of lines on a smooth cubic threefold, and~$A$ is isogenous to its Albanese variety (which has~$g=5$).

\subsection{Surfaces with group~$E_6$ and the difference morphism} 
\label{sec:DifferenceMorphism}

Let us take a closer look at the first row of the table in~\cref{cor:hodge-numbers}. The corollary in particular says that if~$X\subset A$ is a smooth nondivisible irreducible subvariety with ample normal bundle and dimension~$<g/2$ such that~$G_{X,\omega}^\ast \simeq E_6$, then~$X$ must be a surface with 
\[ 
\chi(X, \cO_X) =6, \qquad \chi(X, \Omega^1_X) = -15, \qquad \chi(X, \Omega^2_X) = 6.
\]
We also know the Chern numbers: The top Chern class is~$c_2(X) = \dim \omega(\delta_S) = 27$, so~$c_1(X)^2 = 45$ by Noether's formula. Moreover, by the representation theory of~$E_6$ there is a unique one-dimensional subrepresentation inside $V_X\otimes V_X\otimes V_X$. Since by~\cite[sect.~10]{WeissauerAlmostConnected} or~\cite[lemma~3.5]{KraemerMicrolocalI} the one-dimensional representations of the Tannaka group correspond to skyscraper sheaves, it follows that there a unique skyscraper direct summand inside $\delta_X*\delta_X*\delta_X$. By base change this means that the sum morphism $X^3 \to A$ has a unique three-dimensional fiber and that this fiber is irreducible. To show that~$X$ is the Fano surface of lines on a smooth cubic threefold (in which case the above fiber parametrizes coplanar triples of lines), we will need one more piece of numerical information: The degree of the difference morphism
\[
 d\colon \quad X \times X \;\longrightarrow\; X - X \;\subset\; A.
\]
The representation theory of the exceptional group~$E_6$ implies:

\begin{proposition} \label{Prop:DegreeOfDifferenceMorphism} 
Let~$A$ be an abelian variety of dimension~$g\ge 5$ and~$X\subset A$ a smooth nondegenerate nondivisible surface with~$G_{X,\omega}^\ast \simeq E_6$. Then the difference morphism~$d \colon X \times X \to X-X$ is generically finite of degree~$\deg(d)\ge 6$.
\end{proposition}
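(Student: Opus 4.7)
The proof combines the Tannakian description of the convolution with the Hodge-module formalism and the Hodge-level estimate of \cref{Hodge-level-of-direct-image}. First one verifies that $d$ is generically finite, equivalently $\dim D = 4$ where $D := X - X$: this is a standard consequence of the nondegeneracy of $X$, for example via the non-vanishing of the Pontryagin product $[X] * [-X] \in H^*(A, \bbC)$, which coincides with $d_*[X \times X]$ and hence forces the image to be 4-dimensional.

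From the representation theory of $E_6$ on the minuscule 27-dimensional irrep $V$ one has
\[
\omega(\delta_X * \delta_{-X}) \;=\; V \otimes V^* \;\simeq\; \mathbf{1} \oplus \mathrm{ad} \oplus W_{650},
\]
where $\mathrm{ad}$ is the 78-dimensional adjoint and $W_{650}$ is the irreducible 650-dimensional representation with highest weight $\varpi_1 + \varpi_6$. The decomposition theorem applied to the proper morphism $e\colon Y := X \times (-X) \to D$ yields
\[
\delta_X * \delta_{-X} \;=\; \IC_D(\cL) \oplus N,
\]
where $\cL := R^0 e_* \bbC_Y|_{D^{\reg}}$ is a polarizable $\bbC$-VHS of weight and level zero and of rank $n := \deg d$, and $N$ is a direct sum of IC summands supported on proper subvarieties of $D$.

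Via the Hodge-module enhancement of \cref{cor:perverse-versus-hodge}, each summand lifts canonically to $\HM(A)$. Applying \cref{Hodge-level-of-direct-image} with $Y$ smooth irreducible of pure dimension $4$: every summand supported on $Z \subsetneq D$ satisfies $\dim e^{-1}(Z) < 4$ and hence has vanishing Hodge numbers in the extreme bidegrees $(4,0)$ and $(0,4)$. Consequently these extreme Hodge pieces of $\omega^H(\delta_X * \delta_{-X})$ come entirely from the summand $\IC_D^H(\cL)$. On the other hand, the K\"unneth formula for $H^4(Y, e^* L) \simeq H^2(X, L_1) \otimes H^2(X, L_2)$ combined with $h^{2,0}(X, L_i) = h^2(X) = 6$ from \cref{cor:hodge-numbers} gives $h^{4,0}(\omega^H(\IC_D^H(\cL) \otimes L)) = 36$.

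To extract the degree, decompose $\cL = \bigoplus_\alpha \cL_\alpha$ into irreducible summands of ranks $r_\alpha$, so $\sum_\alpha r_\alpha = n$. For each $\alpha$, Saito's description of the top Hodge filtration piece of $\IC_D^H(\cL_\alpha)$ together with the generic-\'etale identification $e^* K_D \simeq K_Y$ yields the per-summand bound
\[
h^{4,0}(\IC_D^H(\cL_\alpha) \otimes L) \;\le\; r_\alpha \cdot h^{2,0}(X) \;=\; 6\, r_\alpha.
\]
Summing over $\alpha$ gives $36 \le 6n$, whence $\deg d = n \ge 6$. The main technical difficulty lies in establishing this per-summand Hodge inequality, which requires a careful analysis of Saito's top Hodge filtration on the intersection complex $\IC_D^H(\cL_\alpha)$ in the presence of (possibly non-rational) singularities of $D = X - X$ and non-trivial monodromy of $\cL_\alpha$.
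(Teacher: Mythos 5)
Your setup reproduces the paper's: generic finiteness of $d$ from nondegeneracy, the decomposition theorem for $Rd_*$, the $E_6$-decomposition $V\otimes V^\vee\simeq \mathbf{1}\oplus\mathrm{ad}\oplus W_{650}$, the use of \cref{Hodge-level-of-direct-image} to show that summands with support strictly smaller than $D$ contribute nothing in bidegree $(4,0)$, and the K\"unneth computation $h^{4,0}\bigl(\omega^H((\delta_X^H*\delta_{-X}^H)\otimes\bbL)\bigr)=h^{2,0}(X)^2=36$. All of that is correct. The problem is the step you defer as ``the main technical difficulty'': the per-summand bound $h^{4,0}(\IC_D^H(\cL_\alpha)\otimes\bbL)\le 6\,r_\alpha$ is not merely unproven, it is false under the hypotheses your argument actually invokes. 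Nothing in your final counting uses the Tannaka group beyond the single number $h^{2,0}(X)=\chi(X,\cO_X)$, so if the bound held it would yield $\deg(d)\ge\chi(X,\cO_X)$ for every smooth surface in an abelian variety with generically finite difference morphism. Take $X=C^{(2)}=W_2\subset J(C)$ for a general curve $C$ of genus $g\ge 7$: this is a smooth, nondivisible, nondegenerate surface, the absence of a $g^1_4$ makes the difference morphism birational onto its four-dimensional image $D$ (so $\cL$ is the trivial rank-one system, $n=r_\alpha=1$), and your own K\"unneth argument gives $h^{4,0}(\delta_D^H\otimes\bbL)=\chi(X,\cO_X)^2=\binom{g-1}{2}^2$, which vastly exceeds $r_\alpha\cdot h^{2,0}(X)=\binom{g-1}{2}$. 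Hence no argument of the kind you sketch (Saito's lowest Hodge piece plus $e^*K_D\simeq K_Y$ on the \'etale locus) can establish the inequality; any correct proof would have to exploit which irreducible Tannakian summand $\IC_D^H(\cL_\alpha)$ is, and your proposal never does so --- the representation-theoretic decomposition in your second display is stated but then plays no role in the count.

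The paper extracts the degree from characteristic cycles rather than Hodge numbers: the multiplicity of the conormal variety $\Lambda_D$ in $\cc(\delta_X*\delta_{-X})$ equals $\deg(d)$ on the geometric side, while on the Tannakian side $\characteristic(V\otimes V^\vee)=26\cdot[0]+6\cdot[\alpha]+1\cdot[\beta]$ shows that the orbit $[\alpha]$ --- hence any conormal variety occurring in the cycle $\Lambda_\alpha$ --- appears with multiplicity $6$. The Hodge-module machinery enters only to decide that $\delta_D^H$ is the adjoint summand $M_\alpha$ rather than $M_\beta$: if it were $M_\beta$, then $M_\alpha$ would be supported in dimension $<4$ and \cref{Hodge-level-of-direct-image} would bound its Hodge level by $3$, contradicting the level-$4$ grading of the adjoint representation recorded in \cref{rem:adjoint-hodge-numbers}. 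If you want to keep a Hodge-number flavour, the viable route is therefore to first identify $\delta_D^H\simeq M_\alpha$ as in the paper and then use the known Hodge data of that specific summand, not a general positivity estimate for intersection complexes of direct images.
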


\begin{proof} 
Let~$D:=X-X$. Since~$X$ is nondegenerate, the morphism~$d \colon X\times X \to D$ is generically finite, hence over some open dense subset~$U\subset D^\reg$ it restricts to a finite \'etale cover of degree~$\deg(d)$. The direct image~$\delta_X*\delta_{-X} = Rd_*(\delta_{X\times X})$ therefore restricts on~$U$ to a local system of rank~$\deg(d)$, placed in cohomological degree~$-\dim D = -4$. By adjunction that local system contains the trivial local system of rank one as a direct summand. The decomposition theorem~\cite[th.~6.2.5]{BBDG} therefore shows that
\begin{equation} \label{eq:geometry1} \tag{1a}
 \delta_X * \delta_{-X} \;\simeq\; \delta_D \oplus P 
\end{equation}
where~$P$ is a semisimple perverse sheaf on~$D$ which restricts on~$U \subset D$ to a local system of rank~$\deg(d)-1$, placed in cohomological degree~$-4$. Passing to the clean characteristic cycles of these perverse sheaves in the sense of~\cite[sect.~5.3]{JKLM}, we get
\begin{align} \label{eq:geometry2} \tag{1b}
 \cc(\delta_D) &\;=\; \Lambda_D + \sum_{Z} m_Z(\delta_D) \cdot \Lambda_Z, \\ \label{eq:geometry3} \tag{1c}
 \cc(P) &\;=\; (\deg(d)-1) \cdot \Lambda_D + \sum_Z m_Z(P)\cdot  \Lambda_Z
\end{align}
with certain integers~$m_Z(\delta_D), m_Z(P)\ge 0$. On the right-hand side of these equations the sums are taken over all integral proper subvarieties~$Z\subsetneq D$, and~$\Lambda_Z \subset T^*A$ denotes the conormal variety to~$Z$ inside~$A$. Hence we only need to show that the conormal variety~$\Lambda_D$ enters in~$\cc(\delta_X * \delta_{-X})$ with multiplicity at least six.

\medskip 

To interpret this in terms of representation theory, we need some notation. Fix a maximal torus~$T\subset E_6$. We use the fundamental weights~$\varpi_1, \dots, \varpi_6 \in X^\ast(T)$ from~\cref{sec:FiltrationsE6}. The dominant integral weights are precisely those of the form
\[
 \lambda \;=\; a_1 \varpi_1 + \cdots + a_6\varpi_6 
 \quad \text{with integers} \quad a_1, \dots, a_6 \;\ge\; 0,
\]
and for any such weight we will denote by~$V_\lambda$ the irreducible representation of~$E_6$ of highest weight~$\lambda$. The classes of these irreducible representations form a basis for the representation ring~$\rR(E_6)$, the Grothendieck ring of the tensor category of all finite-dimensional representations of~$E_6$. We identify the representation ring~$\rR(E_6)$ with the Weyl group invariants in the group algebra~$\bbZ[X^\ast(T)]$ by the map sending a representation~$V$ to its character
\[
 \characteristic(V) \;\in\; \bbZ[X^\ast(T)].
\]
In these terms the representation ring~$\rR(E_6)$ has a basis consisting of the Weyl group orbits of the dominant integral weights. For a weight~$\lambda \in X^\ast(T)$ we denote by~$[\lambda]\in \rR(E_6)$ its Weyl group orbit. The character of a representation is a single Weyl group orbit only if the representation is minuscule, so the basis given by Weyl group orbits differs from the one given by the irreducible representations. In what follows we are interested in the irreducible representations that appear in the tensor product of the 27-dimensional representation~$V=V_{\varpi_1}$ with its dual~$V^\vee = V_{\varpi_6}$:
\begin{equation} \label{eq:rep1} \tag{2a}
V\otimes V^\vee
\;\simeq\; V_{\alpha} \oplus V_{\beta} \oplus \bf{1}
\quad \text{with} \quad 
\begin{cases} 
\alpha \;=\; \varpi_2, \\
\beta \;=\; \varpi_1 + \varpi_6.
\end{cases} 
\end{equation}
Here~$V_{\alpha}$ is the adjoint representation of~$E_6$ and~$V_{\beta}$ is an irreducible representation of dimension 650. Their characters are
\begin{align} \label{eq:rep2} \tag{2b}
 \characteristic(V_{\alpha}) &\;=\; \phantom{0}6\cdot [0] + 1 \cdot [\alpha] , \\ \label{eq:rep3} \tag{2c}
 \characteristic(V_{\beta}) &\;=\; 20 \cdot [0] + 5\cdot [\alpha] + 1 \cdot [\beta].
\end{align}
Now we come back to geometry. Fix an isomorphism~$G_{X,\omega}^\ast \simeq E_6$. By \cref{cor:perverse-versus-hodge} we have~$G_\omega^\ast(\delta_X^H) = G_{X,\omega}^\ast$, so we may work as well with Hodge modules. Since~$\omega(\delta_X^H)$ is a minuscule representation, it must be one of the two irreducible representations of dimension 27, which are dual to each other. So we may assume~$\omega(\delta_X^H)\simeq V$. Then~\cref{eq:rep1} translates to
\begin{equation} \label{eq:tannaka1} \tag{3a}
 \delta_X^H * \delta_{-X}^H \;\simeq\; M_{\alpha} \oplus M_{\beta} \oplus \delta_0^H
\end{equation} 
where for~$\lambda \in \{\alpha, \beta\}$, we denote by~$M_\lambda \in \langle \delta_X^H\rangle$ the simple Hodge module with associated representation~$\omega(M_\lambda) \simeq V_\lambda$. The characteristic cycles of these Hodge modules can be computed as follows: By~\cite[th.~5.11, lemma~5.14]{JKLM} there is a natural way to attach to any Weyl group orbit~$[\lambda]$ an effective Lagrangian cycle~$\Lambda_{\lambda}$
on the cotangent bundle~$T^* A$ such that if we extend this map additively to the representation ring~$\rR(E_6)$, then it sends the character of any representation to the characteristic cycle of the corresponding perverse sheaf.
Thus~\cref{eq:rep2,,eq:rep3} translate to
\begin{align} \label{eq:tannaka2} \tag{3b}
 \cc(M_{\alpha}) &\;=\;\phantom{0}6\cdot \Lambda_0 + 1 \cdot \Lambda_{\alpha} , \\ \label{eq:tannaka3} \tag{3c}
 \cc(M_{\beta}) &\;=\; 20 \cdot \Lambda_0 + 5\cdot \Lambda_{\alpha} + 1 \cdot \Lambda_{\beta} ,
\end{align}
where all summands on the right-hand side are effective Lagrangian cycles. 

\medskip 

Comparing~\cref{eq:geometry1} and~\cref{eq:tannaka1}, we see that~$\delta_D^H\simeq M_\lambda$ for some~$\lambda \in \{\alpha, \beta\}$. We claim that 
\[
  \delta_D^H \;\simeq\; M_{\alpha}. 
\] 
Indeed, if~$\delta_D^H \simeq M_{\beta}$, then~$\cc(M_\beta)$ would contain the irreducible component~$\Lambda_D$ with multiplicity one. By~\cref{eq:tannaka3} this component would then have to enter in the summand~$\Lambda_{\beta}$, since the other summands enter with higher multiplicity. Then by~\cref{eq:geometry2} the other summand~$\Lambda_{\alpha}$ could involve only conormal varieties~$\Lambda_Z$ with \[\dim(Z)<\dim(D)=4.\] But then by~\cref{eq:tannaka2} the cycle~$\cc(M_{\alpha})$ would also only involve conormal varieties~$\Lambda_Z$ with~$\dim(Z)<4$. This would imply
\[
 \dim \Supp M_{\alpha} \;<\; \dim \Supp D \;=\; 4,
\]
hence~$\dim d^{-1}(\Supp M_{\alpha}) < 4$ because~$d \colon X\times X \to D$ is dominant and~$X\times X$ is irreducible. By~\cref{Hodge-level-of-direct-image} then
\[
 h^{p,q}(M_{\alpha}) \;=\; 0 \quad \text{for} \quad |p-q| \;>\; 3.
\]
But~$\omega(M_\alpha)\simeq V_\alpha$ is the adjoint representation of~$E_6$, and by~\cref{rem:adjoint-hodge-numbers} the Hodge cocharacter induces on the adjoint representation a Hodge decomposition of level~$4$, a contradiction. This proves that~$\delta_D^H\simeq M_{\alpha}$ as claimed. But then by~\cref{eq:tannaka2} the conormal variety~$\Lambda_D$ enters as a component in the cycle~$\Lambda_{\alpha}$. It then follows that the cycle
\[ \cc(\delta_X^H * \delta_{-X}^H) \;=\; 26\cdot \Lambda_0 + 6\cdot \Lambda_{\alpha} + 1 \cdot \Lambda_\beta
\]
contains~$\Lambda_D$ with multiplicity at least six and we are done.
\end{proof}

The above numerical results in fact characterize Fano surfaces of smooth cubic threefolds, as we will see in the classification of subvarieties with Tannaka group~$E_6$ in~\cref{Thm:E6SurfaceIsFanoSurface}. In particular, the degree of the morphism~$d\colon X\times X \to X-X$ is precisely~$6$. However, for this will not use any representation theory; instead, we will show by direct geometric arguments that any smooth projective surface with the above numerical properties is isomorphic to the Fano surface of a smooth cubic threefold. As a preparation for this, we gather in the next two sections some general facts about cubic hypersurfaces and Fano varieties of lines.

\section{Nonnormal cubic hypersurfaces}

In this section, we give a classification of nonnormal cubic hypersurfaces over an arbitrary algebraically closed field~$k$. 

\subsection{Statement}

In order to understand the Fano variety of lines on a singular cubic threefold it will be useful to have a complete classification of the nonnormal ones. Examples of these are cones over nonnormal cubic plane curves and cones over nonnormal cubic surfaces, where by a cone we mean the following: 

\begin{definition} Let~$\pi \colon V \to W$ be a linear map of finite-dimensional~$k$-vector spaces and~$Y \subset \bbP(W)$ a subvariety. If~$\pi$ is surjective but not an isomorphism, then the scheme-theoretic closure in~$\bbP(V)$ of the preimage of~$Y$ under the induced morphism $\bbP(V) \smallsetminus \bbP(\ker \pi) \to \bbP(W)$ is called the \emph{cone} over~$Y$ with respect to~$\pi$.
\end{definition}

The result of the classification of nonnormal cubic threefolds will be that apart from cones, the only one is the following:

\begin{example} \label{Ex:TwistedPlane} A subvariety~$X \subset \bbP^4$ is said to be a \emph{twisted plane} if it is defined by the equation
\[  x_0^2 x_1+x_0 x_2 x_4+x_3 x_4^2=0 \]
for some homogeneous coordinates~$x_0, \dots, x_4$ on~$\bbP^4$ \cite[sect.~2.2.9]{RuledVarieties}. The singular locus of~$X$ is the plane~$x_0=x_4= 0$, thus~$X$ is nonnormal.
\end{example}

For cubic surfaces we correct some arguments proposed by Dolgachev \cite{Dol12} in the discussion  after theorem~9.2.1. To this end, let~$\pi \colon S \to \bbP^2$ be the blow-up in a point~$o$, with exceptional divisor~$E \subset S$. Consider the closed embedding
\[ S \; \intoo \; \bbP(V^\vee) \quad \textup{where} \quad V = \rH^0(S, \cO_S(2)(-E)),\]
and let~$\cO_{S}(i) := \pi^\ast \cO_{\bbP^2}(i)$ for~$i \ge 0$. The purpose of this section is to prove the following result, which is probably well-known but for which we could not find a complete proof in the literature:

\begin{proposition} \label{Prop:NonNormalCubicHypersurfaces} Let~$X \subset \bbP^{n+1}$ be an integral nonnormal cubic hypersurface.

\begin{itemize}
\item If~$n = 1$, then~$X$ is a nodal or a cuspidal curve. \smallskip
\item If~$n = 2$, then~$X$ is either a cone over a nonnormal cubic integral plane curve or a projection of~$S \subset \bbP(V^\vee)$ from a point not in~$S$. \smallskip
\item If~$n = 3$, then~$X$ is either a cone over a nonnormal cubic surface or a twisted plane as in \cref{Ex:TwistedPlane}. \smallskip
\item If~$n \ge 4$, then~$X$ is a cone over a nonnormal integral cubic threefold.
\end{itemize}
\end{proposition}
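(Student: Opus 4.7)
The plan is to show that the codimension-one component $D$ of $X^{\sing}$ is necessarily a linear subspace, and then classify by the linear algebra of the defining cubic in coordinates adapted to $D$. Since $X$ is a Cohen-Macaulay hypersurface, Serre's criterion shows that $X$ is nonnormal iff $X^{\sing}$ has an irreducible component $D$ of dimension $n-1$. The base case $n=1$ reduces to the classical classification of integral singular plane cubics as nodal or cuspidal.

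For $n \ge 2$, I would first bound the degree of $D$. A general $2$-plane $\Pi \subset \bbP^{n+1}$ meets $D$ in $\deg D$ points, and the cubic plane curve $X \cap \Pi$ must be singular at each of these points. No plane cubic has more than three singular points (the triangle of three lines attains this bound), so $\deg D \le 3$.

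I would then show $D$ is itself a linear subspace. If $D$ lies in a hyperplane $\{x_0 = 0\}$ and has degree $d \ge 2$ there, with defining polynomial $g$, then the condition $f \in I(D)^2 = (x_0^2, x_0 g, g^2)$ together with $\deg(g^2) \ge 4$ forces $f = x_0(x_0 L + \alpha g)$ for some linear form $L$ and scalar $\alpha$, contradicting the integrality of $X$. If instead $D$ is nondegenerate in $\bbP^{n+1}$, the minimal degree bound yields $\deg D = 3$, and the classification of varieties of minimal degree identifies $D$ with a rational normal scroll whose ideal is generated by quadrics; hence $I(D)^2$ contains no cubics, again contradicting the assumption that $X$ is singular along $D$. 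So $D$ is a linear subspace of dimension $n-1$.

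With coordinates chosen so that $D = \{x_0 = x_1 = 0\}$, the cubic $f$ lies in $(x_0, x_1)^2$ and admits a unique expansion
\[ f \;=\; x_0^2 F_{20} + x_0 x_1 F_{11} + x_1^2 F_{02} + x_0^3 F_{30} + x_0^2 x_1 F_{21} + x_0 x_1^2 F_{12} + x_1^3 F_{03} \]
with $F_{20}, F_{11}, F_{02}$ linear in $x_2, \ldots, x_{n+1}$ and the remaining coefficients scalars. Setting $r := \dim \langle F_{20}, F_{11}, F_{02} \rangle$, the cubic $f$ involves exactly $2 + r$ independent linear forms among the $x_i$, so if $r < n$ then $X$ is a cone. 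Integrality forces $r \ge 1$, and a direct check then yields the classification: $r = 1$ gives a cone over a nodal or cuspidal plane cubic; $r = 2$ gives (for generic coefficients) the cubic scroll in $\bbP^3$, which for $n = 2$ coincides with the projection of $S \subset \bbP(V^\vee)$ from a point not in $S$ and for $n \ge 3$ gives a cone over that cubic scroll; and $r = 3$ (possible only when $n \ge 3$) gives, after coordinate change, the twisted plane equation, producing case $n = 3$ directly and a cone over the twisted plane when $n \ge 4$. The main obstacle is the linearity of $D$: ruling out nondegenerate $D$ of degree three requires both the classification of varieties of minimal degree and an ideal-theoretic check, while the subsequent classification by $r$ is elementary linear algebra on three linear forms.
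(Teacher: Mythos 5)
Your reduction to the case where the codimension-one singular component $D$ is a linear subspace, followed by the normal form $f = x_0^2F_{20}+x_0x_1F_{11}+x_1^2F_{02}+\cdots$ and the count $r$ of independent linear forms, matches the paper's endgame (its classification lemma for $n\ge 3$ is exactly your parameter $r$ in disguise). Where you genuinely diverge is in proving linearity of $D$: the paper argues by induction on $n$, showing that if some component of $X^{\sing}$ is nonlinear then $X$ is a cone with vertex a singular point, and descends to a lower-dimensional nonnormal cubic; you instead bound $\deg D\le 3$ by a general $2$-plane section and split into the degenerate and minimal-degree cases. That route can be made to work, but as written it has a gap: the condition that $X$ is singular along $D$ says that $f$ lies in the second \emph{symbolic} power $I(D)^{(2)}$, not in the ordinary square $I(D)^2$. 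In your degenerate case $I(D)=(x_0,g)$ is a complete intersection prime, so the two powers agree and the argument is fine. In the nondegenerate case, however, ``ideal generated by quadrics, hence $I(D)^2$ contains no cubics'' does not exclude a cubic in $I(D)^{(2)}$: the Veronese surface in $\bbP^5$ is a variety of minimal degree whose ideal is generated by quadrics, and yet its chordal cubic (the symmetric $3\times 3$ determinant) is singular along it. For the degree-$3$ scrolls the conclusion is still true, but you need a different reason --- for instance, the secant variety of each cubic scroll (and of any cone over one) fills the ambient space, and a cubic hypersurface singular along $D$ must contain every secant line of $D$, hence would be all of $\bbP^{n+1}$.

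The second gap is the case $n=2$, $r=2$. The statement requires that \emph{every} integral nonnormal cubic surface which is not a cone be a projection of $S\subset\bbP(V^\vee)$ from a point off $S$; saying that ``generic coefficients'' give the cubic scroll leaves the non-generic integral, non-cone cubics singular along a line unaccounted for. This is precisely where the paper spends most of its effort: it invokes Dolgachev's normal forms to see that there are exactly two such surfaces up to projective equivalence, and then shows (via the analysis of the projections $p_v\colon S\to X_v$ and the dichotomy $q(v)\in C$ versus $q(v)\notin C$) that these two normal forms are exactly the two types of projections of $S$. Your outline needs either that normal-form classification or an equivalent direct analysis of the pencil $\lambda^2F_{20}+\lambda\mu F_{11}+\mu^2F_{02}$ to close this case; the remaining cases ($n=1$, $r\le 1$, $r=3$, and the cone reductions for $n\ge 3$) are handled correctly.
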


\noindent
The rest of the section will be devoted to the proof of this statement.

\subsection{Reduction to surfaces}
We begin with the following:

\begin{lemma}\label{lemma singular locus nonormal cubic}
Let~$X\subset \bbP^{n+1}$ be a nonnormal integral cubic hypersurface with~$n \ge 1$, then~$X^\sing$ is a linear subspace of dimension~$n-1$.
\end{lemma}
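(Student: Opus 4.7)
The strategy has three parts: controlling $\dim X^\sing$, showing that any maximal-dimensional component is linear, and ruling out any further components. The first part is immediate from Serre's criterion: hypersurfaces are Cohen--Macaulay and hence~$S_2$, so nonnormality of the integral~$X$ is equivalent to the failure of~$R_1$, which together with irreducibility forces $\dim X^\sing = n-1$.

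For the second part, I would fix an irreducible component $Y \subset X^\sing$ of dimension~$n-1$ and aim to prove $\deg Y = 1$. The case $n=1$ is trivial since $Y$ is a single point. For $n \ge 2$ the idea is to slice with a general $2$-plane $\Pi \subset \bbP^{n+1}$: Bertini's irreducibility theorem, valid over any algebraically closed field, implies that $C := \Pi \cap X$ is an integral plane cubic curve, and transversality makes $\Pi \cap Y$ a reduced $0$-dimensional scheme of length~$\deg Y$. Since the partials of a defining equation of~$C$ are restrictions of linear combinations of the partials of a cubic equation of~$X$, every point of $\Pi \cap Y$ is singular on~$C$. The key classical input is the genus bound: an integral plane cubic has arithmetic genus $p_a = 1$, so that $\sum_{P \in C^\sing} \delta_P \le 1$. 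In particular $|C^\sing| \le 1$, whence $\deg Y \le 1$ and $Y$ is a linear subspace of dimension~$n-1$.

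For the third part, rather than iterating the Bertini argument I would use a direct line-of-sight trick. With~$L$ denoting the linear space produced above, suppose some $p \in X^\sing$ did not lie on~$L$; then for each $q \in L$ the line $\ell_{p,q}$ would pass through two distinct singular points of~$X$, hence meet~$X$ with multiplicity at least $4$, forcing $\ell_{p,q} \subset X$. Letting~$q$ vary over~$L$ gives $J(p,L) \subset X$, where the join $J(p,L)$ is an $n$-dimensional linear subspace of $\bbP^{n+1}$. By irreducibility of~$X$ this would force $J(p,L) = X$, contradicting $\deg X = 3$; hence $X^\sing = L$. I expect the most delicate moment in the argument to be the Bertini step, where one must simultaneously guarantee integrality of the plane section, the transversality of~$\Pi$ with~$Y$ that identifies $\deg Y$ with an unweighted point count, and the inclusion of $\Pi \cap Y$ in $C^\sing$.
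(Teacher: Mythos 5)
Your proof is correct, but it takes a genuinely different route from the paper's. The paper argues by induction on~$n$: assuming a top-dimensional component~$Y\subset X^\sing$ is \emph{not} linear, it picks a point~$y\in Y$ through which the lines to general points of~$Y$ sweep out~$X$, concludes that~$X$ is a cone with vertex~$y$ over a nonnormal cubic~$Z\subset\bbP^n$, and applies the induction hypothesis to~$Z^\sing$; the base case~$n=1$ is the classification of integral plane cubics as nodal or cuspidal. You instead slice by a general $2$-plane and invoke the arithmetic genus bound~$p_a=1$ for an integral plane cubic to get~$\deg Y\le 1$ directly, with no induction and no analysis of the cone structure; your third step (the join~$J(p,L)$ would be an $n$-plane inside the cubic) then disposes of extra components of~$X^\sing$, a point the paper's write-up treats only implicitly. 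Both arguments ultimately rest on the same two facts --- Serre's criterion and the multiplicity count showing that a line through two singular points of a cubic lies on it --- but your version is more self-contained and, pleasantly, makes the ``at most one singular point on an integral plane cubic'' input explicit via~$\sum_P\delta_P\le p_a$ rather than via the nodal/cuspidal classification. The only places requiring care are exactly the ones you flag: over an arbitrary algebraically closed field (the setting of this section of the paper) you need Bertini in Jouanolou's form to guarantee that the general plane section is geometrically integral and that the general complementary-dimensional section of~$Y$ is reduced of cardinality~$\deg Y$; both hold over any infinite field, so the argument goes through.
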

\begin{proof}
We argue by induction on~$n \ge 1$. For~$n=1$ the only nonnormal integral cubic plane curves are the nodal and the cuspidal plane cubics, which both have only one singular point. Suppose~$n \ge 2$. By Serre's criterion of normality the singular locus of~$X$ has dimension~$n - 1$. Let~$Y \subset X^\sing$ be an irreducible component of dimension~$n-1$ and suppose that~$Y$ is not a linear subspace. Pick a point~$y \in Y$ such that the line through~$y$ and a general point~$y' \in Y \smallsetminus \{ y \}$ is not contained in~$Y$. As~$\deg X =3$ and~$Y \subset X^\sing$, every such line is contained in~$X$. An open subset of~$X$ is thus covered by lines going through~$y$. It follows that~$X$ is a cone with vertex~$y$ over an integral cubic hypersurface~$Z\subset \bbP^n$, thus~$Y^\sing$ is a cone with vertex~$y$ over~$Z^\sing$. In particular we have~$\dim Z^\sing=n-2$, hence the cubic hypersurface~$Z$ is not normal. By the induction hypothesis,~$Z^\sing$ is a linear subspace of dimension~$n-2$, thus~$Y^\sing$ is a linear subspace of dimension~$n-1$.
\end{proof}

\begin{lemma} \label{Lemma:ClassificationNonnormalCubics} Let~$X\subset \bbP^{n+1}$ be a nonnormal integral cubic hypersurface with~$n \ge 3$. 
\begin{enumerate}
\item If~$n = 3$, then~$X$ is either a cone over a nonnormal cubic surface or the twisted plane from \cref{Ex:TwistedPlane}. \smallskip
\item If~$n \ge 4$, then~$X$ is a cone over a nonnormal integral cubic threefold.
\end{enumerate}
\end{lemma}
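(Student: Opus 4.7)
The plan is to work in explicit coordinates adapted to the singular locus. By \cref{lemma singular locus nonormal cubic}, $L := X^\sing$ is a linear subspace of $\bbP^{n+1}$ of dimension $n-1$. I would choose homogeneous coordinates $x_0, \ldots, x_{n+1}$ so that $L = \{x_0 = x_1 = 0\}$. Since the defining cubic $F$ vanishes to order at least~$2$ along~$L$, it has the shape
\[ F \;=\; x_0^2 A + x_0 x_1 B + x_1^2 C \]
for three linear forms $A, B, C \in k[x_0, \ldots, x_{n+1}]$. Let $A', B', C'$ be the restrictions to $L$, i.e.\ the parts in $k[x_2, \ldots, x_{n+1}]$.

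The analysis rests on two elementary observations. First, a direct computation (passing to new coordinates in which a chosen point $v \in L$ becomes a coordinate vertex and asking when $F$ no longer involves that coordinate) shows that $v$ is a cone vertex of $X$ if and only if $A'(v) = B'(v) = C'(v) = 0$. Second, $A', B', C'$ cannot all vanish identically: otherwise $F$ would involve only $x_0, x_1$ and factor as a product of three linear forms, contradicting the integrality of $X$.

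With this in hand the case analysis is short. When $n \ge 4$, the three linear forms $A', B', C'$ on $L \simeq \bbP^{n-1}$ automatically have a common zero, because three hyperplanes in $\bbP^{n-1}$ intersect in dimension $\ge n - 4 \ge 0$; so $X$ has a cone vertex $v \in L$ and is a cone over a cubic hypersurface $Z \subset \bbP^n$ of dimension $n-1$. The singular locus of $Z$ contains $L \cap H$ for a general hyperplane $H$, of dimension $n-2$, so $Z$ is itself integral and nonnormal, and iterating the argument yields the cone structure over a nonnormal cubic threefold claimed in~(2). For $n = 3$ with $A', B', C'$ linearly dependent, a common zero in $L \simeq \bbP^2$ still exists (two dependent nonzero forms on $\bbP^2$ already share one), so $X$ is a cone over an integral cubic surface whose singular locus contains a line and is therefore nonnormal.

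The delicate step, and the one where I expect the bulk of the work, is the remaining case $n = 3$ with $A', B', C'$ linearly independent. Here $X$ admits no cone vertex, and the goal is to produce an explicit projective isomorphism to the twisted plane of \cref{Ex:TwistedPlane} using only coordinate changes that preserve $L$. My plan is as follows: first apply a $\GL_3$-change in $(x_2, x_3, x_4)$ to bring $(A', B', C')$ into the standard form $(x_4, x_2, x_3)$; then eliminate the remaining $x_0, x_1$ contributions to $A, B, C$ by shifts $x_i \mapsto x_i + a_i x_0 + b_i x_1$ for $i = 2, 3, 4$, which preserve $L$ and the shape of $F$. These shifts translate into two $3 \times 3$ linear systems sharing the coefficient matrix of $(A', B', C')$ in the basis $(x_2, x_3, x_4)$, which is invertible by assumption, so unique solutions exist. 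After these reductions $F$ equals $x_0^2 x_4 + x_0 x_1 x_2 + x_1^2 x_3$, projectively equivalent to the equation of \cref{Ex:TwistedPlane} by a permutation of coordinates. The main obstacle is bookkeeping the simultaneous coordinate changes so that the standard form $F = x_0^2 A + x_0 x_1 B + x_1^2 C$ is preserved throughout and the shifts do not reintroduce unwanted terms.
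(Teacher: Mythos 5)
Your proof is correct and follows essentially the same route as the paper: both reduce via \cref{lemma singular locus nonormal cubic} to the normal form $x_0^2f_1+x_0x_1f_2+x_1^2f_3$ and then split according to whether the five linear forms $x_0,x_1,f_1,f_2,f_3$ span the whole space of linear forms. Your condition that the restrictions $A',B',C'$ have a common zero on $X^{\sing}$ is precisely the condition that this span is proper, so your cone-vertex criterion and the explicit coordinate shifts normalizing to the twisted plane are a more detailed rendering of the paper's shorter argument (which simply takes $f_1,f_2,f_3$ as coordinate functions in the spanning case).
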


\begin{proof} By \cref{lemma singular locus nonormal cubic}, the singular locus is a linear subspace of dimension~$n - 1$, which we may assume to have equations~$x_0 = x_1 = 0$. In such coordinates, the hypersurface~$X$ is defined by an equation of the form
\[
x_0^2 f_1 + x_0x_1f_2 + x_1^2f_3 = 0
\]
for some linear forms~$f_1,f_2,f_3$. Let~$m+2$ be the dimension of the linear span of the linear forms~$x_0$,~$x_1$,~$f_1$,~$f_2$ and~$f_3$. If~$n > m$, then~$X$ is the cone over a cubic hypersurface in~$\bbP^{m+1}$ which is necessarily integral and nonnormal. Since~$m \le 3 \le n$ the equality~$n = m$ holds only for~$m = 3$ in which case we may choose~$f_1,f_2,f_3$ as coordinate functions and obtain that~$X$ is the twisted plane in \cref{Ex:TwistedPlane}.
\end{proof}

With the above lemma, the proof of \cref{Prop:NonNormalCubicHypersurfaces} will be reduced at the end of the next section to the study of nonnormal cubic surfaces given below.

\subsection{Nonnormal cubic surfaces} \label{sec:NonnormalCubicSurfaces}

In order to conclude the proof of \cref{Prop:NonNormalCubicHypersurfaces} we need to describe nonnormal cubic surfaces which are not cones over nonnormal plane cubic curves. To do this, for a point~$v \in \bbP(V^\vee) \smallsetminus S$ let~$W_v \subset V$ be the subspace of linear forms vanishing at~$v$. Let
\[ p_v \colon \quad \bbP(V^\vee) \smallsetminus \{ v \} \; \too \; \bbP(W_v^\vee)\]
be the corresponding projection. The geometry of the projection~$p_v \colon S \to p_v(S)$ depends on the image of~$v$ via the projection with center~$E$ that we denote by
\[ q \colon \quad \bbP(V^\vee)\smallsetminus E \; \too \; \bbP(V'^\vee) \quad \textup{where} \quad V' := \rH^0(S, \cO_{S}(2)(-2E)).\]
More precisely, the line bundle~$\cL := \cO_{S}(1)(-E)$ is globally generated on~$S$, hence the restriction of~$q$ to~$S \smallsetminus E$ extends to a morphism~$q \colon S \to \bbP(V'^\vee)$. The latter is  the composite morphism
\[ S \; \too \; \bbP(\rH^0(S, \cL)^\vee) \; \intoo \; \bbP(\rH^0(S, \cL^{\otimes 2})^\vee) = \bbP(V'^\vee)\]
where the second map is the second Veronese embedding. Note that the first map can be identified with the projection~$S \to E$ onto the exceptional divisor. Let
\[ C \; := q(S) \;  \subset \; \bbP(V'^\vee)\]
be the smooth conic which is the image of the previous composite morphism. For any~$v' \in \bbP(V'^\vee)$, the closure of~$q^{-1}(v')$ in~$\bbP(V')$ is a plane~$P_{v'}$ which intersects~$S$ in the following way:
\begin{itemize}
\item If~$v' \in C$, then $S \cap P_{v'} = L \cup E$ where~$L \subset S$ is the strict transform of a line in~$\bbP^2$ passing through the point~$o$. \smallskip
\item If~$v' \not \in C$, then $S \cap P_{v'} = E$,  because~$C = q(S)$ and~$E$ is the center of the projection~$q$.
\end{itemize}
Recall that the strict transform of a line~$L \subset \bbP^2$ is mapped to a line if~$L$ contains the point~$o$, and to a smooth conic otherwise. With this in mind, we have:

\begin{proposition} \label{prop:ProjectionHirzebruchSurface} The image~$X_v := p_v(S) \subset \bbP(V'^\vee)$ is a nonnormal cubic surface and its singular locus~$X_v^\sing \subset \bbP(V'^\vee)$ is a line. The morphism~$p\colon S \to X_v$ is finite birational and on the ramification locus~$B \subset S$ it is of degree~$2$. More precisely:
\begin{itemize}
\item If~$q(v) \in C$, then~$B = L \cup E$ where~$L \subset S$ is the strict transform of a line in~$\bbP^2$ passing through~$o$ and~$p_v \colon L \cup E \to X_v^\sing$ identifies the two lines. \smallskip
\item If~$q(v) \not \in C$, then~$B$ is a smooth conic and~$q \colon B \to X_v^\sing$ is a degree~$2$ map.
\end{itemize}
\end{proposition}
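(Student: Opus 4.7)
I would first show that $p := p_v|_S \colon S \to X_v$ is a finite birational morphism onto a nondegenerate cubic surface, and that this cubic surface is nonnormal. Since $v \notin S$, no line through $v$ can be contained in $S$ (otherwise $v \in S$), so each fiber $p^{-1}(x) = S \cap \overline{vx}$ is $0$-dimensional; combined with properness this yields finiteness. The line bundle $p^\ast \cO(1)$ on $S$ equals $\cO_S(2)(-E)$, whose self-intersection is $3$, so $\deg p \cdot \deg X_v = 3$. Moreover $X_v$ is nondegenerate in $\bbP(W_v^\vee) \cong \bbP^3$, because any containing hyperplane would pull back to a hyperplane of $\bbP(V^\vee)$ passing through both $S$ and $v$, contradicting the nondegeneracy of $S \subset \bbP(V^\vee)$; hence $\deg X_v \ne 1$, so $\deg X_v = 3$ and $\deg p = 1$. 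Were $X_v$ normal, $p$ would then be an isomorphism by Zariski's main theorem and $X_v \simeq S$ would be a smooth cubic surface in $\bbP^3$, hence a del Pezzo of degree $3$ and therefore $\bbP^2$ blown up in six points; but $S = \Bl_o \bbP^2$ satisfies $K_S^2 = 8 \ne 3$, a contradiction. So $X_v$ is nonnormal, $p$ is its normalization, and $X_v^\sing$ equals set-theoretically the image $p(B)$ of the non-isomorphism locus $B \subset S$.

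To determine the singular locus and the ramification, I split into the two cases. If $q(v) \in C$, then $v \in P_{q(v)}$ with $P_{q(v)} \cap S = L \cup E$ for some ruling $L$, and projection from $v$ collapses the plane $P_{q(v)}$ onto a line $\ell \subset \bbP(W_v^\vee)$, mapping both $L$ and $E$ isomorphically onto $\ell$ since $v \notin L \cup E$. This yields $B = L \cup E$ and $p|_B \colon B \to \ell = X_v^\sing$ of degree $2$. If $q(v) \notin C$, then $v$ lies on no plane $P_t$, hence on no line of $S$ (as every line of $S$ lies in some $P_t$). I would then determine the class $[B] \in \Pic(S) = \bbZ E \oplus \bbZ f$ numerically: since $X_v$ is nonnormal along a curve with smooth normalization, $p|_B$ must be generically a $2$-to-$1$ cover of a line, giving $B \cdot H' = 2$ for the hyperplane class $H' = 2f + E$; writing $[B] = aE + bf$ forces $a + b = 2$, and the reducible alternatives (each of which would require $v$ to lie on some line of $S$) are ruled out, leaving $[B] = E + f$. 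Such a divisor is an irreducible smooth section of the ruling disjoint from $E$, which is precisely a smooth conic in $\bbP^4$. In both cases $X_v^\sing = p(B)$ is then a line, by the description of the normalization.

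\textbf{Main obstacle.} The technically most involved step is the case $q(v) \notin C$: establishing rigorously that $B$ is irreducible and reduced of class $E + f$, and that $p|_B$ is genuinely $2$-to-$1$ onto the line $X_v^\sing$. This hinges on a careful secant and tangent analysis of the cubic scroll $S$ from a point $v$ lying outside all of the special planes $P_t$, where the condition $q(v) \notin C$ is systematically used to exclude the degenerate configurations that would split $B$ or thicken it along a line of $S$.
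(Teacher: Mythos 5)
Your overall architecture matches the paper's (the same dichotomy $q(v)\in C$ versus $q(v)\notin C$, built around the planes $P_t$ and the conic bundle $q$), and two of your preliminary steps are arguably cleaner than what the paper writes down: the degree count $\deg p\cdot \deg X_v=3$ combined with nondegeneracy of $X_v\subset \bbP(W_v^\vee)$ to force birationality, and the $K_S^2=8\neq 3$ argument for nonnormality. (The paper instead deduces nonnormality from the explicit failure of injectivity, and gets that $X_v^\sing$ is a line by citing \cref{lemma singular locus nonormal cubic}, which is proved independently by induction, rather than reading it off the normalization.)

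There are, however, genuine gaps exactly where the paper does its real work. First, in the case $q(v)\in C$ you only establish the containment $L\cup E\subseteq B$: you show that $L$ and $E$ are identified, but give no argument that $p_v$ is injective and immersive on $S\smallsetminus(L\cup E)$ with image disjoint from $p_v(L\cup E)$. That reverse containment is the bulk of the paper's proof; it is obtained by a secant analysis through $q$ (for $x\notin L\cup E$ the line $\overline{xv}$ misses $E$, hence $\overline{xv}\cap S\subseteq q^{-1}(q(\overline{xv})\cap C)=L\cup L'$, and one checks $\overline{xv}$ meets this only in $x$). Second, in the case $q(v)\notin C$ your derivation of $B\cdot H'=2$ is circular: you assume the non-normal locus is a \emph{line} and that $p|_B$ is generically $2$-to-$1$, which is precisely what is to be proven (a priori the double curve could have degree $\neq 1$, or $B\to X_v^\sing$ could be $1$-to-$1$ onto a conic with a cuspidal edge). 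This step can be repaired honestly by the conductor formula $[B]=-H'-K_S=E+f$, which also closes the case-$1$ gap since $L+E\leq B$ and $[L+E]=[B]$ force equality; but even granting $[B]=E+f$, the effective divisors in that class are exactly $L+E$ (for a ruling $L$) and irreducible conics, and excluding $L+E$ here — as well as identifying $B$ with the specific conic cut out by the unique plane through $v$, whose existence and uniqueness the paper proves separately — still requires the synthetic incidence argument you defer to your ``main obstacle.'' As written, the proposal establishes the easy containments and the numerology, but not the injectivity statements on which the proposition actually rests.
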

\begin{proof} The nonnormality of~$X_v$ follows from the description of~$p_v \colon S \to X_v$. Indeed, if~$X_v$ were to be normal, then~$p_v$ would be an isomorphism because~$p_v$ is finite birational and~$S$ is smooth. But we will show that~$p_v$ is not an isomorphism, thus the surface~$X_v$ cannot be normal. The line bundle~$\cO_{S}(2)(-E)$ has self-intersection~$3$; thus~$S \subset \bbP(V^\vee)$ has degree~$3$ and the same is true for~$X_v \subset \bbP(W_v^\vee)$. By \cref{lemma singular locus nonormal cubic} the singular locus~$X_v^\sing$ is a line. \medskip

Suppose~$q(v) \in C$. In this case,~$B = P \cap S$ where~$P$ is the closure of~$q^{-1}(q(v))$. Indeed, as recalled above,~$P$ meets~$S$ in~$L \cup E$ where~$L$ is the strict transform of a line in~$\bbP^2$ passing through~$o$. When~$x \in L \cup E$, the line~$D$ joining~$x$ and~$v$ is contained in the plane~$P$. Therefore the scheme-theoretic intersection~$D \cap (L \cup E)$ is the cycle~$[x] + [y]$ where~$y \in E$ is equal to~$x$ if and only if~$x$ is the intersection point of~$L$ and~$E$. This shows that~$p_v \colon L \cup E \to X_v$ identifies the lines~$L$ and~$E$. To conclude it suffices to prove that~$p_v \colon S \smallsetminus B \to X_v$ is injective and takes values in~$X_v \smallsetminus p_v(B)$. Indeed, we show that given~$x \in S \smallsetminus (L \cup E)$ the line~$D$ joining~$x$ and~$v$ meets~$S$ only in~$x$. To see this, first notice that~$D \cap E = \varnothing$: Otherwise, the line~$D$ would lie in the closure~$P'$ of~$q^{-1}(q(x))$ and we would have~$q(x) = q(v)$.  Now~$D \cap S$ is contained in the preimage of~$q(D) \cap C$. The line~$q(D)$ meets the smooth conic~$C$ in the two distinct points~$q(x)$ and~$q(v)$, thus
\[ D \cap S \; \subset \; q^{-1}(q(D) \cap C) = L \cup L'\]
where~$P' \cap S = L' \cup E$ and~$L' \neq L$ is the strict transform of a line in~$\bbP^2$ passing through~$o$. The lines~$D$ and~$L$ do not meet (otherwise~$v$ would belong to~$S$) and~$D$ meets~$L'$ only in~$x$.\medskip

Suppose~$q(v) \not \in C$. We claim that there is a unique plane~$P \subset \bbP(V^\vee)$ containing~$v$ and the strict transform~$Q$ of a line in~$\bbP^2$ not passing through~$o$. For the uniqueness, let~$P$ and~$P'$ distinct be planes in~$\bbP(V^\vee)$ meeting~$S$ in the strict transform~$Q$ and~$Q'$ respectively of distinct lines in~$\bbP^2$ not passing through~$o$. Then~$P \cap P'$ is the meeting point of~$Q$ and~$Q'$, and in particular it is contained in~$S$. To see this, notice that~$P \cup P'$ is not contained in any hyperplane. If it were contained in some hyperplane~$H$ say, then we would have
\[ Q \cup Q' \; = \; (P \cup P') \cap S \; \subset \; H \cap P.\]
But by construction~$H \cap S$ is the strict transform of a (possibly singular) conic in~$\bbP^2$ that \emph{passes} through~$o$. It follows that~$P \cap P' = \{ v \}$ is contained~$S$, which contradicts the hypothesis~$v \not \in S$. For the existence, note that every line~$L$ in~$\bbP^2$ determines a plane~$P_L \subset \bbP(V^\vee)$ and these planes cover~$\bbP(V^\vee)$. Moreover, by construction the points in the cone in~$\bbP(V^\vee)$ over~$C$ are exactly the points lying in~$P_L$ for some~$L$ passing through~$o$. Since~$v$ is not one of those, we obtain the wanted plane. \medskip

We are now ready to conclude the proof. Let~$P$ be the plane containing~$v$ and the strict transform~$B$ of a line in~$\bbP^2$ not passing through~$o$. Then~$B \subset P$ is a smooth conic and any line through~$v$ meets~$B$ in two points (taking in account multiplicities). This shows that~$B \to X_v$ is of degree~$2$ onto its image. To conclude it suffices to show that~$p_v \colon S \smallsetminus B \to X_v$ takes values in~$X_v \smallsetminus p_v(B)$ and is injective. To do this, let~$x, y \in S$ be distinct points such that~$p_v(x) = p_v(y)$ and let~$D$ be the line joining~$x$,~$y$ and~$v$. Note that~$x$ and~$y$ cannot lie both in the exceptional divisor, otherwise we would have~$D = E$ hence~$v \in S$. It follows that~$x$ and~$y$ have distinct images under the blow-up map~$\pi \colon S \to \bbP^2$. Let~$L \subset \bbP^2$ be the line joining~$\pi(x)$ and~$\pi(y)$, and let~$Q \subset S$ be its strict transform. If the blow-up point~$o$ were to belong to~$L$, then~$Q$ would be mapped to a line in~$\bbP(V^\vee)$ containing~$x$ and~$y$, implying~$D = Q$ and~$v \in S$ again. Therefore~$L$ does not contain~$o$, so~$Q$ is embedded as a smooth conic in~$\bbP(V^\vee)$. To conclude, notice that the plane~$P'$ determined by~$Q$ must be~$P$: Otherwise, as argued above,~$P$ and~$P'$ would meet in a single point which belongs to~$S$; but~$v \in P \cap P'$ so that would imply~$v \in S$ once more. Finally, we have~$ P = P'$ hence~$Q = B$.
\end{proof}

\begin{proof}[{Proof of \cref{Prop:NonNormalCubicHypersurfaces}}]
We choose homogeneous coordinates~$x_0,x_1,x_2$ on~$\bbP^2$ and identify~$S$ with the blow-up of~$\bbP^2$ in~$o = [1:0:0]$. Then a basis of~$V$ is given by the monomials $u_{ij} = x_i x_j$ with $0 \le i \le j \le 2$ and $(i,j) \neq (0,0)$, and~$S$ is cut out by the vanishing of the polynomials $h_1= u_{02}u_{11} - u_{01}u_{12}$, $h_2 =u_{01}u_{22}- u_{02}u_{12}$ and $ h_3 =u_{11}u_{22}- u_{12}^2$.
With this notation the vector space~$V'$ is spanned by~$u_{11}$,~$u_{12}$ and~$u_{22}$ and the conic~$C$ has equation~$h_3 = 0$. If~$v \in \bbP(V)$ is defined by the vanishing of~$u_{01}$,~$u_{12}$,~$u_{22}$ and~$u_{02} + u_{11}$, then~$q(v) \in C$ and
\begin{equation} \label{eq:EqPointInConic} X_v : \quad 
 u_{12}^3 + u_{22}^2 u_{01} - u_{12}u_{22}(u_{02} + u_{11}) 
 = 0.
 \end{equation}
If instead~$v$ is defined by the vanishing of~$u_{ij}$ for~$(i,j) \neq (1,2)$, then~$q(v) \not \in C$ and 
\begin{equation} \label{eq:EqPointNotInConic}
X_v: \quad 
u_{02}^2u_{11} - u_{01}^2u_{22} 
= 0.
\end{equation}
The discussion preceding theorem 9.2.1 in \cite{Dol12} shows that up to a linear change of variables, any nonnormal integral cubic surface~$X \subset \bbP^3$  which is not a cone over a nonnormal plane cubic curve has equation \eqref{eq:EqPointInConic} or \eqref{eq:EqPointNotInConic}. Together with \cref{Lemma:ClassificationNonnormalCubics} and \cref{prop:ProjectionHirzebruchSurface} this concludes the proof.
\end{proof}

\section{On Fano varieties of lines}

We now prove some facts about Fano varieties of lines that will be used for \cref{Thm:E6SurfaceIsFanoSurfaceIntro}. Let~$k$ be an algebraically closed field. For a finite-dimensional~$k$-vector space~$V$, the Fano variety of lines on a subvariety~$X \subset \bbP(V)$ is the~$k$-scheme~$F_X$ whose points with values in a~$k$-scheme~$S$ are rank~$2$ vector bundles~$\cV 
\subset V \otimes_k \cO_S$ on~$S$ with locally free quotient such that~$\bbP(\cV) \subset \bbP(V) \times S$ is contained in~$X \times S$. Equivalently,~$F_X$ is the Hilbert scheme parametrizing subvarieties of~$X$ with Hilbert polynomial~$P(z) = z + 1$ with respect to line bundle~$\cO(1)$. 

\subsection{Rational normal scrolls} For an integer vector~$a = (a_1, \dots, a_r) \in \bbZ^r$ we put~$\cO_{\bbP^1}(a)  =  \cO_{\bbP^1}(a_1) \oplus \cdots \oplus \cO_{\bbP^1}(a_r)$. Let
\[p_a \colon \quad \bbP(a) = \bbP(\cO_{\bbP^1}(-a)) \; \too \; \bbP^1\] 
be the projective bundle of lines in~$\cO_{\bbP^1}(-a)$. If 
$ 0 \le a_1 \le \cdots \le a_r$ with $a_r > 0$, then the line bundle~$\cO_{\bbP(a)}(1)$ on~$\bbP(a)$ is generated by global sections. This defines a morphism
\[ f_a \colon \quad \bbP(a) \;\too\; \bbP(V(a)^\vee)
\quad \text{with} \quad V(a) \; = \; V(a_1) \oplus \cdots \oplus V(a_r) 
 \]
where $V(a_i) = \rH^0(\bbP^1, \cO_{\bbP^1}(a_i))$. 
The morphism~$f_a$ is birational and embeds each fiber $F$ of~$p_a$ in such a way that $f_a^\ast \cO(1) \iso \cO_F(1)$. Moreover,~$f_a$ is a closed immersion if and only if~$a_i > 0$ for all~$i = 1, \dots, r$. The image of~$f_a$ is called the \emph{rational normal scroll} of type~$a$ and will be denoted~$S(a)$. The rational normal scroll~$S(a)$ is smooth in the following cases \cite[p.~6]{EisenbudHarrisMinimalDegree}:
\begin{itemize}
\item $a_i > 0$ for all~$i$, in which case~$f_a \colon \bbP(a) \to S_a$ is an isomorphism; \smallskip
\item $a = (0, \dots, 0, 1)$, in which case~$S(a)$ is a projective~$r$-plane in~$\bbP(V(a)^\vee)$.\smallskip
\end{itemize}
Let~$F(a) = F_{S(a)}$ be the Fano variety of lines in~$S(a)$, and denote by~$\Gr_2(\cO_{\bbP^1}(-a))$ the Grassmannian of planes in~$\cO_{\bbP^1}(-a)$. 

\begin{proposition} \label{prop:FanoVarietiesOfScrolls} 
Suppose~$a_i > 0$ for all~$i = 1, \dots, r$. Then $F(a)=F(a)_v\sqcup F(a)_h$ with
\begin{itemize}
\item $F(a)_v = \Gr_2(\cO_{\bbP^1}(-a))$, \medskip
\item $F(a)_h = \bbP(W^\vee)$ where $W = V(1)^n$ and $n = |\{ i : a_i = 1\}|$.\smallskip
\end{itemize}
Moreover~$F(a)_h \subset \Gr_2(V(a)^\vee)$ is linearly embedded via the Pl\"ucker embedding. 
\end{proposition}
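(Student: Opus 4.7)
The plan is to classify lines on~$S(a)$ via the behavior of the restriction of~$p_a$ to them, and to identify each resulting family. For any line $\ell \subset S(a)$, the composition $p_a|_\ell \colon \ell \to \bbP^1$ is either constant---so $\ell$ sits in a single fiber $p_a^{-1}(x) \cong \bbP^{r-1}$---or surjective of some positive degree~$d$. I would first argue that $d = 1$ in the second case: the integer $d$ equals the intersection number $\ell \cdot F_x$ in~$S(a)$, which coincides with the number of points in~$\ell \cap F_x$ inside the ambient~$\bbP(V(a)^\vee)$; the hypothesis $a_i > 0$ makes $f_a$ a closed immersion embedding each fiber linearly as a $\bbP^{r-1}$, and a line not contained in a linear subspace meets it in at most one point. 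Hence $\ell$ is a section of~$p_a$, giving the decomposition $F(a) = F(a)_v \sqcup F(a)_h$.

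For the vertical component, lines in a fixed fiber $\bbP^{r-1}$ form the Grassmannian $\Gr(2, r)$, and globalising over the base~$\bbP^1$ produces the relative Grassmannian, yielding $F(a)_v = \Gr_2(\cO_{\bbP^1}(-a))$.

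For the horizontal component, a section $s \colon \bbP^1 \to \bbP(a)$ corresponds to a line sub-bundle $\cL \hookrightarrow \cO_{\bbP^1}(-a)$ with locally free quotient, satisfying $s^* \cO_{\bbP(a)}(1) = \cL^\vee$. The image $s(\bbP^1)$ is a line in~$\bbP(V(a)^\vee)$ precisely when $\deg \cL^\vee = 1$, forcing $\cL = \cO_{\bbP^1}(-1)$. Such sub-bundles, up to scalar, are parametrized by the projectivization of
\[
\Hom(\cO_{\bbP^1}(-1), \cO_{\bbP^1}(-a)) \;=\; \bigoplus_{i=1}^r \rH^0(\bbP^1, \cO_{\bbP^1}(1 - a_i)),
\]
and under the assumption $a_i \ge 1$ only the indices with $a_i = 1$ contribute. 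Packaging these summands yields the natural identification $F(a)_h = \bbP(W^\vee)$ with $W$ built from the $V(1)$-summands of~$V(a)$ indexed by $\{i : a_i = 1\}$.

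For the final assertion, I would check by direct computation that the $2$-plane in~$V(a)^\vee$ associated to each horizontal line is the image of~$V(1)^\vee$ under the dual of the surjection $V(a) \twoheadrightarrow V(1)$ induced on global sections by the section data; this surjection factors through $W \subset V(a)$, so every such $2$-plane lies inside the fixed subspace $W^\vee \subset V(a)^\vee$, and under Plücker its image sits in the linear subspace $\bbP(\wedge^2 W^\vee) \subset \bbP(\wedge^2 V(a)^\vee)$. The main delicacy I anticipate is to keep the various duality conventions coherent---line sub-bundles versus quotient line bundles, $\bbP(\cdot)$ versus its dual, and the two $\cO(1)$'s appearing in the construction---so that the identification of~$W$ matches its appearance in the Plücker embedding, rendering the asserted linearity of $F(a)_h \subset \Gr_2(V(a)^\vee)$ transparent.
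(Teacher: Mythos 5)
Your treatment of the decomposition $F(a)=F(a)_v\sqcup F(a)_h$ and of the two components follows essentially the same route as the paper. The one place you diverge is the justification that a non-vertical line is a section: you intersect $\ell$ with a fiber $F_x$, embedded as a linear $\bbP^{r-1}$, and use that a line meets a disjoint linear subspace in at most one point, whereas the paper deduces $\deg g=1$ from the fact that $\epsilon^\ast\cO_{\bbP(a)}(1)$ is a degree-one quotient of $g^\ast\cO_{\bbP^1}(a)$. Both work; yours is arguably more transparent. Likewise your dual formulation of the horizontal family (line subbundles $\cO_{\bbP^1}(-1)\hookrightarrow\cO_{\bbP^1}(-a)$) is equivalent to the paper's (degree-one quotients of $\cO_{\bbP^1}(a)$), and your computation $\Hom(\cO_{\bbP^1}(-1),\cO_{\bbP^1}(-a))=\bigoplus_i\rH^0(\cO_{\bbP^1}(1-a_i))$ correctly isolates the indices with $a_i=1$; note only that this gives a parameter space $\bbP^{n-1}$, and the identification with $\bbP(W^\vee)$ for the $2n$-dimensional $W=V(1)^n$ is a notational wrinkle you inherit from the statement rather than resolve (the examples in the paper, e.g.\ $F(1,1,1)_h=\bbP^2$, confirm that $\bbP^{n-1}$ is what is meant).

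The genuine gap is in your last step. Showing that every $2$-plane attached to a horizontal line lies in the fixed subspace $W^\vee\subset V(a)^\vee$, hence that the Pl\"ucker image of $F(a)_h$ lies in the linear subspace $\bbP(\Alt^2 W^\vee)$, does not prove that $F(a)_h$ is \emph{linearly embedded}: containment in a linear subspace says nothing about the degree of the embedding. If you carry out the computation you propose, the $2$-plane attached to $[\lambda]\in\bbP((k^n)^\vee)$ is $V(1)^\vee\otimes\langle\lambda\rangle$, whose Pl\"ucker point is $\Alt^2(V(1)^\vee)\otimes\lambda^{\otimes 2}$ --- quadratic in $\lambda$. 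So for $n\ge 2$ the map $\bbP^{n-1}\to\bbP(\Alt^2 W^\vee)$ is a Veronese followed by a linear map, not a linear embedding; the classical instance is $a=(1,1)$, where each ruling of the quadric $\bbP^1\times\bbP^1\subset\bbP^3$ is a \emph{conic} in $\Gr(2,4)$. To be fair, the paper's own one-line argument ("$L\mapsto L\otimes\Alt^2(H^\vee)$ is clearly linear") glosses over exactly the same point, and the linearity assertion as stated really only holds when $n\le 1$ (where it is vacuous); in the one later application with $n=3$ the contradiction sought there survives if one replaces "degree $1$" by "degree $4$". But as written, your argument does not establish the final assertion, and you should either restrict it to $n\le 1$ or replace "linearly embedded" by the correct degree.
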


\begin{proof} Since~$f_a$ is a closed embedding, a line in~$S(a)$ is the image of a closed embedding~$\epsilon \colon \bbP^1 \into \bbP(a)$ such that the line bundle~$\epsilon^\ast \cO_{\bbP(a)}(1)$ has degree~$1$. The morphism~$g := p_a \circ \epsilon \colon \bbP^1\to \bbP^1$ is either dominant or constant. We say that the corresponding line is \emph{horizontal} in the first case, and \emph{vertical} in the second. The Fano variety of lines decomposes correspondingly as
\[ F(a) = F(a)_h \sqcup F(a)_v \]
where the subscript~$h$ resp.~$v$ stands for horizontal resp. vertical. Vertical lines are parametrized by the relative Grassmannian~$\Gr_2(\cO_{\bbP^1}(-a))$. For horizontal lines, the condition~$\deg \epsilon^\ast \cO_{\bbP(a)}(1) = 1$ implies that the morphism~$g$ is an isomorphism, hence up to precomposing~$\epsilon$ by~$g^{-1}$, we may assume~$g = \id$. It follows that horizontal lines in~$S(a)$ are given by degree~$1$ line bundles on~$\bbP^1$ which are quotients of~$\cO_{\bbP^1}(a)$. Such quotients are necessarily quotients of~$\cO_{\bbP^1}(a_1, \dots, a_n) = \cO_{\bbP^1}(1)^n$ where~$n$ is as in the statement. Finally, twisting by~$\cO_{\bbP^1}(-1)$, this discussion shows that vertical lines in~$S(a)$ are given by surjective morphisms~$\cO_{\bbP^1}^n \to \cO_{\bbP^1}$. This identifies~$F(a)_h$ with~$\bbP(W^\vee)$ for $W = V(1)^n$. For the second claim write~$V := V(a) = W \oplus V'$ with
\[  V' = V(a_{n+1}) \oplus \cdots \oplus V(a_r).\] The Pl\"ucker embedding of~$\bbP(W^\vee) \into \bbP(\Alt^2 V^\vee)$ is given by~$L \mapsto L \otimes \Alt^2(H^\vee)$ which is clearly linear.
\end{proof}

\begin{example} \label{Ex:SmoothScrollsOfDegree3} The case when~$S(a)$ is smooth with~$a_1 + \cdots + a_r = 3$ will be relevant to us. Because of the previous characterization, there are three possibilities:
\begin{itemize}
\item $a = (3)$. In this case~$S(3) \subset \bbP^3$ is the twisted cubic, i.e.~the triple Veronese embedding of the projective line. Hence $F(3) = \varnothing$.
\item $a = (1,2)$. In this case~$S(1,2) \subset \bbP^4$ is the blow-up~$\pi \colon S \to \bbP^2$ of~$\bbP^2$ in a point and it is embedded via the line bundle~$\pi^\ast \cO_{\bbP^2}(2)(-E)$, where~$E \subset S$ is the exceptional divisor. Thus
\[
 F(1, 2)_v = \{ \textup{fibers of the projection~$S \to E$}\}, \qquad
 F(1,2)_h = \{ E \}.
\]
\item $a = (1,1,1)$. In this case~$S(1,1,1)$ is~$\bbP^1 \times \bbP^2$ embedded in~$\bbP^5$ via the Segre embedding. Then, the Pl\"ucker embedding
\[ F(1,1,1)_v  \into \bbP(\Alt^2 \cO_{\bbP^1}(-1,-1,-1))= \bbP(2,2,2) \cong \bbP^1 \times \bbP^2\]
is an isomorphism and~$F(1,1,1)_h = \bbP^2$.
\end{itemize}
\end{example}

\subsection{The Fano variety of a cone} Let~$\pi \colon V \to W$ be a surjective linear map between finite-dimensional~$k$-vector spaces. For~$i = 0, 1, 2$ consider the following closed subvariety of the Grassmannian of planes in~$V$:
\[ \Gr^i_2(V, \pi)  \; := \; \{ E \subset V \mid \dim (E \cap \ker \pi) \ge i \} \; \subset \; \Gr_2(V).\]
We have~$\Gr^0_2(V, \pi) = \Gr_2(V)$ and~$\Gr^2_2(V, \pi) = \Gr_2(\ker \pi)$. For~$i = 0, 1$ the induced morphism
\[ \pi_i \colon \quad  \Gr^i_2(V, \pi) \smallsetminus \Gr^{i+1}_2(V, \pi) \; \too \; \Gr_{2-i}(W), \quad E \; \longmapsto \; \pi(E)\]
is surjective and smooth. Moreover, given a plane~$E \subset W$, the closure of the fiber of~$\pi_0$ in the corresponding point~$[E] \in \Gr_2(W)$ is~$\Gr_2(\pi^{-1}(E))$.

\medskip

  Let~$Y \subset \bbP(W)$ be a nonempty subvariety and~$X \subset \bbP(V)$ the cone over~$Y$. Consider the respective Fano varieties of lines~$F_X \subset \Gr_2(V)$ and~$F_Y \subset \Gr_2(W)$. At the set-theoretical level we have 
\[ F_X \quad = \quad \pi_0^{-1}(F_Y) \; \sqcup \; \pi_1^{-1}(Y) \; \sqcup \; \Gr_2(\ker \pi). \]  
At the topological level, note that~$\Gr_2(\ker \pi)$ is contained in the closure of any fiber of~$\pi_1$. For the other components, we have:

\begin{proposition} \label{Prop:FanoOfACone} With the notation above,
\begin{enumerate}
\item If~$Y$ is covered by lines, then~$\pi_0^{-1}(F_Y)$ is dense in~$F_X$ and~$\pi_0$ sets up a bijection between the irreducible components of~$F_X$ and those of~$F_Y$. \smallskip
\item If~$Y' \subset Y$ is an irreducible component that is not covered by lines, then the closure of~$\pi_1^{-1}(Y')$ is an irreducible component of~$F_{X}$.  \smallskip
\item If~$Y$ contains no lines, then~$\pi_1$ sets up a bijection between the irreducible components of~$F_X$ and those of~$Y$.
\end{enumerate}
\end{proposition}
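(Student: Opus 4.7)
The plan is to analyze the three pieces of the set-theoretic decomposition
\[ F_X \;=\; \pi_0^{-1}(F_Y) \;\sqcup\; \pi_1^{-1}(Y) \;\sqcup\; \Gr_2(\ker \pi) \]
by dimension count, then refine by a closure argument controlling how the three strata degenerate into one another. Set $k = \dim \ker \pi$. The restrictions of $\pi_0$ and $\pi_1$ to the loci where they are defined are smooth and surjective with irreducible fibers of relative dimensions $2k$ and $2k-2$ respectively: the fiber of $\pi_0$ over $[E']$ is an open subset of $\Gr_2(\pi^{-1}(E'))$, and similarly for $\pi_1$ over a line $[L]$. Hence
\[ \dim \pi_0^{-1}(F_Y) = \dim F_Y + 2k, \quad \dim \pi_1^{-1}(Y) = \dim Y + 2k - 2, \quad \dim \Gr_2(\ker \pi) = 2k - 4, \]
and each closure $\overline{\pi_0^{-1}(F_{Y''})}$ or $\overline{\pi_1^{-1}(Y'')}$ is irreducible as $Y''$ ranges over irreducible components.

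Parts (1) and (3) are direct dimension comparisons. In (1), every irreducible component $Y''$ of $Y$ is covered by lines, so the standard incidence argument gives $\dim F_{Y''} \ge \dim Y'' - 1$, and therefore $\dim \pi_0^{-1}(F_Y)$ strictly exceeds the other two dimensions. Hence $\pi_0^{-1}(F_Y)$ is open dense in $F_X$, and the irreducibility of the fibers of $\pi_0$ induces a bijection on irreducible components with $F_Y$. In (3) one has $F_Y = \varnothing$, and the inequality $\dim \pi_1^{-1}(Y) > \dim \Gr_2(\ker \pi)$ yields the analogous statement via $\pi_1$.

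The main obstacle is (2). The closure $\overline{\pi_1^{-1}(Y')}$ is irreducible, so it suffices to show it is not properly contained in a larger irreducible closed subset $C \subset F_X$. The generic point of $C$ lies in exactly one of the three strata; the cases when it lies in $\Gr_2(\ker \pi)$ or in $\pi_1^{-1}(Y'')$ with $Y'' \neq Y'$ are immediate (by dimension, or because it would force $Y' = Y''$). The critical case is $C \subset \overline{\pi_0^{-1}(F_{Y''})}$ for some component $Y'' \subset Y$, which we rule out as follows. Pick a generic $[E] \in \pi_1^{-1}(Y')$ with $\pi(E) = \langle y \rangle$ and a family $[E_t] \to [E]$ with $[E_t] \in \pi_0^{-1}(F_{Y''})$. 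The graph of the rational map $\pi_0$ sits inside the closed incidence variety $\{([E], [L]) \in \Gr_2(V) \times \Gr_2(W) : \pi(E) \subset L\}$; since $F_{Y''} \subset \Gr_2(W)$ is closed and proper, a subsequence of $\pi(E_t)$ converges to some $L \in F_{Y''}$ satisfying $\langle y \rangle \subset L \subset Y''$. As $y$ was generic in $Y'$, this forces $Y' \subset Y''$, hence $Y' = Y''$ since both are irreducible components of $Y$. But then $Y'$ is covered by the lines $L$ as $y$ varies, contradicting the hypothesis. The injectivity of $Y' \mapsto \overline{\pi_1^{-1}(Y')}$ on components is automatic since $\pi_1(\pi_1^{-1}(Y')) = Y'$.
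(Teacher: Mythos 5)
Your part (2) is sound and is essentially the paper's own argument with the limit step made explicit: a vertical line over $y$ that is a limit of lines from the $\pi_0$-stratum forces, via the closed incidence correspondence $\{([E],[L]) : \pi(E)\subset L\}$, a line of $Y$ through $y$, and genericity of $y$ then contradicts the hypothesis on $Y'$. Part (1), however, has a genuine gap. You deduce that $\pi_0^{-1}(F_Y)$ is dense in $F_X$ from the inequality $\dim\pi_0^{-1}(F_Y)>\dim\pi_1^{-1}(Y)>\dim\Gr_2(\ker\pi)$. This is a non sequitur: $F_X$ is a disjoint union of three locally closed strata, and the fact that one stratum has the largest dimension does not prevent the closure of a smaller stratum from being an irreducible component of $F_X$ of smaller dimension --- the components of $F_X$ need not be equidimensional, as \cref{Ex:ConesOverNonnormalCubicSurfaces} shows even in a case where $Y$ \emph{is} covered by lines. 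What is actually needed, and what the paper proves, is the containment $\pi_1^{-1}(Y)\subset\overline{\pi_0^{-1}(F_Y)}$: given $[E]\in\pi_1^{-1}(y)$ and a line $\bbP(E')\subset Y$ through $y$ (which exists because $Y$ is covered by lines), one has $\langle y\rangle\subset E'$, hence $E\subset\pi^{-1}(\langle y\rangle)\subset\pi^{-1}(E')$, so $[E]\in\Gr_2(\pi^{-1}(E'))=\overline{\pi_0^{-1}([E'])}$. You state the needed ingredient (that the fiber of $\pi_0$ over $[E']$ is dense in $\Gr_2(\pi^{-1}(E'))$) but never use it; once this containment is supplied, the incidence estimate $\dim F_{Y''}\ge\dim Y''-1$ becomes superfluous.

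The same non sequitur appears in your part (3), where the containment $\Gr_2(\ker\pi)\subset\overline{\pi_1^{-1}(Y)}$ is attributed to the inequality $\dim\pi_1^{-1}(Y)>\dim\Gr_2(\ker\pi)$; it is harmless there only because the fact that $\Gr_2(\ker\pi)$ lies in the closure of every fiber of $\pi_1$ is already recorded in the setup preceding the proposition. With the containments in place, your bookkeeping of components (irreducibility of the closures of the strata over each component, and their pairwise non-containment via $\pi_0$, resp.\ $\pi_1$) does go through.
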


\begin{proof} (1) The subset~$\pi_1^{-1}(Y) \subset F_X$ parametrizes lines contained in the closure of fibers of~$X \dashto Y$. By hypothesis any~$y \in Y$ belongs to some line~$L \subset Y$. Therefore lines in the closure of fiber of~$X \dashto Y$ in~$y$ are contained in 
$ \Gr_2(\pi^{-1}(E)) = \overline{\pi_0^{-1}([E])}$
where~$L = \bbP(E)$ and~$[E] \in \Gr_2(W)$ is the point defined by~$E$. \smallskip

(2) Lines in the closure of fiber of~$X \dashto Y$ in a point~$y \in Y'$ belonging to no line contained in~$Y$ form a subset of~$\pi_1^{-1}(Y')$ not lying in the closure of~$\pi_0^{-1}(F_Y)$. \smallskip

(3) is clear.
\end{proof}

It will be useful to combine \cref{Prop:FanoOfACone} with the following remark: For each irreducible component~$F' \subset F_Y$ and~$Y' \subset Y$, we have
\begin{align*}
\dim \pi_0^{-1}(F') \; &= \; \dim F' + 2(\dim V - \dim W),\\
 \dim \pi_1^{-1}(Y') \; &= \; \dim Y' + 2(\dim V - \dim W - 1).
\end{align*}

\begin{example} \label{Ex:ConesOverEllipticCurves} Suppose~$\dim W = 3$ and that~$Y$ is an integral cubic plane curve. Then the preceding statement says that the Fano variety of lines in the cone~$X$ is irreducible of dimension~$2(\dim V -4)+1$.
\end{example}

\begin{example} \label{Ex:ConesOverCubicSurfacesWithFinitelyManyLines} Suppose~$\dim W = 4$ and that~$Y \subset \bbP(W)$ is an integral cubic surface with only finitely many lines. This is the case if~$Y$ is smooth, or if it has only isolated singularities and is not a cone over an elliptic curve in~$\bbP^2$ \cite{BruceWall, Sakamaki}. Then~$F_X$ is pure of dimension~$2(\dim V - 4)$ and its irreducible components (with the reduced structure) are the following: the closure of~$\pi_1^{-1}(Y)$ in~$\Gr_2(V)$ and, for each line in~$Y$, one component isomorphic to~$\bbP^{2(\dim V - 4)}$ linearly embedded with respect to the Pl\"ucker embedding.
\end{example}

\begin{example} \label{Ex:ConesOverNonnormalCubicSurfaces} Suppose~$\dim W = 4$ and that~$Y \subset \bbP(W)$ is a nonnormal integral cubic surface that is not a cone over a cubic plane curve. Borrow notation from \cref{sec:NonnormalCubicSurfaces}. 
By \cref{Prop:NonNormalCubicHypersurfaces} the cubic surface~$Y$ is obtained from 
the blow-up~$S$ of~$\bbP^2$ in a point,
\[ S = S(1, 2) \; \subset \; \bbP(V^\vee) \quad \textup{where} \quad V = V(1,2), \]
by projecting it from a point~$v \in \bbP(V^\vee)$ not in~$S$. Recall from \cref{Ex:SmoothScrollsOfDegree3} that 
\[ F_{S} \; = \; F_{S, h} \sqcup F_{S, v} \qquad \textup{with} \quad F_{S, h} = 
\{ \textup{pt} \}, \quad F_{S,v} \cong \bbP^1. \]
It follows from \cref{prop:ProjectionHirzebruchSurface} that such a projection induces a morphism between Fano varieties of lines~$f \colon F_S \to F_Y$. The morphism~$f$ is a closed embedding once restricted to irreducible components. Moreover, following the case distinction in \cref{prop:ProjectionHirzebruchSurface} there are two possibilities:
\begin{itemize}
\item if~$q(v) \in C$, then~$F_{Y, \red} = f(F_{S, v})$ has one irreducible component; \smallskip
\item otherwise~$F_{Y, \red} = f(F_{S, v}) \sqcup f(F_{S, h}) \sqcup \{ Y^\sing \}$ has three irreducible components (recall by \cref{lemma singular locus nonormal cubic} that the singular locus~$Y^\sing$ of~$Y$ is a line).
\end{itemize}
The surface~$Y$ is covered by lines, thus by \cref{Prop:FanoOfACone}, the morphism~$\pi_{0}$ sets up a bijection between the irreducible components of the Fano variety of~$X$ and that of~$Y$. Continuing with the above distinction, we have the following cases:
\begin{itemize}
\item if~$q(v) \in C$, then~$F_X$ is irreducible of dimension~$1 + 2 (\dim V - 4)$; \smallskip
\item otherwise~$F_X$ has one irreducible component of dimension~$1 + 2 (\dim V - 4)$ and two irreducible components isomorphic to~$\bbP^{2 (\dim V - 4)}$ linearly embedded with respect to the Pl\"ucker embedding.
\end{itemize}
\end{example}

\subsection{The Fano variety of singular cubic hypersurfaces} Let~$X \subset \bbP^{n+1}$ be an integral cubic hypersurface and~$F = F_X$ its Fano variety of lines. Let~$\cO_F(1)$ be the restriction to~$F \subset \Gr_2(V)$ of the line bundle defining the Pl\"ucker embedding.

\begin{proposition}\label{Prop:HilbertPolynomial} If~$n = 3$ and~$\dim F \le 2$, then $F$ is a locally complete intersection of pure dimension $2$, and for all~$i \ge 0$,
\[ \chi(F, \cO_F(i)) =  \textstyle 45 \binom{i+1}{2} - 45 i + 6.\]
\end{proposition}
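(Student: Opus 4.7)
The plan is to realize~$F$ as the zero locus of a section of a vector bundle on the Grassmannian of lines in~$\bbP^4$, and then apply the Koszul complex to express~$\chi(F, \cO_F(i))$ as a polynomial in~$i$ that is independent of~$X$.

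More precisely, let~$V$ be a~$5$-dimensional vector space with~$\bbP^4 = \bbP(V)$, let~$G := \Gr_2(V)$ be the six-dimensional Grassmannian of planes in~$V$, let~$\cS \subset V \otimes \cO_G$ be the tautological rank~$2$ subbundle, and set~$\cE := \Sym^3 \cS^\vee$, a vector bundle of rank~$4$ on~$G$. The cubic form~$f \in \Sym^3 V^\vee$ defining~$X$ induces a section of~$\cE$ whose zero locus coincides scheme-theoretically with~$F$. Since~$X$ is integral the form~$f$ is non-zero, so every irreducible component of the zero locus of the associated section has codimension at most~$\rk \cE = 4$ in~$G$, that is, dimension at least~$\dim G - \rk \cE = 2$. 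Combined with the hypothesis~$\dim F \le 2$ this forces~$\dim F = 2$ together with the equality of codimension and rank, which means that the section of~$\cE$ is regular.

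It follows that the Koszul complex
\[ 0 \;\too\; \wedge^4 \cE^\vee \;\too\; \wedge^3 \cE^\vee \;\too\; \wedge^2 \cE^\vee \;\too\; \cE^\vee \;\too\; \cO_G \;\too\; \cO_F \;\too\; 0 \]
is exact. Twisting by~$\cO_G(i)$ and taking Euler characteristics on~$G$ yields
\[ \chi(F, \cO_F(i)) \;=\; \sum_{k=0}^{4} (-1)^k \chi\bigl(G, \wedge^k \cE^\vee \otimes \cO_G(i)\bigr), \]
and by Hirzebruch-Riemann-Roch on~$G$ the right-hand side is a polynomial in~$i$ depending only on the Chern classes of~$\cS$ and~$\cO_G(1)$, hence \emph{not} on the particular cubic~$f$.

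To identify this polynomial it therefore suffices to compute it for a single integral cubic threefold with~$\dim F \le 2$, and the easiest choice is a smooth~$X$. Then~$F$ is a smooth connected surface, and from~$\omega_G = \cO_G(-5)$ together with~$\det \cE = \cO_G(6)$ adjunction on~$G$ yields~$\omega_F = \cO_F(1)$. The classical identities~$\chi(\cO_F) = 6$ and~$\omega_F^2 = 45$ (the latter following from~$c_2(F) = 27$ via Noether's formula), combined with Riemann-Roch on the surface~$F$, then give
\[ \chi(F, \cO_F(i)) \;=\; \chi(\cO_F) + \frac{1}{2} i(i-1)\, \omega_F^2 \;=\; 6 + \frac{45}{2} i(i-1) \;=\; 45 \binom{i+1}{2} - 45 i + 6, \]
as desired. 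The main subtle point in this approach is the verification that the defining section of~$\cE$ is regular, but this is forced by the dimension hypothesis together with the integrality of~$X$.
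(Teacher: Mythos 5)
Your proof is correct and follows essentially the same route as the paper's: both rest on the observation that the hypothesis $\dim F \le 2$ forces the defining section of $\Sym^3 \cS^\vee$ on the Grassmannian to be regular, so that the Koszul complex computes $\chi(F, \cO_F(i))$ independently of the particular cubic. The only difference is one of packaging --- the paper delegates the regularity criterion and the resulting formula to Altman--Kleiman, while you make the argument self-contained and evaluate the universal polynomial by specializing to a smooth cubic threefold, where the classical invariants $\chi(\cO_F)=6$, $\omega_F^2=45$ and $\omega_F=\cO_F(1)$ give the stated answer.
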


\begin{proof} With the notation in \cite{AltmanKleiman}, theorem 3.3~(iv) therein states that~$F$ is the zero locus of a regular section of the locally free sheaf~$\Sym^3 Q$. Note that the statement of theorem 3.3~(iv) gives a somewhat weaker statement: for each~$x \in F$ at which~$F$ has dimension~$\le 2$, the subvariety~$F$ is the zero subscheme of a section~$s_x$ of~$\Sym^3 Q$ which is regular at~$x$. Anyway, it is clear from its proof that the section~$s_x$ can be taken to be the same for all points~$x \in F$. When~$X$ has isolated singularities, the identity in the statement is equation (1.21.1) in \cite{AltmanKleiman}, whose proof relies on corollary 1.4, lemma 1.14 and prop. 1.15 (i) in loc. cit.. However, the proof of these facts only uses that~$F$ is the zero locus of a regular section of~$\Sym^3 Q$, hence holds under our more general assumptions.
\end{proof}

The main goal of this section is to complement \cite{AltmanKleiman} by describing precisely the cases where~$F$ is not of dimension~$2$. When~$X$ is a cone over a cubic surface with finitely many lines~$F$ has dimension~$2$; see \cref{Ex:ConesOverCubicSurfacesWithFinitelyManyLines}. On the other hand, in \cref{Ex:ConesOverNonnormalCubicSurfaces,,Ex:ConesOverEllipticCurves} we saw that the Fano variety of a cone over a nonnormal cubic surface or over an integral cubic plane curve has dimension~$3$. The twisted plane in \cref{Ex:TwistedPlane} also furnishes such an example:

\begin{example} \label{Ex:ConesOverTwistedPlane} Let~$X \subset \bbP^4$ be the twisted plane from \cref{Ex:TwistedPlane}, given by
\[  x_0^2 x_1+x_0 x_2 x_4+x_3 x_4^2=0. \]
The discussion in \cite[sect.~2.2.9]{RuledVarieties} shows that the Fano variety of lines~$F = F_X$ on~$X$ has three irreducible components:
\begin{itemize}
\item one~$3$-dimensional component is given by the lines in the~$1$-dimensional family of planes in~$X$ with equations, for~$[t_0 : t_1] \in \bbP^1$,
\[
t_1 x_0 - t_0 x_4 = 0, \qquad t_0^2 x_1 + t_0 t_1 x_2 + t_1^2 x_3 = 0;
\]
\item one~$2$-dimensional component is the dual of the plane~$X^\sing$ and is linearly embedded via the Pl\"ucker embedding; \smallskip
\item one~$2$-dimensional component obtained as the closure of the image of the morphism~$\bbP^2 \smallsetminus \{ [0:1:0], [0:0:1]\} \to \Gr_2(5)$ associating to~$[t_0 : t_1: t_2]$ the line joining the points
\[ [t_0^2 : t_1 t_2 : - t_0 t_2 : 0 : t_0 t_1], \quad [0 : t_1^2 : - 2 t_0 t_1 : t_0^2 : 0] \quad \in \; \bbP^4. \]
This component has degree~$4$ with respect to the Pl\"ucker embedding.
\end{itemize}
\end{example}

\noindent
The main result of this section states that these are the only cases of~$3$-dimensional Fano varieties. The argument goes through in any dimension and yields:

\begin{proposition} \label{Prop:ExpectedDimensionCubicHypersurface} For~$n = 3$ we have~$\dim F \le 3$ and equality holds if and only if~$X$ is either
\begin{itemize}
\item the twisted plane in \cref{Ex:ConesOverTwistedPlane}, or 
\item a cone over a nonnormal integral cubic surface, or 
\item a cone over a smooth cubic plane curve.
\end{itemize}
For~$n \ge 4$ we have~$\dim F \le 2n-3$ and if equality holds then~$X$ is a cone.
\end{proposition}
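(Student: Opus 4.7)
The plan is to split into cases according to the dimension of $X^{\operatorname{sing}}$ and the normality of $X$, and then to apply \cref{Prop:NonNormalCubicHypersurfaces} together with \cref{Prop:FanoOfACone} and the explicit Fano computations of \cref{Ex:ConesOverEllipticCurves}--\cref{Ex:ConesOverTwistedPlane}. If $X$ has only isolated singularities (including the smooth case), then $F$ has the expected dimension $2n-4 < 2n-3$, as follows from the Altman-Kleiman analysis already invoked in the proof of \cref{Prop:HilbertPolynomial}. In what follows I may therefore assume $\dim X^{\operatorname{sing}} \ge 1$.

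Suppose first that $X$ is nonnormal. By \cref{lemma singular locus nonormal cubic} its singular locus is a linear subspace of codimension one in $X$, and \cref{Prop:NonNormalCubicHypersurfaces} reduces me, for $n = 3$, to either the twisted plane of \cref{Ex:TwistedPlane} or a cone over a nonnormal cubic surface; for $n \ge 4$ it forces $X$ to be a cone over a nonnormal integral cubic threefold. The twisted plane gives $\dim F = 3$ by \cref{Ex:ConesOverTwistedPlane}; for the remaining cone subcases, iterating \cref{Prop:FanoOfACone} and using the examples \cref{Ex:ConesOverEllipticCurves,Ex:ConesOverCubicSurfacesWithFinitelyManyLines,Ex:ConesOverNonnormalCubicSurfaces} yields $\dim F \le 2n-3$ with equality only in the listed cases. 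Note that cones over smooth cubic surfaces have Fano dimension $2n-4$ by \cref{Ex:ConesOverCubicSurfacesWithFinitelyManyLines}, so they do not enter.

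The remaining case is $X$ normal with $\dim X^{\operatorname{sing}} \ge 1$, and here I will argue that any such $X$ is a cone. Let $C \subset X^{\operatorname{sing}}$ be a positive-dimensional component with linear span $\Lambda \subset \mathbb{P}^{n+1}$; the defining cubic form vanishes to order two along $C$, so its Hessian at each point of $C$ annihilates every tangent direction to $C$. A local coordinate computation adapted to $\Lambda$ then forces the cubic to depend on sufficiently few variables to exhibit $X$ as a cone with vertex a linear subspace containing $C$. Once $X$ is known to be a cone, \cref{Prop:FanoOfACone} combined with the examples pins down the only normal equality case for $n = 3$: a cone over a smooth plane cubic (\cref{Ex:ConesOverEllipticCurves}), since any integral singular plane cubic leads to a nonnormal cone already handled above. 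For $n \ge 4$ the statement merely asks that $X$ be a cone.

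The main obstacle is this last step: showing that a normal cubic hypersurface with non-isolated singular locus is necessarily a cone. Normality alone only forces codimension-$\ge 2$ singularities, so extracting a linear vertex subspace requires a careful polar-form analysis around the positive-dimensional singular stratum, together with an irreducibility argument for $X$ of degree three. Once this is in place, combining the three cases yields the upper bound $\dim F \le 2n-3$ and, in dimension three, the explicit classification into twisted plane, cone over nonnormal cubic surface, and cone over smooth plane cubic.
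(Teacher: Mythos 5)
Your handling of the nonnormal case matches the paper: reduce via \cref{Prop:NonNormalCubicHypersurfaces} to the twisted plane and to cones, then read off the Fano dimensions from \cref{Ex:ConesOverEllipticCurves,,Ex:ConesOverNonnormalCubicSurfaces,,Ex:ConesOverTwistedPlane}. The gap is in your remaining case: the statement you propose to prove there --- that a normal cubic hypersurface with positive-dimensional singular locus is necessarily a cone --- is false, so no amount of ``careful polar-form analysis'' will establish it. For $n=3$, a general cubic threefold singular along a line (take $x_0q_0+x_1q_1+x_2q_2=0$ with the quadrics $q_i$ vanishing on the line $x_0=x_1=x_2=0$) has singular locus of dimension one, hence is normal (a hypersurface is Cohen--Macaulay, and its singular locus has codimension two), yet it is not a cone; the chordal cubic threefold, i.e.\ the secant variety of the rational normal quartic in $\bbP^4$, is another such example in which the singular locus is not even contained in a proper linear subspace. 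For $n=4$ the symmetric $3\times 3$ determinantal cubic in $\bbP^5$, singular exactly along the Veronese surface, is normal and not a cone. Your case analysis therefore leaves all of these hypersurfaces uncovered, and the ``cone with vertex containing $C$'' you try to extract simply does not exist for them.

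The paper avoids any structure theory for normal cubics. Its key observation (\cref{lemma:DimensionFanoOfLinesThroughAFixedPoint}) is that if $X$ is \emph{not} a cone, then for every $x\in X$ the lines through $x$ form a family of dimension at most $n-2$: a component of dimension $\ge n-1$ would make the evaluation map from the universal line surjective onto $X$, exhibiting $X$ as a cone with vertex $x$. Combined with \cite[th.~4.2]{AltmanKleiman}, which gives that $F$ is smooth of dimension $2n-4$ at any line avoiding $X^\sing$, and with $\dim X^\sing\le n-2$ for normal $X$, the lines meeting the singular locus form a family of dimension at most $\dim X^\sing+(n-2)\le 2n-4$; this is \cref{lemma dim ysing at most one}, and it disposes of every normal non-cone at once, including the examples above, without classifying them. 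Replace your polar-form step by this incidence bound; the rest of your outline then goes through, with the normal cones for $n=3$ handled by \cref{Ex:ConesOverEllipticCurves,,Ex:ConesOverCubicSurfacesWithFinitelyManyLines} exactly as you indicate.
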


This is the combination of the following two lemmas with the classification of nonnormal cubic hypersurfaces in \cref{Prop:NonNormalCubicHypersurfaces}.

\begin{lemma} \label{lemma:DimensionFanoOfLinesThroughAFixedPoint} Let~$x \in X$ and~$F_{X, x} \subset F_X$ the subvariety of lines passing through~$x$. If~$X$ is not a cone, then
$ \dim F_{X, x} \le n-2$.
\end{lemma}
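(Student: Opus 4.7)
The plan is to argue by contraposition: we show that if $\dim F_{X,x} \ge n-1$, then $X$ is a cone with vertex $x$. Write $\bbP^{n+1} = \bbP(V)$, and consider the universal line $\cU \to F_{X,x}$, which is a $\bbP^1$-bundle, together with its tautological evaluation morphism $\ev \colon \cU \to \bbP(V)$. By construction $\ev$ takes values in $X$, and its image is closed since $\cU$ is projective over the projective scheme $F_{X,x}$. Set $Y_x := \ev(\cU) \subset X$; this is precisely the union of the lines in $X$ passing through~$x$.

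The key step is a dimension count: $\dim Y_x = \dim F_{X,x} + 1$. The inequality $\dim Y_x \le \dim \cU = \dim F_{X,x} + 1$ is immediate. For the reverse inequality, observe that any $y \in Y_x \smallsetminus \{x\}$ lies on a unique line of $\bbP(V)$ joining $x$ to $y$, so the fiber $\ev^{-1}(y)$ consists of a single point of $\cU$. Thus $\ev \colon \cU \to Y_x$ is generically injective, and the reverse inequality follows.

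To conclude, assume $\dim F_{X,x} \ge n-1$; then $\dim Y_x \ge n = \dim X$, and irreducibility of $X$ forces $Y_x = X$. In other words, for every $y \in X$ the line in $\bbP(V)$ joining $x$ and $y$ is contained in $X$. Choosing any hyperplane $H \subset \bbP(V)$ not containing $x$, one then verifies that $X$ coincides with the cone over $X \cap H$ with respect to the projection $V \to V/\langle x\rangle$ in the sense of the paper's definition: indeed, for any $z \in X \smallsetminus \{x\}$ the intersection point $y = \overline{xz} \cap H$ lies in $X \cap H$ and $z \in \overline{xy}$, while conversely all such lines $\overline{xy}$ have been shown to lie in $X$. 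This contradicts the hypothesis that $X$ is not a cone.

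There is no serious obstacle in this argument; the only subtle point is the generic injectivity of $\ev$, which however just reduces to the elementary fact that two distinct points in projective space determine a unique line. The result can be viewed as a variant of the classical observation that for a non-conic variety, the locus swept by lines through any fixed point has codimension at least two in the variety.
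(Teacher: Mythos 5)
Your argument is correct and is essentially the same as the paper's: both consider the universal line over $F_{X,x}$ with its evaluation morphism to $X$, use that two distinct points determine a unique line to get generic injectivity, and conclude from the dimension count that $X$ would be swept out by lines through $x$, i.e.\ a cone. The only cosmetic difference is that the paper works with a single irreducible component of $F_{X,x}$ of dimension $\ge n-1$, while you work with all of $F_{X,x}$ and spell out the identification of $X$ with the cone over a hyperplane section; both are fine.
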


\begin{proof} Suppose there is an irreducible component~$Z \subset F_{X, x}$ of dimension~$\ge n-1$. Consider the universal line~$\pi \colon P \to Z$ and the evaluation morphism~$f \colon P \to X$. The morphism~$f \colon P\smallsetminus f^{-1}(x) \to X$ is injective because there is a unique line passing through two distinct points. As~$P \neq f^{-1}(x)$, this implies
\[ n = \dim X \ge \dim P\smallsetminus f^{-1}(x) = \dim Z + 1 \ge n,\]
hence~$\dim Z = n-1$ and~$f$ is surjective. It follows that the lines in~$Z$ cover~$X$ contradicting the assumption that~$X$ is not a cone.
\end{proof}

\begin{lemma}\label{lemma dim ysing at most one}
If~$\dim X^\sing\le n-2$, then either~$\dim F \le 2n - 3$ or~$X$ is a cone.
\end{lemma}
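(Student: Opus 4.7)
My plan is to prove the lemma by contrapositive: assume~$X$ is not a cone and deduce~$\dim F \le 2n-3$ via the standard incidence-variety argument. The hypothesis~$\dim X^\sing \le n-2$ will actually play no role in my proof; it seems to be inherited from the setting of~\cref{Prop:ExpectedDimensionCubicHypersurface}, where the complementary case~$\dim X^\sing = n-1$ is handled separately by the classification in~\cref{Prop:NonNormalCubicHypersurfaces}.

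Concretely, I would introduce the universal line
\[
 P \;:=\; \{(x, L) \in X \times F \mid x \in L\} \;\subset\; X \times F,
\]
together with its two projections~$\pi_F\colon P \to F$ and~$\pi_X\colon P \to X$. The fiber of~$\pi_F$ over a point~$L\in F$ is the line~$L$ itself, so~$\pi_F$ is a~$\bbP^1$-bundle and hence~$\dim P = \dim F + 1$. The fiber of~$\pi_X$ over a point~$x\in X$ is precisely the subvariety~$F_{X,x}$ of lines in~$X$ through~$x$.

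The key input is~\cref{lemma:DimensionFanoOfLinesThroughAFixedPoint}: since~$X$ is not a cone, we have~$\dim F_{X,x} \le n-2$ for every~$x \in X$. Applying the fiber-dimension inequality to~$\pi_X$, I obtain
\[
 \dim F + 1 \;=\; \dim P \;\le\; \dim X + \max_{x\in X} \dim F_{X,x} \;\le\; n + (n-2) \;=\; 2n-2,
\]
whence~$\dim F \le 2n-3$, as required.

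I expect no serious obstacle: once the preceding lemma is in hand, the argument reduces to a standard dimension count. The only point that requires a small verification is the identification of~$\pi_F$ as a~$\bbP^1$-bundle, which is immediate since its fibers are the individual lines parametrized by~$F$.
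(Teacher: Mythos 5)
Your argument is correct, but it takes a genuinely different route from the paper's. The paper splits the Fano variety into the locus of lines avoiding~$X^\sing$, where it invokes the Altman--Kleiman smoothness theorem to get that~$F$ has dimension exactly~$2n-4$, and the locus of lines meeting~$X^\sing$, which it bounds by~$\dim X^\sing + \max_x \dim F_{X,x} \le (n-2)+(n-2)$ using the hypothesis on the singular locus together with~\cref{lemma:DimensionFanoOfLinesThroughAFixedPoint}. You instead run a single global incidence-variety count over all of~$X$: since~$\dim P = \dim F + 1$ via the~$\bbP^1$-bundle~$\pi_F$ and every fiber of~$\pi_X$ has dimension~$\le n-2$ when~$X$ is not a cone, you get~$\dim F \le n + (n-2) - 1 = 2n-3$ directly. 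Your observation that the hypothesis~$\dim X^\sing \le n-2$ is then superfluous is accurate: \cref{lemma:DimensionFanoOfLinesThroughAFixedPoint} applies at every point of~$X$, singular or not, so you in fact prove the slightly stronger statement that any non-cone integral cubic hypersurface has~$\dim F \le 2n-3$. What the paper's approach buys in exchange is the finer information that, under its hypotheses, the bound improves to~$2n-4$ away from (and even along) the singular lines, and it identifies the open smooth locus of~$F$ of the expected dimension; your approach buys simplicity and avoids the external input from Altman--Kleiman at this stage (it is still needed later, e.g.\ in \cref{Prop:HilbertPolynomial}). Both are valid proofs of the stated lemma.
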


\begin{proof} Suppose that~$X$ is not a cone. By \cite[th.~4.2]{AltmanKleiman} the variety~$F$ is smooth of dimension~$2n - 4$ at the point corresponding to any line not meeting the singular locus. Therefore, it remains to treat the case of lines going through singular points. Since~$\dim X^\sing\le n-2$ by hypothesis, \cref{lemma:DimensionFanoOfLinesThroughAFixedPoint} implies that~$F$ at a line going through a singular point has dimension~$\le 2(n-2)$. This concludes the proof. 
\end{proof}

\begin{proof}[{Proof of \cref{Prop:ExpectedDimensionCubicHypersurface}}] By \cref{lemma dim ysing at most one} it remains to treat the case when~$X$ is not normal. We then conclude by \cref{Prop:NonNormalCubicHypersurfaces} and \cref{Ex:ConesOverNonnormalCubicSurfaces,,Ex:ConesOverEllipticCurves,,Ex:ConesOverTwistedPlane}.
\end{proof}

\subsection{A construction} \label{sec:Construction} Let~$X$ be a smooth projective variety over~$k$,~$V$ a finite-dimensional~$k$-vector space and 
\begin{equation} \label{eq:SESDatum} 0 \too \cT \too V \otimes_k \cO_X \too \cN \too 0 \end{equation}
be a short exact sequence of vector bundles over~$X$ with $\rk \cT = 2$. Consider the composite morphism 
\[ \pi \colon \quad \bbP(\cT) \intoo \bbP(V) \times X \too \bbP(V).\]
Let~$Y \subset \bbP(V)$ be the image of~$\bbP(\cT)$ and~$F = F_Y$ the Fano variety of lines of~$Y$. The short exact sequence \eqref{eq:SESDatum} defines a morphism
\[ \gamma \colon \quad X \; \too \;  F.\]
Assuming the generic finiteness of~$\pi$, the main result of this section states that the generic line of~$Y$ parametrized by~$X$ cannot be deformed into a line that is not of the same nature:

\begin{proposition} \label{Prop:TheFlatnessArgument} Suppose~$\dim V > \dim X + 2$ and~$\pi$ generically finite. Then there is a nonempty open subset~$X' \subset X$ such that for every~$x \in X'$ the tangent map of~$\gamma$ is surjective:
\[ \rT_x \gamma \colon \quad \rT_x X \; \ontoo \; \rT_{\gamma(x)} F.\]
If~$\gamma$ is generically finite, then the image of~$\gamma$ is an irreducible component of~$F$.
\end{proposition}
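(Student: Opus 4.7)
The plan is to identify $\rT_x\gamma$ with the second fundamental form of~\eqref{eq:SESDatum}, translate the generic finiteness of $\pi$ into injectivity of $\rT_x\gamma$, and then compare the image of $\rT_x\gamma$ against $\rT_{\gamma(x)}F$ on a dense open subset; the component claim will follow by a dimension argument.

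First I would fix $x_0\in X$, choose a splitting of~\eqref{eq:SESDatum} near $x_0$, and write the subbundle $\cT$ locally as $\cT_x = (\id+A(x))\cT_{x_0}$ for a map $A\colon U\to \Hom(\cT_{x_0},\cN_{x_0})$ with $A(x_0)=0$. Under the standard identification $\rT_{\gamma(x_0)}\Gr_2(V) = \Hom(\cT_{x_0},\cN_{x_0})$, the tangent map $\rT_{x_0}\gamma$ becomes $dA_{x_0}$. A direct local computation shows that the differential $d\pi_{(x_0,[t])}$ is the identity on the vertical tangent space $\cT_{x_0}/\langle t\rangle$ and sends $v\in \rT_{x_0}X$ to $dA_{x_0}(v)(t)\in \cN_{x_0}$; hence $d\pi_{(x_0,[t])}$ is injective precisely when the evaluation $v\mapsto \rT_{x_0}\gamma(v)(t)\in \cN_{x_0}$ is. The generic finiteness of $\pi$ forces this evaluation to be injective for general $(x_0,[t])\in \bbP(\cT)$; a tangent vector in the kernel of $\rT_{x_0}\gamma$ would be annihilated by all these evaluations, so $\rT_{x_0}\gamma$ is itself injective on a dense open $X'\subseteq X$.

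Next I would compare the images. Set $L=\gamma(x_0)\cong\bbP^1$; under $\rT_{\gamma(x_0)}\Gr_2(V) = \rH^0(L,\cN_{L/\bbP(V)})$ the subspace $\rT_{\gamma(x_0)}F$ corresponds to $\rH^0(L,\cN_{L/Y})$. The map $\rT_{x_0}\gamma$ promotes to an $\cO_L$-linear sheaf morphism $\psi\colon \rT_{x_0}X\otimes \cO_L \to \cN_{L/\bbP(V)}$, obtained by composing the adjoint of $\rT_{x_0}\gamma$ with the tautological inclusion $\cO_L(-1)\hookrightarrow \cT_{x_0}\otimes \cO_L$. The local computation above identifies the fiber of $\psi$ with $\cN_{L/Y}$ at every point of $L$ where $Y$ is smooth and $\pi$ is \'etale; since this holds off a finite set of $L\cong\bbP^1$ and $\cN_{L/Y}\subseteq \cN_{L/\bbP(V)} \cong \cO_L(1)^{\oplus(\dim V-2)}$ is torsion-free, I would promote this generic equality to an equality $\mathrm{im}(\psi)=\cN_{L/Y}$ of subsheaves, whence to an equality of $\rH^0$-groups. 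This gives the desired surjectivity $\rT_{x_0}\gamma\colon \rT_{x_0}X\to \rT_{\gamma(x_0)}F$. Here the codimension hypothesis $\dim V>\dim X+2$ is used to guarantee the required control on $\cN_{L/Y}$.

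Finally, assuming $\gamma$ generically finite, $\gamma(X)\subseteq F$ has dimension $\dim X$, and the surjectivity just proved forces $\dim_{\gamma(x)}F \le \dim X$ at a general $x\in X'$. Since $X$ is projective, $\gamma(X)$ is closed in $F$, so it is an irreducible component. The hardest step will be the sheaf-theoretic matching in Step~3: the generic equality of $\mathrm{im}(\psi)$ with $\cN_{L/Y}$ a priori leaves a torsion cokernel supported at the finite set of points where $Y$ is singular or $\pi$ is ramified, and ruling this torsion out is precisely where the codimension condition $\dim V>\dim X+2$ has to do real work.
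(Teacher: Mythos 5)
Your proposal takes a genuinely different route from the paper: you try to prove surjectivity of $\rT_x\gamma$ by identifying its image, viewed inside $\rT_{\gamma(x)}\Gr_2(V)=\rH^0(L,\cN_{L/\bbP(V)})$, with $\rH^0(L,\cN_{L/Y})$ via a comparison of subsheaves of $\cN_{L/\bbP(V)}$ along the line $L=\bbP(\cT_x)$. The paper instead argues purely deformation-theoretically: a tangent vector of $F$ at $\gamma(x)$ is a rank-two free $k[\epsilon]$-submodule $\cV\subset V\otimes k[\epsilon]$ with $\bbP(\cV)\subset Y_{k[\epsilon]}$, and one produces the required lift to $\rT_xX$ by forming the fiber product $\bbP(\cT)_{k[\epsilon]}\times_{Y_{k[\epsilon]}}\bbP(\cV)$ over the locus where $\pi$ is finite flat, passing to a suitable flat closed subscheme $P'$ with $P'_\red=\bbP(\cT_x)$, showing $\pi\colon P'\to\bbP(\cV)$ is an isomorphism by ampleness of $\pi^\ast\cO(1)$ together with Nakayama, and then observing that $p\colon P'\to X$ factors through a $k[\epsilon]$-point of $X$ at $x$. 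Your local computations in Steps 1--2 (the identification of $d\pi$ with the evaluation $v\mapsto \rT_x\gamma(v)(t)$, the resulting generic injectivity, and the fiberwise identification of $\mathrm{im}(\psi)$ with $\cN_{L/Y}$ at points where $Y$ is smooth and $\pi$ is \'etale) are correct, as is the concluding dimension count for the component statement.

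However, there is a genuine gap at exactly the step you flag as hard, and it is not a technicality: two subsheaves of a locally free sheaf on $\bbP^1$ that agree on a dense open subset need not be equal, and need not even have the same space of global sections (compare $\cO_L(-1)\subset\cO_L$ with $\cO_L\subset\cO_L$: the generic fibers agree but $\rH^0$ jumps). Torsion-freeness of $\cN_{L/Y}$ only tells you that both subsheaves are locally free; it says nothing about their behaviour at the finitely many points of $L$ where $Y$ is singular or $\pi$ ramifies, and a discrepancy concentrated there is precisely what could make $\rH^0(L,\mathrm{im}(\psi))\subsetneq\rH^0(L,\cN_{L/Y})$ and destroy the surjectivity. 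You do not supply the argument that rules this out, and it is not clear that the hypothesis $\dim V>\dim X+2$ alone can do so along the lines you sketch; all of the content of the proposition is concentrated at these bad points, which is why the paper's proof works scheme-theoretically with the first-order neighbourhood (flatness over $k[\epsilon]$ plus Nakayama) rather than with generic fibers. As written, the proposal establishes the surjectivity only after saturation, which is strictly weaker than what the statement requires.
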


\begin{proof} 
Let~$Y' \subset Y$ be a nonempty open subset over which~$\pi$ is finite flat. We claim that~$X' = p(\pi^{-1}(Y'))$ does the job, where~$p \colon \bbP(\cT) \to X$ is the projection. To show this, given~$x \in X'$, let us consider a tangent vector to~$\gamma(x)$ in~$F$. By interpreting tangent vectors as points of~$F$ with values in~$R = k[\epsilon]$, this consists in a free~$R$-submodule~$\cV \subset V \otimes_k R$ of rank~$2$ with free cokernel and such that~$\bbP(\cV)$ is contained in~$Y_{R}$. Moreover, saying that such a tangent vector lies over~$\gamma(x)$ means that
\[ \bbP(\cV)_{\red} = \bbP(\cT_x) \subset \bbP(L).\]
Finally, the fact that~$x$ belongs to~$X'$ boils down to saying that~$\bbP(\cV)_{\red}$ meets~$Y'$. It follows that the fiber product
\[ P:= \bbP(\cT)_{R} \times_{Y_{R}} (\bbP(\cV) \cap V_{R}) \]
is nonempty and flat over~$R$. Better, an irreducible component of~$P_{\red}$ is the open subset
\[ \bbP(\cV)_{\red}  \cap Y' \; = \; \bbP(\cT_x) \cap Y' \quad \subset \; \bbP(\cT_x).\]
In particular, the open subset~$P \smallsetminus \overline{P \smallsetminus \bbP(\cT_x)}$ is nonempty and flat over~$R$. Consider its scheme-theoretic closure~$P'$ in~$\bbP(\cT)_{R}$. Then~$P'$ is flat over~$R$ and~$P'_{\red} = \bbP(\cT_x)$. It follows that the morphism~$\pi_{\rvert P'} \colon P' \to \bbP(\cV)$ is an isomorphism. Indeed the morphism~$\pi_{\rvert P'}$ is finite hence the line bundle~$\cL = \pi^\ast \cO_{\bbP(\cV)}(1)$ is ample on~$P'$. Since~$\pi$ induces an open immersion~$P \smallsetminus \overline{P \smallsetminus \bbP(\cT_x)} \into \bbP(\cV)$, for each~$d \ge 0$ the morphism
\[ \pi^\ast \colon \rH^0(\bbP(\cV), \cO(d)) \too \rH^0(P', \cL^{\otimes d})\]
is injective. By Nakayama's lemma, it is actually an isomorphism because it is an isomorphism modulo~$\epsilon$. Therefore, it induces an isomorphism
\[ \pi \textstyle \colon \quad P' = \Proj \Bigl( \bigoplus_{d \ge 0} \rH^0(P', \cL^{\otimes d}) \Bigr) \;  \stackrel{\sim}{\too} \; \bbP(\cV) = \Proj \Bigl( \bigoplus_{d \ge 0} \rH^0(\bbP(\cV), \cO(d)) \Bigr). \]
Now~$p \colon P' \to X$ factors through a morphism~$\Spec k[\epsilon] \to X$ with the unique point of~$\Spec  k$ being sent to~$x$: Indeed, the coherent sheaf of~$\cO_{X}$-algebras~$\cA = p_\ast \cO_{P'}$ is set-theoretically supported only on~$x$, so~$\cA$ is the skyscraper sheaf on~$x$ with value~$\rH^0(X, p_\ast \cO_{P'}) = \rH^0(P', \cO_{P'}) = k[\epsilon]$. Seeing this~$R$-valued point of~$X$ as a tangent vector to~$X$ in~$x$ gives the desired lifting and concludes the proof.
\end{proof}

As a sanity check, note that for~$X = \bbP^1$ the hypotheses of \cref{Prop:TheFlatnessArgument} are never fulfilled, hence this is not in contradiction to \cref{prop:FanoVarietiesOfScrolls}.

\section{Subvarieties with Tannaka group \texorpdfstring{$E_6$}{E6}}
\label{sec:proof-of-main-result}

Let~$A$ be an abelian variety of dimension~$g$ over an algebraically closed field~$k$ of characteristic zero. Putting together the results from the previous sections, we now complete our classification of smooth subvarieties with Tannaka group~$E_6$.

\subsection{Main result}
 
As recalled in the introduction, the Fano surfaces of lines on cubic threefolds are examples of subvarieties~$X\subset A$ with Tannaka group~$E_6$. The main result of this section is that these are the only possible examples among nondegenerate surfaces:

\begin{theorem} \label{Thm:E6SurfaceIsFanoSurface} Suppose~$g \ge 5$. For any smooth irreducible surface~$X \subset A$, the following three properties are equivalent:\smallskip
\begin{enumerate}
\item $X\subset A$ is nondivisible and nondegenerate with Tannaka group~$G_{X,\omega}^\ast \simeq E_6$.\smallskip
\item $X\subset A$ is nondivisible and nondegenerate with 
\[ \chi(X, \cO_X)=6, \qquad c_2(X)= 27,\]
the difference morphism~$ X\times X \to X-X$ has generic degree~$\ge 6$, and the sum morphism $X \times X \times X \to A$ has an irreducible fiber of dimension $\ge 3$.
\smallskip
\item $X$ is isomorphic to the Fano surface of lines on a smooth cubic threefold, and the canonical morphism~$\Alb(X) \to A$ is an isogeny. \smallskip
\end{enumerate}
\end{theorem}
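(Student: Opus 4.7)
The equivalence will be established via the cycle $(3) \Rightarrow (1) \Rightarrow (2) \Rightarrow (3)$. The first implication follows from \cite{KraemerCubicThreefolds} together with \cite[cor.~3.5]{JKLM}, which ensures that the derived connected component of the Tannaka group is preserved under isogenies of the ambient abelian variety. For $(1) \Rightarrow (2)$, the equality $c_2(X) = |\chi_\top(X)| = \dim \omega(\delta_X) = 27$ is immediate from the fact that $\omega(\delta_X)$ is the $27$-dimensional minuscule representation of $E_6$, and the bound $\deg(d) \geq 6$ is the content of \cref{Prop:DegreeOfDifferenceMorphism}. For $\chi(X, \cO_X) = 6$ one observes that the Hodge cocharacter of $\delta_X^H$ must produce a grading of length $\leq 2 = \dim X$ on the minuscule representation of $E_6$, forcing it up to conjugacy to be $\varpi_1^\vee$ by \cref{prop:E6}; by \cref{lem:hodge-estimate} this yields $h^0(X) = 6$ and thus $\chi(\cO_X) = 6$.

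The substantive implication $(2) \Rightarrow (3)$ I would prove by reconstructing the cubic threefold as the projective tangent cone to $D := X - X$ at the origin. Set $C := TC_0(D) \subset \Lie A$. A direct computation of the differential of the difference morphism along the diagonal identifies $\bbP(C) \subset \bbP(\Lie A)$ set-theoretically with the image of the canonical morphism
\[
 \pi \colon \quad \bbP(T_X) \too \bbP(\Lie A)
\]
induced by the inclusion $T_X \subset \Lie A \otimes \cO_X$. Nondegeneracy of $X$ in $A$ forces $\dim D = 4$, hence $\dim C = 4$, so $\pi$ is generically finite onto a three-dimensional $\bbP(C)$; moreover $\bbP(C)$ is nondegenerate in $\bbP(\Lie A) = \bbP^{g-1}$, since otherwise the subspaces $T_x X$ would lie in a proper sub-Lie algebra and $X$ would be contained in a translate of a proper abelian subvariety. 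Applying \cref{Prop:TheFlatnessArgument} to the short exact sequence $0 \to T_X \to \Lie A \otimes \cO_X \to N_{X/A} \to 0$ (using $g \geq 5 > \dim X + 2$) then produces a generically finite morphism $\gamma \colon X \to F_{\bbP(C)}$ whose image is a $2$-dimensional irreducible component of the Fano variety of lines on $\bbP(C)$, and a Segre class computation on $\bbP(T_X)$ combined with Noether's formula yields
\[
 \deg(\pi) \cdot \deg \bbP(C) \;=\; c_1(T_X)^2 - c_2(T_X) \;=\; 45 - 27 \;=\; 18.
\]

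The crucial numerical step now combines the lower bound $\deg(d) \geq 6$, the Pontryagin-product identity $\sigma_*[X \times X] = \deg(d) \cdot [D]$, and the relationship between the multiplicity of $D$ at the origin and the degree of its projective tangent cone, to pin down $g$ and $\deg \bbP(C)$. Together with the classical minimal-degree bound $\deg \bbP(C) \geq g - 3$ for a nondegenerate $3$-dimensional subvariety of $\bbP^{g-1}$, this forces $g = 5$ and $\deg \bbP(C) = 3$; the low-degree and higher-ambient-dimension possibilities are ruled out case by case, using that the $2$-dimensional Fano components of rational normal scrolls, cones, and quadric hypersurfaces (classified in \cref{Ex:SmoothScrollsOfDegree3} and \cref{Prop:FanoOfACone}) are all rational and thus incompatible with $\chi(\cO_X) = 6$. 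Hence $\bbP(C)$ is an integral cubic hypersurface in $\bbP^4$, and \cref{Prop:NonNormalCubicHypersurfaces,,Prop:ExpectedDimensionCubicHypersurface}, combined with \cref{Ex:ConesOverEllipticCurves,,Ex:ConesOverNonnormalCubicSurfaces,,Ex:ConesOverCubicSurfacesWithFinitelyManyLines,,Ex:ConesOverTwistedPlane}, rule out the remaining singular possibilities: either the Fano variety is irreducible of dimension $3$ with no $2$-dimensional component, or all its $2$-dimensional components are rational.

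With $\bbP(C)$ a smooth cubic threefold, $F_{\bbP(C)}$ is irreducible of dimension $2$ and has $\chi(\cO) = 6$ by \cref{Prop:HilbertPolynomial}, so the generically finite morphism $\gamma$ between smooth projective surfaces of equal Euler characteristic is an isomorphism; this identifies $X$ with the Fano surface of lines on $\bbP(C)$, and the canonical morphism $\Alb(X) \to A$ is an isogeny because $\dim \Alb(F_{\bbP(C)}) = 5 = g$. The main obstacle in this plan is the numerical step that simultaneously pins down $\deg \bbP(C)$ and $g$ from $\deg(d) \geq 6$: it requires a careful analysis of the interplay between the generic fiber of the difference morphism, the multiplicity of $D$ at the origin, and the Segre identity on $\bbP(T_X)$. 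The case-by-case elimination of non-cubic-threefold candidates for $\bbP(C)$, while laborious, rests primarily on the rationality obstruction furnished by $\chi(\cO_X) = 6$ together with the structural results of the preceding sections.
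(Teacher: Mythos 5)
Your architecture for $(2)\Rightarrow(3)$ coincides with the paper's: identify the projectivized tangent cone of $D=X-X$ at the origin with the image $Y$ of $\pi\colon\bbP(T_X)\to\bbP(\Lie A)$, use $\deg\pi\cdot\deg Y=c_1(X)^2-c_2(X)=18$ together with $\deg\pi\ge\deg d\ge 6$ and the minimal-degree bound to force $Y$ to be a cubic hypersurface in $\bbP^4$, then compare $X$ with the Fano variety $F$ of lines on $Y$ via \cref{Prop:TheFlatnessArgument}. But two of your decisive steps do not work as stated. First, your case elimination rests on the claim that a $2$-dimensional Fano component being rational is ``incompatible with $\chi(X,\cO_X)=6$''. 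It is not: $\gamma\colon X\to F$ is only generically finite, and a surface of general type with $\chi(\cO_X)=6$ can perfectly well dominate $\bbP^2$. The paper's actual mechanism is numerical: $\gamma^*\cL\simeq\Omega^2_X$ has self-intersection $c_1(X)^2=45$, so if the image component were a linearly embedded plane one would get $\deg\gamma=45$, whereas $\deg\gamma$ must divide both $\deg\pi\in\{6,9,18\}$ and $45$, hence $\deg\gamma\in\{1,3,9\}$; the remaining cases are killed by the dimension counts of \cref{Prop:FanoOfACone} (a $2$-dimensional $X$ cannot dominate a $3$-dimensional component). Your sketch supplies neither ingredient.

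Second, you never establish that the cubic $Y$ is smooth. Ruling out the $\dim F=3$ degenerations of \cref{Prop:ExpectedDimensionCubicHypersurface} still leaves all cubic threefolds with isolated singularities (e.g.\ nodal ones), whose Fano varieties are $2$-dimensional, and your closing step ``a generically finite morphism between smooth projective surfaces of equal $\chi(\cO)$ is an isomorphism'' is both false in general and circular, since $F$ is not yet known to be smooth at that point. The paper instead compares the full Hilbert polynomials $\chi(F,\cL^{\otimes i})=\chi(X,\gamma^*\cL^{\otimes i})$ for all $i$ (\cref{Prop:HilbertPolynomial}), deduces that the cokernel $Q$ of $\cO_F\to\gamma_*\cO_X$ has identically vanishing Hilbert polynomial, hence $Q=0$ and $\gamma$ is an isomorphism, and only then concludes that $F$, and finally $Y$ by Altman--Kleiman, is smooth. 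Lastly, the inequality $\deg\pi\ge\deg d$, which you flag as the ``main obstacle'' and propose to extract from multiplicities of $D$ at the origin, is proved in the paper (\cref{Prop:DegreeOfPiE6}) by base-changing the blown-up difference morphism to the exceptional divisor and using semicontinuity of the rank of the direct image of the structure sheaf; as written, your multiplicity argument is not yet a proof.
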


Combining this result with~\cref{cor:hodge-numbers}, we obtain the following classification of subvarieties with Tannaka group~$E_6$:

\begin{corollary} \label{Thm:E6VarietyIsFanoSurface} Let~$X \subset A$ be a  smooth irreducible subvariety with ample normal bundle and dimension~$< g/2$. Then the following are equivalent:
\begin{enumerate}
\item $X\subset A$ is nondivisible with Tannaka group~$G_{X,\omega}^\ast \simeq E_6$.\smallskip
\item $X$ is isomorphic to the Fano surface of lines on a smooth cubic threefold, and the canonical morphism~$\Alb(X) \to A$ is an isogeny.
\end{enumerate}
\end{corollary}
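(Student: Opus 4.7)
The plan is to reduce the corollary to the surface classification \cref{Thm:E6SurfaceIsFanoSurface}, using the Hodge-theoretic restriction \cref{cor:hodge-numbers} to pin down the dimension of~$X$.

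For the direction (2)~$\Rightarrow$~(1), the Fano surface of lines on a smooth cubic threefold, embedded in its Albanese variety, is nondivisible with Tannaka group~$E_6$ by \cref{Ex:CubicThreefoldE6}. Both properties descend along any isogeny~$\Alb(X)\to A$: for the Tannaka group this is the isogeny invariance of the derived connected component used in \cref{ex:TheFanoSurfaceAsAnE6}; for nondivisibility a short direct verification suffices, using that an isogeny has finite kernel and that~$X$ is irreducible.

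For (1)~$\Rightarrow$~(2), the key input is \cref{cor:hodge-numbers}. The hypotheses $d<g/2$ and $G_{X,\omega}^\ast\simeq E_6$ force $d=2$, $g\in\{5,6,7\}$, and Hodge numbers $h^0(X)=6$, $h^1(X)=15$, $h^2(X)=6$. In particular $\chi(X,\cO_X)=h^0(X)=6$ and, since $d$ is even, $c_2(X)=\chi_{\top}(X)=\dim V_X=27$, so the numerical part of \cref{Thm:E6SurfaceIsFanoSurface}(2) is automatic. Ample normal bundle is a strictly stronger positivity property than nondegeneracy (as noted in the discussion following \cref{Thm:E6VarietyIsFanoSurfaceIntro}), so \cref{Prop:DegreeOfDifferenceMorphism} applies and yields the remaining condition that the difference morphism has generic degree at least~$6$. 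With all three items of \cref{Thm:E6SurfaceIsFanoSurface}(2) established, that theorem delivers (2).

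The one substantive step is the dimensional reduction $d=2$ carried out by \cref{cor:hodge-numbers}; once~$X$ is known to be a surface, the remainder is bookkeeping plus an invocation of the representation-theoretic degree bound \cref{Prop:DegreeOfDifferenceMorphism}. Consequently, no genuinely new obstacle is encountered in the proof of this corollary beyond what has already been achieved in the surface case.
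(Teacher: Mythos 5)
Your proposal is correct and follows the paper's own route: the paper proves this corollary precisely by combining \cref{cor:hodge-numbers} (which forces $d=2$, $g\in\{5,6,7\}$ and the stated numerics) with \cref{Thm:E6SurfaceIsFanoSurface}, using that ample normal bundle implies nondegeneracy. The only cosmetic difference is that you pass through condition (2) of \cref{Thm:E6SurfaceIsFanoSurface} via \cref{Prop:DegreeOfDifferenceMorphism}, whereas one can invoke the implication (1)$\Rightarrow$(3) of that theorem directly; this is harmless since that implication is proved by the same intermediate step.
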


The rest of the section is devoted to the proof of \cref{Thm:E6SurfaceIsFanoSurface}. By the Lefschetz principle and \cite[cor. 4.4]{JKLM} we may assume~$k = \bbC$. In this case we already know from~\cref{sec:DifferenceMorphism} that (1)~$\Rightarrow$ (2), and (3)~$\Rightarrow$ (1) is clear from the example in \cref{Ex:CubicThreefoldE6} of the introduction. So the only remaining point is (2)~$\Rightarrow$ (3), which gives a numerical characterization of Fano surfaces of smooth cubic threefolds.

\subsection{Strategy of the proof}\label{section strategy of the proof}

Before coming to the details, let us outline the main steps in the proof of (2)~$\Rightarrow$ (3). The idea is to recover the cubic threefold as in the example in \cref{Ex:CubicThreefoldE6}. By assumption, the subvariety~$X\subset A$ is a surface with
\[ \chi(X, \cO_X) = \chi(X, \Omega^2_X) = 6, 
\qquad c_2(X) = 27. \]
By Noether's formula we then also know~$c_1(X)^2 = 45$ and by the Hirzebruch-Riemann-Roch theorem we have~$\chi(X, \Omega^1_X) = -15$.
Now we can use the short exact sequence
\begin{equation} \label{eq:SESNormalE6}
 0 \too \cT = T_X \too \Lie A \otimes_k \cO_X \too \cN = \cN_{X/A} \too 0,
 \end{equation}
 to compute the Chern classes of the normal bundle~$\cN$ and find:
\[ c_1(\cN)^2 = 45, \qquad c_2(\cN) = c_1(X)^2 - c_2(X) = 18.\]
Borrow notation from \cref{sec:Construction} with the short exact sequence \eqref{eq:SESDatum} being \eqref{eq:SESNormalE6} and~$V = \Lie A$. Since~$X$ is nondivisible, it is not invariant by translation under any nonzero abelian variety, thus the Gauss map~$\gamma\colon X \to \Gr_2(\Lie A)$ is a finite morphism \cite[prop. 3.1]{Deb95}. By interpreting~$c_2(\cN)$ as the top Segre class of~$T_X$, the positivity of~$c_2(\cN)$ implies that the morphism~$\pi \colon \bbP(T_X) \to \bbP(\Lie A)$ is generically finite onto its image.
The bootstrap result is the computation of the degree of~$\pi$. To do this, we use that the difference morphism
\[
 d\colon \quad X \times X \;\longrightarrow\; X - X \;\subset\; A
\]
is generically finite of degree~$\ge 6$ by assumption. 
It is then a completely general fact that~$\deg \pi \ge \deg d$ within this framework. More precisely:

\begin{lemma} \label{Prop:DegreeOfPiE6} Let~$Z \subset A$ be an integral smooth subvariety, and suppose that the difference morphism~$d_Z \colon Z \times Z \to A$ and the morphism~$\pi_Z \colon \bbP(T_Z) \to \bbP(\Lie A)$ are generically finite onto their images. Then
\[ \deg \pi_Z \ge \deg d_Z.\]
\end{lemma}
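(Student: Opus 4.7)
My plan is to prove this by a blow-up construction comparing fiber lengths on both sides. I would introduce $\widetilde{Z^2}$, the blow-up of $Z \times Z$ along the diagonal $\Delta_Z$, whose exceptional divisor is naturally identified with $E = \bbP(T_Z)$, together with $\widetilde{A}$, the blow-up of $A$ at the origin, whose exceptional divisor is $F = \bbP(\Lie A)$. Since $Z \subset A$ is smooth, the differential of $d_Z$ transverse to the diagonal is the inclusion $T_Z \hookrightarrow \Lie A$, which is injective; this will yield two consequences that drive the argument. First, scheme-theoretically $d_Z^{-1}(0) = \Delta_Z$, so by the universal property of the blow-up of $A$ at the origin, $d_Z$ lifts to a morphism $\tilde d \colon \widetilde{Z^2} \to \widetilde{A}$; a local-coordinate calculation will identify $\tilde d|_E$ with $\pi_Z$. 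Second, $d_Z$ vanishes to first order along $\Delta_Z$, so one obtains $\tilde d^\ast F = E$ as Cartier divisors.

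From the key identity $\tilde d^\ast F = E$ I get the scheme-theoretic equality $\tilde d^{-1}(F) = E$, and base-changing along the inclusion of any $v \in F$ gives
\[
 \tilde d^{-1}(v) \;=\; \pi_Z^{-1}(v)
\]
as subschemes of $\widetilde{Z^2}$. The endgame is to compute the length of this common fiber in two ways. On the one hand, $\tilde d$ agrees with $d_Z$ on the dense open $Z \times Z \smallsetminus \Delta_Z$, so it is birational to $d_Z$ and in particular proper and generically finite of degree $\deg d_Z$ onto its image. For $v$ generic in $\pi_Z(E)$ the fiber $\tilde d^{-1}(v) = \pi_Z^{-1}(v)$ is zero-dimensional by the generic finiteness of $\pi_Z$, and upper semicontinuity of fiber length for proper, generically finite morphisms of irreducible varieties of the same dimension will give $\mathrm{length}\,\tilde d^{-1}(v) \ge \deg d_Z$. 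On the other hand, in characteristic zero, for $v$ generic in $\pi_Z(E)$ the fiber $\pi_Z^{-1}(v)$ consists of $\deg \pi_Z$ reduced points and thus has length exactly $\deg \pi_Z$. Combining these two evaluations of the same length yields the claimed inequality $\deg \pi_Z \ge \deg d_Z$.

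The step I expect to require the most care is the identity $\tilde d^\ast F = E$ with multiplicity one: this is precisely where the smoothness of $Z \subset A$ enters essentially, and any extra multiplicity in the pullback would corrupt the length comparison. A related subtlety is that I must identify $\tilde d^{-1}(v)$ and $\pi_Z^{-1}(v)$ as schemes and not merely as sets, since the decisive step is an equality of lengths; this is why the Cartier formulation of $\tilde d^\ast F = E$ is crucial rather than a bare set-theoretic statement.
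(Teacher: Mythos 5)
Your proof is correct and follows essentially the same route as the paper's: both arguments blow up $Z\times Z$ along the diagonal and $A$ at the origin, use the scheme-theoretic identity $d_Z^{-1}(0)=\Delta_Z$ to identify the exceptional fibre of the lifted map with $\pi_Z$, and conclude by upper semicontinuity of the fibre degree of $\tilde d_\ast\cO$ over the (integral) image. The paper merely packages the final step as a general lemma on scheme-theoretic preimages, working with the blow-up of $D=Z-Z$ at the origin and its tangent cone where you use the Cartier-divisor pullback $\tilde d^\ast F=E$; the content is identical.
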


In particular we have~$\deg \pi \ge 6$. On other hand, by construction the degree of~$\pi$ has to divide~$c_2(\cN) = 18$. Therefore, there are only three possibilities:
\[ \deg \pi \in \{ 6, 9, 18 \}. \]
This constrains also the degree of~$\gamma$, as it must divide the degree of~$\pi$. Moreover, the pull-back on~$X$  of the line bundle~$\cO(1)$ on~$\bbP(\Alt^2 \Lie A)$ via the composite of~$\gamma$ and of the Pl\"ucker embedding is~$\Alt^2 (T_X^\vee) = \Omega^2_X$. It follows that the degree of~$\gamma$ needs to divide also~$c_1(\Omega^2_X) = c_1(X)^2 = 45$. 
As a consequence, we have
\begin{equation} \label{Eq:DegreeOfGaussMapE6}  \deg \gamma \in \{ 1,3, 9\}. \end{equation}
As in \cref{sec:Construction} let~$Y \subset \bbP(\Lie A)$ be the image of~$\pi$. Then~$Y$ has dimension~$3$ and degree
\[ \deg Y = c_2(\cN) / \deg \pi \in \{1, 2,  3\}.\]
On the other hand, the nondegeneracy of~$X$ implies that~$X$ generates the abelian variety~$A$. In particular~$Y$ is not contained in any hyperplane, which rules out the possibility~$\deg Y = 1$. Furthermore, the classical lower bound (see \cite[prop. 0]{EisenbudHarrisMinimalDegree} for instance)
\[\deg Y \ge 1 + \codim Y\]
implies that~$Y$ has codimension~$1$ or~$2$. Moreover, if~$\codim Y = 2$, then equality holds in the above inequality. In loc.cit. subvarieties for which the above inequality is an identity are completely classified. We rule out the codimension~$2$ by comparing the Fano variety in each of these cases with the Fano variety~$F$ of lines  on~$Y$:

\begin{proposition}\label{Prop:ImageOfPiIsACubicHypersurface} We have~$g = 5$ and~$Y$ is a cubic hypersurface.
\end{proposition}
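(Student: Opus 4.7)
The plan is to enumerate the remaining possibilities for $(\deg Y, \codim Y)$ and eliminate all except the cubic hypersurface case. The preceding discussion has reduced matters to $\deg Y \in \{2, 3\}$ with $\deg Y \ge 1 + \codim Y$ and $\codim Y \ge 1$ (the latter from the nondegeneracy of $X \subset A$), leaving three possibilities: (a) $Y$ is a quadric hypersurface in $\bbP^4$, so $g = 5$; (b) $Y \subset \bbP^5$ is a variety of minimal degree of dimension $3$ and degree $3$, so $g = 6$; and (c) $Y$ is a cubic hypersurface in $\bbP^4$, giving the desired conclusion $g = 5$. The task is to rule out (a) and (b).

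The main lever is the Gauss map $\gamma\colon X \to F := F_Y$, which by \eqref{Eq:DegreeOfGaussMapE6} is generically finite with $\deg \gamma \in \{1, 3, 9\}$. Since $g \ge 5$ in both remaining cases, the hypothesis $\dim \Lie A > \dim X + 2$ of \cref{Prop:TheFlatnessArgument} holds, and the proposition forces $\gamma(X)$ to be a $2$-dimensional irreducible component of $F$. In each remaining case I would show that $F$ either has no $2$-dimensional component at all, or admits as its only $2$-dimensional component a $\bbP^2 \subset \Gr_2(\Lie A)$ linearly embedded via the Plücker embedding. The latter possibility is excluded by an intersection-theoretic computation: the restriction of $\cO(1)$ from $\bbP(\Alt^2 \Lie A)$ to a linearly embedded $\bbP^2$ has self-intersection $1$, while $\gamma^\ast \cO(1) = \Omega^2_X$ satisfies $c_1(\Omega^2_X)^2 = c_1(X)^2 = 45$; a linear embedding would therefore force $\deg \gamma = 45$, contradicting $\deg \gamma \in \{1, 3, 9\}$.

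For case (a), the threefold $Y$ is an integral quadric in $\bbP^4$, so of rank $\ge 3$: either a smooth quadric threefold, where $F \cong \bbP^3$ is irreducible of dimension $3$; a rank $4$ quadric, a cone from a point over the smooth quadric surface $\bbP^1 \times \bbP^1 \subset \bbP^3$, where \cref{Prop:FanoOfACone} yields two components of $F$, both of dimension $3$; or a rank $3$ quadric, a cone from a line over a smooth conic in $\bbP^2$, where the same proposition gives $F$ irreducible of dimension $3$. None of these admits a $2$-dimensional component. For case (b), the classification of minimal degree threefolds in $\bbP^5$ leaves three subcases: the smooth scroll $S(1,1,1) = \bbP^1 \times \bbP^2$, the cone $S(0,1,2)$ over $S(1,2) \subset \bbP^4$ from a point, and the cone $S(0,0,3)$ over the twisted cubic $S(3) \subset \bbP^3$ from a line. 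Combining \cref{prop:FanoVarietiesOfScrolls}, \cref{Ex:SmoothScrollsOfDegree3} and \cref{Prop:FanoOfACone}, the $2$-dimensional components are: $F(1,1,1)_h = \bbP^2$ linearly embedded for $S(1,1,1)$; the Grassmannian $\Gr_2(\pi^{-1}(E)) \cong \bbP^2$ linearly embedded for $S(0,1,2)$, associated to the unique horizontal line $E$ of $S(1,2)$; and no $2$-dimensional component for $S(0,0,3)$, whose Fano variety is pure of dimension $3$. The linear-embedding contradiction then eliminates case (b).

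The main obstacle is the complete case analysis in (b), which requires invoking the classification of varieties of minimal degree (smooth rational normal scrolls and cones thereof) and applying \cref{Prop:FanoOfACone} to each singular case; but in every case the identification of the $2$-dimensional component, and the verification that it is linearly embedded as a $\bbP^2$ inside $\Gr_2(\Lie A)$, is a direct computation from the structural results already in place.
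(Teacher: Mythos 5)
Your proof is correct and follows essentially the same route as the paper's: the same reduction to the cases $(\deg Y,\codim Y)\in\{(2,1),(3,2),(3,1)\}$, the same use of \cref{Prop:TheFlatnessArgument} to force $\gamma(X)$ to be a $2$-dimensional component of $F$, the same rank-by-rank analysis of the quadric and scroll-by-scroll analysis of the minimal-degree case via \cref{Prop:FanoOfACone}, and the same contradiction $\deg\gamma=45\notin\{1,3,9\}$ for linearly embedded $\bbP^2$ components. The only cosmetic difference is that the paper also explicitly dismisses the cone over the Veronese surface in the minimal-degree classification (excluded there by codimension), which your enumeration silently omits but which is harmless since it has the wrong degree.
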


In particular, $\deg \pi = 6$ and hence $\deg \gamma \in \{1,3\}$. It remains to show that~$Y$ is smooth. For this, we will show $F$ is a smooth surface and apply~\cite[th.~4.2]{AltmanKleiman}. We begin with:

\begin{proposition} \label{Prop:GaussMapIsBirational} The Fano variety $F$ is an integral surface and the finite morphism $\gamma \colon X \to F$ is birational.
\end{proposition}

To conclude, a comparison of the Hilbert polynomials with respect to the Pl\"ucker embedding of~$X$ and of the Fano variety~$F$ of lines on~$Y$ will show:

\begin{proposition} \label{Prop:E6CubicHypersurfaceIsSmooth}The hypersurface~$Y$ is smooth and~$\gamma \colon X \to F$ is an isomorphism.
\end{proposition}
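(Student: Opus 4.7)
By \cref{Prop:ImageOfPiIsACubicHypersurface}, $g = 5$ and $Y \subset \bbP(\Lie A) = \bbP^4$ is an integral cubic hypersurface. The Gauss map $\gamma \colon X \to \Gr_2(\Lie A)$ is finite since $X$ is nondivisible \cite[prop.~3.1]{Deb95}, so it factors as a finite morphism $\gamma \colon X \to F$; by \cref{Prop:TheFlatnessArgument} its image $F_0 := \gamma(X)$ is a $2$-dimensional irreducible component of $F$. The plan is to match the Hilbert polynomials of $(X, K_X)$ and $(F, \cO_F(1))$ to force $\gamma$ to be an isomorphism, and then to deduce both the smoothness of $Y$ and the isogeny statement.

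\medskip

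\textbf{Reduction to $\dim F \le 2$.} By \cref{Prop:ExpectedDimensionCubicHypersurface}, $\dim F = 3$ only when $Y$ is a twisted plane or a cone over a nonnormal integral cubic surface or over a smooth cubic plane curve. Using \cref{Ex:ConesOverEllipticCurves,Ex:ConesOverNonnormalCubicSurfaces,Ex:ConesOverTwistedPlane} to list the $2$-dim irreducible components of $F$ in each case, every candidate for $F_0$ is excluded: $3$-dim components are incompatible with $\dim F_0 = 2$; linearly embedded $\bbP^2$ components cannot receive a dominant morphism from $X$ since $X$ is of general type (indeed $\Omega^1_X$ is a quotient of the trivial bundle $(\Lie A)^\vee \otimes \cO_X$, so $K_X$ is globally generated, and $K_X^2 = 45 > 0$); and the degree-$4$ component appearing for the twisted plane contradicts $\deg \gamma \cdot \deg F_0 = K_X^2 = 45$, since $4 \nmid 45$. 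Therefore $\dim F \le 2$.

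\medskip

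\textbf{Matching Hilbert polynomials.} By \cref{Prop:HilbertPolynomial}, $P_F(i) := \chi(F, \cO_F(i)) = 45\binom{i}{2} + 6$. Since $\gamma^* \cO_F(1) = \Omega^2_X = K_X$ via the Plücker embedding, Hirzebruch-Riemann-Roch on $X$ (using $\chi(X, \cO_X) = 6$ and $K_X^2 = 45$, the latter from Noether's formula with $c_2(X) = 27$) gives
\[
 P_X(i) := \chi(X, K_X^{\otimes i}) = 6 + \tfrac{1}{2} K_X^2 (i^2 - i) = 6 + 45 \binom{i}{2} = P_F(i).
\]
Comparing leading coefficients gives $\deg \gamma \cdot \deg F_0 = 45$. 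Factoring $\pi \colon \bbP(T_X) \to Y$ through the universal line $\bbP(\cL|_{F_0}) \to Y$ of degree $n_0$ (the number of lines of $F_0$ through a general point of $Y$) gives $\deg \gamma \cdot n_0 = \deg \pi = 6$, with $\deg \pi = 6$ from the proof of \cref{Prop:ImageOfPiIsACubicHypersurface}. A Chern class computation on $\Gr_2(\Lie A)$ shows the total number of lines of $Y$ through a general point equals $6$ for any cubic threefold with $\dim F \le 2$, and each $2$-dim component of $F$ contributes at least one such line (its lines cannot all pass through a single point by \cref{lemma:DimensionFanoOfLinesThroughAFixedPoint}); together with \eqref{Eq:DegreeOfGaussMapE6} this narrows $\deg \gamma$ to $\{1, 3\}$.

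\medskip

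\textbf{Conclusion and main obstacle.} The crux is to exclude $\deg \gamma = 3$: this would force $F$ to contain extra $2$-dim components of total degree $30$, each sweeping out $Y$. I expect to rule this out by the representation-theoretic rigidity underlying \cref{sec:DifferenceMorphism}, namely the decomposition $\cc(\delta_X \ast \delta_{-X}) = 26 \Lambda_0 + 6 \Lambda_\alpha + \Lambda_\beta$ together with $\delta_D^H = M_\alpha$, which severely restrict the Lagrangian structure of $F$. Once $\deg \gamma = 1$, Zariski's main theorem makes $\gamma \colon X \to F_0$ an isomorphism; then $P_{F_0} = P_X = P_F$ forces $\cI_{F_0/F}$ to have vanishing Hilbert polynomial, hence $\cI_{F_0/F} = 0$ by ampleness of $\cO_F(1)$, so $F = F_0$ and $\gamma \colon X \to F$ is an isomorphism. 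Finally, nondegeneracy of $X \subset A$ yields a surjection $\Alb(X) \to A$, so $\dim \Alb(X) \ge g = 5$; since for a cubic threefold with any isolated singularity the dimension of the abelian part of the intermediate Jacobian drops strictly below $5$ (Clemens-Griffiths and its refinements for singular cubics), $Y$ must be smooth, and the morphism $\Alb(X) \to A$ is then an isogeny by dimension count.
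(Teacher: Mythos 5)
There is a genuine gap at the crux of your argument, and you flag it yourself: you reduce to $\deg \gamma \in \{1,3\}$ and then write that you ``expect'' to exclude $\deg \gamma = 3$ by some unspecified representation-theoretic rigidity. That is not a proof, and nothing in \cref{sec:DifferenceMorphism} delivers it. The reason you get stuck is that you only use the leading coefficient of the Hilbert polynomial identity ($\deg \gamma \cdot \deg F_0 = 45$), which cannot distinguish $\deg\gamma=1$ from $\deg\gamma=3$. The paper's proof uses the \emph{full} polynomial identity $\chi(X, \gamma^\ast \cL^{\otimes i}) = \chi(F, \cL^{\otimes i})$ at the level of coherent sheaves on $F$: from the exact sequence $0 \to \cO_F \to \gamma_\ast \cO_X \to Q \to 0$, twisting by $\cL^{\otimes i}$ and using that $\gamma$ is finite (so $\rR^q\gamma_\ast$ vanishes for $q\ge 1$ and $\chi(F,\gamma_\ast\gamma^\ast\cL^{\otimes i}) = \chi(X,\gamma^\ast\cL^{\otimes i})$), additivity of Euler characteristics forces the Hilbert polynomial of $Q$ to vanish identically, hence $Q=0$ and $\gamma_\ast\cO_X = \cO_F$, so the finite morphism $\gamma\colon X\to F$ is an isomorphism. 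This single step simultaneously rules out $\deg\gamma>1$ and any extra components of $F$, with no need for your line-counting through a general point of $Y$.

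Two secondary problems. First, your exclusion of linearly embedded $\bbP^2$-components ``since $X$ is of general type'' is a non sequitur: every projective surface admits a finite dominant map to $\bbP^2$. The correct argument (which the paper uses, and which your degree bookkeeping already contains implicitly) is that $\gamma^\ast\cL \simeq \Omega^2_X$ forces $\deg\gamma = K_X^2/\deg F_0 = 45$ for a degree-one component, contradicting \eqref{Eq:DegreeOfGaussMapE6}. Second, your derivation of the smoothness of $Y$ via intermediate Jacobians of singular cubic threefolds is both outside the toolkit assembled in the paper and not actually justified: the relation between $\Alb(F)$ and the intermediate Jacobian is a theorem about \emph{smooth} cubic threefolds, so you cannot invoke it to rule out singular $Y$. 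The paper instead concludes directly: once $\gamma\colon X\to F$ is an isomorphism, $F$ is a smooth surface, and \cite[th.~4.2]{AltmanKleiman} then gives the smoothness of $Y$.
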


The rest of this section is devoted to the proof of \cref{Prop:DegreeOfPiE6,,Prop:ImageOfPiIsACubicHypersurface,,Prop:GaussMapIsBirational,,Prop:E6CubicHypersurfaceIsSmooth}.

\subsection{The projection to the tangent space: Proof of \cref{Prop:DegreeOfPiE6}} 
Let us write~$d = d_Z$ and~$\pi = \pi_Z$. 
Let~$D:= Z - Z$ be the image of the difference morphism. Consider the blow-up~$\tilde{A}$ of~$A$ in~$0$, the blow-up~$\tilde{D}$ of~$D$ in~$0$ and the blow-up~$\tilde{P}$ of~$P = Z \times Z$ in the diagonal. The normal bundle of the diagonal in~$Z \times Z$ is by definition the tangent bundle of~$Z$. It follows that the exceptional divisor of~$\tilde{P} \to P$ is~$\bbP(T_Z)$. The exceptional divisor of~$\tilde{A} \to A$ is~$\bbP(\Lie A)$ and that of~$\tilde{D} \to D$ is the tangent cone~$C$ of~$D$ at~$0$. Let~$\tilde{d} \colon \tilde{P} \to \tilde{D}$ be the morphism induced by the difference map. The situation is summarized in the following commutative diagram:

\begin{equation} \label{eq:DiagramBlowUpE6}
\begin{tikzcd}
\bbP(T_Z) \ar[d, hook] \ar[r] & \ar[d, hook] C \ar[r, hook] & \bbP(\Lie A) \ar[d, hook]\\
\tilde{P} \ar[d] \ar[r, "\tilde{d}"] & \tilde{D} \ar[d] \ar[r, hook] & \tilde{A} \ar[d]  \\
P = Z \times Z \ar[r, "d"] & D = Z - Z \ar[r, hook]& A
\end{tikzcd}
\end{equation}

\begin{lemma} \label{Lemma:StrictTransformOfDifferenceMorphism} The composite morphism~$\bbP(T_Z) \to \bbP(\Lie A)$ is~$\pi$.
\end{lemma}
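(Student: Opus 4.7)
The plan is to compute the restriction of $\tilde d$ to the exceptional divisor $\bbP(T_Z) \subset \tilde P$ and identify it with the projectivization of the derivative of $d$ along the diagonal $\Delta_Z \subset Z \times Z$; this derivative will turn out to be precisely the vector bundle inclusion $T_Z \hookrightarrow \Lie A \otimes \cO_Z$ defining~$\pi$ in Section~\ref{sec:Construction}.

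The first step is to check that $d$ actually lifts to a morphism $\tilde d \colon \tilde P \to \tilde A$ refining the $\tilde d \colon \tilde P \to \tilde D$ appearing in diagram~\eqref{eq:DiagramBlowUpE6}. By the universal property of the blow-up, this amounts to verifying that $d^{-1}\cI_0 \cdot \cO_{\tilde P}$ is invertible, which in turn follows from the scheme-theoretic equality $d^{-1}\cI_0 = \cI_{\Delta_Z}$ near $\Delta_Z$. This is a short local computation at a point $z \in Z$: choosing coordinates $(u, w)$ on $Z \times Z$ in which $\Delta_Z$ is cut out by $w = 0$ and writing down a formal expansion of the embedding $Z \subset A$, the components of $d$ take the shape $y_i = 2 w_i + O(\|w\|^3)$ on the linear part and $y_j = O(u\,w)$ on the higher-order part, and all of them already lie in the diagonal ideal $(w_1, \dots, w_d) = \cI_{\Delta_Z}$.

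The second step invokes the general blow-up principle: if $f \colon X' \to Y'$ is a morphism of smooth varieties sending a smooth closed subvariety $Z' \subset X'$ into a smooth closed subvariety $W' \subset Y'$, then its lift $\tilde f \colon \Bl_{Z'} X' \to \Bl_{W'} Y'$ restricts on exceptional divisors to the projectivization of the derivative on normal bundles $df \colon N_{Z'/X'} \to (f|_{Z'})^* N_{W'/Y'}$, which is everywhere defined as soon as $df$ is fiberwise injective. Applied to our $d$, it only remains to compute this normal derivative. Using the translations on $A$ to identify $T_a A \cong \Lie A$ for every $a \in A$, the tangent map of $d$ at a point $(z,z) \in \Delta_Z$ reads
\[
 T_{(z,z)}(Z \times Z) \;=\; T_z Z \oplus T_z Z \;\too\; \Lie A, \qquad (v_1, v_2) \;\longmapsto\; v_1 - v_2,
\]
with the $v_i$ viewed in $\Lie A$ via the tangent inclusion $T_z Z \into T_z A = \Lie A$. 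This vanishes on the tangent space $\Delta(T_z Z)$ to $\Delta_Z$, and under the canonical identification
\[
 N_{\Delta_Z / Z \times Z}\rvert_{(z,z)} \;=\; (T_z Z \oplus T_z Z)/\Delta(T_z Z) \;\iso\; T_z Z, \qquad [(v_1, v_2)] \;\longmapsto\; v_1 - v_2,
\]
it induces exactly the tangent inclusion $T_z Z \into \Lie A$. Globalizing over $Z$ we recover precisely the vector bundle monomorphism $T_Z \into \Lie A \otimes \cO_Z$ whose projectivization is $\pi$, which proves the lemma.

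The only care required is the bookkeeping of the identification of the normal bundle of the diagonal; any sign or factor-of-two ambiguity in this identification disappears after projectivization, so no essential obstacle arises.
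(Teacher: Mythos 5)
Your proof is correct, but it takes a genuinely different route from the paper's. The paper reduces to the case $Z=A$ (observing that both $d$ and $\pi$ are restrictions of the corresponding maps for the ambient abelian variety) and then composes with the automorphism $(x,y)\mapsto (x-y,y)$ of $A\times A$, which carries the diagonal to $A\times\{0\}$ and turns the difference morphism into a projection; for a projection the statement about exceptional divisors is immediate, so no normal-bundle computation is ever made. You instead invoke the general functoriality of blow-ups and compute the normal derivative of $d$ along $\Delta_Z$ directly, using the identification $N_{\Delta_Z/Z\times Z}\cong T_Z$, $[(v_1,v_2)]\mapsto v_1-v_2$. Your route is more mechanical and makes explicit why the subbundle $T_Z\subset \Lie A\otimes\cO_Z$ appears, and it would apply verbatim to any morphism whose scheme-theoretic fibre over the centre is the blown-up locus with injective normal derivative; the paper's route is shorter because the group law does all the work. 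Two small points: your sufficiency claim for the general blow-up principle should also note that $f^{-1}(W')$ contains no points outside $Z'$ (automatic here, since $d^{-1}(0)=\Delta_Z$ set-theoretically); and the scheme-theoretic identity $d^{-1}\cI_0=\cI_{\Delta_Z}$ that you obtain by a local power-series expansion is established in the paper, just after the lemma, more cleanly by base change from the smooth difference morphism $A\times A\to A$ — you could simply borrow that and drop the coordinate computation.
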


\begin{proof} We reduce to the case~$Z = A$ by noticing that~$d$ is the restriction to~$Z \times Z$ of the difference morphism~$A \times A \to A$ and~$\pi$ is the restriction to~$\bbP(T_Z)$ the projection \[\bbP(T_A) = \bbP(\Lie A) \times A \too \bbP(\Lie A).\]
Write~$d$ as the composite of the automorphism~$(x, y) \mapsto (x-y, y)$ of~$A \times A$ and the second projection~$\pr_2 \colon A \times A \to A$. In this way we reduce to the situation where we replace~$d$ by the second projection~$\pr_2$ and~$\tilde{P}$ by the blow-up of~$A \times A$ in~$A \times \{ 0 \}$. In this case the result to prove is that the morphism
$A \times \bbP(\Lie A) \to \bbP(\Lie A)$ induced by~$\pr_2$ is the second projection. This is of course clear and concludes the proof.
\end{proof}

To conclude the proof of \cref{Prop:DegreeOfPiE6}, notice first that the top-left square in diagram \eqref{eq:DiagramBlowUpE6} is cartesian. Indeed, this boils down to the fact that the scheme-theoretic fiber of~$d$ in~$0$ is the diagonal~$\Delta_Z \colon Z \to Z \times Z$. To see this, notice that the difference morphism~$A \times A \to A$ is smooth and the scheme-theoretic preimage of~$0$ is the diagonal~$\Delta_A \colon A \into A \times A$ with its reduced structure; the claim then follows because the following square is cartesian:
\[
\begin{tikzcd}
Z \ar[r, "\Delta_Z"] \ar[d] & Z \times Z \ar[d] \\
A \ar[r, "\Delta_A"] & A \times A
\end{tikzcd}
\]

\begin{lemma} Let~$f \colon S \to T$ be a proper surjective morphism between varieties and~$T' \subset T$ a subvariety whose scheme-theoretic preimage~$S' = S \times_T T'$ is integral. If the morphisms~$f$ and~$f_{\rvert S'} \colon S' \to T'_{\red}$ are generically finite, then
\[ \deg (f_{\rvert S'}) \ge \deg (f). \]
\end{lemma}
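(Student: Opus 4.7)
The plan is to express both degrees as lengths of generic fibers of the coherent sheaf $\cF:=f_\ast\cO_S$ on $T$, and then to conclude by upper semicontinuity of fiber rank.

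More concretely, write $\eta\in T$ for the generic point and $\eta'\in T$ for the generic point of $T'$. Since $f$ is generically finite and $k$ has characteristic zero, the scheme-theoretic fiber $S_\eta$ is zero-dimensional and reduced, of length $\deg f$. A routine base change identifies the scheme-theoretic fiber of $f\vert_{S'}\colon S'\to T'_\red$ over $\eta'$ with $S_{\eta'}$; the hypotheses that $f\vert_{S'}$ is generically finite and that $S'$ is integral, together with characteristic zero, will then force $S_{\eta'}$ to be zero-dimensional and reduced, hence of length $\deg(f\vert_{S'})$.

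Next, I would use upper semicontinuity of the fiber dimension of the proper morphism $f$ to see that $f$ is quasi-finite on some Zariski-open neighborhood $U$ of $\eta'$, hence finite on $U$ by Zariski's main theorem. Over $U$, the base change map $\cF\otimes_{\cO_T}k(\eta')\to H^0(S_{\eta'},\cO_{S_{\eta'}})$ is an isomorphism. Combined with the elementary upper semicontinuity of the fiber dimension of a coherent sheaf (a consequence of Nakayama's lemma), which gives $\dim_{k(\eta')}\cF\otimes k(\eta')\ge\dim_{k(\eta)}\cF\otimes k(\eta)=\deg f$, this will yield $\deg(f\vert_{S'})\ge\deg f$.

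The main technical point is the identification of each side with a generic-fiber length, which is precisely where integrality of $S'$ and characteristic zero enter: without them, nilpotents or embedded components in the generic fiber could inflate $\dim_{k(\eta')} H^0(S_{\eta'},\cO_{S_{\eta'}})$ beyond $\deg(f\vert_{S'})$, breaking the final comparison.
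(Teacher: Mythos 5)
Your argument is correct and follows essentially the same route as the paper's: both restrict to the open locus where $f$ is finite, compute $\deg f$ and $\deg(f_{\rvert S'})$ as fiber ranks of $\cF=f_\ast\cO_S$ at the generic points of $T$ and $T'$, use the integrality of $S'$ to see that the rank at the generic point of $T'$ really equals $\deg(f_{\rvert S'})$ (the paper phrases this via the scheme-theoretic image being $T'_{\red}$, you via reducedness of the generic fiber), and conclude by semicontinuity of the fiber rank of a coherent sheaf. No substantive difference.
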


\begin{proof} 
This follows from upper semicontinuity of the rank of $f_\ast \cO_S$ on the locus in~$T$ where $f$ is finite.
\end{proof}

We apply the preceding lemma with~$S = \tilde{P}$,~$T = \tilde{D}$,~$f = \tilde{d}$ and~$T' = C$. The hypotheses are fulfilled because~$\tilde{d}$ is generically finite, the scheme-theoretic preimage of~$C$ is~$\bbP(T_Z)$, and the restriction of~$\tilde{d}$ to~$\bbP(T_Z)$ is~$\pi$ by \cref{Lemma:StrictTransformOfDifferenceMorphism} hence generically finite by hypothesis. Moreover, the morphism~$\tilde{d}$ is generically finite of same degree of~$d$ because blow-ups are birational maps. 
\qed

\subsection{The cubic hypersurface: Proof of \cref{Prop:ImageOfPiIsACubicHypersurface}} By hypothesis~$X$ is nondegenerate, thus it generates the abelian variety~$A$. It follows that~$Y$ cannot be contained in any hyperplane. Therefore the lower bound~$\deg Y \ge 1 + \codim Y$ holds by \cite[prop. 0]{EisenbudHarrisMinimalDegree}. As already argued, we have~$\deg \pi \in \{3, 9 \}$ hence
\[ \deg Y = c_2(\cN) / \deg \pi \in \{ 2, 3\}.\]

We start by excluding the case~$\deg Y = 2$. In this case we have~$\codim Y= 1$ hence~$g = 5$ and~$Y$ is a~$3$-dimensional quadric. To rule out the case, let~$F$ be the Fano variety of lines in~$Y$ and recall by \cref{Prop:TheFlatnessArgument} that the image of~$\gamma \colon X \to F$ is a~$2$-dimensional irreducible component of~$F$. The rank~$r$ of the quadric~$Y$ is necessarily~$3$,~$4$ or~$5$ because~$Y$ is integral. We proceed now case by case:

\begin{itemize}
\item If~$r = 3$, then~$Y$ is a cone over a smooth plane conic~$Q$ and~$F$ is irreducible of dimension~$\dim Q + 2(5 - 3- 1) = 3$ by \cref{Prop:FanoOfACone}. Contradiction. \smallskip
\item If~$r = 4$, then~$Y$ is a cone over the rational normal scroll~$S(1,1) = \bbP^1 \times \bbP^1$ in~$\bbP^3$ via the Segre embedding \cite[p.~285]{HarrisAlgebraicGeometry}. By \cref{prop:FanoVarietiesOfScrolls} the Fano variety of lines in~$S(1,1)$ is~$\bbP^1 \sqcup \bbP^1$, so $F$ has two irreducible components of dimension~$\dim \bbP^1 + 2(5 - 4) = 3$ by \cref{Prop:FanoOfACone}. Contradiction. \smallskip
\item If~$r = 5$, then~$Y$ is smooth quadric threefold in~$\bbP^4$ and~$F \cong \bbP^3$; see for instance \cite[ex.~22.6]{HarrisAlgebraicGeometry}. Contradiction.
\end{itemize}
The above discussion shows
\[ \deg Y = 3.\]
The inequality~$\deg Y \ge 1 + \codim Y$ implies that~$Y \subset \bbP(\Lie A)$ has codimension~$1$ or~$2$.  It remains to rule out the codimension~$2$ case. If this is the case, the variety~$Y$ is of minimal degree, hence by \cite[th.~1]{EisenbudHarrisMinimalDegree} it is either a cone over a smooth rational scroll or a cone over the Veronese surface, that is,~$\bbP^2$ embedded in~$\bbP^5$ by the second Veronese embedding. Now taking a cone with respect to some linear projection preserves the codimension. This rules out at once the case of the Veronese surface since it has codimension~$3$ while~$Y$ has codimension~$2$. Suppose~$Y$ is a smooth rational normal scroll~$S = S(a)$ or a cone over such a scroll. By \cref{Prop:TheFlatnessArgument} the scheme-theoretic image of the finite morphism~$\gamma \colon X \to F$ is one of the irreducible components of~$F$. A case by case distinction shows that this is a contradiction. More precisely, the tuple~$a = (a_1, \dots, a_r)$ satisfies~$1 \le a_1 \le \cdots \le a_r$ and~$a_1 + \cdots + a_r = 3$. This leaves us with the~$3$ possibilities described in detail in \cref{Ex:SmoothScrollsOfDegree3}:
\[ a = (3), (1,2), (1,1,1).\]

Suppose~$\dim S = 1$. In this case the scroll~$S$ is the image of the triple Veronese embedding~$\bbP^1 \into \bbP^3$. As~$S$ contains no lines, \cref{Prop:FanoOfACone} implies that~$F$ is irreducible of dimension~$3$. But~$X$ dominates a component of~$F$. Contradiction. \medskip

Suppose~$\dim S = 2$. In this case the Fano variety is~$\bbP^1 \sqcup \{ \textup{pt}\}$. By \cref{Prop:FanoOfACone} the component of~$F$ over~$\bbP^1$ has dimension~$\dim \bbP^1 + 2(6-5) = 3$ hence~$X$ cannot dominate it. Instead again by \cref{Prop:FanoOfACone} the component of~$F$ over the singleton is a projective plane embedded linearly via the Pl\"ucker embedding. If~$X$ were to dominate it, we would have~$\deg \gamma = 45$, contradicting \eqref{Eq:DegreeOfGaussMapE6}.\medskip

Suppose~$\dim S = 3$. In this case~$Y = S$ and~$F \iso (\bbP^1 \times \bbP^2) \sqcup \bbP^2$. It follows that~$X$ has to dominate the component isomorphic to the projective plane. But this component is linearly embedded via the Pl\"ucker embedding. As above, if~$X$ were to dominate this component, then~$\deg \gamma = 45$ which contradicts \eqref{Eq:DegreeOfGaussMapE6}.  \medskip
\qed

\subsection{The Fano surface and the birationality of the Gauss map: Proof of \cref{Prop:GaussMapIsBirational}}
By \cref{Prop:ExpectedDimensionCubicHypersurface} we have~$\dim F \le 3$, with equality if and only if~$Y$ is either the twisted plane in \cref{Ex:ConesOverTwistedPlane} or a cone over a nonnormal cubic surface or a cone over a smooth cubic plane curve. These cases are excluded by looking at the Fano variety:
\begin{itemize}
\item If~$Y$ is a cone over an integral cubic curve, then~$F$ is irreducible and has dimension~$3$; see \cref{Ex:ConesOverEllipticCurves}. \smallskip
\item If~$Y$ is a cone over a nonnormal cubic surface that is not itself a cone over a nonnormal cubic curve, then the~$2$-dimensional irreducible components of~$F$ are linearly embedded via the Pl\"ucker embedding; see \cref{Ex:ConesOverNonnormalCubicSurfaces}. This would imply~$\deg \gamma = 45$, which contradicts \eqref{Eq:DegreeOfGaussMapE6}. \smallskip 
\item If~$Y$ is the twisted plane, then by \cref{Ex:ConesOverTwistedPlane} the~$2$-dimensional irreducible components of~$F$ have degree~$1$ and~$4$ with respect the Pl\"ucker embedding; this again contradicts \eqref{Eq:DegreeOfGaussMapE6}.
\end{itemize}
The above shows that we can assume~$\dim F = 2$. To conclude that $F$ is integral, it suffices to show that the Gauss map $\gamma \colon X \to F$ is birational onto its image: Indeed, assuming this, let $\cL$ be the line bundle on~$F\subset \Gr_2(\Lie A)$ which is the restriction of~$\cO(1)$ on~$\bbP(\Alt^2 \Lie A)$ via the Pl\"ucker embedding. Then $\cL$ and~$\gamma^\ast \cL$ have self-intersection
\[ \cL.\cL = 45 = \gamma^\ast \cL.\gamma^\ast \cL.\]
The first identity holds by \cref{Prop:HilbertPolynomial}, the second is $c_1(X)^2 = 45$ since $\gamma^\ast \cL$ is the canonical bundle of~$X$ by definition of~$\gamma$. Since~$\gamma$ is birational onto its image and $\cL$ is ample, this forces~$F$ to be irreducible. \Cref{Prop:TheFlatnessArgument} then implies that $F$ is generically reduced, hence satisfies condition~$R_0$ because its singular locus has codimension~$\ge 1$. But \cref{Prop:HilbertPolynomial} implies that $F$ is a locally complete intersection, thus satisfies condition $S_1$. Therefore, $F$ is reduced by \cite[5.8.5]{EGAIV2}.

\medskip

We now show that $\gamma \colon X \to F$ is birational onto its image. To do this, we mimic the relation between incidence divisors and the canonical bundle of the Fano surface of a smooth cubic threefold~\cite[p. 326]{ClemensGriffiths}. For any $x \in X$, consider the line $L_x := \bbP(T_x X)$ in $\bbP(\Lie A)$. Inside $X\times X$ consider the locus of couples~$(x, y)$ with~$L_x\cap L_y \neq \varnothing$ and let
\[ D \; \subset \; X \times X\]
be the union of its top dimensional components.
For $x\in X$, let 
$D_x = p(q^{-1}(x))$ be the fiber of the second projection
$q\colon D\to X$, considered as a subvariety of $X$ via the first projection $p\colon D\to X$. By~\cite[lemma II.12]{Ran}, the nondegeneracy of~$X$ implies $D_x \neq X$. 
On the other hand, since $\pi\colon \bbP(T_X) \to Y$ is generically finite of degree $6$ and $\deg(\gamma) \in \{ 1, 3 \}$, there are at least two distinct lines $L_x$, $L_{x'}$ through a generic $y \in Y$. Thus, $D_x$ is a divisor on $X$ for generic $x \in X$. 

\begin{lemma} \label{Lemma:FormulaLinesCanonical} \label{Lemma:ExistenceGoodTriples}
Let $U\subset X$ be the open subset where $q\colon D \to X$ is flat.
\begin{enumerate} 
\item There exist $x, y, z \in U$ such that the lines $L_x,L_y,L_z\subset \bbP(\Lie A)$ are pairwise distinct and span a plane.\smallskip
\item For any such $x, y, z$, the sum $D_{x} + D_{y} + D_{z}$ is a canonical divisor on $X$.
\end{enumerate} 
\end{lemma}

\begin{proof} 
(1) By assumption, there exists an integral fiber $I\subset X^3$ of dimension $\ge 3$ of the sum morphism $X^3 \to A$. We first claim that for generic $(x, y, z)\in I$, the three lines~$L_x, L_y,L_z$ are pairwise distinct: Otherwise the fiber $I$ would be contained in the preimage of the big diagonal $\Delta \subset \Gr_2(\Lie A)^3$ under the morphism
\[ \gamma^3\colon\quad X^3 \too \Gr_2(\Lie A)^3 .\]
Now $\Delta$ has three irreducible components which are permuted under the action of the symmetric group $S_3$. Since $I$ is irreducible and stable under the action of $S_3$, it must lie in the preimage of the intersection the irreducible components of $\Delta$. This intersection is the small diagonal of $\Gr_2(\Lie A)^3$ and its preimage via~$\gamma^3$ has dimension $2$ because $\gamma$ is finite and $\dim X = 2$, thus contradicting $\dim I \ge 3$. \medskip

We next claim that for any $(x,y,z)\in I$ the lines $L_x, L_y, L_z$ are coplanar.  Since coplanarity is a closed condition, it suffices to show this for generic~$(x,y,z)\in I$. The tangent map of the sum morphism $X^3 \to A$ has generic rank~$\le 3$ on~$I$ because it contracts $I$. Since the sum map on $A$ induces the sum on tangent spaces, this implies 
$ \dim (T_x X + T_y X + T_z X ) \le 3$ for generic $(x,y, z) \in I$.
Passing to projective spaces this means that $L_x,L_y,L_z \subset \bbP(\Lie A)$ are coplanar.\medskip

Finally, we claim that $I \subset X^3$ dominates $X$ via each projection: Since $I$ is irreducible and stable under permutations, we would otherwise have 
$I = C^3$ for a curve $C \subset X$, but then the image of $I$ under the sum map would be $C+C+C \subset A$, which is not a point.

\medskip 

(2) Let $P$ be the plane in $\bbP(\Lie A)$ spanned by $L_x$, $L_y$ and $L_z$ and $H \subset \Gr_2(\Lie A)$ the subvariety of points in $\Gr_2(\Lie A)$ corresponding to lines meeting $P$. The subvariety $H$ is a hyperplane section of the Pl\"ucker embedding. The nondegeneracy of~$X$ implies $\gamma(X)$ meets any such hyperplane properly \cite[lemma II.12]{Ran}, hence~$\gamma^{-1}(H)$ is a divisor linearly equivalent to $\gamma^\ast \cL$, which is isomorphic to the canonical bundle of $X$. By construction, we have
\[ P \cap Y = L_x \cup L_y \cup L_z,\]
hence $\gamma^{-1}(H) = D_x \cup D_y \cup D_z$, which concludes the proof.
\end{proof}

\begin{corollary} For $U\subset X$ as above and $x, y \in U$, we have $D_x . D_y = 5$.
\end{corollary}

\begin{proof} By design, the divisors $D_x$ form a flat family over $U$, thus the intersection number $n=D_x.D_y$ is independent of $x$ and $y$. For $x, y, z \in U$ such that the lines $L_x$, $L_y$ and $L_z$ are pairwise distinct and span a plane, we thus have
\[ 9n = (D_x + D_y + D_z)^2 = c_1(X)^2 = 45,\]
because $D_x + D_y + D_z$ is a canonical divisor by \cref{Lemma:FormulaLinesCanonical}. Therefore $n = 5$.
\end{proof}

We can now prove that $\gamma$ is birational onto its image: The divisors $D_x$ are the pullback of Weil divisors on~$\gamma(X)$ because the incidence relation defining them comes from~$\Gr_2(\Lie A)$. The projection formula then implies that $\deg(\gamma)$ divides $D_x . D_y$ for $x, y \in U$. Since $\deg(\gamma)\in \{1,3\}$ and $D_x . D_y = 5$, it follows that~$\deg(\gamma)=1$. \qed

\subsection{Smoothness: Proof of \cref{Prop:E6CubicHypersurfaceIsSmooth}}  It follows from \cref{Prop:HilbertPolynomial} that the Hilbert polynomial of~$F$ with respect to the Pl\"ucker embedding is
\[ \chi(F, \cL^{\otimes i}) =  \textstyle 45 \binom{i+1}{2} - 45 i + 6,\]
where~$\cL$ is the line bundle on~$\Gr_2(\Lie A)$ obtained as restriction via the Pl\"ucker embedding of the line bundle~$\cO(1)$ on~$\bbP(\Alt^2 \Lie A)$. On the other hand, for all~$i \ge 0$,
\[ \chi(X, \gamma^\ast \cL^{\otimes i}) =  \textstyle 45 \binom{i+1}{2} - 45 i + 6.\]
To see this, first notice that~$\gamma^\ast \cL \iso \Omega^2_X$ by definition of the morphism~$\gamma$, thus by Serre's duality theorem, we have
\[\chi(X, \gamma^\ast \cL)  =  \chi(X, \cO_X) = 6.\]
On the other hand, the self-intersection of~$\gamma^\ast \cL$ is~$c_1(X)^2 = 45$, hence the Hirzebruch-Riemann-Roch theorem implies the wanted identity. Now, consider the short exact sequence of~$\cO_F$-modules
\[ 0 \too \cO_F \too \gamma_\ast \cO_X \too Q \too 0.\]
Since the morphism~$\gamma$ is finite, the~$\cO_F$-module~$\gamma_\ast \cO_X$ is coherent, thus so is~$Q$. Taking the tensor product with~$\cL^{\otimes i}$, we have the short exact sequence
\[ 0 \too \cL^{\otimes i} \too \gamma_\ast \gamma^\ast  \cL^{\otimes i} \too Q \otimes \cL^{\otimes i} \too 0\]
for all integer~$i \ge 0$. The finiteness of the morphism~$\gamma$ also implies the vanishing of the higher direct images~$\rR^q \gamma_\ast \gamma^\ast \cL^{\otimes i}$ for~$q \ge 1$, which yields the identity
\[ \chi(F, \gamma_\ast \gamma^\ast  \cL^{\otimes i}) = \chi(X, \gamma^\ast  \cL^{\otimes i})\]
Since Euler characteristics are additive in short exact sequences, this implies that the Hilbert polynomial of~$Q$ vanishes altogether, hence~$Q = 0$. The morphism~$\gamma$ being finite, this implies that~$\gamma \colon X\to F$ is an isomorphism. By hypothesis~$X$ is smooth, thus so is~$F$. Since~$F$ has dimension~$2$ we can apply \cite[th.~4.2]{AltmanKleiman} to derive that~$Y$ is smooth, too.
\qed

\section{How to rule out the Tannaka group \texorpdfstring{$E_7$}{E7}}
\label{sec:E7}

In this section, which is largely independent of the rest of the paper, we explain how to rule out the occurrence of the Tannaka group $E_7$.

\subsection{Main result} \label{sec:StatementsNoE7} We will rule out $E_7$ via a general criterion for a subvariety to have a big Tannaka group if the alternating or symmetric convolution square has few summands. We go back to the notation introduced in \cref{sec:TannakaGroupIntro}, so let~$A$ be an abelian variety of dimension $g$ over an algebraically closed field $k$ of characteristic zero, and consider a smooth integral subvariety $X \subset A$ of dimension~$d$. We again fix a fiber functor $\omega \colon \langle \delta_X \rangle \to \Vect(\bbF)$ and denote the corresponding Tannaka group by
\[ G_{X, \omega} \; \subset \; \GL(V) \quad \textup{where} \quad V \; := \; \omega(\delta_X).\]
The derived category of constructible sheaf complexes is a symmetric pseudoabelian tensor category with respect to the convolution product. So we have a decomposition
\[
 \delta_X * \delta_X \;=\; \Sym^2(\delta_X) \; \oplus \; \Alt^2(\delta_X)
\] 
where the involution of $\delta_X * \delta_X$ permuting the two factors acts as $\id$ on $\Sym^2(\delta_X)$ and $-\id$ on $\Alt^2(\delta_X)$. Define
\[
T_+(\delta_X) \;:=\;
 \begin{cases} 
 \Sym^2(\delta_X) & \text{if $d$ is even}, \\
 \Alt^2(\delta_X) & \text{if $d$ is odd}.
 \end{cases} 
\]
If $X\subset A$ is \emph{symmetric up to translation} in the sense that~$X = a - X$ for some point~$a \in A(k)$, then the fiber of the sum morphism $\sigma\colon X\times X \to A$ over this point is the antidiagonal $\sigma^{-1}(a)=\{(x, a-x)\mid x\in X\}$. By proper base change 
and the decomposition theorem~\cite[th.~6.2.5]{BBDG}, we then have a skyscraper direct summand $\delta_a\subset \delta_X*\delta_X$, and a closer look at the parity shows that $\delta_a \subset T_+(\delta_X)$; see~\cite[sect.~1.2]{JKLM}~\cite[lemma 2.1]{KWGeneric}. When $X$ is nondivisible, the point $a$ is unique and we define
\[ T_+(\delta_X) = S_+(\delta_X) \oplus \delta_a\]
for a unique complex $S_+(\delta_X)$. By the dimension formula~\eqref{eq:dim}, the skyscraper direct summand corresponds to a one-dimensional representation of the Tannaka group, and the projection onto it defines a bilinear form $\theta \colon  V \otimes V \to \omega(\delta_a)$ which is symmetric if $d$ is even and alternating if $d$ is odd. Hence the derived subgroup~$G_{X,\omega}^\ast$ of the connected component of $G_{X, \omega}$ is contained in $\SO(V, \theta)$ is $d$ is even and in~$\Sp(V, \theta)$ if $d$ is odd. To have a uniform notation, when~$X$ is not symmetric up to translation we set
\[ S_+(\delta_X) \; := \; T_+(\delta_X).\]

\begin{theorem} \label{th:LarsenAlternative} If $X\subset A$ is nondivisible with ample normal bundle and $d < g / 2$, then 
$ S_+(\delta_X)$ is a perverse sheaf without negligible direct summands. If~$S_+(\delta_X)$ is simple, then
\[
G_{X, \omega}^\ast = 
\begin{cases}
\SL(V) & \text{if $X$ is not symmetric up to translation}, \\
\SO(V, \theta) & \text{if $X$ is symmetric up to translation and $d$ is even}, \\
\Sp(V, \theta) & \text{if $X$ is symmetric up to translation and  $d$ is odd}.
\end{cases}
\]
\end{theorem}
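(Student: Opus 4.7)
The theorem has two parts: a geometric statement on the structure of $S_+(\delta_X)$, and a Larsen-type dichotomy for the Tannaka group. I would handle them in that order.

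\textbf{Part 1 (perversity and absence of negligible summands).} The sum morphism $\sigma\colon X \times X \to A$ is proper with image $X+X$ of dimension at most $2d < g$. Nondivisibility of $X$ forces its Gauss map to be finite by \cite[prop.~3.1]{Deb95}, and combined with the ample normal bundle hypothesis this implies that $\sigma$ is generically finite onto its image. Applying the decomposition theorem to $\sigma$ then shows that $\delta_X \ast \delta_X = \sigma_\ast(\delta_X \boxtimes \delta_X)$ is a semisimple perverse sheaf on $A$ (no shifts arise, thanks to generic finiteness). Splitting under the involution swapping the two factors gives $T_\pm(\delta_X)$ as perverse sheaves, and subtracting the explicit summand $\delta_a$ when $X$ is symmetric up to translation leaves $S_+(\delta_X)$ perverse. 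To rule out negligible direct summands one uses that simple negligible perverse sheaves on $A$ are essentially intersection complexes supported on translates of proper abelian subvarieties; any such summand of $\delta_X \ast \delta_X$ would yield a translate $B + c \subset X+X$ arising from a positive-dimensional jumping locus of $\sigma$, which a standard stabilizer argument combining ampleness and nondivisibility rules out.

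\textbf{Part 2 (Larsen's alternative).} Assume now that $S_+(\delta_X)$ is simple. The fiber functor $\omega$ carries the decomposition $\delta_X \ast \delta_X = T_+(\delta_X) \oplus T_-(\delta_X)$ to $V \otimes V = \Sym^2 V \oplus \Alt^2 V$ (with the roles of $+$ and $-$ swapped according to the parity of $d$) and identifies $\omega(\delta_a)$, when present, with the one-dimensional $G^\ast_{X,\omega}$-invariant line $\bbC \cdot \theta$ spanned by the form. Simplicity of $S_+(\delta_X)$ then says that $\Sym^2 V$ (resp.~$\Alt^2 V$) is either irreducible as a $G^\ast_{X,\omega}$-module (in the nonsymmetric case, when $V$ is not self-dual) or splits as $\bbC\cdot\theta$ plus an irreducible complement (in the symmetric case). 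A version of Larsen's alternative for connected semisimple subgroups of $\GL(V)$ acting irreducibly now concludes: the number of irreducible summands of $V \otimes V$ attains the minimal value consistent with the self-duality type, forcing $G^\ast_{X,\omega}$ to be the full $\SL(V)$, $\SO(V,\theta)$, or $\Sp(V,\theta)$ according to whether $V$ is not self-dual, self-dual via a symmetric form, or self-dual via an alternating form.

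The chief obstacle lies in Part 1, specifically the precise control of negligible summands. The Franecki--Kapranov inequality identifies possible negligible summands with intersection complexes supported on translates of abelian subvarieties, but showing that such translates cannot appear in the decomposition of $\delta_X \ast \delta_X$ under ampleness and nondivisibility requires a careful analysis of the jumping loci of $\sigma$ and their relation with the stabilizer of $X + X$.
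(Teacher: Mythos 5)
Your Part~2 contains the decisive gap: the representation-theoretic step is false as stated. Knowing only that $S_+(\delta_X)$ is simple --- i.e.\ that the summand of $V\otimes V$ containing the invariant form $\theta$ decomposes as $\bbC\cdot\theta$ plus one irreducible --- does not force $G^\ast_{X,\omega}$ to be the full classical group. The group $E_7$ in its $56$-dimensional symplectic representation is a counterexample: $\Alt^2 V=\mathbf{1}\oplus W$ with $W$ irreducible, yet $E_7\neq\Sp_{56}$; this is exactly the configuration the theorem is designed to exclude (see \cref{Cor:NoE7}), so no ``version of Larsen's alternative'' that sees only $T_+(\delta_X)$ can close the argument. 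The point is that the Lie algebra $\frakh$ of $\Sp(V,\theta)$ (resp.\ $\SO(V,\theta)$) is $\Sym^2 V$ (resp.\ $\Alt^2 V$), which under $\omega$ corresponds to the \emph{other} summand $T_-(\delta_X)$; one needs \emph{its} irreducibility to run the inclusion $\frakg\subset\frakh$ and conclude $\frakg=\frakh$. The heart of the paper's proof (\cref{prop:larsen-alternative}(2)) is a geometric derivation of the simplicity of $T_-(\delta_X)$ from that of $S_+(\delta_X)$: simplicity forces $S_+(\delta_X)=\delta_{X+X}$, hence $f\colon\Sym^2X\to X+X$ is birational, hence $T_-(\delta_X)\simeq Rf_*L_-[2d]$ has generic rank one on $X+X$; any further summand with smaller support is then excluded by playing the dimensions of the fibers of $f$ against the top cohomology of those fibers (\cref{lem:top-cohomology}) and the perversity from part~(1). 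None of this appears in your proposal, and it cannot be replaced by pure representation theory. (The same caveat applies in the non-symmetric case: irreducibility of just one of the two summands of $W\otimes W$ does not force $\SL(W)$; e.g.\ for $\SL_n$ acting on $W=\Lambda^2\bbC^n$ the summand $\Alt^2 W$ is irreducible.)

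In Part~1 the statement is correct but both justifications are off. Generic finiteness of the sum morphism does not make it semismall, so ``no shifts arise'' is unjustified; what is true is the general structure theorem that a convolution of semisimple perverse sheaves on an abelian variety is a semisimple perverse sheaf plus a negligible complex, so perversity reduces to the absence of negligible summands. For that, your ``standard stabilizer argument'' is not enough: the support of a simple negligible summand is a subvariety invariant under translation by a positive-dimensional abelian subvariety, but it may well be a \emph{proper} subvariety of $X+X$, and $X+X$ can perfectly well contain such translates. The paper instead uses Kashiwara's estimate for characteristic varieties of direct images (\cref{lem:Kashiwara-estimate}): ampleness of the normal bundle makes the Gauss map $\PLambda_X\to\bbP_A$ finite, this finiteness propagates to $\PChar(\delta_X*\delta_X)$, and conormal varieties of translation-invariant subvarieties have non-finite Gauss maps, so no negligible component can occur. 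You correctly flagged the control of negligible summands as the chief obstacle, but the conormal/Gauss-map mechanism is the missing idea, and it is also what makes the fiber-dimension argument in Part~2 work.
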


Note that in comparison to Larsen's alternative in~\cite[lemma 3.7]{JKLM}, we here only need an assumption on the symmetric or alternating square rather than on the full tensor square. The rest of this section will be devoted to the proof of the above criterion, see \cref{sec:proof-of-LarsenAlternative}. But before coming to the proof, let us show how~\cref{th:LarsenAlternative} rules out the exceptional Tannaka group $E_7$:

\begin{corollary} \label{Cor:NoE7}
If $X\subset A$ has ample normal bundle and $d < g / 2$, then
\[G_{X, \omega}^\ast \; \not \simeq \; E_7.\]
\end{corollary}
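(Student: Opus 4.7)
The plan is to derive a contradiction via Theorem~\ref{th:LarsenAlternative}. Suppose for contradiction that $G_{X,\omega}^\ast \simeq E_7$. By a standard descent we may assume that $X$ is nondivisible: if $X$ were invariant by translation by some nonzero point, we could pass to the quotient abelian variety by the closed subgroup it generates, preserving the hypotheses of the corollary and the Tannaka group $G_{X,\omega}^\ast$. The smoothness of $X \subset A$ then forces $V := \omega(\delta_X)$ to be a minuscule representation of $G_{X,\omega}^\ast$, so $V$ must be the unique $56$-dimensional irreducible representation $V_{\varpi_7}$ of $E_7$, which carries an invariant symplectic form.

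The key representation-theoretic input is the decomposition
\[
\Lambda^2 V \;\simeq\; \mathbf{1} \oplus V_{1539}
\]
into irreducible $E_7$-representations, where $V_{1539}$ is simple. This follows from the standard decomposition $V \otimes V \simeq \mathbf{1} \oplus V_{133} \oplus V_{1463} \oplus V_{1539}$ together with the fact that the adjoint representation $\mathfrak{e}_7 = V_{133}$ lies in $\Sym^2 V$: since the invariant form on $V$ is alternating, $\mathfrak{e}_7 \subset \mathfrak{sp}(V) = \Sym^2 V$. The alternating nature of the form also forces $d = \dim X$ to be odd by~\cite[sect.~1.2]{JKLM}, so $T_+(\delta_X) = \Lambda^2 \delta_X$. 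Under the Tannaka equivalence $\langle \delta_X \rangle \simeq \Rep(G_{X,\omega})$, the trivial $E_7$-summand of $\Lambda^2 V$ extends to a one-dimensional $G_{X,\omega}$-direct summand, corresponding to a skyscraper $\delta_b$ that occurs as a direct summand of $T_+(\delta_X)$ in $\Pbar(A)$. This skyscraper summand is the Tannakian incarnation of a self-duality $\delta_X \simeq \delta_{b-X}$; for smooth irreducible subvarieties of $A$ this forces $X = b-X$, so $X$ is symmetric up to translation and $T_+(\delta_X) = S_+(\delta_X) \oplus \delta_a$ with $a = b$ by definition.

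The same Tannaka equivalence identifies the simple summands of $S_+(\delta_X)$ in $\Pbar(A)$ with the irreducible constituents of $\Lambda^2 V / \mathbf{1} \simeq V_{1539}$, so $S_+(\delta_X)$ has a unique simple summand in $\Pbar(A)$. Combined with the first assertion of Theorem~\ref{th:LarsenAlternative}, which rules out negligible summands, we conclude that $S_+(\delta_X)$ is a simple perverse sheaf. The second assertion of that theorem, applied in the case ``$d$ odd, $X$ symmetric up to translation'', then yields
\[
G_{X,\omega}^\ast \;=\; \Sp(V,\theta) \;=\; \Sp_{56}.
\]
This contradicts $G_{X,\omega}^\ast \simeq E_7$, because $E_7$ is a proper closed subgroup of $\Sp_{56}$ (of dimension $133 < 1596$).

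The main obstacle is the clean decomposition $\Lambda^2 V_{56} \simeq \mathbf{1} \oplus V_{1539}$ with its nontrivial summand irreducible: this precise two-summand structure is what makes Larsen's alternative directly applicable here, and is the representation-theoretic incarnation of the ``parity issue'' mentioned in the introduction. Granted this input, the rest of the argument is a purely formal Tannakian deduction.
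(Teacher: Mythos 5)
Your proposal is correct and follows essentially the same route as the paper: reduce to the nondivisible case, use minusculity to pin down $V$ as the $56$-dimensional symplectic representation, deduce that $d$ is odd and $X$ is symmetric up to translation, identify $S_+(\delta_X)$ with the irreducible complement of the trivial summand in $\Lambda^2 V$, and invoke \cref{th:LarsenAlternative} to force $G_{X,\omega}^\ast = \Sp(V,\theta)$, a contradiction. The only difference is that you spell out the decomposition $\Lambda^2 V \simeq \mathbf{1}\oplus V_{1539}$ via the full tensor square and the placement of the adjoint representation in $\Sym^2 V$, where the paper simply cites highest weight theory.
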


\begin{proof}
Suppose $G_{X, \omega}^\ast \simeq E_7$. 
If $X\subset A$ descends under an isogeny $f\colon A\to B$, then~$f(X)\subset B$ is still smooth nondegenerate and irreducible of the same dimension with Tannaka group $E_7$ by~\cite[rem.~2.6]{JKLM}~\cite[cor.~1.6(a)]{KraemerMicrolocalI}. Hence in what follows we assume that~$X$ is nondivisible. Then by~\cite[cor.~1.10]{KraemerMicrolocalI} the representation $V= \omega(\delta_X)$ of $G_{X,\omega}^\ast$ is minuscule and hence isomorphic to the $56$-dimensional irreducible representation of $E_7$. By the highest weight theory of the exceptional group $E_7$ then
\[
\Alt^2(V) \;\simeq\; W \oplus \mathbf{1},\]
where $W$ is irreducible and $\mathbf{1}$ is the trivial representation of dimension one. In particular, there is an alternating bilinear form on $V$ which is invariant under the action of $E_7$. It follows from the parity discussion in~\cite[sect.~1.2]{JKLM} that $d$ is odd and, after a suitable translation, that $X$ is symmetric. Then
\[
 \Alt^2(\delta_X) \;=\; S_+(\delta_X) \oplus \delta_0 
\]
and $S_+(\delta_X)$ is perverse by the first part of \cref{th:LarsenAlternative} and corresponds to the representation $\omega(S_+(\delta_X))\simeq W$. The latter is irreducible, so $S_+(\delta_X)$ is a simple perverse sheaf up to negligible direct summands. The second part of \cref{th:LarsenAlternative} then says that $G_{X, \omega}^\ast$ is the full symplectic group $\Sp(V, \theta)$, a contradiction.
\end{proof}

\subsection{Topological preliminaries} 
In this section we work over $k=\bbC$. For the proof of~\cref{th:LarsenAlternative} we will detect perverse direct summands of a convolution product from the top cohomology of the fibers of the sum morphism, using the following well-known fact:

\begin{lemma} \label{lem:top-cohomology}
Let $Z$ be a proper complex variety of dimension $d$, and let $n$ be the number of its irreducible components of dimension $d$.
Then $\dim_\bbC \rH^{2d}(Z, \bbC)=n$.
\end{lemma}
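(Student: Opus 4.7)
The plan is to reduce to the case of an irreducible proper variety by iterated Mayer--Vietoris, exploiting that for a complex algebraic variety of complex dimension $k$ the constructible cohomology vanishes in real degrees above $2k$.

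First I would order the irreducible components so that $Z_1,\dots,Z_n$ are those of dimension $d$ and write $Z' \subset Z$ for the union of the remaining components, all of complex dimension at most $d-1$. Setting $Z_{\mathrm{top}} := Z_1 \cup \cdots \cup Z_n$, the intersection $Z_{\mathrm{top}} \cap Z'$ has complex dimension at most $d-1$ as well. The Mayer--Vietoris sequence for the closed cover $Z = Z_{\mathrm{top}} \cup Z'$, combined with the vanishing of $\rH^i$ of $Z'$ and of $Z_{\mathrm{top}}\cap Z'$ in real degrees $i \ge 2d-1$, yields
\[\rH^{2d}(Z,\bbC) \;\simeq\; \rH^{2d}(Z_{\mathrm{top}},\bbC).\]

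Iterating Mayer--Vietoris on the decomposition $Z_{\mathrm{top}} = Z_1 \cup (Z_2 \cup \cdots \cup Z_n)$, and noting that $Z_1 \cap (Z_2 \cup \cdots \cup Z_n) = \bigcup_{j\ge 2}(Z_1 \cap Z_j)$ again has dimension at most $d-1$, I would peel off one irreducible component at a time by the same argument to obtain
\[\rH^{2d}(Z_{\mathrm{top}},\bbC) \;\simeq\; \bigoplus_{i=1}^{n} \rH^{2d}(Z_i,\bbC).\]

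It then remains to show that each $\rH^{2d}(Z_i,\bbC)$ is one-dimensional. For this I would fix a resolution of singularities $\pi_i \colon \widetilde{Z}_i \to Z_i$ with $\widetilde{Z}_i$ smooth, proper and irreducible of dimension $d$. The cone of the adjunction $\bbC_{Z_i} \to R\pi_{i,*} \bbC_{\widetilde{Z}_i}$ is supported on the image of the exceptional locus, which has complex dimension strictly less than $d$, so its hypercohomology vanishes in degrees $\ge 2d-1$. Hence $\rH^{2d}(Z_i,\bbC) \simeq \rH^{2d}(\widetilde{Z}_i,\bbC) \simeq \bbC$ by Poincar\'e duality on the smooth connected proper variety $\widetilde{Z}_i$. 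The only delicate point is keeping track of the dimension bounds on the various pairwise intersections and exceptional loci; once this bookkeeping is in place, summing over $i=1,\dots,n$ gives the claim.
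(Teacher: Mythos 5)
Your proof is correct and follows essentially the same route as the paper: Mayer--Vietoris for the decomposition into irreducible components (using that pairwise intersections and the lower-dimensional components have cohomology vanishing above degree $2d-2$), followed by passage to a resolution of singularities, whose discriminant locus again has dimension $<d$. The paper's proof is a two-sentence sketch of exactly this argument, so your write-up just supplies the bookkeeping it leaves implicit.
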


\begin{proof} 
The intersection of distinct irreducible components has real codimension at least two, hence via the Mayer-Vietoris sequence one reduces to the case where~$Z$ is irreducible. The same argument allows us to pass to a resolution of singularities and thus the claim follows. 
\end{proof}

The above lemma is about the cohomology with constant coefficients, so we want a criterion to decide when the perverse intersection complex on a singular variety is the constant sheaf. A complex variety $W$ is a \emph{rational homology manifold} if
\[
 \rH^i_{\{x\}}(W, \bbC) \;\simeq\; 
 \begin{cases} 
 \bbC & \text{for $i=2\dim W$}, \\
 0 & \text{for $i\neq 2\dim W$}.
 \end{cases}
\]
Any smooth variety is a rational homology manifold. More generally we have:

\begin{lemma} \label{lem:homology-manifold}
For any rational homology manifold $X$ and any $n\in \bbN$, the symmetric power $W=\Sym^n X$ is again a rational homology manifold. In particular, if $X$ is irreducible, then 
\[
 \delta_{W} \;\simeq\; \bbC_{W}[m]
 \quad \text{where} \quad m = \dim W = n\dim X.
\]
\end{lemma}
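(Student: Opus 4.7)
The plan is to reduce the claim to two facts: that a finite quotient of a rational homology manifold is again one, and that for an irreducible rational homology manifold the constant sheaf (appropriately shifted) is already the perverse intersection complex.

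First I would observe that $X^n$ is a rational homology manifold. Using the characterization that a pure-dimensional variety $Y$ is a rational homology manifold if and only if $\bbC_Y[\dim Y]$ is Verdier self-dual, this follows from the compatibility of Verdier duality with external tensor products applied to $(\bbC_X[d])^{\boxtimes n} \iso \bbC_{X^n}[nd]$. Next I would show that $W = X^n/S_n$ is a rational homology manifold by a direct local cohomology computation. For $w \in W$, pick a preimage $y = (x_1, \ldots, x_n)\in X^n$ with stabilizer $G_y \subset S_n$ and a small $G_y$-stable analytic neighborhood $U$ of $y$ meeting the $S_n$-orbit of $y$ only in $y$. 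Then $\pi(U) \iso U/G_y$ and, since cohomology with rational coefficients commutes with finite quotients,
\[
\rH^i_{\{w\}}(W,\bbC) \;\iso\; \rH^i_{\{y\}}(X^n,\bbC)^{G_y}.
\]
By the previous step this vanishes outside degree $2nd = 2\dim W$, where Künneth identifies it with $G_y$-invariants in $\bigotimes_{k=1}^n \rH^{2d}_{\{x_k\}}(X,\bbC)$; the group $G_y$ permutes identical one-dimensional factors all in the same even degree, so the action is trivial and the invariants equal $\bbC$.

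Finally, for $X$ irreducible, $W$ is an irreducible rational homology manifold of pure dimension $m$. Then $\bbC_W[m]$ is perverse (the support condition comes from $\dim W = m$, the cosupport condition from self-duality $D(\bbC_W[m]) \iso \bbC_W[m]$) and restricts to $\delta_{W^\reg}$ on the smooth locus, so it only remains to verify simplicity as a perverse sheaf. Any nonzero perverse subobject supported on a proper closed $Z \subsetneq W$ has the form $i_{Z*}\cF$ with $\cF \in \Perv(Z)$, corresponding by adjunction to a nonzero map $\cF \to i_Z^! \bbC_W[m] \iso D(\bbC_Z[m])$. But $\bbC_Z[\dim Z]$ lies in ${}^p D^{\le 0}$, hence $\bbC_Z[m]$ lies in ${}^p D^{\le -(m-\dim Z)}$ with $m - \dim Z > 0$, and its Verdier dual lies in strictly positive perverse degrees, forcing the map to vanish. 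Perverse quotients supported on proper subvarieties are handled by self-duality, so $\bbC_W[m]$ is simple and equals $\delta_W$.

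The main subtlety I anticipate is the triviality of the $G_y$-action on top local cohomology in the quotient step: one has to confirm that permuting Künneth tensor factors introduces no Koszul sign, which is ensured here by all factors sitting in the same even cohomological degree $2d$.
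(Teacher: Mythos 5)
Your argument is correct and follows the same architecture as the paper's proof: pass to $X^n$, quotient by $\frS_n$ using that finite quotients of rational homology manifolds are again rational homology manifolds, and conclude via the fact that the intersection complex of an irreducible rational homology manifold is the (shifted) constant sheaf. The only difference is that the paper simply cites these two ingredients (Brion, prop.~A.1(iii), and Hotta et al., prop.~8.2.21), whereas you prove them directly -- your local-cohomology computation of the $G_y$-invariants (including the observation that the Koszul sign is trivial in even degree $2d$) and your adjunction argument ruling out perverse sub- and quotient objects supported on proper closed subsets are both sound.
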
 

\begin{proof} 
Any quotient of a rational homology manifold by the action of a finite group is a rational homology manifold~\cite[prop.~A.1(iii)]{BrionRationalSmoothness}. So $\Sym^n X = X^n/\frS_n$ is a rational homology manifold. But the intersection complex on any irreducible rational homology manifold is the constant sheaf~\cite[prop.~8.2.21]{Hotta}. 
\end{proof} 

\subsection{Kashiwara's estimate} In this section we continue to work over $k=\bbC$. We will control negligible summands in convolution products in terms of characteristic varieties. 
As in~\cite[def.~2.2]{JKLM}, we attach to any subvariety $Y\subset A$ its projective conormal variety
\[
 \PLambda_Y \;\subset\; \bbP(\Omega^1_A) \;=\; A \times \bbP_A
\]
where $\bbP_A = \bbP(\Lie(A)^\vee)$.
The \emph{characteristic variety} of a perverse sheaf $P\in \Perv(A)$ is defined as the support $\Char(P)=\Supp \CC(P)$ of the characteristic cycle $\CC(P)$, including possible negligible components. Its projectivization 
\[
 \PChar(P) \;\subset\; \bbP(\Omega^1_A)
\]
is a finite union of projective conormal varieties. For a sheaf complex $K\in \Dbc(A, \bbC)$ we denote by
\[
 \PChar(K) \;=\; \bigcup_{i\in \bbZ} \PChar(^p\cH^i(K))
\]
the union of the characteristic varieties of all its perverse cohomology sheaves. We can then control the characteristic varieties of convolution products $K=P*P$ by the following incarnation of Kashiwara's estimate:

\begin{lemma} \label{lem:Kashiwara-estimate}
Let $P_1, P_2\in \Perv(A)$. If the Gauss maps $\PChar(P_i)\to \bbP_A$ are finite for both $i=1,2$, then so is the Gauss map 
\[ 
 \PChar(P_1*P_2) \;\longrightarrow\; \bbP_A
\]
and hence $\PChar(P_1*P_2)$ does not contain any negligible components.
\end{lemma}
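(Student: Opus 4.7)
The plan is to deduce this from Kashiwara's estimate for the proper direct image $R\sigma_*$ along the group law $\sigma\colon A\times A\to A$, together with the Lie-group triviality of $T^*A$ which lets us compare conormal directions at $a$ with conormal directions at $x_1,x_2$ such that $x_1+x_2=a$.

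First, since $\sigma$ is proper and $P_1*P_2 = {}^p\cH^0(R\sigma_*(P_1\boxtimes P_2))$, Kashiwara's estimate for the characteristic variety of a direct image implies
\[
 \Char(P_1*P_2) \;\subset\; \sigma_\circ\Char(P_1\boxtimes P_2),
\]
where $\sigma_\circ$ denotes the Lagrangian correspondence $T^*A \xleftarrow{d\sigma^*} T^*A \times_A (A\times A) \to T^*(A\times A)$, noting that taking perverse cohomology can only shrink the characteristic variety. Moreover, $\Char(P_1\boxtimes P_2) = \Char(P_1)\times \Char(P_2)$ inside $T^*(A\times A) = T^*A\times T^*A$.

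Second, I would make the correspondence explicit. Using the Lie-group trivialization $T^*A = A\times \Lie(A)^\vee$, the differential of $\sigma$ at $(x_1,x_2)\in A\times A$ is the addition map $\Lie A\oplus \Lie A \to \Lie A$, so its transpose sends a covector $\xi$ at $a=x_1+x_2$ to the pair $(\xi,\xi)$ at $(x_1,x_2)$. Combining with the first step:
\[
 (a,\xi)\in \Char(P_1*P_2) \;\Longrightarrow\; \exists\, x_1,x_2\in A \;\text{with}\; x_1+x_2=a \;\text{and}\; (x_i,\xi)\in \Char(P_i).
\]

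Third, I would projectivize and analyze the Gauss map. Fix $[\xi]\in\bbP_A$ and consider its fiber in $\PChar(P_1*P_2)$. By the above, any element of this fiber has the form $[a,\xi]$ for some $a=x_1+x_2$ with $[x_i,\xi]\in\PChar(P_i)$ lying over $[\xi]$. By hypothesis each Gauss map $\PChar(P_i)\to\bbP_A$ is finite, so there are only finitely many admissible $x_1,x_2$ and hence only finitely many possible sums $a$. Therefore the Gauss map $\PChar(P_1*P_2)\to\bbP_A$ has finite fibers.

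Fourth, the absence of negligible components is automatic once finiteness of the Gauss map is established. Indeed, every irreducible component of $\PChar(P_1*P_2)$ is a projective conormal variety $\PLambda_Y$ for some integral $Y\subset A$, and $\PLambda_Y$ corresponds to a negligible summand precisely when $Y$ is stable under translation by a positive-dimensional abelian subvariety, which is equivalent to the Gauss map $\PLambda_Y\to\bbP_A$ failing to be finite (this is the standard dictionary recalled in the preamble to \cite[sect.~5.3]{JKLM}). Since the Gauss map on the whole of $\PChar(P_1*P_2)$ is finite, it is finite on each component and no negligible components can appear. The main delicate point is the bookkeeping for the Lagrangian correspondence $\sigma_\circ$ and the cleanliness of the identification $d\sigma^*(\xi)=(\xi,\xi)$, but everything else is a direct manipulation.
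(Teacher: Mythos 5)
Your proposal is correct and follows essentially the same route as the paper: Kashiwara's estimate for the proper pushforward along $\sigma$, the identification of the Lagrangian correspondence with the fiber product $\PChar(P_1)\times_{\bbP_A}\PChar(P_2)$ over $\bbP_A$ (which is exactly what your computation $d\sigma^*(\xi)=(\xi,\xi)$ makes explicit), and the observation that finiteness of the two Gauss maps forces finiteness over $\bbP_A$ of this fiber product and hence of its image containing $\PChar(P_1*P_2)$. Your step from ``finite fibers'' to ``finite'' is harmless since $\PChar(P_1*P_2)$ is closed in $A\times\bbP_A$ and therefore proper over $\bbP_A$.
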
 

\begin{proof} 
By definition $P_1*P_2 = R\sigma_*(P_1\boxtimes P_2)$ where $\sigma\colon A\times A \to A$ denotes the sum morphism. Thus by Kashiwara's upper estimate for the characteristic variety of proper direct images~\cite[th.~4.2(b)]{KashiwaraBFunctionsAndHolonomic} we have
\[
 \PChar(P_1*P_2) 
 \;\subset\; \tilde{\sigma}(\PLambda_1\times_{\bbP_A} \PLambda_2),
\]
where $\PLambda_i = \PChar(P_i)$ and $\tilde{\sigma} = \sigma \times \id_{\bbP_A}$. Our finiteness assumption on the Gauss map implies that the projection $f \colon \PLambda_1\times_{\bbP_A} \PLambda_2 \to \bbP_A$ is a finite morphism. The morphism $f$ factors through the projection $g \colon \tilde{\sigma}(\PLambda_1 \times_{\bbP_A} \PLambda_2) \to \bbP_A$. Thus $g$ is finite and so is its restriction to the subvariety $\PChar(P_1*P_2)\subset \tilde{\sigma}(\PLambda_1\times_{\bbP_A} \PLambda_2)$. 
\end{proof} 

\begin{corollary} \label{cor:no-negligibles}
Let $P_1, P_2\in \Perv(A)$. If the Gauss maps $\PChar(P_i)\to \bbP_A$ are both finite, then for all $i\in \bbZ$ and any perverse direct summand 
$Q\subset ^p\cH^i(P_1*P_2)$, 
the support \[ Y \;=\; \Supp(Q) \] is either of general type (in which case $Q$ is not negligible), or $Y=A$.
\end{corollary}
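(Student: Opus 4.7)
The plan is to derive the corollary as a short consequence of \cref{lem:Kashiwara-estimate} together with one classical input, namely Ueno's structure theorem for subvarieties of abelian varieties. Since $Q$ is a direct summand of ${}^p\cH^i(P_1 * P_2)$, its characteristic cycle is a direct summand of that of $P_1 * P_2$, hence
\[
 \PChar(Q) \;\subset\; \PChar(P_1 * P_2).
\]
The hypothesis on $P_1$ and $P_2$ combined with \cref{lem:Kashiwara-estimate} then tells us that the Gauss map $\PChar(P_1 * P_2) \to \bbP_A$ is finite, and hence so is its restriction to $\PChar(Q)$.

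Set $Y = \Supp(Q)$ and let $Y_0 \subset Y$ be any irreducible component; it suffices to show that $Y_0 = A$ or $Y_0$ is of general type. Since $Q$ is a nonzero perverse sheaf whose support contains $Y_0$, at a generic point of $Y_0$ the characteristic cycle of $Q$ restricts to a positive multiple of the conormal bundle to $Y_0$ in $A$. Thus $\PLambda_{Y_0}$ is an irreducible component of $\PChar(Q)$, so the Gauss map $\PLambda_{Y_0} \to \bbP_A$ is finite. Assume from now on that $Y_0 \ne A$.

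I claim this forces $Y_0$ not to be invariant under translation by any positive-dimensional abelian subvariety of $A$. Indeed, suppose for contradiction that some abelian subvariety $B \subset A$ with $\dim B > 0$ preserves $Y_0$ by translation. Then, for any smooth point $y \in Y_0^\reg$, the tangent space $T_y Y_0 \subset \Lie(A)$ contains $\Lie(B)$, and since $B$ acts on $Y_0^\reg$ by translations (which are trivial under the canonical identification $T_\bullet A = \Lie(A)$), the same holds at every point of the orbit $y + B \subset Y_0^\reg$. Consequently, any conormal direction $[\xi]$ above $y$ is also conormal above every point of $y + B$, so the fiber of the Gauss map $\PLambda_{Y_0} \to \bbP_A$ above $[\xi]$ contains the positive-dimensional translate $y + B$, contradicting finiteness. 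By Ueno's theorem, the nonexistence of such a $B$ is exactly the condition for $Y_0$ to be of general type, concluding the proof.

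The only nontrivial external ingredient is Ueno's theorem; the rest is a mechanical combination of \cref{lem:Kashiwara-estimate} with the definition of the characteristic cycle. The main point to double-check carefully is that $\PLambda_{Y_0}$ really occurs as a component of $\PChar(Q)$ (and not merely inside the singular support of some summand of $P_1 * P_2$ that is disjoint from $Q$), but this is immediate from the fact that $Q$ is perverse with generic rank $> 0$ along $Y_0$.
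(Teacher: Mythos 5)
Your proof is correct and follows exactly the paper's (one-line) argument: the key point is that $\PLambda_{Y_0}\subset\PChar(Q)\subset\PChar(P_1*P_2)$ has finite Gauss map by \cref{lem:Kashiwara-estimate}, and a conormal variety with finite Gauss map can only lie over a subvariety of general type (or be empty, i.e.\ $Y_0=A$). The Ueno/stabilizer step you spell out is precisely what the paper leaves implicit in the phrase ``does not contain any negligible components,'' so your write-up is a faithful expansion rather than a different route.
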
 

\begin{proof} 
For $Q \subset {}^p\cH^i(P_1*P_2)$ with support $Y$ we have $\PLambda_Y \subset \PChar(P_1*P_2)$.
\end{proof}

\subsection{Decomposition of the convolution square} 
The main point in the proof of~\cref{th:LarsenAlternative} will be to deduce from our assumption about the symmetric or alternating square the decomposition of the full tensor square. To explain this we go back to the notation of \cref{sec:StatementsNoE7}. Let $T_-(\delta_X)$ denote the direct summand complementary to $T_+(\delta_X)$ in the decomposition of the convolution square $\delta_X \ast \delta_X$, so that
\[ \delta_X*\delta_X \;=\; T_+(\delta_X) \oplus T_-(\delta_X). \]
We control the second direct summand in this decomposition as follows:
 
\begin{proposition} \label{prop:larsen-alternative}
Suppose that $X\subset A$ has ample normal bundle and $d < g/2$.\smallskip
\begin{enumerate} 
\item The convolution $\delta_X * \delta_X$ is a perverse sheaf without negligible summands.\smallskip 
\item If $S_+(\delta_X)$ is simple, then \smallskip
\begin{enumerate} 
\item the sum morphism $f\colon \Sym^2 X \to X + X$ is birational, and\smallskip
\item $T_-(\delta_X)$ is a simple perverse sheaf with support $X+X$.
\end{enumerate} 
\end{enumerate}
\end{proposition}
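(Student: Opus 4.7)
For part (1), I write $\delta_X\ast\delta_X = R\sigma_\ast(\bbC_{X\times X}[2d])$ with $\sigma\colon X\times X\to A$ the restriction of the group law. The ample normal bundle of $X$ makes the Gauss map $\PChar(\delta_X)\to\bbP_A$ finite, so by~\cref{lem:Kashiwara-estimate} the same holds for $\PChar(\delta_X\ast\delta_X)$, and then~\cref{cor:no-negligibles} rules out negligible summands in every perverse cohomology sheaf of $\delta_X\ast\delta_X$. For perversity itself, I use that the ample normal bundle combined with $d<g/2$ forces $\sigma$ to be semismall onto its image $X+X$, so $R\sigma_\ast$ preserves perversity and $\delta_X\ast\delta_X = R\sigma_\ast(\bbC_{X\times X}[2d])$ is a perverse sheaf on $A$.

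For part (2a), the Tannakian braiding on $\Perv(A)$ under convolution twists the topological swap on $\bbC_{X\times X}[2d]$ by $(-1)^d$, so in either parity of $d$ the summand $T_+(\delta_X)$ equals the topological swap-invariants of $R\sigma_\ast(\bbC_{X\times X}[2d])$. Factoring $\sigma = i\circ f\circ q$ with $q\colon X\times X\to\Sym^2 X$ the quotient, $f\colon\Sym^2 X\to X+X$, and $i\colon X+X\into A$, this invariant part equals $i_\ast Rf_\ast(\bbC_{\Sym^2 X}[2d])$; by~\cref{lem:homology-manifold}, $\bbC_{\Sym^2 X}[2d]$ is the intersection complex of $\Sym^2 X$. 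The decomposition theorem applied to $Rf_\ast$ yields $\delta_{X+X}$ as a direct summand of $T_+(\delta_X)$: on the open dense $U\subset X+X$ where $f$ is étale, $Rf_\ast\bbC$ is a rank-$\deg(f)$ local system whose trivial subrepresentation extends to $\delta_{X+X}$. The simplicity of $S_+(\delta_X)$ forces $T_+(\delta_X)$ to consist of at most two simple summands (with the extra one being $\delta_a$ in the symmetric up to translation case); since $\delta_{X+X}\neq\delta_a$ must appear exactly once, we get $S_+(\delta_X)=\delta_{X+X}$, and the absence of any other full-support summand forces the monodromy to be wholly trivial, so $\deg(f)=1$ and $f$ is birational.

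For part (2b), by (2a) the morphism $\sigma$ has generic degree $2$. On the open $U$ where $\sigma$ is étale, $(\delta_X\ast\delta_X)|_U$ is a rank-$2$ local system in degree $-2d$ that decomposes under the swap as $\bbC\oplus\cL$, where $\cL$ is the sign local system of the double cover $\sigma^{-1}(U)\to U$. Since $X\times X$ is irreducible, this double cover is connected, so $\cL$ is nontrivial. Hence $T_-(\delta_X)|_U = \cL[2d]$, and the unique simple summand of $T_-(\delta_X)$ with full support is $\IC_{X+X}(\cL)$. To eliminate further summands, I identify $T_-(\delta_X)$ with $i_\ast Rf_\ast\IC_{\Sym^2 X}(\cL_{sx})$, where $\cL_{sx}$ is the sign local system on $\Sym^2 X\smallsetminus\Delta_X$: this uses the decomposition $Rq_\ast\bbC_{X\times X}[2d] = \bbC_{\Sym^2 X}[2d]\oplus\IC_{\Sym^2 X}(\cL_{sx})$, where the anti-invariant part must coincide with $\IC_{\Sym^2 X}(\cL_{sx})$ since its stalks vanish along the diagonal $\Delta_X$ (because the monodromy of $\cL_{sx}$ around $\Delta_X$ is $-1$). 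Combining this with the birationality of $f$ from (2a) then yields $T_-(\delta_X)=\IC_{X+X}(\cL)$ up to possible residual summands supported on proper subvarieties of $X+X$.

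The main obstacle is this last step of (2b): controlling that the birational morphism $f$ (which need not be an isomorphism if $X+X$ fails to be normal) produces no exceptional summands in $Rf_\ast\IC_{\Sym^2 X}(\cL_{sx})$, and in particular no unexpected contribution at the point $a$ in the symmetric-up-to-translation case — where $\sigma^{-1}(a)\cong X$ has positive dimension and the involution $x\mapsto a-x$ on $X$ may produce nontrivial anti-invariant cohomology. My plan is to handle this via~\cref{cor:no-negligibles} (to force any such exceptional summand to have general type support) together with a fiberwise count of top-dimensional components of the fibers of $\sigma$ via~\cref{lem:top-cohomology}.
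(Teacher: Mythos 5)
There is a genuine gap, and it sits exactly where you locate it: the elimination of the residual summands of $T_-(\delta_X)$ in part (2b) is the technical heart of the proposition, and your proposal leaves it as a ``plan''. Moreover, of the two tools you propose, \cref{cor:no-negligibles} cannot do the job: it only forces the support $Y$ of a residual summand $R$ to be of general type or all of $A$, and a general-type proper subvariety of $X+X$ is not excluded by it. The paper's actual argument is the fiberwise one you gesture at, but its execution requires a specific interplay you do not describe: if $R\neq 0$ has support of dimension $e<2d$, then the nonvanishing of $\cH^{-e}(R)_y\subset \rH^{2d-e}(F,L_-|_F)$ at a general $y$ forces $2\dim F\ge 2d-e$ for $F=f^{-1}(y)$; then \cref{lem:top-cohomology} gives $\rH^{2\dim F}(F,\bbC)\neq 0$, which contributes to the \emph{plus} part $T_+(\delta_X)$ in cohomological degree $2\dim F-2d$. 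If $2\dim F>2d-e$ this contradicts the perversity of $T_+(\delta_X)$ from part (1); if $2\dim F=2d-e$ it produces a small-support perverse summand of $S_+(\delta_X)$, contradicting its assumed simplicity. One also needs the separate adjunction argument showing $R$ cannot be a skyscraper (the only skyscrapers in $\delta_X*\delta_X$ are $\delta_a$ with $X=a-X$, and that one lands in $T_+$ by the parity convention), which handles precisely the fibre $\sigma^{-1}(a)\cong X$ that worries you.

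A secondary but real problem is your justification of perversity in part (1): you assert that ample normal bundle and $d<g/2$ force $\sigma$ to be semismall, but semismallness of $\sigma$ is \emph{equivalent} to the perversity of $R\sigma_*\bbC_{X\times X}[2d]$, so as written this is circular. The paper instead uses the structure theorem that a convolution of semisimple perverse sheaves on an abelian variety is a semisimple perverse sheaf plus a negligible complex, and then kills the negligible part via the finiteness of the Gauss map (\cref{lem:Kashiwara-estimate}, \cref{cor:no-negligibles}); you already have the second half of this argument, so the fix is to replace the semismallness claim by that structure theorem. Parts (2a) and the identification of the full-support summand of $T_-(\delta_X)$ are essentially correct and follow the paper's route.
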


\begin{proof} We can assume $k= \bbC$ and work in the analytic framework.\medskip

(1) By construction $\delta_X*\delta_X$ is a sheaf complex with support $X+X$, which by our dimension assumption is a strict subvariety of $A$.  Since for any smooth subvariety $X\subset A$ with nontrivial ample normal bundle the Gauss map $\PLambda_X \to \bbP_A$ is finite, we see from~\cref{cor:no-negligibles} that the convolution $\delta_X * \delta_X$ cannot contain any negligible direct summand. The claim now follows from the general fact that the convolution of any two simple perverse sheaves is a direct sum of a semisimple perverse sheaf and a negligible complex, due to the decomposition theorem~\cite[th.~6.2.5]{BBDG} and generic vanishing
~\cite[lemma~4.3c]{KraemerSemiabelian}.

\medskip 

(2) Part (1) implies that $S_+(\delta_X)$ is perverse. Now, the sum $X \times X \to X + X$ is the composite of the quotient morphism $\pi \colon X \times X \to \Sym^2 X$ with a unique map $f \colon \Sym^2 X \to X + X$. Write
\[ \pi_*(\bbC_{X\times X}) \;\simeq\; L_+ \oplus L_- \]
where $L_+$ are the invariants under the action of the deck transformation group of the double cover~$\pi$ and where~$L_-$ are the anti-invariants. Both $L_+$ and $L_-$ are constructible sheaves of generic rank one on $\Sym^2 X$. In fact, by adjunction one sees that
\[
 L_+  \;\simeq\; \bbC_{\Sym^2 X}
\] 
and by~\cref{lem:homology-manifold} this sheaf is the perverse intersection complex on $\Sym^2 X$ up to a shift. 
The definition of the commutativity constraint for the convolution product in~\cite[sect.~2.1]{WeissauerBNSheaves} moreover shows
\[
 T_+(\delta_X) \;\simeq\; Rf_* L_+[2d]
 \quad \text{and} \quad 
 T_-(\delta_X) \;\simeq\; Rf_* L_-[2d].
\]
The decomposition theorem~\cite[th.~6.2.5]{BBDG} together with the fact that $f$ is generically finite implies that $\delta_{X+X} \subset T_+(\delta_X)$. Since by assumption $S_+(\delta_X)$ is a simple perverse sheaf, we see that 
\[
 S_+(\delta_X) \;=\;  \delta_{X+X}.
\]
Then
$f_* L_+ \simeq \mathcal{H}^{-2d}(T_+(\delta_X))$
has generic rank one on its support $X+X$. Since by base change the generic rank of this direct image is the degree of the generically finite morphism 
\[
 f\colon \quad \Sym^2 X \;\longrightarrow\; X+X,
\]
it follows that $f$ is birational. This implies that also $\mathcal{H}^{-2d}(T_-(\delta_X)) \simeq f_* L_-$ has generic rank one on its support $X+X$. The structure of perverse sheaves then forces
\[
 T_-(\delta_X) \;=\; Q \oplus R
\] 
where $Q$ is a simple perverse sheaf of generic rank one on its support $X+X$ and~$R$ is either zero or a semisimple perverse sheaf with strictly smaller support. Note that~$R$ cannot be a skyscraper sheaf: By adjunction~\cite[cor.~1]{WeissauerBNSheaves}, any skyscraper summand of $\delta_X \ast \delta_X$ is supported at a point $a \in A(\bbC)$ with $X = a - X$, but since $X$ is nondivisible, there is at most one such skyscraper sheaf and it enters $T_+(\delta_X)$. \medskip

To show that $R= 0$, we argue by contradiction: Suppose $R\neq 0$. The support $Y=\Supp(R)$ has dimension $e=\dim Y < 2d = \dim X + X$. For general $y\in Y$ we have
\[
 0 \;\neq\; 
 \cH^{-e}(R)_y 
 \;\subset\;
 \cH^{-e}(T_-(\delta_X))_y 
 \;=\;
 R^{2d-e}f_* (L_-)_y
 \;\simeq\; 
 \rH^{2d-e}(F, {L_-}_{\vert F})
\]
for the fiber $F = f^{-1}(y)$. 
This nonvanishing forces $2 \dim F \ge 2d-e$. On the other hand 
\[
 \cH^{2\dim F-2d}(T_+(\delta_X))_y \;=\; R^{2\dim F} f_*(\bbC_{\Sym^2 X}) \;\simeq\; \rH^{2\dim F}(F, \bbC) \;\neq\; 0
\] 
by~\cref{lem:top-cohomology}. It follows that we have
\[ 2\dim F = 2d-e. \]
Indeed, if $2\dim F > 2d-e$, then the above nontrivial stalk cohomology would come from a non-perverse direct summand in $T_+(\delta_X)$, contradicting (1). The identity $2\dim F = 2d-e$ implies the above nontrivial stalk cohomology comes from a perverse direct summand in $S_+(\delta_X)$ whose support $Y$ is strictly smaller than $X+X$. This contradicts the simplicity of $S_+(\delta_X)$.
\end{proof}

The above statement is false if we exchange the roles of $S_+(\delta_X)$ and $T_-(\delta_X)$. Indeed, if $X$ is the Fano surface of lines on a smooth cubic threefold embedded in $A = \Alb(X)$, then $T_-(\delta_X) = \Alt^2(V)$ is simple but $S_+(\delta_X) = \Sym^2 V$ is not.

\subsection{Proof of \cref{th:LarsenAlternative}} \label{sec:proof-of-LarsenAlternative} 
Part (1) of \cref{prop:larsen-alternative} says that $S_+(V)$ is a perverse sheaf without negligible direct summands, so it only remains to show that if $S_+(\delta_X)$ is a simple perverse sheaf, then
\[
G_{X, \omega}^\ast = 
\begin{cases}
\SL(V) & \text{if $X$ is not symmetric up to translation}, \\
\SO(V, \theta) & \text{if $X$ is symmetric up to translation and $d$ is even}, \\
\Sp(V, \theta) & \text{if $X$ is symmetric up to translation and  $d$ is odd}.
\end{cases}
\]
But this follows from part (2b) of \cref{prop:larsen-alternative} and from the version of Larsen's alternative given in \cite[lemma 3.7]{JKLM}. For convenience we recall the simple argument when the subvariety $X\subset A$ is symmetric up to translation, which is the only one needed for the application to $E_7$. The symmetry of $X$ implies that the representation $V$ is self-dual and we have an isomorphism $V \otimes V \iso  \End(V)$. Let $\mathfrak{h} \subset \End(V)$ be the Lie algebra of $\SO(V, \theta)$ if $d$ is even and of $\Sp(V, \theta)$ if $d$ is odd. Via the preceding isomorphism we have
\[ 
\mathfrak{h} \iso
\begin{cases}
\Alt^2(V) & \textup{if $d$ is even},\\
\Sym^2(V) & \textup{if $d$ is odd}.
\end{cases}
\]
The representation $\mathfrak{h}$ of $G_{X, \omega}$ therefore corresponds via Tannaka duality to the perverse sheaf $S_-(\delta_X)$, which is simple by part (2b) of~\cref{prop:larsen-alternative}. Thus $\mathfrak{h}$ is an irreducible representation of the group $G_{X,\omega}$. On the other hand, the adjoint representation $\mathfrak{g}$ of this group is a subrepresentation of $\mathfrak{h}$, thus $\mathfrak{g} = \mathfrak{h}$.
\qed

\appendix

\section{Hodge number estimates} \label{sec:HodgeEstimates}

Let~$X$ be a smooth complex projective variety of dimension~$d$. The arguments of Lazarsfeld-Popa~\cite[th.~C(iii)]{LazarsfeldPopa} and Lombardi~\cite[cor.~2.3]{Lombardi13} can be applied in a slightly more general context to get bounds on the Euler characteristics~$\chi(X, \Omega^p_X)$ with the Albanese variety replaced by an arbitrary ambient abelian variety. For a linear subspace~$V\subset \rH^0(X, \Omega^1_X)$ put
\[
 m(V) \;:=\; \min \bigl\{ \, \codim(X, Z(\omega)) \mid 0\neq \omega \in V \, \bigr\}, 
\]
where~$Z(\omega) \subset X$ denotes the zero locus of~$\omega \in \rH^0(X, \Omega^1_X)$ as in loc.~cit. We will be interested in the case~$m(V)=d$, which is to say that every nonzero differential form~$\omega \in V$ has at most isolated zeroes. In this case, we have: 

\begin{theorem} \label{thm:hodge-estimate-for-subspace}
If~$V \subset \rH^0(X, \Omega^1_X)$ satisfies~$m(V) = d$ and~$\dim V > d + 1$, then
\[
 (-1)^{d-p}\chi(X, \Omega^p_X) \;\ge\; 
 \begin{cases} 
  \dim V - d +1& \text{for~$p \,\in\, \{0,d\}$},\\[0.1em]
   2 & \text{for~$p \,\in\, \{ 1, d - 1 \}$}, \\[0.1em]
 1 & \text{for~$2 \le p \le d-2$}.
 \end{cases} 
\]
\end{theorem}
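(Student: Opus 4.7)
The plan is to adapt the derivative complex strategy of Lazarsfeld-Popa~\cite{LazarsfeldPopa} and Lombardi~\cite{Lombardi13}, replacing their use of the full Albanese with an arbitrary subspace $V \subset \rH^0(X, \Omega^1_X)$ satisfying $m(V) = d$. The main input is the generalized BGG-type complex
\begin{equation*}
K^\bullet_{r,q} \colon \quad \cdots \too \wedge^{i} V \tensor \rH^{q+i}(X, \Omega^r_X) \too \wedge^{i+1} V \tensor \rH^{q+i+1}(X, \Omega^r_X) \too \cdots
\end{equation*}
whose differential is the cup product with the tautological class in $V^\vee \tensor \rH^1(X, \cO_X)$ (or its twist by $\Omega^r_X$). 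All the bounds in the statement will follow from the single exactness assertion: if $m(V) = d$, then for $q + r < d$ the complex $K^\bullet_{r,q}$ is exact.

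First I would establish this exactness. The arguments in the cited works proceed by induction via a general hyperplane-section specialization, and their local content is simply the Koszul exactness of wedging with a nonzero $\omega \in V$ on $\Omega^\bullet_X$ off the zero locus $Z(\omega)$, which is valid precisely because $Z(\omega)$ has codimension $d$ under our hypothesis. This local input depends only on the individual one-forms in $V$ and not on any global source for $V$, so the specialization-style argument should pass verbatim to the case of an arbitrary subspace.

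Next I would extract the numerical bounds. Taking alternating sums of dimensions in the exact complexes $K^\bullet_{p, d-p}$, together with Serre duality (which symmetrises the conclusion in $p \leftrightarrow d - p$), converts the exactness into identities expressing $(-1)^{d-p}\chi(X, \Omega^p_X)$ as a nonnegative Hodge-theoretic quantity plus a combinatorial term of the shape $\sum_i (-1)^i \binom{\dim V}{\ast}$. Matching the case $p \in \{0, d\}$ gives the Castelnuovo-type bound $\dim V - d + 1$, while the intermediate cases $p \in \{1, d-1\}$ and $2 \le p \le d - 2$ give $2$ and $1$ respectively from the Koszul-type terms that cannot be collapsed further. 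The strict inequality $\dim V > d + 1$ enters to keep the relevant binomial sums nontrivial.

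The main obstacle is the first step: although the published proofs appear local, they are written assuming $V$ arises from a (quotient of the) Albanese morphism, and one has to verify that the specialization step does not silently use integrality of $V$. A fallback, should this fail, is to approximate a general $V$ by integral subspaces --- those of the form $V = \rH^0(B, \Omega^1_B)$ for a quotient $\Alb(X) \to B$, for which the published results apply directly to the induced morphism $X \to B$ --- and then pass to the limit, using that $m(V) = d$ is an open condition on the Grassmannian and that the Euler characteristic bounds in the conclusion are numerical and hence stable under such a deformation.
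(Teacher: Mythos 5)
Your first step is fine and matches the paper: the pointwise input is \cite[prop.~3.4]{GreenLazarsfeld} on the cup product with a single $1$-form $v$ having isolated zeros, which indeed never uses that $V$ comes from an abelian quotient, so the exactness of the derivative complex away from degree $d-p$ carries over verbatim for any $V$ with $m(V)=d$. (Your fallback of approximating $V$ by ``integral'' subspaces is unnecessary, and would not work anyway: subspaces pulled back from quotients of $\Alb(X)$ form a countable, non-dense family in the Grassmannian.)

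The genuine gap is in your second step. Exactness of the complexes in all degrees but one, combined with an alternating sum, only yields the \emph{identity}
$(-1)^{d-p}\chi(X,\Omega^p_X)=\dim \cH^{d-p}$ of the middle cohomology, hence nonnegativity --- it cannot produce the lower bounds $1$, $2$, $\dim V-d+1$, and there is no ``combinatorial term of the shape $\sum_i(-1)^i\binom{\dim V}{\ast}$'' that does so. The actual mechanism in the paper is to globalize the pointwise complexes into a complex of vector bundles $\bbL^p_{X,V}$ on $\bbP(V)$ with constant-rank linear differentials, so that the middle cohomology becomes a vector bundle $E$ with $\rk E=(-1)^{d-p}\chi(X,\Omega^p_X)$, and then to bound its rank from below by \emph{non-splitting} arguments: (a) for general $p$, $E\neq 0$ because a linear exact complex of sums of line bundles on $\bbP(V)$ must have length at least $\dim V+1>d+1$ \cite[lemma~2.2]{Lombardi13}; (b) for $p=d-1$, if the relevant kernel bundle had rank $1$ one would get a split extension exhibiting it as a sum of line bundles, again contradicting loc.~cit., whence the bound $2$; (c) for $p=d$, the kernel bundle $G=\ker(\delta_0)$ has $\rH^i(\bbP,G^\vee(j))=0$ for $1\le i\le \dim V-d-1$, so the Evans--Griffiths syzygy theorem \cite[ex.~7.3.10]{LazarsfeldPositivityII} forces $G$ to split as a sum of line bundles unless $\rk G\ge \dim V-d+1$. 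These sheaf-theoretic inputs (especially Evans--Griffiths, which is where $\dim V-d+1$ comes from) are the heart of the proof and are entirely absent from your proposal; without them you only get $(-1)^{d-p}\chi(X,\Omega^p_X)\ge 0$.
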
 
\begin{proof} 
For~$0\le p\le d$ and any~$v\in V\smallsetminus \{0\}$, the cup product with~$v$ gives rise to a complex
\[
 \bbL^p_{X,v}\colon \; 
 \bigl[ 0 \;\longrightarrow\; \rH^0(X, \Omega^p_X) 
 \;\stackrel{v \smile}{\longrightarrow}\; \rH^1(X, \Omega^p_X) 
 \;\stackrel{v \smile}{\longrightarrow}\; \cdots 
 \;\stackrel{v \smile}{\longrightarrow}\; \rH^d(X, \Omega^p_X) 
 \;\longrightarrow\; 0
 \bigr].
\]
For~$m(V)=d$, this complex is exact in all degrees~$i \neq d-p$ by~\cite[prop. 3.4]{GreenLazarsfeld}, Hodge symmetry and Serre duality, see~\cite[prop.~2.1]{Lombardi13}. By varying the vector~$v$ we can view these complexes as the fibers of a complex of vector bundles on the projective space~$\bbP=\bbP(V)$, the Green-Lazarsfeld derivative complex
\[
 \bbL^p_{X,V}\colon \; 
 \bigl[ 
 \; \cdots \rightarrow \cO_{\bbP}(-d+i)\otimes \rH^i(X, \Omega^p_X)\stackrel{\delta_i}{\longrightarrow} \cO_{\bbP}(-d+i+1)\otimes \rH^{i+1}(X, \Omega^p_X) 
 \rightarrow \cdots \;\bigr]
\]
sitting in degrees~$i=0,1,\dots, d$. By the above, we have
\begin{enumerate} 
\item $\cH^i(\bbL^p_{X,V})=0$ for all~$i\neq d-p$, and
\item the morphisms~$\delta_i$ have constant rank for all~$i$.
\end{enumerate} 
Property (2) implies that the cokernel~$F=\coker(\delta_{d-p-1})$ is locally free and then also that
\[
 E \;:=\; \ker\left(F \;\rightarrow\; \cO_{\bbP}(-p+1)\otimes \rH^{d-p+1}(X, \Omega^p_X) \right)
\]
is locally free. Property (1) then shows that the rank of the vector bundle~$E$ is given by
\[
 \rk(E) \;=\; (-1)^{d-p} \chi(X, \Omega^p_X).
\]
This shows~$(-1)^{d-p} \chi(X, \Omega^p_X)\ge 1$, since necessarily~$E\neq 0$: Else the complex~$\bbL^p_{X,V}$ would be exact in all degrees, which is impossible because any exact sequence of direct sums of line bundles on~$\bbP=\bbP(V)$ with differentials of degree one has length at least~$\dim V+1$ by~\cite[lemma 2.2]{Lombardi13}, and we assumed~$\dim V>d$.
\medskip 

When~$p = d-1$, let~$G := \ker(\delta_1)$ so the complex~$\bbL^p_{X,V}$ becomes an exact sequence
\[ 0 \; \too \; G \; \too \; \cO_{\bbP}(-d + 1) \otimes \rH^1(X, \Omega^{d-1}_X) \; \too \; \cdots \too \; \cO_\bbP \otimes \rH^d(X, \Omega^{d-1}_X) \; \too \; 0.\]
The differential~$\delta_0$ factors through an injective map 
$ h \colon \cO_{\bbP}(-d) \otimes \rH^0(X, \Omega^1_X) \into G$. The cokernel of~$h$ is locally free and it is nonzero by \cite[lemma 2.2]{Lombardi13}. If it were of rank~$1$, it would be of the form~$\cO_{\bbP}(j)$ for some integer~$j$, hence we would have a short exact sequence
\[ 0 \too \cO_{\bbP}(-d) \otimes \rH^0(X, \Omega^1_X) \too G \too \cO_{\bbP}(j) \too 0.\]
The vanishing of~$\rH^1(\bbP, \cO_{\bbP}(d + j))$ implies that the previous short exact sequence is split, hence~$G$ is a sum of line bundles on~$\bbP$. By loc. cit. this is not possible, thus
\[ - \chi(X, \Omega^{d-1}_X) \; = \; \rk G \; \ge 2. \]

It remains to prove the estimate~$\chi(X, \Omega^d_X) \ge \dim V - d+1$. For this take~$p=d$, and let~$G := \ker(\delta_0)$. We then have an exact sequence of vector bundles
\[
 0 \;\too\; G \;\too\; \cO_\bbP(-d)\otimes \rH^0(X, \Omega^d_X) \;\too\;
 \cO_\bbP(-d+1)\otimes \rH^1(X, \Omega^d_X) \;\too\; \cdots
\]
Taking the dual of the above sequence one sees that 
\[
 \rH^i(\bbP, G^\vee(j)) \;=\; 0 \quad \text{for all~$j\in \bbZ$ and all~$i\in \{1,2,\dots, \dim V - d - 1\}$}.
\]
If~$\rk G \le \dim V - d$, then by the Evans-Griffiths theorem~\cite[ex.~7.3.10]{LazarsfeldPositivityII} it would follow that~$G$ is a direct sum of line bundles. This is seen to be impossible by applying \cite[lemma 2.2 (ii)]{Lombardi13} with $e = d$, $q = \dim V$ and $a = -j$. Therefore, we have 
$
 \rk G \ge \dim V - d +1
$
and the claim follows since from the above exact sequence~$\rk G = \chi(X, \Omega_X^d)$. 
\end{proof} 

The above in particular applies when~$X\subset A$ is a subvariety of an abelian variety and~$V=\rH^0(A, \Omega_A^1)$. For a smooth ample divisor~$D \subset A$ the Euler characteristics can be computed exactly:
\[ (-1)^{g-1-i} \chi(D, \Omega^i_D) \; = \;  n A(g, i),  \]
where~$n = D^g/g!$ and~$A(g, i)$ is the Eulerian number; see \cite[sect.~3.2]{LS20}. For subvarieties of codimension~$\ge 2$, we obtain:

\begin{corollary} \label{cor:hodge-number-estimates}
Let~$A$ be a complex abelian variety of dimension~$g$, and let~$X\subset A$ be a smooth subvariety of dimension~$d\le g -2$ with ample normal bundle. Then we have
\[
 (-1)^{d-p}\chi(X, \Omega^p_X) \;\ge\; 
 \begin{cases} 
  g - d +1& \text{for~$p \,\in\, \{0,d\}$},\\[0.1em]
   2 & \text{for~$p \,\in\, \{ 1, d - 1 \}$}, \\[0.1em]
 1 & \text{for~$2 \le p \le d-2$}.
 \end{cases} 
\]

\end{corollary}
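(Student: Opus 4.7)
The plan is to apply Theorem~\ref{thm:hodge-estimate-for-subspace} to the subspace $V \subset \rH^0(X, \Omega^1_X)$ obtained by pulling back translation-invariant one-forms from $A$, i.e.\ as the image of the restriction map $\rho\colon \rH^0(A, \Omega^1_A) \to \rH^0(X, \Omega^1_X)$. Since $d \le g-2$ we have $\dim \rH^0(A, \Omega^1_A) = g > d+1$, so once $\rho$ is shown to be injective and $m(V) = d$ is established, the theorem will furnish exactly the claimed bounds, with $\dim V - d + 1 = g - d + 1$ in the outer cases $p \in \{0, d\}$.

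Both conditions reduce to the finiteness of the Gauss map
\[ \gamma\colon \quad \bbP(\cC) \;\too\; \bbP((\Lie A)^\vee), \]
where $\cC = \cN^\vee$ is the conormal bundle. Indeed, $\bbP(\cC)$ sits naturally in $X \times \bbP((\Lie A)^\vee)$ and $\gamma$ is the second projection, given by the globally generated line bundle $\cO_{\bbP(\cC)}(1)$. By \cite[ex.~6.1.5]{LazarsfeldPositivityII}, ampleness of $\cN$ is equivalent to ampleness of $\cO_{\bbP(\cC)}(1)$ and hence to finiteness of $\gamma$. For any nonzero $\omega \in \rH^0(A, \Omega^1_A) = (\Lie A)^\vee$, the zero locus of the restricted form $\rho(\omega) \in \rH^0(X, \Omega^1_X)$ admits the description
\[ Z(\rho(\omega)) \;=\; \{\, x \in X \mid T_x X \subset \ker \omega \,\} \;=\; \{\, x \in X \mid \omega \in \cC_x \,\} \;=\; p\bigl(\gamma^{-1}([\omega])\bigr), \]
where $p\colon \bbP(\cC) \to X$ is the projection to $X$. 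Finiteness of $\gamma$ therefore forces $Z(\rho(\omega))$ to be finite; this immediately yields $\rho(\omega) \ne 0$ (as $X$ has positive dimension, ampleness of $\cN$ ruling out the case $d = 0$), proving injectivity of $\rho$, and $\codim_X Z(\rho(\omega)) = d$, proving $m(V) = d$. Here the equality $m(V) = d$ rather than an inequality uses that $\gamma$ is moreover surjective, which follows from its finiteness since $\dim \bbP(\cC) = g-1 = \dim \bbP((\Lie A)^\vee)$; hence $Z(\rho(\omega))$ is nonempty for every nonzero~$\omega$.

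With these hypotheses verified, Theorem~\ref{thm:hodge-estimate-for-subspace} applied to $V$ directly yields the three bounds. The argument presents no real obstacle: its only substantive input is the classical translation of positivity of $\cN$ into finiteness of the Gauss-type map $\gamma$, with the remaining content being routine bookkeeping to match the hypotheses of Theorem~\ref{thm:hodge-estimate-for-subspace}.
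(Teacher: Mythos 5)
Your proposal is correct and follows essentially the same route as the paper: both reduce to \cref{thm:hodge-estimate-for-subspace} applied to the space $V$ of restricted invariant one-forms, with the only real input being that ampleness of the normal bundle forces every nonzero $\omega\in V$ to have finite zero locus. The paper simply cites \cite[prop.~6.3.10(i)]{LazarsfeldPositivityII} for this last fact, whereas you re-derive it from finiteness of the Gauss map on $\bbP(\cC)$ and, along the way, make explicit the injectivity of the restriction map (needed for $\dim V=g$), which the paper leaves implicit.
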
 

\begin{proof} 
Since~$X\subset A$ has ample normal bundle, we know by~\cite[prop.~6.3.10(i)]{LazarsfeldPositivityII} that for nonzero 
$
\omega \in V := \rH^0(A, \Omega^1_A) \subset \rH^0(X, \Omega^1_X),
$
the zero locus~$Z(\omega) \subset X$ is finite. Hence,~$m(V)=d$, and~\cref{thm:hodge-estimate-for-subspace} applies.
\end{proof}

\small

\bibliography{./../biblio}

\bibliographystyle{amsalpha}

\end{document}